\documentclass[11pt]{article}

\usepackage[english]{babel}

\usepackage[T1]{fontenc}
\usepackage{lmodern}

\usepackage[utf8]{inputenc}
\usepackage{stmaryrd}
\usepackage[a4paper,margin=2.5cm]{geometry}
\usepackage[dvipsnames]{xcolor}
\usepackage{eufrak}
\usepackage{chemist}
\usepackage{amsmath,amsfonts,amsthm,amssymb}

\DeclareMathOperator{\spanvector}{Span}
\DeclareMathOperator{\Poiss}{Poiss}

\DeclareMathOperator{\Hess}{Hess}
\DeclareMathOperator{\tr}{tr}
\usepackage{blindtext}
\usepackage{mathtools}
\usepackage{mathrsfs}
\newcommand{\defeq}{\mathrel{\mathop:}=}

\usepackage{booktabs,tabularx}
\usepackage{apxproof}
\usepackage{environ}
\usepackage{dsfont}
\usepackage{etoolbox}
\usepackage{fancyvrb}
\usepackage{ifthen}
\usepackage{kvoptions}
\usepackage{bibunits}
\usepackage{comment}
\usepackage{chemformula}
\usepackage{mathrsfs}
\usepackage{float}
\usepackage{xcolor}
\usepackage{enumerate}
\usepackage[toc,page]{appendix}
\usepackage{graphicx,color}
\usepackage{hyperref}
\usepackage{mathtools}
\usepackage{setspace}
\usepackage{cite}

\makeatletter
\renewcommand*\env@matrix[1][\arraystretch]{%
  \edef\arraystretch{#1}%
  \hskip -\arraycolsep
  \let\@ifnextchar\new@ifnextchar
  \array{*\c@MaxMatrixCols c}}
\makeatother

\newtheorem{theorem}{Theorem}[section]
\newtheorem{lemma}[theorem]{Lemma}
\newtheorem{corollary}[theorem]{Corollary}
\newtheorem{assumption}{Assumption}

\newtheorem{proposition}[theorem]{Proposition}

\numberwithin{equation}{section}

\newtheorem*{remark}{Remark}
\newtheorem*{theorem*}{Theorem}
\newtheorem*{proposition*}{Proposition}
\newtheorem*{corollary*}{Corollary}
\newtheorem*{lemma*}{Lemma}

\DeclareMathOperator*{\spec}{spec}

\newcommand{\ignore}[1]{}

\usepackage{amsmath}
\usepackage{graphicx}
\title{How seed banks evolve in plants: a stochastic dynamical system
subject to a strong drift}

\author{Alison Etheridge\thanks{Department of Statistics, University of Oxford, UK} \and João Luiz de Oliveira Madeira\footnotemark[1]}

\begin{document}

\maketitle

\begin{center}{{\small\textbf{Abstract}}}
\end{center}

{\small We study how changes in population size and 
fluctuating environmental conditions influence the 
establishment of seed banks in plants. Our model is a modification of the 
Wright–Fisher model with seed bank, introduced by 
Kaj, Krone and Lascoux~\cite{kaj2001coalescent}. We distinguish between 
wild type individuals, producing only nondormant seeds, and mutants, 
producing seeds with dormancy. To understand how changing population
size shapes the 
establishment of seed banks, we analyse the process under a diffusive scaling. 
The results support the biological insight that seed banks are favoured in
a declining population, and disfavoured if population size is constant or
increasing. The surprise is that this is true even when population sizes
are changing very slowly - over the timescales of evolution. 
We also investigate the influence of short-term fluctuations, such
as annual variations in rainfall or temperature.
%under 
%adverse and fluctuating environments, and disfavoured under constant or
%improving environments. 

Mathematically, our analysis reduces to a stochastic dynamical system forced 
onto a manifold by a large drift, which converges under scaling to a 
diffusion on the manifold. Inspired by the Lyapunov–Schmidt reduction, we 
derive an explicit formula for the limiting diffusion coefficients by 
projecting the system onto its linear counterpart. This provides a general 
framework for deriving diffusion approximations in models with strong drift 
and nonlinear constraints.}

\medskip

\noindent\textbf{Keywords:} Seed banks; Wright--Fisher model; stochastic dynamical systems;
Lyapunov--Schmidt reduction.

\section{Introduction} \label{Paper03_introduction}

In this paper, we study, from a theoretical perspective, how and why seed 
banks evolve in plants. A seed bank consists of seeds that can germinate 
after remaining dormant for several generations, acting as a reservoir of 
genetic material from the past that competes with the genetic contribution 
of the current generation. We suppose that a mutation arises in a population 
with no pre-existing seed bank, conferring the ability to form a seed bank. 
We also assume that the distribution of the total number of seeds produced 
by a plant carrying this mutation is the same as the number
produced in a single generation by a wild type individual, 
and that when they germinate
mutant and wild-type juveniles are equally fit, so the mutation provides no 
obvious 
intrinsic advantage to individual carriers. We investigate the probability 
that the mutation invades the population, and compare its fixation probability 
to that of a newly arriving neutral mutation without a seed bank. Our results 
show that seed banks significantly affect fixation probability, in a way that 
depends on the environmental conditions. Specifically, we show that in a 
constant population, or when conditions favour population growth over
evolutionary timescales, the 
presence of a seed bank is disadvantageous. In contrast, 
environmental conditions that cause random fluctuations in population size 
or population decline (again on evolutionary timescales) can favour seed banks.
We also consider the impact of short term fluctuations in the ability of 
plants to produce seed and see that these too can favour the development
of seed banks.
Our study was initially motivated by a question posed by Diala Abu Awad 
and Amandine Véber, whose ongoing work addresses the biological 
ramifications of our results.

A major challenge in the mathematical analysis of our model is that we must 
keep track of multiple timescales, since the finite number of generations 
that a seed can remain dormant will be negligible with respect to the 
timescale of evolutionary processes. The main mathematical contribution of this
paper is that to overcome this challenge, in 
Section~\ref{Paper03_general_result_derivatives_manifold}
we develop a general formula for 
the quadratic approximation of the stable manifold around the asymptotic 
attractor of an ordinary differential equation (ODE) flow. In 
combination with results from~\cite{katzenberger1991solutions} 
and~\cite{parsons2017dimension}, this formula 
can be used to compute the diffusion 
approximation of large-scale stochastic dynamical systems with multiple 
timescales arising from theoretical biology and applied probability. 

Before formally defining the interacting particle system in Section~\ref{Paper03_main_results_section}, we outline the biological motivation for the model.

\subsection{Biological Motivation} 
\label{Paper03_subsection_biological_motivation}

Seeds are the basic reproductive units of gymnosperms and angiosperms, 
the most widespread plants on the planet. From a physiological point of view, 
each seed contains all the necessary structures to germinate, i.e.~to grow 
into a new mature plant. In this context, \emph{dormancy} is biologically 
defined as the failure of a viable seed to complete germination given all 
necessary environmental 
conditions~\cite{bewley1997seed, baskin2004classification}. Therefore, seed 
banks can be thought of as reservoirs of genetic material from past 
generations: a seed produced as an offspring of the current generation can 
skip some generations, and germinate in the future. We say that a plant 
has a \emph{seed bank} if its seeds exhibit dormancy. The presence 
(and length) of dormancy has a strong hereditary component 
(see~\cite[Chapter~8]{baskin1998seeds}), and it is largely determined by the 
mother plant. Interestingly, for some species, seeds of the same individual 
can have different germination times — in this case, seed dormancy is said to 
exhibit a \emph{bet-hedging strategy}~\cite{abley2024bet}

In evolutionary biology, mutations are typically classified into three 
categories: beneficial (which increase fitness), neutral (which have no impact 
on fitness), and deleterious (which decrease fitness). Natural selection 
favours beneficial mutations and their establishment. Nevertheless, neutral 
mutations can also become fixed even without natural selection, through 
mechanisms such as genetic drift driven by demographic 
fluctuations~\cite{kimura1985neutral, nei2010neutral}.

From a genetic point of view, dormancy is a highly variable trait. For 
instance, seeds of different lineages of the species \textit{Avena fatua}, 
also known as wild oat, may exhibit different periods of dormancy, or even no 
dormancy~\cite{simons2011modes, abley2024bet}. It is also remarkable that 
dormant and nondormant seeds can be found as traits over the entire 
phylogenetic tree of gymnosperms and angiosperms, indicating that the seed 
dormancy trait must have been lost and evolved multiple times during 
evolution~\cite{baskin1998seeds, linkies2010evolution,dayrell2017phylogeny}. 
Although highly variable, there is also a correlation between different 
ecosystems and the prevalence of seed 
dormancy~\cite[Chapter~12]{baskin1998seeds}. For example, in tropical regions, 
there is a higher prevalence of species of plants without seed banks, whilst 
in colder and drier regions, the presence of seed dormancy predominates 
(see~\cite[Chapter~12]{baskin1998seeds} and~\cite{dayrell2017phylogeny}). 
Nonetheless, even in stable environments, the prevalence of plant populations 
with seed banks is comparable in magnitude to the prevalence of populations 
without seed banks~\cite{dayrell2017phylogeny}. Moreover, experimental data 
suggest that dormancy diversity along a phylogenetic tree is highly skewed 
towards the root of the tree~\cite{dayrell2017phylogeny}. Taking into account 
these observations, the following questions naturally arise:
\begin{enumerate}[(1)]
    \item Is seed dormancy a beneficial trait in environments with harsher 
conditions (e.g.~lower levels of water, extreme temperatures) or less stable
conditions?
    \item Can seed banks evolve and establish as a result of genetic drift 
in stable environments?
    \item If the dormancy trait is prevalent in a population living in a 
stable environment, how likely is it to be lost?
\end{enumerate}

To the best of our knowledge, there are no studies using individual-based 
models to address these questions. In this paper, we use a variant of the 
Wright–Fisher model introduced by 
Kaj, Krone, and Lascoux in~\cite{kaj2001coalescent}. 
In contrast to previous mathematical work, we are interested in finding 
conditions under which seed banks might become prevalent, rather than
the implications for genetic variation of an established seed bank.
We provide partial answers to the questions above by 
investigating the probability of establishment of a mutation that
confers the ability to produce seeds with dormancy under
three different scenarios:
\begin{enumerate}[(i)]
    \item \textbf{Constant environment}: we show that the presence of a seed 
bank is disfavoured compared to the evolution of a neutral allele without 
dormancy (see Theorem~\ref{Paper03_bound_fixation_probability_any_K}). 
Nonetheless, fixation of the seed bank trait can still occur due to genetic 
drift, and its fixation probability is of the same order of magnitude as that 
of a nondormant neutral trait. Furthermore, in a population predominantly 
carrying the dormancy trait, it is unlikely that this trait will be lost 
(see Theorem~\ref{Paper03_weak_convergence_diffusion_constant_environment} and 
Proposition~\ref{Paper03_bound_drift_general_K_proposition}). In particular, 
our results are consistent with biological data from populations living in 
stable environments~\cite{dayrell2017phylogeny}.
    
    \item \textbf{Slowly-varying environment}: in this setting, we 
suppose that as a result of changing environmental conditions, 
population size is changing on the timescale of evolution. Adverse conditions 
(e.g.~lack of water, extreme temperatures) can be expected to result
in population 
decline, while favourable conditions (e.g.~adequate water supply) lead 
to population growth. In 
Theorem~\ref{Paper03_weak_convergence_WF_model_slow_env} and 
Corollary~\ref{Paper03_corollary_SDE_proportion_slowly_varying_environment}, 
we show that adverse environmental conditions favour the seed bank trait, 
while the nondormancy trait is favoured when conditions promote population 
growth. Moreover, random fluctuations in population size on the evolutionary 
timescale tend to favour an intermediate state in which both dormant and 
nondormant individuals coexist. 
We emphasise that this is in spite of the fact that in our model, seeds
will only remain dormant for times that are negligible on the timescale 
of evolution. 
Our results help to explain biological data 
comparing tropical and temperate habitats~\cite[Chapter~12]{baskin1998seeds}.

\item \textbf{Fast–changing environmental conditions}: 
in this setting, we model short-term environmental conditions—such as 
annual variations in rainfall or temperature—through fluctuations 
in seed production, while holding the population size constant. 
Adverse conditions correspond to a decrease in the overall seed output 
of all individuals in a given generation, regardless of their genetic type, 
while favourable conditions correspond to an increase. In 
Theorem~\ref{Paper03_thm_convergence_diffusion_fast_changing_environment} 
we show that such short-lived fluctuations can favour the seed–bank trait 
for a class of germination–time distributions. Our findings help to explain 
biological patterns in different habitats~\cite[Chapter~12]{baskin1998seeds}, 
and they extend classical results such as those 
of~\cite{cohen1966optimizing}. Crucially, in contrast to previous work,
we incorporate the effects of
the randomness inherent in reproduction in a finite population (so-called
genetic drift).
\end{enumerate}

Before precisely stating our model and our results, we briefly review 
some earlier mathematical work on the evolution of seed banks.

\subsection{Related Work} \label{}

Cohen~\cite{cohen1966optimizing} introduced the first model of which we are 
aware to study under which conditions seed banks can be beneficial. He 
considers an annual plant that reproduces only once during its lifetime. In 
his model, in each year, a fixed fraction $g \in (0,1)$ of seeds germinate, 
and a fixed fraction $d \in (0,1)$ of the remaining seeds dies out. Moreover, 
in each year, the average number of seeds produced per mature plant is 
described by a random variable $\xi(t)$, where $t \geq 0$ indicates the year. 
According to Cohen's model, if $\zeta(t)$ denotes the number of seeds in the seed 
bank in year $t \geq 0$, we have the recursion
\begin{equation} \label{Paper03_definition_cohen_model}
    \zeta(t+1) = (1 - g)\zeta(t) - d(1-g)\zeta(t) + g\xi(t)\zeta(t), \; \forall t \in \mathbb{N}_{0}.
\end{equation}
The random process $(\xi(t))_{t \geq 0}$ captures the environmental 
fluctuations, and by finding $g \in (0,1)$ that maximises the number of 
surviving seeds, Cohen concludes that in a randomly varying environment the 
presence of seed banks is favoured. 
In contrast to our model, in Cohen's model, the seed bank is assumed to be 
established, and so his analysis does not take into account the competition between 
plants with and without seed banks. The possibility of invasion of a mutation
with a different germination fraction is considered 
in~\cite{ellner1987competition}, and
further modifications of the model were 
implemented in~\cite{bulmer1984delayed,abley2024bet} to 
study the effect of competitive pressure and other environmental conditions. 
The recursion~\eqref{Paper03_definition_cohen_model} implicitly assumes a 
geometric distribution for the germination time of seeds. For plant 
populations, however, the distribution of germination time is often complex, 
and it is rarely given by the geometric distribution 
(see~\cite[Table~12.2]{baskin1998seeds}).
Moreover, none of these works include the impact of genetic drift. 

Some work has also been done on the establishment of dormancy in microbial 
populations.  In this context, Blath and 
T\'{o}bi\'{a}s~\cite{blath2020invasion} study a model in which an individual 
can switch between active and inactive states in a population under competitive
pressure. They conclude that if the switching into the dormant state is induced 
by competitive pressure, then dormancy is beneficial and can establish with 
positive probability when the population size is sufficiently large.

Blath, Hermann and Slowik~\cite{blath2021branching} also studied the beneficial 
aspects of dormancy for microbial populations. The authors study a branching 
process model in which particles can switch between an active and a dormant 
state. In this model, the authors include a cost for dormancy, which captures 
the idea that microbes need to invest energy and resources into the machinery 
required to allow them to remain dormant for long periods of time. In their 
framework, the benefits of dormancy do not overcome its cost in a constant 
environment. However, when the environment fluctuates randomly, dormancy
%the presence 
%of a seed bank 
may be beneficial, and the choice between responsive and 
stochastic switching may have a great impact on population growth.

Although the results found in~\cite{blath2020invasion, blath2021branching} are 
applicable to microbial populations under different hypotheses, their 
conclusions cannot be directly extended to plants. First, in contrast to 
microbial populations, plant seed banks are produced as a fraction of the 
offspring of active (or mature) plants, rather than by switching the metabolic 
state of an adult plant. Moreover, in plants, even dormant seeds generally 
cannot skip more than a few 
generations~\cite{tellier2019persistent, barrett2005temporal,lennon2021principles}, 
which limits the time span of the genetic reservoir, 
whereas the models of~\cite{blath2020invasion, blath2021branching} assume 
that microbial dormancy can persist for periods of time comparable to the 
timescale of evolutionary change. Furthermore, some of the molecular mechanisms 
that regulate dormancy are common among seed plants~\cite{willis2014evolution},
and so the cost of dormancy in plants should not be as high as the 
corresponding cost to microbes.

Since seed banks act as reservoirs of genetic material, they have an impact on 
how a sample of individuals from a population are genetically related, 
i.e.~they influence the \emph{genealogy} of the population; this influence 
depends on the duration of dormancy, and it has been extensively studied from 
the point of view of stochastic 
analysis~\cite{lennon2021principles, kaj2001coalescent,blath2013ancestral,blath2016new}. 
We will not focus on this aspect of seed banks in this paper. The difference 
in the length of the dormancy period motivated Blath and 
co-authors~\cite{blath2013ancestral,blath2016new} to classify seed banks into 
two categories: \emph{weak} (referring to seed banks with finite mean 
germination time) and \emph{strong} (describing seed banks whose dormancy 
length is comparable to the timescale of evolution). The term weak reflects 
the fact that seed banks with finite mean germination time do not substantially 
alter the genealogical structure (at least over long timescales). 
Since our focus is on 
plants, and we show that seeds with dormancy lasting only up to a finite 
number of generations can still substantially affect establishment probability,
we will not adopt this weak/strong classification in the present work.

\subsection{Phases of fixation of a mutation} 
\label{Paper03_phases_invasion_section}

For an advantageous mutation that arises in an established 
population and sweeps to fixation, it is customary to divide
the invasion into four phases~\cite{barton1998effect}:
\begin{enumerate}[(i)]
    \item \emph{Branching phase}: In this phase, the proportion of mutants in 
the population remains sufficiently small that competition between them can be 
neglected. Hence, the behaviour of the number of individuals can be 
mathematically modelled as a branching process.
    \item \emph{Early or lag phase}: The number of mutant individuals is 
neither too small to neglect competition nor sufficiently large to be 
considered a significant proportion of the total population. In this case, 
since the average per capita growth rate of the mutant is bigger than that of 
individuals of the `wild type' (those not carrying the mutation), the 
behaviour of the proportion of mutants is well approximated by the solution of 
an ordinary differential equation (ODE) with positive growth 
rate~\cite{cuthbertson2012fixation}.
    \item \emph{Diffusion phase}: The mutants reach a significant 
proportion of the total population, and this proportion changes more slowly
as a result of mutants competing with one another. On a longer timescale, 
we can approximate its evolution by a stochastic differential equation (SDE).
    \item \emph{Fixation phase}: The number of wild type individuals is 
negligible compared to the number of mutants, and one can approximate the 
dynamics of wild type individuals by a subcritical branching process. 
\end{enumerate}

The challenge in applying this framework to model the fixation of neutral 
variants is the analysis of the early phase, since in the neutral case the 
average per capita growth rate of the mutant population is the same as that of
the wild type, and therefore genetic drift 
dominates~\cite{kimura1985neutral, nei2010neutral}. 
For this reason, in the biological literature, the invasion of a neutral 
mutation is often modelled as though
the diffusion phase starts immediately after the branching phase.

Since plants harbouring seed dormancy do not have any clear advantage or 
disadvantage, our study will be more closely related to the study of fixation 
of a neutral mutation than a beneficial one. It is however beyond the scope of 
this paper to provide a formulation of early phase in the neutral case, and 
we will follow Kimura~\cite{kimura1962probability,kimura1985neutral} and 
restrict our attention to the branching and diffusion phases only, 
i.e.~we will ignore the lag and the fixation phases. We will explain, after 
stating Theorem~\ref{Paper03_branching_approximation_constant_environment} in 
Section~\ref{Paper03_BP_constant_environment_description}, how our choice of 
initial condition for the diffusion phase will be informed by the outcome
of the branching phase.

\medskip

\noindent{\textbf{Structure of the paper:}} 
In Section~\ref{Paper03_main_results_section} we present the main model and 
results of this paper. We will discuss three different scenarios: the case of 
constant environment will be described in 
Section~\ref{Paper03_WF_model_const_environ_description}, that of 
a population whose size changes on the same timescale 
as the evolutionary process 
in Section~\ref{Paper03_subsection_WF_slow_environment}, 
and the case in which environmental conditions affecting seed
production fluctuate on the
timescale of a single generation in Section~\ref{subsection:WF_fast_env_model}.

In Sections~\ref{Paper03_subsection_katzenberger_overview} 
and~\ref{Paper03_subsection_computing_derivatives}, we provide a brief 
overview of stochastic dynamical systems forced onto a manifold by a large 
drift. Section~\ref{Paper03_general_result_derivatives_manifold} states a 
general formula to compute the quadratic approximation of the flow of a 
dynamical system near an attractor manifold; 
to improve readability, the proof of this result is 
postponed to Section~\ref{Paper03_section_proofs_dynamical_systems}.
Section~\ref{Paper03_derivatives_specific_ode} illustrates the 
use of this formula through an application to a specific ODE. Finally, proofs 
of the main results begin in 
Section~\ref{Paper03_section_proof_results_branching}, where we establish the 
result concerning the branching phase. In 
Section~\ref{Paper03_proof_results_concerning_diffusion} we combine our 
dynamical systems results with theorems in stochastic analysis to derive the 
limiting stochastic differential equation that approximates our model in 
the diffusion phase.
The remaining proofs are in 
Section~\ref{Paper03_section_proof_auxilliary_lemmas}.

\medskip

\noindent \textbf{Notation:} In what follows, we denote the set of strictly 
positive integers by $\mathbb{N}$, and let 
$\mathbb{N}_{0} \defeq  \mathbb{N} \cup \{0\}$. Moreover, for any 
$K \in \mathbb{N}$, we let $\llbracket K \rrbracket \defeq \{1,2,\ldots, K\}$, 
and $\llbracket K \rrbracket_{0} \defeq \llbracket K \rrbracket \cup \{0\}$. 
Since we will work with $(K+1)$-dimensional vectors, it will be convenient to 
use bold letters for vectors and matrices, and plain letters for scalar 
quantities. We let $\mathbf{e}_{0}, \ldots, \mathbf{e}_{K}$ be the canonical 
basis of $\mathbb{R}^{K+1}$, and we denote the inner product between vectors 
$\boldsymbol{u}, \boldsymbol{v} \in \mathbb{R}^{K+1}$ 
by $\langle \boldsymbol{u}, \boldsymbol{v} \rangle$.

For any random variable $Y$, let $\sigma(Y)$ denote the $\sigma$-algebra generated 
by $Y$.
Moreover, if $\sigma$ and $\tilde{\sigma}$ are two $\sigma$-algebras, 
let $\sigma \times \tilde{\sigma}$ denote the corresponding product 
$\sigma$-algebra.
For a Borel subset $I \subset \mathbb{R}^{K+1}$, we let 
$\mathscr{D}([0, +\infty), I)$ denote the set of $I$-valued càdlàg paths.
For $T > 0$ and a given càdlàg finite variation process 
$\mathbf{A}: [0, + \infty) \rightarrow \mathbb{R}^{K+1}$, 
let $\mathfrak{T}(\mathbf{A}, T)$ denote the total variation of $\mathbf{A}$ on the time interval $[0,T]$.
Moreover, for a real càdlàg martingale 
$({M}(t))_{t \geq 0}$, we use $([{M}](t))_{t \geq 0}$ and 
$(\langle M \rangle(t))_{t \geq 0}$ for, respectively, the quadratic variation and the predictable bracket associated to ${M}$ 
(see~\cite[Theorem~4.52]{jacod2013limit} for the difference between 
these processes).

It will also be convenient to introduce some notation regarding comparisons 
between functions.
For a set $\mathcal{S}$, let $f,g: \mathcal{S} \rightarrow \mathbb{R}_{+}$ be 
non-negative real-valued functions on $\mathcal{S}$. We write $f \lesssim g$ 
to indicate that there exists $C > 0$ such that for every $x \in \mathcal{S}$, 
$f(x) \leq Cg(x)$. If we wish to emphasise that the constant $C$ depends on a 
parameter $p$, then we write $f \lesssim_p g$. Moreover, for 
$f: [0, \infty) \rightarrow \mathbb{R}$, $N \in \mathbb{N}$ and $T \geq 0$, 
we let
\begin{equation*}
    \int_0^T f(t) \; d\lfloor Nt \rfloor \defeq \sum_{t = 0}^{\lfloor NT \rfloor - 1} f\left(\frac{t}{N}\right).
\end{equation*}

\section{Model description and main results} 
\label{Paper03_main_results_section}

In this section, we introduce the model that will be the principal focus 
of this paper (and some of its modifications), and state our main results. 
Our model can be viewed as an 
extension of the Wright-Fisher model with seed bank introduced by 
Kaj, Krone and Lascoux in~\cite{kaj2001coalescent}. 
It is appropriate for a population of annual plants. An entirely analogous
analysis could be pursued for plants that survive for multiple generations, 
but the equations would become considerably more complicated and harder 
to interpret.
To improve readability, we shall divide the description according to whether
we are considering a constant or varying environment.

\subsection{Wright-Fisher model in constant environment} 
\label{Paper03_WF_model_const_environ_description}

Let $N \in \mathbb{N}$ be a scaling parameter. In the case of constant 
environment, $N$ will be the population size. We consider a population 
evolving in discrete generations and composed of two types, which differ in 
their ability to produce seed banks, but not in their overall fitness. 
Wild type individuals have no seed bank, so that they can contribute to the 
genetic material of the next generation only. On the other hand, mutant 
individuals produce seeds that can last up to $K \in \mathbb{N}$ generations. 
The distribution of the germination time of seeds of a mutant plant is 
given by a vector $\boldsymbol{b} = (b_i)_{i \in \llbracket K \rrbracket_0}$, 
satisfying
\begin{equation} 
\label{Paper03_assumption_probability_distribution_bi}
        \sum_{i = 0}^{K} b_{i} = 1 \quad \textrm{and} \quad b_0 > 0.
    \end{equation}
In particular, $\mathbf{b}$ is an element of the $K$-dimensional simplex
\begin{equation} \label{Paper03_K_dimensional_simplex}
    \mathbb{S}_{K}^* \defeq \left\{\boldsymbol{b} = (b_i)_{i \in \llbracket K \rrbracket_{0}} \in [0,1]^{K+1}: \; \sum_{i  = 0}^K b_i = 1, \; b_0 > 0\right\}.
\end{equation}
We explain below 
Equation~(\ref{Paper03_binomial_distribution_WF_constant_environment})
why it is convenient to impose the condition that $b_0 > 0$. 
We define the \emph{mean germination time} as
\begin{equation} 
\label{Paper03_mean_age_germination}
    B \defeq \sum_{i = 1}^{K} ib_{i}.
\end{equation}
Intuitively, $b_{i}$ represents the proportion of seeds of a mutant 
individual that spend exactly $i$ generations in the seed bank, 
for $i \in \llbracket K \rrbracket_{0}$. 
For any $t \in \mathbb{N}_{0}$, let $X^{N}_{i}(t)$ be the number of 
mature mutant plants that lived in generation $t - i$. 
Let $M > 0$ be some large parameter and let $t \in \mathbb{N}_0$ 
satisfy $t \geq K-1$. Each wild type individual in generation $t$ contributes 
$\Poiss(M)$ seeds, and each mature mutant in generation 
$t - i \, (i \in  \llbracket K \rrbracket_0)$, contributes 
$\Poiss(Mb_i)$ seeds to the pool of seeds that are ready to germinate in the
current generation. From this pool, we sample $N$ seeds to germinate and 
form the mature plants in generation $t + 1$. 
The parameter $M$ is assumed to be large enough that 
we can ignore the possibility that there are fewer than $N$ seeds from which to
sample, and indeed, as usual for Wright-Fisher models, we shall suppose that
the pool of seeds is large enough that we can ignore the difference between
sampling with and without replacement. 

More formally, we define our Markov process 
$\left(\boldsymbol{X}^{N}(t)\right)_{t \in \mathbb{N}_{0}} = \left(X_{0}^{N}(t),
 X_{1}^{N}(t), \ldots, X_{K}^{N}(t) \right)_{t \geq K-1}$ 
in such a way that for any $t \in \mathbb{N}_{0}$, conditional on 
$\boldsymbol{X}^{N}(t)$, 
the distribution of 
$X^{N}_{0}(t+1)$ is given by
\begin{equation} 
\label{Paper03_binomial_distribution_WF_constant_environment}
    X_{0}^{N}(t+1)\big\vert \boldsymbol{X}^{N}(t) 
\sim \textrm{Bin}\left(N, \; 
\frac{\sum_{i = 0}^{K} b_{i}X^{N}_{i}(t)}{(N - X^{N}_{0}(t)) 
+ \sum_{i = 0}^{K} b_{i}X^{N}_{i}(t)}\right),
\end{equation}
and $X^{N}_{i}(t+1) \defeq X^{N}_{i-1}(t)$ for 
$i \in \llbracket K \rrbracket$. 

\begin{remark}
The condition $b_0 > 0$ 
in~\eqref{Paper03_assumption_probability_distribution_bi} ensures that even
if the population of mature plants consists entirely of mutant individuals, 
there will always be sufficient seeds ready to germinate to form the next 
generation. Our main results can still be proved in the case $b_0=0$
by writing the dynamics of the Wright-Fisher model in such a way that if the 
mature plants in generation $t$ are all mutant individuals, then so are
those in 
generation $t+1$. We decided that including this level of generality led to
additional complications in notation, but no
new mathematical or biological insights.
\end{remark}

We are interested in the case in which one mature mutant conferring the 
dormancy trait appeared in generation $0$, 
i.e.~when $\boldsymbol{X}(0) = (1,0,0, \cdots,0)$. As explained in 
Section~\ref{Paper03_phases_invasion_section}, we start by considering the 
branching phase.

\subsubsection{Branching phase in constant environment} 
\label{Paper03_BP_constant_environment_description}

It will be convenient to count (for mutant individuals) the number of seeds 
produced that will germinate $i \in \llbracket K \rrbracket_{0}$ generations 
in the future rather than keeping track of the number of mutants in the 
previous generations. Hence, let $M > 1$ be some large positive number. 
Let the $(K+1)$-type branching process 
$\left(\mathbf{Z}(t)\right)_{t \in \mathbf{N}_{0}} 
= \left({Z}_{0}(t), \ldots, Z_{K}(t)\right)_{t \in \mathbf{N}_{0}}$ 
be a multitype branching process with dynamics given as follows.
\begin{enumerate}[(i)]
    \item In each generation, each type $0$ particle gives birth to 
$\Poiss(b_{0})$ type $0$ particles and $\Poiss(Mb_{i})$ type $i$ particles 
for any $i \in \llbracket K \rrbracket$.
    \item For $i > 1$, a particle of type $i$ matures into a particle of 
type $i -1$ with probability $1$.
    \item Each particle of type $1$ germinates with probability $1/M$ into 
a particle of type $0$, and dies with probability $ 1 - 1/M$.
\end{enumerate}
The type $0$ particles represent the mature mutants in the current generation, 
whilst for $i \in \llbracket K \rrbracket$, the type $i$ particles represent 
the seeds in the seed bank that will be ready to 
germinate exactly $i$ generations in the 
future. Therefore, $\mathbf{Z}$ is not an approximation of the vector 
process $\boldsymbol{X}^{N}$, but it follows from its definition (and from 
the Poisson approximation to the binomial) that $Z_{0}$ approximates the 
behaviour of $X^{N}_{0}$. Note that the generator of $\mathbf{Z}$ does not 
depend on the scaling parameter $N$, which reflects the fact that this 
approximation is only valid when the number of mutant individuals is 
negligible with respect to the total population size $N$. Also note that 
the mean matrix $\mathbf{M}$ of $\mathbf{Z}$ is given by
\begin{equation} \label{Paper03_mean_matrix_branching_constant_environment}
        \mathbf{M} = \left(\begin{array}{cccccc}
            b_{0} & Mb_{1} & Mb_{2} & Mb_{3} & \ldots  & Mb_{K} \\
           1/M & 0 & 0 & 0 & \ldots & 0 \\ 0 & 1 & 0 & 0 & \ldots & 0 \\ 0 & 0 & 1 & 0 & \ldots & 0\\ \vdots & \vdots & \vdots & \vdots & \ddots & \vdots \\ 0 & 0 & 0 & 0 & \ldots & 0
        \end{array}\right).
    \end{equation}
Since we are not assuming that the mutant individuals harbour any clear 
advantage or disadvantage, we expect the branching process approximating 
$(X^{N}_{0}(t))_{t \in \mathbb{N}_{0}}$ to be critical. 
Recall the definition of $\boldsymbol{e}_0$ from the end of 
Section~\ref{Paper03_phases_invasion_section}, and the definition of the 
mean germination time $B$ from~\eqref{Paper03_mean_age_germination}.

\begin{theorem} 
\label{Paper03_branching_approximation_constant_environment}
The multitype branching process $\mathbf{Z}$ is critical, and its survival 
probability satisfies
\begin{equation*}
\lim_{t \rightarrow \infty} t\mathbb{P}_{\mathbf{e}_{0}}
\left(\mathbf{Z}(t) \neq 0\right) = 2(B+1).
\end{equation*}
Moreover, conditional on its survival, the limiting distribution of the 
scaled number of mature individuals satisfies, for any $y \in [0, \infty)$,
\begin{equation*}
\lim_{t \rightarrow \infty} 
\mathbb{P}_{\mathbf{e}_{0}}\left(\frac{Z_{0}(t)}{t} \geq y \, 
\Big \vert \, \mathbf{Z}(t) \neq 0\right) = \exp\left(-2y(B+1)^2\right).
\end{equation*}
\end{theorem}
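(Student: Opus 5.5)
The plan is to obtain both statements from the classical limit theory for critical, positively regular multitype Galton–Watson processes with finite second moments. This is the Kesten–Ney–Spitzer theorem (see Athreya–Ney, Chapter~V). Fix the right and left Perron eigenvectors $\mathbf{u},\mathbf{v}$ of the mean matrix $\mathbf{M}$ for the eigenvalue $1$, normalised by $\langle \mathbf{v},\mathbf{1}\rangle=1$ and $\langle \mathbf{u},\mathbf{v}\rangle=1$. Let $\mathbf{C}^{(i)}$ be the matrix of second factorial moments of the offspring of a type-$i$ particle, and set $Q=\tfrac12\sum_i v_i\,\mathbf{u}^{T}\mathbf{C}^{(i)}\mathbf{u}$. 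The theorem then gives
\[
t\,\mathbb{P}_{\mathbf{x}}(\mathbf{Z}(t)\neq 0)\to \langle \mathbf{u},\mathbf{x}\rangle/Q .
\]
It also gives that, conditional on survival, $\mathbf{Z}(t)/t$ converges in law to $W\mathbf{v}$, where $W$ is exponential with mean $Q$. So the proof reduces to three tasks: checking the hypotheses, computing $\mathbf{u},\mathbf{v},Q$, and reading off the result.

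\textbf{Step 1 (hypotheses).} Types $i>K^*\defeq\max\{i:b_i>0\}$ are never produced from $\mathbf{e}_0$, so I discard them and work on $\{0,\dots,K^*\}$. On this type space, type $0$ produces type $K^*$ with positive mean. Each type $i\ge 2$ produces type $i-1$, and type $1$ produces type $0$, so the mean matrix is irreducible. The entry $b_0>0$ gives a loop at type $0$, so the matrix is aperiodic, hence primitive. All offspring laws are Poisson, Bernoulli or deterministic, so second moments are finite. The process is also not singular, since type $0$ has Poisson offspring.

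\textbf{Step 2 (eigenvectors and criticality).} Solving $\mathbf{M}\mathbf{u}=\mathbf{u}$ gives $u_i=u_0/M$ for all $i\ge1$. The first row then reads $b_0u_0+\sum_{i\ge1}b_iu_0=u_0$, so $1$ is an eigenvalue with a positive eigenvector. By Perron–Frobenius it is the spectral radius, which proves criticality. Solving $\mathbf{v}\mathbf{M}=\mathbf{v}$ backwards from the last column gives $v_i=Mv_0\sum_{j\ge i}b_j$ for $i\ge1$. Then
\[
\textstyle\sum_i v_i=v_0\bigl(1+M\sum_{i\ge1}\sum_{j\ge i}b_j\bigr)=v_0(1+MB),
\]
and $\langle\mathbf{u},\mathbf{v}\rangle=u_0v_0(1+B)$, so the two normalisations fix $u_0,v_0$.

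\textbf{Step 3 (the constant $Q$).} For independent Poisson offspring of a type-$0$ particle, the second factorial moments are $m_jm_k$, where $m_j$ are the entries of row $0$ of $\mathbf{M}$. Hence
\[
\mathbf{u}^T\mathbf{C}^{(0)}\mathbf{u}=\Bigl(\textstyle\sum_j m_ju_j\Bigr)^2=u_0^2,
\]
using the first row of $\mathbf{M}\mathbf{u}=\mathbf{u}$. Type $1$ has Bernoulli offspring of a single type, and types $i\ge2$ have deterministic single offspring, so their factorial moment matrices vanish. Therefore $Q=\tfrac12 v_0u_0^2$.

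\textbf{Step 4 (conclusion).} The survival asymptotics give $t\,\mathbb{P}_{\mathbf{e}_0}(\mathbf{Z}(t)\neq0)\to u_0/Q=2/(u_0v_0)=2(B+1)$. For the conditional limit, $Z_0(t)/t$ converges in law to $Wv_0$, so
\[
\mathbb{P}(Wv_0\ge y)=\exp\bigl(-y/(v_0Q)\bigr), \qquad v_0Q=\tfrac12(u_0v_0)^2=\tfrac{1}{2(B+1)^2}.
\]
This yields $\exp(-2y(B+1)^2)$, and $M$ drops out as it should. Since the exponential law is continuous, convergence in law gives convergence of $\mathbb{P}(Z_0(t)/t\ge y)$ at every $y$.

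The main obstacle is the bookkeeping in Step 1 together with the correct normalisation conventions in the cited theorem, which differ across references by factors of $2$ and in how $\mathbf{u}$ and $\mathbf{v}$ are scaled. I would first check the conventions against the single-type case $K=0$. There $B=0$, and the formulas must reduce to Kolmogorov's $2/(\sigma^2 t)$ with $\sigma^2=1$ and Yaglom's exponential law with mean $\sigma^2/2$. This is consistent with the values $2$ and $e^{-2y}$ obtained above.
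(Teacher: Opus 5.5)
Your proposal is correct and follows the paper's route. Both arguments invoke the critical multitype limit theorems (Mullikin in the paper, Kesten--Ney--Spitzer in yours) and feed in $u_0v_0=1/(B+1)$ and $Q^{(0)}(\mathbf u)=u_0^2/2$ to obtain $2(B+1)$ and $\exp(-2y(B+1)^2)$. The differences are minor: you get criticality directly from a positive Perron eigenvector rather than by matching the characteristic polynomial of $\mathcal{J}\mathbf{F}_{\boldsymbol 0}+\mathbf{I}_{K+1}$, and you discard the unreachable types when $b_K=0$ to secure positive regularity, a point the paper passes over.
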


\medskip
\noindent
{\bf Choosing the initial condition for the diffusion phase:}
Theorem~\ref{Paper03_branching_approximation_constant_environment} will be 
proved in Section~\ref{Paper03_section_proof_results_branching}. 
We should like to compare these quantities to the survival probability and 
expected numbers conditional on survival for a neutral mutation without 
seed bank arising in the wild type population. 
The evolution of such a neutral mutation is approximated in the branching 
phase by a (one-type) critical branching process $(Y(t))_{t \geq 0}$ with 
the variance of the offspring distribution equal to one. In this case we have 
(see for instance~\cite[Theorems~I.9.1 and~I.9.2]{athreya2004branching})
\begin{equation} \label{Paper03_critical_branching_phase_monotype}
    \lim_{t \rightarrow \infty} t\mathbb{P}\left(Y(t) \neq 0\right) = 2 
\quad  \textrm{ and } \quad \lim_{t \rightarrow \infty} 
\mathbb{P}\left(\frac{Y(t)}{t} \geq y \, \Big \vert \, Y(t) \neq 0\right) 
= \exp(-2y), \; \forall \, y \in [0, \infty).
\end{equation}
In particular, for $B > 0$, 
Theorem~\ref{Paper03_branching_approximation_constant_environment} 
and~\eqref{Paper03_critical_branching_phase_monotype} imply that
\begin{equation*}
    \lim_{t \rightarrow \infty} t\mathbb{P}_{\mathbf{e}_{0}}\left(\mathbf{Z}(t) \neq 0\right) = 2(B+1) > 2 = \lim_{t \rightarrow \infty} t\mathbb{P}\left(Y(t) \neq 0\right),
\end{equation*}
which means that the presence of seed bank increases the survival probability 
during the branching phase. However, the expected number of mature individuals 
by the end of the branching phase (conditional on survival) is lower than 
that at the end of the branching phase (again conditional on survival) 
for a neutral mutation without dormancy, since 
Theorem~\ref{Paper03_branching_approximation_constant_environment} 
and~\eqref{Paper03_critical_branching_phase_monotype} imply that
\begin{equation*}
    \lim_{t \rightarrow \infty} \mathbb{E}_{\mathbf{e}_{0}}\left[\frac{Z_{0}(t)}{t} \, \Big \vert \, \mathbf{Z}(t) \neq 0\right] = \frac{1}{2(B+1)^2} < \frac{1}{2} = \lim_{t \rightarrow \infty} \mathbb{E}\left[\frac{Y(t)}{t} \, \Big \vert \, Y(t) \neq 0\right]. 
\end{equation*}
The mean is, however, a crude measure of how seed banks affect the outcome 
of the branching phase. As explained in 
Section~\ref{Paper03_phases_invasion_section}, we shall make the 
approximation that the diffusion process begins as soon as the branching
phase ends. As partial compensation for this crude assumption,
we shall adopt a slightly more
refined approach than is usual to splice together the final state of 
the branching phase and the initial condition of the diffusion phase.
%end of the 
%branching phase as a proxy for the initial condition of the diffusion phase. 
%To this end, we adopt a more refined approach to compare the branching phase outcomes of a neutral allele carrying the dormancy trait with those of a nondormant allele.

We have already seen in~\eqref{Paper03_critical_branching_phase_monotype}
that the probability of survival up to time $t$ under
the branching approximation is increased by the seed bank property, at the 
expense of a smaller expected population size conditional on survival. We 
look for a correspondence between the initial values for the diffusion 
approximation for the two scenarios (a mutation that confers a 
seed bank and a purely neutral mutation) that captures these two effects.   
The branching approximation is valid only when the proportion of 
mutants in the mature population is negligible compared to the total 
population size. Let $\varepsilon \in (0, 1)$ be sufficiently small that the 
branching approximation is adequate while the number of mutant individuals 
in the mature population is at most $\varepsilon N$. 
For $B \geq 0$, we look for a map 
$\psi_B: [0, \varepsilon] \rightarrow [0, \infty)$ such that $\psi_B(0) = 0$ 
and for any $0 \leq y^{(1)} < y^{(2)} \leq \varepsilon$, 
for sufficiently large $t$,
\begin{equation} 
\label{Paper03_correspondence_between_branching_phase_monotype_seed_bank}
    \mathbb{P}_{\mathbf{e}_{0}}\left(\psi_B(y^{(1)}) \leq \frac{Z_0(t)}{t} \leq \psi_B(y^{(2)})\right) = \mathbb{P}\left(y^{(1)} \leq \frac{Y(t)}{t} \leq y^{(2)}\right) + o(t^{-1}).
\end{equation}
Note that the probabilities on each side of this equation are for the 
\emph{unconditioned} processes. 
Roughly, the probability that a neutral mutation survives and reaches
frequency $y$ is the same as the probability that the seed bank mutation
survives and reaches $\psi_B(y)$.
Applying  Theorem~\ref{Paper03_branching_approximation_constant_environment} 
and~\eqref{Paper03_critical_branching_phase_monotype} 
to~\eqref{Paper03_correspondence_between_branching_phase_monotype_seed_bank}, 
we conclude that the map $\psi_B$ must satisfy, 
for any $0 \leq y^{(1)} < y^{(2)} \leq \varepsilon$,
\begin{equation} 
\label{Paper03_correspondence_between_branching_phase_monotype_seed_bank_ii}
    (B+1)\left(\exp\left(-2\psi_B(y^{(1)})(B+1)^2\right) - \exp\left(-2\psi_B(y^{(2)})(B+1)^2\right)\right) = \exp(-2y^{(1)}) - \exp(-2y^{(2)}). 
\end{equation}
Our next result follows directly 
from~\eqref{Paper03_correspondence_between_branching_phase_monotype_seed_bank_ii}.

\begin{lemma} 
\label{Paper03_lemma_correspondence_value_branching_phase}
For any $\varepsilon \in (0, 1]$ and $B \geq 0$, there is a unique map 
$\psi_{B}: [0, \varepsilon] \rightarrow [0, \infty)$ 
satisfying~\eqref{Paper03_correspondence_between_branching_phase_monotype_seed_bank_ii} 
and $\psi_B(0) = 0$, and for any $y \in [0, \varepsilon]$, it is given by
    \begin{equation*}
        \psi_B(y) = \frac{1}{2(B+1)^2} \log\left(\frac{B+1}{B + e^{-2y}}\right).
    \end{equation*}
    Moreover, $\psi_B$ can be continuously extended to $[0,1]$, and satisfies the following properties:
    \begin{enumerate}[(i)]
        \item For any fixed $y > 0$, the map $B \mapsto \psi_B(y)$ is strictly decreasing.\item  For any fixed $B \in [0, \infty)$, the map $y \mapsto \psi_B(y)$ is strictly increasing.
    \end{enumerate}
\end{lemma}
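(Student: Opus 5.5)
Setting $y^{(1)}=0$ in~\eqref{Paper03_correspondence_between_branching_phase_monotype_seed_bank_ii} and using $\psi_B(0)=0$ forces, for every $y\in(0,\varepsilon]$,
\begin{equation*}
(B+1)\left(1-\exp\left(-2\psi_B(y)(B+1)^2\right)\right) = 1-e^{-2y}.
\end{equation*}
This is the identity that determines $\psi_B$. Rearranged, it reads
\begin{equation*}
\exp\left(-2\psi_B(y)(B+1)^2\right) = 1-\frac{1-e^{-2y}}{B+1} = \frac{B+e^{-2y}}{B+1}.
\end{equation*}
The right-hand side lies in $(0,1]$ for $y\ge 0$ and $B\ge 0$, since $B+e^{-2y}>0$ and $e^{-2y}\le 1$. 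Taking logarithms therefore gives a unique value $\psi_B(y)\in[0,\infty)$, namely the stated formula, and this proves uniqueness. For existence, I substitute the formula back into~\eqref{Paper03_correspondence_between_branching_phase_monotype_seed_bank_ii} for a general pair $y^{(1)}<y^{(2)}$. The left-hand side becomes $(B+1)\bigl(\tfrac{B+e^{-2y^{(1)}}}{B+1}-\tfrac{B+e^{-2y^{(2)}}}{B+1}\bigr)=e^{-2y^{(1)}}-e^{-2y^{(2)}}$, which is exactly the right-hand side. The formula also gives $\psi_B(0)=\frac{1}{2(B+1)^2}\log 1=0$.

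For the extension to $[0,1]$, the formula itself is well defined and continuous on all of $[0,\infty)$, because its argument $\frac{B+1}{B+e^{-2y}}$ is positive and continuous. Restricting it to $[0,1]$ gives the continuous extension.

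Property (ii) is immediate. For fixed $B$, the denominator $B+e^{-2y}$ is strictly decreasing in $y$, so the logarithm is strictly increasing, and the prefactor is a positive constant.

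Property (i) is the only step with any real content. Fix $y>0$ and set $c=e^{-2y}\in(0,1)$. I then need to show that
\begin{equation*}
f(B)=\frac{1}{2(B+1)^2}\log\frac{B+1}{B+c}
\end{equation*}
is strictly decreasing on $[0,\infty)$. The factor $\frac{1}{(B+1)^2}$ is positive and strictly decreasing. The factor $\log\frac{B+1}{B+c}$ is positive because $c<1$. It is also strictly decreasing, since its derivative is $\frac{1}{B+1}-\frac{1}{B+c}<0$. A product of two positive strictly decreasing functions is strictly decreasing, which completes (i). The only potential obstacle was the sign of the log factor's derivative, and this computation settles it, so no further difficulty is expected.
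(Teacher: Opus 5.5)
Your proof is correct and follows essentially the same route as the paper: setting $y^{(1)}=0$ pins down $\psi_B(y)$ uniquely, and the monotonicity properties follow from the explicit formula. For (i), the paper writes out $\frac{\partial}{\partial B}\psi_B(y)$, whose two negative terms are exactly the product-rule contributions of your two positive, strictly decreasing factors; you additionally verify existence by substituting back for a general pair $y^{(1)}<y^{(2)}$ and justify the continuous extension, both of which the paper leaves implicit.
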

As an illustration of 
Lemma~\ref{Paper03_lemma_correspondence_value_branching_phase} 
we plot $\psi_B$ for some different values of $B$ in 
Figure~\ref{Paper03_figure_different_corresponding_functios_branching}. 
Observe that property~(i) of 
Lemma~\ref{Paper03_lemma_correspondence_value_branching_phase} reflects 
our expectation that, by the end of the branching phase, in the case of 
constant environment, there should be a lower proportion of mutant individuals 
in the case of a seed bank than in the case of a neutral mutation that
is not associated with a seed bank. 

The next step will be to compare the diffusion approximation for the
mutation with seed bank started from $\psi_B(y)$ to that of a neutral 
mutation without seed bank started from $y$.

\begin{figure}[t]
\centering
\includegraphics[width=0.5\linewidth,trim=0cm 0cm 0cm 0cm,clip=true]{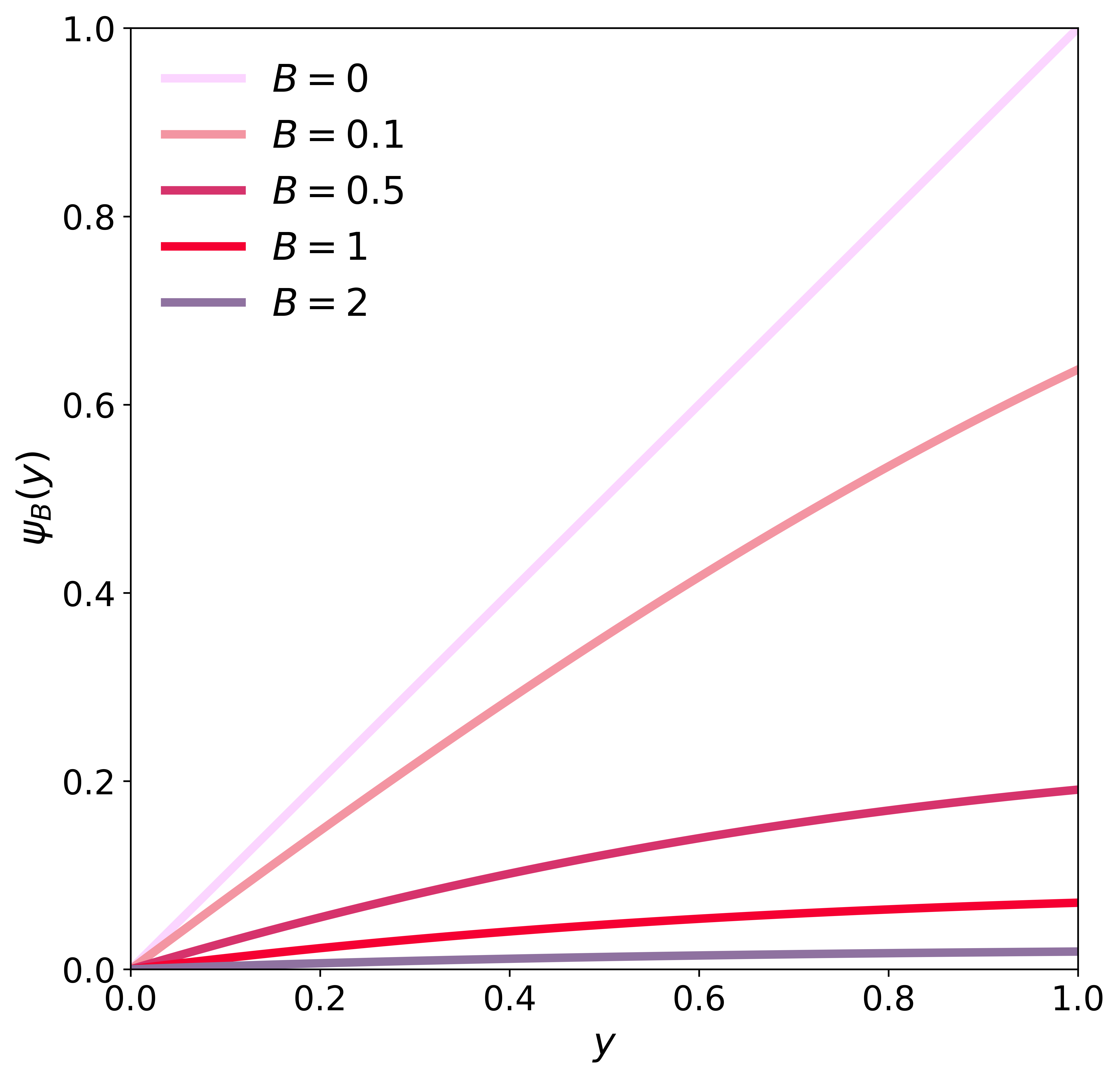}
\caption{For $B \in \{0, 0.1, 0.5, 1, 2\}$, we 
plot the function $\psi_B$ that determines, for sufficiently
small $y$, the initial condition for our diffusion approximation with 
seed bank that we argue in the main text provides an appropriate
comparison to the case
without seed bank started from initial condition $y$.
%For a full explanation see the main text.  
Observe that, as expected 
(c.f.~Lemma~\ref{Paper03_lemma_correspondence_value_branching_phase}), 
$\psi_B$ is a decreasing function of the mean germination time $B$. 
%Biologically, this means that by the end of the branching phase, we expect a lower proportion of mature mutant individuals carrying the seed bank bank trait when compared to the end of the branching phase of a mutant neutral allele with no seed banks.}
}
\label{Paper03_figure_different_corresponding_functios_branching}
\end{figure}

\subsubsection{Diffusion phase in constant environment} 
\label{Paper03_diffusion_phase_constant_environment}

Suppose now that mutant individuals survive the branching phase to become 
a non-negligible fraction of the mature population. For the next phase, 
one expects that for sufficiently large $N$ the dynamics of the population 
will be well approximated by an SDE. Recall that for $N \in \mathbb{N}$, 
$i \in \llbracket K \rrbracket_0$ and $t \in \mathbb{N}_0$, $X^{N}_{i}(t)$ 
indicates the number of mutant individuals in generation $t - i$. Then, 
following the usual scaling in Wright-Fisher models, we define for any 
$N \in \mathbb{N}$ and $t \in \mathbb{R}_{+}$, the scaled process
\begin{equation} 
\label{Paper03_scaled_process_WF_model}
\boldsymbol{x}^{N}(t) \defeq \frac{\boldsymbol{X}^{N}(\lfloor Nt \rfloor)}{N},
\end{equation}
where $\boldsymbol{x}^{N}=(x^{N}_{0}, x^{N}_{1}, \ldots, x^{N}_{K})$.

For the usual Wright-Fisher model with two or more genetic types, the 
weak convergence of the scaled process to the Wright-Fisher diffusion as 
$N \rightarrow \infty$ is classically proved by directly showing the 
convergence of the action of the infinitesimal generator of the scaled 
discrete process to the action of the SDE generator on twice continuously 
differentiable functions (see~\cite[Theorem~10.1.1]{ethier2009markov}). 
Unfortunately, this approach is not applicable to the analysis of the process 
$(\boldsymbol{x}^{N}(t))_{t \geq 0}$. In order to understand why, we compute 
the expected increment in the number of mutant individuals over a single
generation.

Let $\{\mathcal{F}^N_{t}\}_{t \geq 0}$ be the natural filtration of the process
$(\boldsymbol{x}^N(t))_{t \geq 0}$. Then, 
by~\eqref{Paper03_scaled_process_WF_model} 
and~\eqref{Paper03_binomial_distribution_WF_constant_environment}, 
for any $t \in \mathbb{N}_{0}$, conditioning on the information up to 
time $t$, we have
\begin{eqnarray} 
\mathbb{E}\left[x^{N}_{0}\left(t + \tfrac{1}{N}\right) - x^{N}_{0}(t) 
\Big\vert \mathcal{F}^N_t\right] 
& = &\frac{\sum_{i = 0}^{K} b_{i}X^{N}_{i}(\lfloor Nt\rfloor)}{(N - X^{N}_{0}
(\lfloor Nt \rfloor)) 
+ \sum_{i = 0}^{K} b_{i}X^{N}_{i}(\lfloor Nt \rfloor)} - x^{N}_{0}(t) 
\nonumber
\\[2ex] 
& = &\frac{\sum_{i = 0}^{K} b_{i}x^{N}_{i}(t)}{(1 - x^{N}_{0}(t)) 
+ \sum_{i = 0}^{K} b_{i}x^{N}_{i}(t)} - x^{N}_{0}(t) 
\nonumber
\\[2ex] 
& = &\frac{(1 - x^{N}_{0}(t))\left(\sum_{i = 1}^{K} b_{i}x^{N}_{i}(t) 
- (1 - b_{0})x^{N}_{0}(t)\right)}{(1 - x^{N}_{0}(t)) 
+ \sum_{i = 0}^{K} b_{i}x^{N}_{i}(t)}.
\label{Paper03_source_ODE_first_coordinate}
\end{eqnarray}
Moreover, for $i \in \llbracket K \rrbracket$,  
from~\eqref{Paper03_scaled_process_WF_model}
\begin{equation} \label{Paper03_source_ODE_other_coordinates}
x^{N}_{i}\left(t + \tfrac{1}{N}\right) = x^{N}_{i-1}(t).
\end{equation}
In contrast to the classical Wright-Fisher model, for which the corresponding
expected increment would be $\mathcal{O}(N^{-1})$, when taking the limit as 
$N \rightarrow \infty$, it is not even immediate that the right-hand side 
of~\eqref{Paper03_source_ODE_first_coordinate} converges to $0$. 
Nevertheless, if $(\boldsymbol{x}^{N}(t))_{t \geq 0}$ does converge 
weakly to a $(K+1)$-dimensional diffusion process 
$(\boldsymbol{x}(t))_{t \geq 0}$ on $[0,1]^{K+1}$ 
as $N \rightarrow \infty$, then since $K$ is finite, we do not expect to see 
the change in the number of mutant individuals over~$K$ consecutive 
generations in the limit. 
%In other words, if $\left((\boldsymbol{x}^{N}(t))_{t \geq 0}\right)_{N \in \mathbb{N}}$ weakly converges to a $(K+1)$-dimensional diffusion process 
That is, we expect that
\begin{equation*}
x_{0}(t) \equiv x_{1}(t) \equiv \cdots \equiv x_{K}(t).
\end{equation*}
Also observe that expressions~\eqref{Paper03_source_ODE_first_coordinate} 
and~\eqref{Paper03_source_ODE_other_coordinates} vanish when 
$x^{N}_{0} = x^{N}_{1} = \cdots = x^{N}_{K}$.

To prove convergence to a diffusion in our setting we follow a method 
introduced by Katzenberger in~\cite{katzenberger1991solutions}. We shall 
discuss Katzenberger's results in more detail in 
Section~\ref{Paper03_subsection_katzenberger_overview}, so here we restrict 
ourselves to a heuristic explanation of his approach. 
We view the stochastic process $\boldsymbol{x}^{N}$ as a noise-driven process 
that fluctuates around a deterministic dynamical system given 
by~\eqref{Paper03_source_ODE_first_coordinate} 
and~\eqref{Paper03_source_ODE_other_coordinates}. 
Consider then the vector-valued map 
$\mathbf{F} = (F_{i})_{i = 0}^{K}: [0,1]^{K+1} \rightarrow \mathbb{R}^{K+1}$ 
given for all $\boldsymbol{x} \in [0,1]^{K+1}$ by
\begin{equation}
\label{Paper03_flow_definition_WF_model}
    \left\{
    \arraycolsep=1.3pt\def\arraystretch{2.2}
    \begin{array}{cl} F_{0}(\boldsymbol{x}) 
& \defeq \displaystyle{\frac{(1 - x_{0})
\left(\sum_{i = 1}^{K} b_{i}x_{i}- (1 - b_{0})x_{0}\right)}{(1 - x_{0}) 
+ \sum_{i = 0}^{K} b_{i}x_{i}}}, \\
    F_{i}(\boldsymbol{x}) 
& \defeq x_{i - 1} - x_{i} \quad \forall \, i \in \llbracket K \rrbracket.
    \end{array}\right.
\end{equation}
The component $F_{0}$ corresponds to the expected difference between the number 
of mutant individuals in consecutive generations, i.e.~to 
Equation~\eqref{Paper03_source_ODE_first_coordinate}, while for 
$i \in \llbracket K \rrbracket$, $F_i$~corresponds to the ageing of seeds, 
i.e.~to Equation~\eqref{Paper03_source_ODE_other_coordinates}. Let
\begin{equation} 
\label{Paper03_ODE_describing_expected_increments_number_mutants}
    \frac{d \boldsymbol{x}}{dt} = \mathbf{F}(\boldsymbol{x}),
\end{equation}
and note that $\boldsymbol{x}(0) \in [0,1]^{K+1}$ implies 
$\boldsymbol{x}(t) \in [0,1]^{K+1}$ for all $t \geq 0$. 
On the hypercube $[0,1]^{K+1}$, the map $\mathbf{F}$ only vanishes on the 
diagonal~$\Gamma = \left\{\boldsymbol{x} = (x, \ldots, x) \vert \; 
x \in [0,1]\right\}$. 

We now consider the semimartingale decomposition of $\boldsymbol{x}^{N}$, 
i.e.~for $T \geq 0$, we write
\begin{equation} 
\label{Paper03_semimartingale_formulation_discrete_process_first_coordenate}
\begin{aligned}
    x^{N}_{0}(T) = x^{N}_{0}(0) + M^{N}_{0}(T) + \int_{0}^{T} F_{0}(\boldsymbol{x}^{N}) \, d \lfloor Nt \rfloor,
\end{aligned}
\end{equation}
where the martingale $M^{N}_{0}$ arises from the binomial sampling determining 
the number of mature mutant individuals. Conditioned on 
$\boldsymbol{x}^{N}(t)$, $x^{N}_{i}\left(t + \frac{1}{N}\right)$ is 
deterministic for $i \in \llbracket K \rrbracket$, so the only noise term 
arising from the dynamics is the one due to the sampling of current generation 
and for $i \in \llbracket K \rrbracket$,
\begin{equation} 
\label{Paper03_semimartingale_formulation_discrete_process_other_coordenates}
\begin{aligned}
    x^{N}_{i}(T) = x^{N}_{i}(0) + \int_{0}^{T} F_{i}(\boldsymbol{x}^{N}) \, d \lfloor Nt \rfloor.
\end{aligned}
\end{equation}

Since, by construction, $\boldsymbol{x}^{N}$ is accelerated in time, we expect 
the weak limit of the sequence 
$\left((\boldsymbol{x}^{N}(t))_{t \geq 0}\right)_{N \in \mathbb{N}}$ to take 
values in the diagonal and so we focus on identifying the first component. 
First, for any $\boldsymbol{x} \in [0,1]^{K+1}$, we define the flow
\begin{equation*}
\begin{aligned}
    \boldsymbol{\phi}(\boldsymbol{x}, \cdot): & \, [0, +\infty) \rightarrow [0,1]^{K+1} \\
    & \, T \mapsto \boldsymbol{\phi}(\boldsymbol{x}, T) = \boldsymbol{x} + \int_{0}^{T} \mathbf{F}(\boldsymbol{\phi}(\boldsymbol{x}, t)) \, dt.
\end{aligned}
\end{equation*}
We show in Lemma~\ref{Paper03_eigenvalue_condition_constant_environment} that 
solutions to the 
ODE~\eqref{Paper03_ODE_describing_expected_increments_number_mutants} 
asymptotically converge to the diagonal $\Gamma$, and therefore for any 
$\boldsymbol{x} \in [0,1]^{K+1}$, we can define the map
\begin{equation} 
\label{Paper03_definition_projection_map}
\begin{aligned}
    \boldsymbol{\Phi}: & \, [0,1]^{K+1} \rightarrow \Gamma \\
    & \, \boldsymbol{x} \mapsto \boldsymbol{\Phi}(\boldsymbol{x}) 
= \lim_{T \rightarrow \infty} \boldsymbol{\phi}(\boldsymbol{x},T).
\end{aligned}
\end{equation}
Notice that since, for any $t>0$,
$\boldsymbol{\Phi}(\boldsymbol{x}(t))=\boldsymbol{\Phi}(\boldsymbol{x}(0))$,
an application of the chain rule gives
\begin{equation}
\label{Paper03_definition_projection_mapA}
    \sum_{i = 0}^{K} 
\frac{\partial \Phi_{0}}{\partial x_{i}}F_{i}\left(\boldsymbol{x}\right) = 0 
\qquad\quad \forall \boldsymbol{x} \in [0,1]^{K+1}.
\end{equation}
Using this observation and 
applying Itô's formula to $\Phi_{0}(\boldsymbol{x}^{N})$, we obtain, 
for all $T \geq 0$,
\begin{equation} 
\label{Paper03_Ito_formula_discrete_constant_environment}
\begin{aligned}
    & \Phi_{0}(\boldsymbol{x}^{N}(T)) - \Phi_{0}(\boldsymbol{x}^{N}(0)) \\ 
& \quad \quad = \int_{0}^{T} \frac{\partial \Phi_{0}}{\partial x_{0}} 
\, dM^{N}_{0}(t) + \frac{1}{2 }\int_{0}^{T} 
\frac{\partial^{2}\Phi_{0}}{\partial x_{0}^{2}} d\left[M^{N}_{0}\right](t) 
\, + \int_{0}^{T} \sum_{i = 0}^{K} 
\frac{\partial \Phi_{0}}{\partial x_{i}}F_{i}(\boldsymbol{x}^{N}) 
\, d\lfloor Nt \rfloor  
+ \epsilon^{N}(T) \\ 
& \quad \quad = \int_{0}^{T} \frac{\partial \Phi_{0}}{\partial x_{0}} \, dM^{N}_{0}(t) 
+ \frac{1}{2 }\int_{0}^{T} \frac{\partial^{2}\Phi_{0}}{\partial x_{0}^{2}} 
d\left[M^{N}_{0}\right](t) \, + \epsilon^{N}(T),
\end{aligned}
\end{equation}
where $\epsilon^{N}(T)$ captures the error arising from the jumps. 

As usual for Wright-Fisher models, the limiting martingale $M_0$ will be such 
that 
\[d\langle M_0 \rangle(t)  = x_0(1- x_0) \, dt,
\]
and the error term $\epsilon^N \rightarrow 0$ as $N \rightarrow \infty$. 
Our next result, which will be proved in 
Section~\ref{Paper03_subsection_diffusion_WF_model_constant_environment}, 
makes this rigorous.

\begin{theorem} 
\label{Paper03_weak_convergence_diffusion_constant_environment}
Suppose that $(\boldsymbol{x}^{N}(0))$ converges to 
$(\boldsymbol{x}(0)) \in \Gamma$ as $N \rightarrow \infty$. Then the sequence 
of processes 
$\left((\boldsymbol{x}^{N}(t))_{t \geq 0}\right)_{N \in \mathbb{N}}$ 
converges in distribution in the space of càdlàg functions 
$\mathscr{D}\left([0, \infty), [0,1]^{K+1} \right)$ to a diffusion process 
$(\boldsymbol{x}(t))_{t \geq 0}$ with sample paths in the diagonal $\Gamma$, 
and such that $(x_{0}(t))_{t \geq 0}$ satisfies the stochastic differential 
equation
\begin{equation} 
\label{Paper03_SDE_WF_constant_environment}
x_{0}(T)= \; x_{0}(0) + \frac{1}{2} \int_{0}^{T} 
\left(x_{0}(1 - x_{0}) 
\frac{\partial^{2}\Phi_{0}}{\partial x_{0}^{2}}(\boldsymbol{x})\right) \, dt 
+ \int_{0}^{T} \frac{\sqrt{x_{0}(1 - x_{0})}}{(B(1 - x_{0}) + 1)} \, dW_{0}(t),
\end{equation}
where $W_{0}$ is a standard Brownian motion, $B$ is the mean germination time 
given by~\eqref{Paper03_mean_age_germination} and $\boldsymbol{\Phi}$ is the 
projection map defined in~\eqref{Paper03_definition_projection_map}.
\end{theorem}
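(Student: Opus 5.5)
The plan is to apply Katzenberger's theorem on semimartingales forced onto a manifold by a large drift~\cite{katzenberger1991solutions}, in the form used by~\cite{parsons2017dimension}.

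\textbf{Step 1: Katzenberger form.} Write $\boldsymbol{x}^N$ as in~\eqref{Paper03_semimartingale_formulation_discrete_process_first_coordenate}--\eqref{Paper03_semimartingale_formulation_discrete_process_other_coordenates}, namely
\[
\boldsymbol{x}^N(T)=\boldsymbol{x}^N(0)+\mathbf{M}^N(T)+\int_0^T \mathbf{F}(\boldsymbol{x}^N)\,dA^N,
\]
where $A^N(t)=\lfloor Nt\rfloor$ and $\mathbf{M}^N=(M^N_0,0,\dots,0)$. Katzenberger's hypotheses then reduce to four checks.

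(a) The jumps of $M^N_0$ are at most $1/N$. Its predictable bracket is
\[
\langle M_0^N\rangle(T)=\frac{1}{N}\sum_{t<\lfloor NT\rfloor}p^N_t(1-p^N_t),
\]
where $p^N_t$ is the binomial parameter in~\eqref{Paper03_binomial_distribution_WF_constant_environment}. This gives uniform control of the total variation and quadratic variation, hence relative compactness.

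(b) $A^N$ is nondecreasing, and its increments over any interval of positive length diverge.

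(c) $\mathbf{F}$ is $C^2$ near $\Gamma$ and vanishes on $\Gamma$. This needs $b_0>0$, which keeps the denominator of $F_0$ bounded away from $0$ on $[0,1]^{K+1}$.

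(d) $\Gamma$ is asymptotically stable, and at every point of $\Gamma$ the Jacobian $D\mathbf{F}$ has $K$ eigenvalues with strictly negative real part, transverse to $\Gamma$. This is Lemma~\ref{Paper03_eigenvalue_condition_constant_environment}, which I take as given.

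Two technical points remain. Since $A^N$ is a step process, I will use the discrete-time version of Katzenberger's result, whose condition is that the one-step map $\boldsymbol{x}\mapsto \boldsymbol{x}+\mathbf{F}(\boldsymbol{x})$ contracts onto $\Gamma$. Its linearisation is the companion-type matrix $I+D\mathbf{F}$, whose nonunit eigenvalues lie strictly inside the unit disc because $b_0>0$ and $\sum b_i=1$. I will also localise with stopping times so that everything takes place in a compact neighbourhood of $\Gamma$ inside the hypercube.

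\textbf{Step 2: the limit.} Katzenberger's theorem says that $\boldsymbol{x}^N-\boldsymbol{\phi}(\boldsymbol{x}^N(0),A^N)+\boldsymbol{\Phi}(\boldsymbol{x}^N(0))\Rightarrow\boldsymbol{x}$. Because $\boldsymbol{x}^N(0)\to\boldsymbol{x}(0)\in\Gamma$, the initial layer is trivial. The limit lives on $\Gamma$ and satisfies the SDE coming from Itô's formula for $\boldsymbol{\Phi}$, exactly as in~\eqref{Paper03_Ito_formula_discrete_constant_environment}:
\[
dx_0=\partial_{x_0}\Phi_0\,dM_0+\tfrac12\,\partial^2_{x_0}\Phi_0\,d\langle M_0\rangle .
\]
The drift term involving $\mathbf{F}$ drops out by~\eqref{Paper03_definition_projection_mapA}. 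On $\Gamma$ we have $p^N_t\to x_0$, so $d\langle M_0\rangle=x_0(1-x_0)\,dt$. This accounts for both the drift and the noise in~\eqref{Paper03_SDE_WF_constant_environment}.

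\textbf{Step 3: the gradient of $\boldsymbol{\Phi}$ on $\Gamma$ (main obstacle).} It remains to show
\[
\partial_{x_0}\Phi_0(\boldsymbol{x})=\frac{1}{B(1-x_0)+1}\qquad\text{on }\Gamma.
\]
On $\Gamma$, the row vector $\nabla\Phi_0$ is the left null vector $\boldsymbol{\ell}$ of $D\mathbf{F}$, normalised by $\langle \boldsymbol{\ell},\mathbf{1}\rangle=1$. The normalisation holds because $\Phi_0$ is the identity along $\Gamma$. To compute $D\mathbf{F}$ at $(x,\dots,x)$: the denominator of $F_0$ equals $1$ there, so
\[
\partial_{x_0}F_0=-(1-x)(1-b_0),\qquad \partial_{x_i}F_0=(1-x)b_i \ \ (i\ge1),
\]
and the rows $i\ge1$ are $\mathbf{e}_{i-1}-\mathbf{e}_i$. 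Solving $\boldsymbol{\ell}D\mathbf{F}=0$ from the last coordinate upward gives
\[
\ell_i=\ell_0(1-x)\sum_{j\ge i}b_j\quad (i\ge1).
\]
Then
\[
\sum_{i}\ell_i=\ell_0\Bigl(1+(1-x)\sum_{i\ge1}\sum_{j\ge i}b_j\Bigr)=\ell_0\bigl(1+(1-x)B\bigr),
\]
so the normalisation yields the claimed formula.

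The justification that $\nabla\boldsymbol{\Phi}$ on $\Gamma$ is the spectral projection onto $\ker D\mathbf{F}$ along its range is standard, or is given by Katzenberger. The genuinely delicate part is the discrete-time Katzenberger verification and the boundary localisation. For this I would first try to stop the process before it leaves a neighbourhood $U$ of $\Gamma$ on which $\mathbf{F}$ is smooth, and then show that the stopping times diverge in probability.
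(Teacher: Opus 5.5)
Your route is the paper's. You write $\boldsymbol{x}^N$ in Katzenberger form with $\mathbf{z}^N=(M_0^N,0,\dots,0)$ integrated against $d\lfloor Nt\rfloor$. You then use Lemma~\ref{Paper03_eigenvalue_condition_constant_environment}: its $D(1)$ condition is exactly your ``one-step map contracts onto $\Gamma$'' condition, so Katzenberger's theorem as stated already covers the step process and no separate discrete-time version is needed. Finally you compute $\partial_{x_0}\Phi_0$ on $\Gamma$ as the normalised left null vector of $\mathcal{J}\mathbf{F}_{\boldsymbol{x}}$. That is the Parsons--Rogers formula~\eqref{Paper03_formula_Parsons_Rogers_first_order_derivatives} with $\boldsymbol{\gamma}'=\boldsymbol{1}$, and your recursion $\ell_i=\ell_0(1-x)\sum_{j\ge i}b_j$ reproduces~\eqref{Paper03_left_eigenvector_jacobian}. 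Your localisation by stopping times is more careful than the paper, which simply takes all of $[0,1]^{K+1}$ to lie in the basin of attraction.

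The step that is wrong is (a). The jump of $M_0^N$ at time $(t+1)/N$ is
$\frac1N\sum_{j=1}^N(\zeta_j-p^N_t)=x_0^N(\tfrac{t+1}{N})-p^N_t$.
Its conditional variance is $p^N_t(1-p^N_t)/N$, so it is of order $N^{-1/2}$, and the only deterministic bound is $1$. Jumps of size $1/N$ belong to a Moran scaling, which has $N^2$ steps per unit time, not $N$. If the jumps really were at most $1/N$, you would get $[M_0^N](T)\le\lfloor NT\rfloor/N^2\to0$ and the limit martingale would vanish. That contradicts both your own formula for $\langle M_0^N\rangle$ and the noise term you derive in Step 2.

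What you actually need is what the paper proves in Lemma~\ref{Paper03_martingales_WF_model_constant_env}:
\begin{itemize}
\item From $\langle M_0^N\rangle(T)\le T/4$ and jumps bounded by $1$, the BDG inequality~\eqref{Paper03_BDG_inequality_cadlag_version} with $p=4$ gives uniform integrability of $(M_0^N(T))^2$ and of $[M_0^N](T)$ (Lemma~\ref{Paper03_auxiliary_lemma_uniform_integrability_martingales}).
\item You also need $\sup_{t\le T}|M_0^N(t)-M_0^N(t-)|\to0$ in probability, which follows from Hoeffding's inequality and a union bound over the $\lfloor NT\rfloor$ steps. This is what makes the limit continuous.
\end{itemize}
The uniform integrability of $(M_0^N(T))^2$ is also what turns your heuristic ``$p^N_t\to x_0$, so $d\langle M_0\rangle=x_0(1-x_0)\,dt$'' into a proof. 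It lets you pass the martingale property of $Q^N=(M_0^N)^2-\langle M_0^N\rangle$ to the limit, as the paper does with~\eqref{Paper03_quadratic_variation_prelimit_characterisation}. Once (a) is replaced by these bounds, the rest of your argument goes through.
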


A major challenge in analysing the SDE of 
Theorem~\ref{Paper03_weak_convergence_diffusion_constant_environment} is the 
need to compute explicitly the first and second order derivatives of the 
projection map~$\boldsymbol{\Phi}$. If the underlying ODE admits an explicit 
solution, this computation can be carried out directly. However, nonlinear 
ODEs rarely admit explicit solutions, and in our case we have only been
able to solve~\eqref{Paper03_flow_definition_WF_model} explicitly (via the 
method of characteristics) when $K = 1$ 
(see Section~\ref{Paper03_explicit_solution_small_K}). 
To overcome this problem, 
in~\cite{parsons2017dimension}
Parsons and Rogers derive a method of computing the first and second 
order derivatives of $\boldsymbol{\Phi}$ by analysing the underlying 
dynamical system, as well as the linear centre manifold and the curvature of 
the nonlinear stable manifold around~$\Gamma$. %~\cite{parsons2017dimension}. 
We will discuss their approach in detail in 
Section~\ref{Paper03_subsection_computing_derivatives}. By replicating their 
method, we are able to compute the first order derivatives, for all 
$K \in \mathbb{N}$,
\begin{equation} 
\label{Paper03_first_order_derivatives_projection}
\frac{\partial \Phi_{0}}{\partial x_{0}} (\boldsymbol{x}) 
= \frac{1}{(B(1-x_{0})+1)} \qquad\quad \forall \boldsymbol{x} \in \Gamma,
\end{equation}
leading to the martingale term in~\eqref{Paper03_SDE_WF_constant_environment}.
Their formula for second order derivatives requires the computation of the 
curvature of the flow field around~$\Gamma$. To compute this curvature, they 
suggest computing the eigenvalues and eigenvectors of the Jacobian of the 
flow field $\mathbf{F}$ at $\Gamma$, and then performing a change of basis 
to study the behaviour of the dynamical system in the linearly stable 
directions. In our case, this approach only works for small values of 
$K \in \mathbb{N}$, since it requires explicit expressions for the roots of 
a polynomial of degree $K$.

In Section~\ref{Paper03_general_result_derivatives_manifold} we will avoid 
this problem by directly projecting the dynamical system onto its linearly 
stable manifold around $\Gamma$. This method, usually known as the 
Lyapunov-Schmidt reduction in the dynamical systems literature 
(see~\cite[Section~I.3]{golubitsky2012singularities}), is very useful 
for studying the behaviour of singularities in finite and infinite 
dynamical systems. As we explain in 
Section~\ref{Paper03_general_result_derivatives_manifold}, instead of the 
more usual application to classify bifurcations, we use this approach
to understand the behaviour of the non-linear stable manifold, 
i.e.~to study how the flow behaves near~$\Gamma$. By following this programme, 
in Section~\ref{Paper03_general_result_derivatives_manifold} 
we obtain the curvature as the unique solution of a linear system 
(see Theorem~\ref{Paper03_prop_uniqueness_quadratic_term_nonlinear_manifold}).

We can then compute explicitly the value of 
$\frac{\partial^{2} \Phi_{0}}{\partial x_{0}^{2}}$ for any fixed 
$K \in \mathbb{N}$. For instance, for $K = 1$, i.e.~when seeds are
dormant for at most one generation, we have for any 
$\boldsymbol{x} \in \Gamma$,
\begin{equation} 
\label{Paper03_drift_K_1}
\frac{\partial^{2} \Phi_{0}^{(1)}}{\partial x_{0}^{2}}(\boldsymbol{x}) 
= \frac{(1-b_{0})(2 - (1-b_{0})^{2}(1-x_{0})^{2})}{((1-b_{0})(1-x_{0}) + 1)^{3}} > 0, \qquad\; \forall x_{0} \, \in [0,1],
\end{equation}
where we have used the superscript~$(1)$ to emphasise that this formula 
is valid when $K = 1$. For $K = 2$, we obtain
\begin{equation} 
\label{Paper03_drift_K_2}
\begin{aligned}
  &\frac{\partial^{2} \Phi_{0}^{(2)}}{\partial x_{0}^{2}}(\boldsymbol{x}) \\ 
& \quad = \frac{(1-x_{0})\Big[B(1-b_{0})(1 - (1-b_{0})(1-x_{0}))
(B(1-x_{0})+2) + b_{2}\left(2b_{1}+3b_{2}\right)\Big] 
+ B(4 - B^{2})}{(B(1-x_{0})+1)^{3}((1-b_{0})(1-x_{0})+2)},
\end{aligned}
\end{equation}
which is also strictly positive since $x_{0} \in [0,1]$, $b_{0} \in [0,1]$ 
and the mean germination time satisfies 
$B = b_{1} + 2b_{2} \leq 2(1-b_{0}) \leq 2$.

In these cases, we see from~\eqref{Paper03_drift_K_1} 
and~\eqref{Paper03_drift_K_2} that the SDE in 
Theorem~\ref{Paper03_weak_convergence_diffusion_constant_environment} 
has a positive drift, indicating that in this phase, conferring a seed bank
gives mutants an advantage over the wild type, in contrast to a neutral 
mutation not associated with a seed bank. Recall however, that in order to
compare the outcome of this phase to that in the case without a seed bank, 
we must compare the diffusion determined by
Equation~\eqref{Paper03_SDE_WF_constant_environment}
started from $\psi_B(y)$ to the 
neutral Wright-Fisher diffusion started from $y$ (for small $y$)
and $\psi_B(y)<y$. A natural 
question is whether there exists $K\in \mathbb{N}$ such that the drift in
the diffusion with seed bank is sufficiently positive 
to compensate for this disadvantage.

In order to answer this question, we could aim to find a general formula for 
the second order derivative $\frac{\partial^{2} \Phi_{0}}{\partial x_{0}^{2}}$ 
which holds for any $K \in \mathbb{N}$. 
Since in nature the majority of dormant seeds do not last more than a few 
generations (see~\cite[Section~7.IV]{baskin1998seeds}), and we obtain a 
linear  system that could be used to compute an explicit expression for any 
fixed $K$, it initially seems tractable to do this at least
for some biologically
relevant examples. 
However, 
as we already illustrated in Equation~\eqref{Paper03_drift_K_2}, the 
expressions rapidly become complicated and are not obviously informative
even for small values of $K$. 

To overcome this problem, we shall find an explicit bound on 
$\frac{\partial^{2} \Phi_{0}}{\partial x_{0}^{2}}$ that will hold for any 
$K \in \mathbb{N}$ and allow us to bound the fixation probability of the 
solution to~\eqref{Paper03_SDE_WF_constant_environment}. To better understand 
the heuristics behind our next results, we perform a Taylor expansion 
of the first coordinate $F_{0}$ of the flow field 
$\mathbf{F}$ in~\eqref{Paper03_flow_definition_WF_model} near the diagonal 
$\Gamma$. For sufficiently small $\varepsilon > 0$, and 
$\boldsymbol{x}$ within distance $\varepsilon$ of $\Gamma$,
\begin{equation} \label{Paper03_taylor_expansion_original_flow}
\begin{aligned}
    F_{0}(\boldsymbol{x}) & = \frac{(1 - x_{0})
\left(\sum_{i = 1}^{K} b_{i}x_{i}- (1 - b_{0})x_{0}\right)}{\Big((1 - x_{0}) 
+ \sum_{i = 0}^{K} b_{i}x_{i}\Big)} \\ 
&  =  \frac{(1 - x_{0})\left(\sum_{i = 1}^{K} b_{i}x_{i}- (1 - b_{0})x_{0}
\right)}{1 + \left(\sum_{i = 1}^{K} b_{i}x_{i}- (1 - b_{0})x_{0}\right)} \\ 
& \approx (1 - x_{0})\Bigg(\left(\sum_{i = 1}^{K} b_{i}x_{i}- (1 - b_{0})x_{0}
\right) - \left(\sum_{i = 1}^{K} b_{i}x_{i}- (1 - b_{0})x_{0}\right)^{2} 
+ \mathcal{O}(\varepsilon^{3}) \Bigg).
\end{aligned}
\end{equation}
Ignoring the quadratic and higher order terms that appear in the Taylor 
expansion of $F_{0}$, we can then define a flow field 
$\mathbf{G}: [0,1]^{K+1} \rightarrow \mathbb{R}^{K+1}$ by
\begin{equation}
\label{Paper03_modified_flow_definition_WF_model}
    \left\{
    \arraycolsep=1.4pt\def\arraystretch{2.2}
    \begin{array}{cl} G_{0}(\boldsymbol{x}) & \defeq (1 - x_{0})\left(\sum_{i = 1}^{K} b_{i}x_{i}- (1 - b_{0})x_{0}\right), \\
    G_{i}(\boldsymbol{x}) & \defeq x_{i - 1} - x_{i} \quad \forall \, i \in \llbracket K \rrbracket.
    \end{array}\right.
\end{equation}
Note that if solutions to the ODE with flow field $\mathbf{F}$ converge to 
$\Gamma$, then so too do the solutions to the ODE with flow field given by 
$\mathbf{G}$. Let $\boldsymbol{\Phi}^{G}$ be the associated projection map, 
defined in a way analogous to the map~$\boldsymbol{\Phi}$ 
in~\eqref{Paper03_definition_projection_map}. 
On $\Gamma$, 
\[
\frac{\partial \Phi_0}{\partial x_0}
= \frac{\partial \Phi^{G}_0}{\partial x_0}.
\]
Since the quadratic term that 
appears within the bracket on the right hand side of the
last line in~\eqref{Paper03_taylor_expansion_original_flow} is negative, 
close to $\Gamma$, $F_0(\boldsymbol{x})<G_0(\boldsymbol{x})$, and so we expect
the drift in the flow corresponding to $\mathbf{F}$ to be dominated
by that in the flow corresponding to $\mathbf{G}$. Comparing the 
corresponding expressions in the 
SDE~\eqref{Paper03_SDE_WF_constant_environment}, this suggests that
\begin{equation*}
    \frac{\partial^{2} \Phi_{0}}{\partial x_{0}^{2}}(\boldsymbol{x}) 
\leq \frac{\partial^{2} \Phi^{G}_{0}}{\partial x_{0}^{2}}(\boldsymbol{x}) 
\qquad\quad \forall \, \boldsymbol{x} \in \Gamma \textrm{ and } 
\forall \, K \in \mathbb{N}.
\end{equation*}
In Section~\ref{Paper03_derivatives_specific_ode}, we find an explicit 
expression for $\frac{\partial^{2} \Phi^{G}_{0}}{\partial x_{0}^{2}}$ and 
analyse the eigenvalues of the solution of a matrix Lyapunov equation in
order to confirm our intuition and establish our next proposition. 
Recall the definition of the mean germination time $B$ 
from~\eqref{Paper03_mean_age_germination}. 
\begin{proposition} 
\label{Paper03_bound_drift_general_K_proposition}
For any $K \in \mathbb{N}$ and 
$\mathbf{b} = (b_{i})_{i \in \llbracket K \rrbracket_{0}}$ 
satisfying~\eqref{Paper03_assumption_probability_distribution_bi}, 
the second order derivative $\frac{\partial^{2} \Phi_{0}}{\partial x_{0}^{2}}$ 
of the projection map~$\boldsymbol{\Phi}$ satisfies the following conditions.
    \begin{enumerate}[(i)]
        \item The map 
$x \in [0,1] \mapsto \displaystyle 
\frac{\partial^{2}\Phi_{0}}{\partial x_{0}^{2}}(x, x, \cdots, x)$ 
is strictly increasing.
        \item For any $\boldsymbol{x} \in \Gamma$, we have
         \begin{equation*}
          \frac{\partial^{2} \Phi_{0}}{\partial x_{0}^{2}}(\boldsymbol{x}) 
\leq \frac{B(B(1-x_{0})+2)}{(B(1-x_{0})+1)^{3}}.
    \end{equation*}
    \item For $\boldsymbol{1} \defeq (1,1, \cdots,1)$, we have 
$\displaystyle 
\frac{\partial^{2} \Phi_{0}}{\partial x_{0}^{2}}(\boldsymbol{1}) = 2B$.
    \end{enumerate}   
\end{proposition}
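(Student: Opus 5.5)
We compute $\partial^2\Phi_0/\partial x_0^2$ on $\Gamma$ with the Parsons--Rogers/Katzenberger framework (Section~\ref{Paper03_subsection_computing_derivatives}), together with the linear-system characterisation of the quadratic term of the stable manifold (Theorem~\ref{Paper03_prop_uniqueness_quadratic_term_nonlinear_manifold}).

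\textbf{Linear data at a point of $\Gamma$.} Fix $\boldsymbol{x}=(x,\dots,x)\in\Gamma$ and write $\mathbf{J}=D\mathbf{F}(\boldsymbol{x})$. The first row of $\mathbf{J}$ is $(1-x)(-(1-b_0),b_1,\dots,b_K)$. The remaining rows are the shift $\mathbf{e}_i^\top(\mathbf{e}_{i-1}-\mathbf{e}_i)$. The right null vector is $\boldsymbol{1}$. The left null vector $\boldsymbol{\ell}$ is obtained by back-substitution:
\begin{equation*}
\ell_0=1,\qquad \ell_i=(1-x)\sum_{j\ge i}b_j \quad (i\in\llbracket K\rrbracket).
\end{equation*}
After normalising so that $\langle\boldsymbol{\ell},\boldsymbol{1}\rangle=1$, this gives $\partial\Phi_0/\partial x_j=\ell_j/(1+B(1-x))$, consistent with~\eqref{Paper03_first_order_derivatives_projection}. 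The Parsons--Rogers formula then reads
\begin{equation*}
\partial^2_{x_0}\Phi_0=\sum_{j,k}\frac{\ell_j}{1+B(1-x)}\,\partial_{jk}F\ldots,
\end{equation*}
more precisely $\tfrac{1}{\langle\boldsymbol\ell,\boldsymbol 1\rangle}\big(\langle\boldsymbol\ell, D^2\mathbf F[\boldsymbol v,\boldsymbol v]\rangle$-type terms$\big)$ contracted with the solution $\mathbf{Q}$ of a Lyapunov-type equation $\mathbf{J}\mathbf{Q}+\mathbf{Q}\mathbf{J}^\top=-\mathbf{P}\mathbf{e}_0\mathbf{e}_0^\top\mathbf{P}^\top$. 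Here $\mathbf{P}$ is the projection onto the stable subspace along $\boldsymbol{1}$.

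Since only $F_0$ is nonlinear, the second derivative reduces to
\begin{equation*}
\frac{\partial^2\Phi_0}{\partial x_0^2}=\frac{1}{1+B(1-x)}\,\tr\!\big(\Hess F_0(\boldsymbol x)\,\mathbf Q\big).
\end{equation*}

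\textbf{Splitting $F_0$ and the bound (ii).} Write $u=\sum_{i\ge1}b_ix_i-(1-b_0)x_0=\langle\boldsymbol a,\boldsymbol x\rangle$, so that $F_0=(1-x_0)u/(1+u)$. On $\Gamma$ we have $u=0$, hence
\begin{equation*}
\Hess F_0=\Hess G_0-2(1-x)\,\boldsymbol a\boldsymbol a^\top,\qquad \Hess G_0=-(\mathbf e_0\boldsymbol a^\top+\boldsymbol a\mathbf e_0^\top).
\end{equation*}
The correction term contributes $-2(1-x)\,\boldsymbol a^\top\mathbf Q\boldsymbol a\le0$. The key point is that $\mathbf Q$ is positive semidefinite, because it solves a Lyapunov equation with stable $\mathbf J$ restricted to the stable subspace and a PSD right-hand side; the eigenvalue condition of Lemma~\ref{Paper03_eigenvalue_condition_constant_environment} supplies the stability. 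This yields $\partial^2\Phi_0\le\partial^2\Phi_0^G$.

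It remains to compute $\partial^2\Phi_0^G=-\tfrac{2}{1+B(1-x)}\boldsymbol a^\top\mathbf Q\mathbf e_0$ explicitly. The plan is to avoid diagonalising $\mathbf J$. Instead we use the shift structure: the Lyapunov equation gives the recursion $Q_{ij}=Q_{i-1,j-1}$ (up to projection corrections), so $\mathbf Q$ is essentially Toeplitz. The needed linear functional $\boldsymbol a^\top\mathbf Q\mathbf e_0$ can then be extracted by pairing the Lyapunov equation with $\boldsymbol\ell$ and $\boldsymbol 1$. Concretely, applying $\boldsymbol\ell^\top(\cdot)\mathbf e_0$ and summation identities should express $\boldsymbol a^\top\mathbf Q\mathbf e_0$ in terms of $B$ and $x$ only, giving $B(B(1-x)+2)/(B(1-x)+1)^3$. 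Before trusting the general computation, we will check it against~\eqref{Paper03_drift_K_1} and the $G$-version for $K=1$.

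\textbf{Endpoint (iii) and monotonicity (i).} At $x=1$ the first row of $\mathbf J$ vanishes, and $\boldsymbol\ell=\mathbf e_0$. The $\boldsymbol{a}\boldsymbol{a}^\top$ correction carries the factor $(1-x)=0$, so $\partial^2\Phi_0=\partial^2\Phi^G_0$, which by (ii) evaluated at $x=1$ equals $2B$. Here the inequality in (ii) is an equality, giving (iii). For (i), we write the exact value as the $G$-part minus $\tfrac{2(1-x)}{1+B(1-x)}\boldsymbol a^\top\mathbf Q\boldsymbol a$. The $G$-part is explicit and increasing in $x$, since $B(B y+2)/(By+1)^3$ is decreasing in $y=1-x$. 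We then need to show that the correction, which depends on $x$ through $\mathbf J$ and $\mathbf Q$, decreases in magnitude as $x\uparrow1$. This is the main obstacle. Our first attempt will be to differentiate the Lyapunov equation in $x$: since $\partial_x\mathbf J=-\mathbf e_0\boldsymbol a^\top$ is rank one, the derivative $\partial_x\mathbf Q$ solves another Lyapunov equation. We will try to sign it via the eigenvalues of $\mathbf Q$, as the paper hints. If that fails, we will parametrise everything by $y=1-x$ and show directly that the total is a rational function in $y$ with a decreasing numerator/denominator structure, as in~\eqref{Paper03_drift_K_2}.
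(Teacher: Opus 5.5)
Your dual formulation is a legitimate alternative to the paper's. Instead of solving for the curvature matrix $\boldsymbol\Theta$, you pair with $\mathbf Q=\int_0^\infty e^{\mathbf Jt}\mathbf p\mathbf p^\top e^{\mathbf J^\top t}\,dt\succeq0$, where $\mathbf p=\mathcal P_s\mathbf e_0=\mathbf e_0-v_0\boldsymbol 1$ and $\boldsymbol v=\boldsymbol\ell/\langle\boldsymbol\ell,\boldsymbol 1\rangle$. Your sign argument $\tr(\Delta\mathbf Q)=-\boldsymbol a^\top\mathbf Q\boldsymbol a\le0$ is just the transpose of the paper's $\boldsymbol\Theta^\Delta\succeq0$.

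\textbf{The reduction formula drops a term.} On $\Gamma$, $S=\Hess\Phi_0$ satisfies $S\mathbf J+\mathbf J^\top S=-v_0\Hess F_0$. Pairing with $\mathbf Q$ therefore gives $\mathbf p^\top S\mathbf p=v_0\tr(\Hess F_0\,\mathbf Q)$, and this is not $S_{00}$. Differentiating $\nabla\Phi_0=\boldsymbol v$ along $\Gamma$ gives $S\boldsymbol 1=\boldsymbol v'$, and $\langle\boldsymbol v',\boldsymbol 1\rangle=0$. Hence $S_{00}=2v_0v_0'+v_0\tr(\Hess F_0\,\mathbf Q)$, where $2v_0v_0'=2B/(B(1-x)+1)^3$ is the $v_i'$ term in \eqref{Paper03_formula_Parsons_Rogers_second_order_derivatives}.

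\textbf{Consequences for (ii) and (iii).} The $(0,0)$ entry of your Lyapunov equation gives $2(1-x)\boldsymbol a^\top\mathbf Q\mathbf e_0=-(1-v_0)^2$. So $-\tfrac{2}{1+B(1-x)}\boldsymbol a^\top\mathbf Q\mathbf e_0=B^2(1-x)/(B(1-x)+1)^3$, not the value $B(B(1-x)+2)/(B(1-x)+1)^3$ you set out to prove; your $K=1$ check would have exposed this. At $x=1$ your formula returns $0$, because $p_0=0$ and the first row of $\mathbf J$ vanishes, so $\mathbf Q\mathbf e_0=0$. Thus (iii) comes entirely from the dropped term. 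Once $2v_0v_0'$ is restored, the comparison with $\mathbf G$ is unaffected. That single $(0,0)$ identity then gives (ii) and (iii) at once, which is shorter than the paper's guess-and-verify of the whole matrix $\boldsymbol\Theta^G$ in Lemma~\ref{Paper03_explicit_formula_theta_G_component}.

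\textbf{Part (i).} The proposal gives no argument, and the route you intend cannot work, because the correction $\tfrac{2(1-x)}{1+B(1-x)}\boldsymbol a^\top\mathbf Q\boldsymbol a$ is not monotone in general. This is exactly the paper's $\theta^\Delta_{00}$. Take $K=2$, $b_1=0$ and $b_0\downarrow0$ (hence, by continuity, small $b_0>0$), and write $y=1-x$. A direct computation gives $\theta^\Delta_{00}=y(5+2y)/\bigl((1+2y)^2(2+y)\bigr)$. This increases from $7/27$ at $x=0$ to $3/10$ at $x=1/2$.

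The paper asserts this monotonicity. It orders the right-hand sides $\tfrac{1-x}{B(1-x)+1}\Delta$ in the Loewner order and invokes the order reversal of Theorem~\ref{Paper03_explicit_formula_solution_lyapunov_equation}. That comparison, however, holds $\mathcal J\mathbf F_{\boldsymbol x}$ fixed, which is precisely the obstacle you identified, so you cannot borrow it.

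In the same example the total is $(8+7y-8y^2-4y^3)/\bigl((1+2y)^3(2+y)\bigr)$, and this is still decreasing in $y$. It agrees with \eqref{Paper03_drift_K_2} once the constant $B(4-B^2)$ there is read as $B(4-B^2(1-x_0)^2)$, which (iii) forces. So (i) may well be true, but it has to be proved for the sum, not by monotonicity of the two pieces separately.
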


Recall that we are going to approximate the probability of fixation of the
mutation by the probability that the 
diffusion~\eqref{Paper03_SDE_WF_constant_environment} 
hits $1$ before it hits $0$, and that we defined the map
$y\mapsto \psi_b(y)$ of 
Lemma~\ref{Paper03_lemma_correspondence_value_branching_phase}
in order to make a meaningful comparison to
the fixation probability of a neutral mutation that does not confer the 
seed bank property.
%and that 
%we take as initial condition $x_0(0)=\psi_B(y)$ for some $y\in (0,1)$.
%in order to compare to 
%a neutral mutation that does not confer the seed bank property. 
%. 
%We will approximate the overall probability of fixation by that of the diffusion phase. Recall that for any mean germination time $B \geq 0$, the map $\psi_B: [0, 1] \rightarrow [0, \infty)$ defined in Lemma~\ref{Paper03_lemma_correspondence_value_branching_phase} is such that for sufficiently small $\varepsilon \in (0,1)$, the probability that by the end of the branching phase the mutant with seed banks reaches a proportion $\psi_B(y)$ of the total population, for some $y \in (0, \varepsilon]$, is close to the probability that a neutral allele without the seed bank trait reaches a proportion $y$ of the total population by the end of its corresponding branching phase. To compare the fixation probability of the mutant with the seed bank trait with the fixation probability of a neutral allele that does not carry the seed bank trait, we will start the diffusion phase from the initial condition~$x_{0}(0) = \psi_B(y)$, for some $y \in (0,1)$. We emphasize that this choice of initial condition is illustrative as we are ignoring the ‘lag' phase that separates the branching phase from the diffusive or extinction of the mutant type. 

Let $(x_{0}(t))_{t \geq 0}$ be the solution of 
SDE~\eqref{Paper03_SDE_WF_constant_environment}. 
For any $a \in [0,1]$, we define the stopping time
\begin{equation} 
\label{Paper03_stopping_time_border}
    T_{a} \defeq \inf\left\{t \geq 0: \; x_{0}(t) = a\right\}.
\end{equation}
We seek the probability that
%Then, the mutant with the seed bank property fixes in the population if and 
%only if 
$T_{1} < T_{0}$ when $x_{0}(0)=\psi_B(y)$. 
To introduce our result, we define the map 
%\textcolor{red}{I am not convinced that the notation $\Psi$ is great - on a 
%poor printout it looks a lot like $\psi$.}
$\Psi: [0, \infty) \times [0, 1) \rightarrow [0,1]$ given by
\begin{equation} 
\label{Paper03_identity_fixation_probability_bound}
    \Psi(B,y) \defeq  1 - e^{-B\psi_B(y)} + \psi_B(y)e^{-B\psi_B(y)}.
\end{equation}

\begin{theorem} \label{Paper03_bound_fixation_probability_any_K}
    For any $y \in (0,1)$, the map $B\in [0, \infty) \mapsto \Psi(B,y)$ is strictly decreasing, and it satisfies the following identities:
    \begin{equation*}
        \lim_{B \rightarrow 0} \Psi(B,y) = y \quad \textrm{and} 
\quad \lim_{B \rightarrow \infty} \Psi(B,y) = 0.
    \end{equation*}
Moreover, for any $y \in [0,1)$, $K \in \mathbb{N}$ and 
$\mathbf{b} \in [0,1]^{K+1}$ 
satisfying~\eqref{Paper03_assumption_probability_distribution_bi},
we have
    \begin{equation*}
        \mathbb{P}\left(T_{1} < T_{0} \Big\vert \, x_{0}(0) = \psi_B(y)\right) 
\leq \Psi(B,y).
    \end{equation*}
\end{theorem}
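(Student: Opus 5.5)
The plan is to compute the fixation probability of the one-dimensional diffusion~\eqref{Paper03_SDE_WF_constant_environment} through its scale function, and then use Proposition~\ref{Paper03_bound_drift_general_K_proposition}(ii) to compare it with an explicitly integrable scale function. Write $x=x_0$. The drift is $\mu(x)=\tfrac12 x(1-x)\,\partial^2_{x_0}\Phi_0$ and the squared diffusion coefficient is $\sigma^2(x)=x(1-x)/(B(1-x)+1)^2$. Hence
\begin{equation*}
\frac{2\mu(x)}{\sigma^2(x)} = (B(1-x)+1)^2\,\frac{\partial^2\Phi_0}{\partial x_0^2}(x,\ldots,x) \eqqcolon h(x),
\end{equation*}
which is continuous on $[0,1]$. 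For $0<x_0(0)<1$ we have the classical formula
\begin{equation*}
\mathbb{P}(T_1<T_0 \mid x_0(0)=a)=\frac{\int_0^a s'(x)\,dx}{\int_0^1 s'(x)\,dx}, \qquad s'(x)=\exp\Big(-\int_0^x h(z)\,dz\Big).
\end{equation*}
Here one checks that $0$ and $1$ are accessible boundaries. This is standard, since near the endpoints the process is locally a time-changed Wright--Fisher diffusion with bounded drift.

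Proposition~\ref{Paper03_bound_drift_general_K_proposition}(ii) gives
\begin{equation*}
h(z)\le g(z)\coloneqq \frac{B(B(1-z)+2)}{B(1-z)+1}=B+\frac{B}{B(1-z)+1}.
\end{equation*}
Integrating, $\int_0^x g = Bx+\log\frac{B+1}{B(1-x)+1}$. So the comparison scale density is
\begin{equation*}
\tilde s'(x)=\frac{e^{-Bx}(B(1-x)+1)}{B+1}.
\end{equation*}
Now write $s'=\tilde s'\cdot w$ with $w(x)=\exp\big(\int_0^x (g-h)\big)$, which is nondecreasing. The ratio $\int_0^a s'/\int_0^1 s'$ is the $\tilde s'$-weighted fraction of mass in $[0,a]$. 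Reweighting by a nondecreasing $w$ moves mass to the right, so this fraction is at most $\int_0^a\tilde s'/\int_0^1\tilde s'$. This is a Chebyshev-type inequality: $\int_0^a \tilde s' w\int_a^1\tilde s' \le w(a)\int_0^a\tilde s'\int_a^1\tilde s'\le \int_0^a\tilde s'\int_a^1\tilde s' w$. Since $\frac{d}{dx}\big[-(1-x)e^{-Bx}\big]=e^{-Bx}(B(1-x)+1)$, we get $\int_0^a \tilde s'\propto 1-(1-a)e^{-Ba}$, with total mass proportional to $1$. Taking $a=\psi_B(y)$, which lies in $[0,1)$ by Lemma~\ref{Paper03_lemma_correspondence_value_branching_phase}, yields exactly $\Psi(B,y)=1-(1-\psi_B(y))e^{-B\psi_B(y)}$. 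This proves the inequality.

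For the limits I would use Lemma~\ref{Paper03_lemma_correspondence_value_branching_phase}. As $B\to0$, $\psi_B(y)\to\tfrac12\log e^{2y}=y$, so $\Psi\to 1-(1-y)=y$. As $B\to\infty$, $\log\frac{B+1}{B+e^{-2y}}\sim (1-e^{-2y})/B$, so $\psi_B(y)=O(B^{-3})$ and $B\psi_B(y)\to0$, giving $\Psi\to0$.

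The main obstacle is strict monotonicity in $B$. It is equivalent to showing that $L(B)\coloneqq -\log(1-\psi_B)+B\psi_B$ is strictly decreasing. Part (i) of Lemma~\ref{Paper03_lemma_correspondence_value_branching_phase} handles the first term, but $B\psi_B$ need not be monotone for small $B$. I would first write $u=B+1$ and $c=1-e^{-2y}$, so that $\psi=\frac{1}{2u^2}\log\frac{u}{u-c}$, and differentiate explicitly:
\begin{equation*}
L'(B)=\partial_B\psi\left(\frac{1}{1-\psi}+B\right)+\psi.
\end{equation*}
The goal is to show $-\partial_B\psi\,(1+B)>\psi$, using $1/(1-\psi)\ge1$. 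We have $-\partial_B\psi=\frac{1}{u^3}\log\frac{u}{u-c}+\frac{1}{2u^2}\big(\frac{1}{u-c}-\frac1u\big)$, so $-\partial_B\psi\cdot u\ge 2\psi+(\text{positive})>\psi$. This should close the argument, with the elementary inequality $\log\frac{u}{u-c}>0$ as the only input. If a sign slips somewhere, the fallback is to bound $\frac{1}{1-\psi}$ more carefully using $\psi_B(y)\le y<1$.
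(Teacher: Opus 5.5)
Your proof is correct. It arrives at the same comparison scale function $S(v)=\frac{1}{B+1}\bigl(1-e^{-Bv}+ve^{-Bv}\bigr)$ as the paper, but takes a different route at two points.

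For the fixation bound, the paper introduces an auxiliary SDE $\tilde x_0$ whose drift is the upper bound from Proposition~\ref{Paper03_bound_drift_general_K_proposition}(ii). It then asserts $\mathbb{P}(T_1<T_0)\le\mathbb{P}(\widetilde T_1<\widetilde T_0)$ without justification; this is implicitly a pathwise comparison theorem for SDEs with a common H\"older-$\tfrac12$ diffusion coefficient. Finally it computes the scale function of $\tilde x_0$. You instead stay with the true diffusion, write its scale density as $\tilde s'w$ with $w$ nondecreasing, and prove the inequality directly by the Chebyshev-type rearrangement. This is self-contained and needs no comparison or pathwise-uniqueness machinery. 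The paper's version is more intuitive as a ``dominating process'' picture, but it leaves that step unproved.

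For strict monotonicity, the paper differentiates $\Psi$ directly and reduces the sign question to an auxiliary inequality. You instead observe that $\Psi=1-e^{-L}$ with $L=-\log(1-\psi_B)+B\psi_B$, and use the exact identity $-(B+1)\,\partial_B\psi_B=2\psi_B+\frac{c}{2u^2(u-c)}$. This is shorter and more transparent. It also sidesteps the paper's displayed formula for $\partial_B\Psi$, in which the terms $\psi_B^2$ and $B\psi_B\,\partial_B\psi_B$ carry the wrong sign; they should enter as $-\psi_B^2-B\psi_B\,\partial_B\psi_B$. The conclusion $\partial_B\Psi<0$ survives the correction.

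Your $B\to\infty$ limit is also more careful than the paper's. You note explicitly that $B\psi_B(y)=O(B^{-2})\to 0$, which is needed because $S$ itself depends on $B$; the paper simply invokes continuity of $S$.
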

Recalling our specification of $\psi_B(y)$, and that for the neutral 
Wright-Fisher diffusion started from $y$ the probability of fixation is
exactly $y$, 
Theorem~\ref{Paper03_bound_fixation_probability_any_K} indicates that in a 
constant environment, the fixation probability of a mutation conferring 
a seed bank is lower than that of a neutral allele with no seed bank. 
In Figure~\ref{Paper03_figure_K_2} we compare the estimate $\Psi(B,y)$ 
to the actual probability of fixation for the case $K=2$. 
Although the SDE~\eqref{Paper03_SDE_WF_constant_environment} has a positive 
drift, it is insufficient to compensate for the lower proportion of mutant 
individuals at the end of the branching phase.
The proof of 
Theorem~\ref{Paper03_bound_fixation_probability_any_K} is
in Section~\ref{Paper03_subsection_diffusion_WF_model_constant_environment}. 

In view of the prevalence of seed banks in nature, it is natural to 
investigate the impact of a varying environment. 
This will be the theme of the next two sections.

\begin{figure}[t]
\centering
\includegraphics[width=\linewidth,trim=0cm 0cm 0cm 0cm,clip=true]{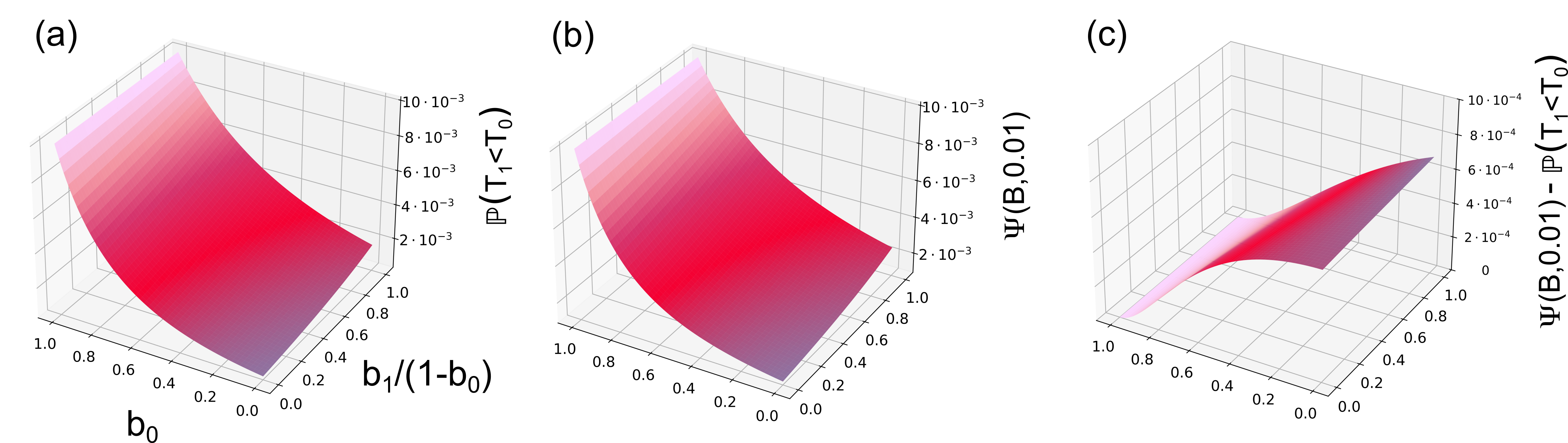}
\caption{
Comparison of the fixation probability $\mathbb P(T_1 < T_0 )$ of the 
SDE~\eqref{Paper03_SDE_WF_constant_environment} in~(a) with the upper
bound provided by the  
map $\Psi(B,0.01)$ in~(b) for the case $K = 2$, as a function of the 
parameters $b_0$ and $b_1/(1 - b_0)$. The fixation probability  was 
computed via the scale function of the the SDE using the
formula~\eqref{Paper03_drift_K_2}, and starting the diffusion phase 
from $x_0 = \psi_B(0.01)$, while $\Psi(B,0.01)$ was computed using the
identity~\eqref{Paper03_identity_fixation_probability_bound}.
Plot~(c) shows the difference $\Psi(B,0.01) - \mathbb P(T_1 < T_0)$. Note that the vertical axis in (c) has a different scale from those in (a) and (b).}
\label{Paper03_figure_K_2}
\end{figure}

\subsection{Wright-Fisher model in a slowly changing environment} 
\label{Paper03_subsection_WF_slow_environment}

We are going to suppose that as a result of changes in the environment,
population size is changing. 
We  modify the model introduced in 
Section~\ref{Paper03_WF_model_const_environ_description} 
to incorporate a population size that changes 
on the timescale of evolution.
For $N \in \mathbb{N}$ and $t \in \mathbb{N}_{0}$, the total number of 
individuals living at time $t$ will be given by 
$\lfloor \Xi^{N}(t)N \rfloor$, where 
$(\Xi^{N}(t))_{t \in \mathbb{N}_{0}}$ is a random process 
taking values in $\left[\xi_{\min}, \xi_{\max}\right]$, and 
$\xi_{\min}$ and $\xi_{\max}$ are strictly positive real numbers. 
We shall choose the dynamics of $(\Xi^N(t))$ in such a way 
that under 
our scaling of time, it will converge to the solution of an SDE.
Roughly speaking, $(\Xi^{N}(t))_{t \in \mathbb{N}_{0}}$ 
is supposed to capture changes in the environment that happen on the 
timescale of evolution, such as slow changes in the climate, and host-parasite 
interactions~\cite{verin2018host, davis2005evolutionary}. 

Let $M > 0$ be 
some large parameter. Informally, the population will evolve as follows.  
Each wild type individual in generation $t$ contributes $\Poiss (M)$ seeds, 
and for $i\in \llbracket K \rrbracket_0$ each mutant individual in 
generation $t - i$ contributes 
$\Poiss (Mb_i)$ seeds
to a pool that is ready to germinate in the current generation.
To determine the genetic composition of generation $t + 1 > K$, we 
first sample $\Xi^{N}(t+1)$, and then 
sample $\lfloor\Xi^{N}(t+1)N \rfloor$ seeds from the pool 
to germinate and grow into mature plants.

%
%In contrast to the model of Section~\ref{Paper03_WF_model_const_environ_description}, the size of the population of mature plants changes across time. 
We must also describe the evolution of the process 
$(\Xi^{N}(t))_{t \in \mathbb{N}_{0}}$. We assume that the composition of 
the population in generation $t$ does not affect the total population size 
in generation $t+1$. Moreover, since we are trying to model changes in 
population size that happen on the timescale of evolution, we assume that 
(after proper rescaling of time) as $N \rightarrow \infty$, 
$(\Xi^N)_{N \in \mathbb{N}}$ converges weakly to a diffusion.

\begin{assumption}[Diffusion limit of the slowly changing environment] 
\label{Paper03_diffusion_limit_slow_environment}
There exists a random variable $\xi_{0}$ taking values in 
$[\xi_{\min}, \xi_{\max}]$ such that $\Xi^{N}(0)$ converges weakly to 
$\xi_{0}$ as $N \rightarrow \infty$. Moreover, there exist $\delta > 0$ and 
continuous functions 
$\alpha, \eta: [\xi_{\min}, \xi_{\max}] \rightarrow \mathbb{R}$ satisfying
    \begin{enumerate}[(i)]
        \item $\alpha(\xi_{\min}) \geq 0$ and $\alpha(\xi_{\max}) \leq 0$,
        \item $\eta(\xi_{\min}) = \eta(\xi_{\max}) = 0$,
    \end{enumerate}
    such that for every $N \in \mathbb{N}$, there exists functions 
$w^N_1,w^N_2,w^N_3: [\xi_{\min}, \xi_{\max}] \rightarrow \mathbb{R}$ 
such that 
    \begin{equation*}
        \sup_{\xi \in [\xi_{\min},\xi_{\max}]} 
\; \left(\vert w^N_1(\xi) \vert \vee \vert w^N_2(\xi) \vert \right) 
= \mathcal{O}(N^{-1-\delta}), \qquad %\textrm{and} \quad 
\sup_{\xi \in [\xi_{\min}, \xi_{\max}]} \, \vert w^N_3(\xi) \vert = \mathcal{O}(N^{-3/2}),
        \end{equation*}
    and for all $t \in \mathbb{N}_{0}$ and $\xi \in [\xi_{\min}, \xi_{\max}]$, 
the process $\Xi^{N}$ satisfies the following identities:
    \begin{equation*}
    \begin{aligned}
    \mathbb{E}\left[\Xi^{N}(t + 1) - \Xi^{N}(t) \, \vert \, \Xi^{N}(t) 
= \xi\right] & = \frac{\alpha(\xi)}{N} + w^N_1(\xi), \\
\mathbb{E}\left[(\Xi^{N}(t + 1) - \Xi^{N}(t))^{2} \, \vert \, \Xi^{N}(t) 
= \xi\right] & = \frac{\eta^{2}(\xi)}{N} + w^N_2(\xi), \\ 
\textrm{and} 
\quad \mathbb{E}\left[\vert\Xi^{N}(t + 1) - \Xi^{N}(t)\vert^{3} \, 
\vert \, \Xi^{N}(t) = \xi\right] & = w^N_3(\xi).
    \end{aligned}
    \end{equation*}
\end{assumption}

\begin{remark}
%In Assumption~\ref{Paper03_diffusion_limit_slow_environment}, we suppose that
%the process 
By assuming that $(\Xi^N(t))_{t \in \mathbb{N}_0}$ is uniformly bounded away 
from $0$, we avoid the possibility of dramatic 
bottlenecks (see e.g.~\cite{pra2025multi} for a Wright-Fisher model with 
bottlenecks). We suppose $(\Xi^N(t))_{t \in \mathbb{N}_0}$ is uniformly 
bounded from above to avoid explosions in population size. In particular, 
Assumption~\ref{Paper03_diffusion_limit_slow_environment} implies that 
$(\Xi^N)_{N \in \mathbb{N}}$ converges (after time rescaling) to a diffusion 
on $[\xi_{\min}, \xi_{\max}]$
(that may be absorbed at either endpoint).
%either at $\xi_{\min}$ or $\xi_{\max}$.
\end{remark}

Our formal description of the evolution of 
$(\boldsymbol{X}^{N}(t), \Xi^{N}(t))_{t \in \mathbb{N}_{0}}$ mirrors
that in the case of a constant population size.
As before, $X^{N}_{i}(t)$ will denote the number of mutant individuals in 
generation~$t - i$ for $i \in \llbracket K \rrbracket_0$. 
We define the Markov process 
$(\boldsymbol{X}^{N}(t), \Xi^{N}(t))_{t \in \mathbb{N}_{0}}$
as follows. 
For any $t \in \mathbb{N}_{0}$, and for any given configuration 
$(\boldsymbol{X}^{N}(t), \Xi^{N}(t))$: 
\begin{enumerate}[(i)]
    \item The conditional distribution of $\Xi^{N}(t+1)$ given 
$(\boldsymbol{X}^{N}(t), \Xi^{N}(t))$ is the same as the 
conditional distribution of $\Xi^{N}(t+1)$ given $\Xi^{N}(t)$, 
and it satisfies Assumption~\ref{Paper03_diffusion_limit_slow_environment};
    \item The conditional distribution of $X^{N}_{0}(t + 1)$ given 
$(\boldsymbol{X}^{N}(t), \Xi^{N}(t), \Xi^{N}(t + 1))$ 
is given by
    \begin{equation} 
\label{Paper03_binomial_distribution_slowly_changing_environment}
        \textrm{Bin}\left(\lfloor \Xi^{N}(t+1)N \rfloor, \, 
\frac{\sum_{i = 0}^{K}b_{i}X^{N}_{i}(t)}{(\lfloor \Xi^{N}(t)N 
\rfloor - X^{N}_{0}(t)) 
+ \sum_{i = 0}^{K}b_{i}X^{N}_{i}(t)}\right).
    \end{equation}
\end{enumerate}
In~\eqref{Paper03_binomial_distribution_slowly_changing_environment}, 
$\lfloor \Xi^{N}(t+1)N \rfloor$ is the number of mature individuals in 
generation $t+1$, 
%THIS IS WRONG$\sum_{i=0}^{K} b_i X_i^N(t)$ is the proportion of 
%germinating seeds 
%produced by mutants, 
and $\lfloor \Xi^{N}(t)N \rfloor - X_0^N(t)$ is
the number of mature wild type individuals in generation $t$.
%, or, 
%equivalently, the proportion of germinating seeds produced by the wild type. 
For each $N \in \mathbb{N}$, we consider the scaled process 
$(\boldsymbol{x}^{N}(t), \xi^{N}(t))_{t \geq 0}$ given by
\begin{equation} 
\label{Paper03_rescaling_processes_slowly_changing_environment}
    \boldsymbol{x}^{N}(t) \defeq 
\frac{\boldsymbol{X}^{N}(\lfloor Nt \rfloor)}{N}, \quad 
\textrm{ and } \quad \xi^{N}(t) \defeq \Xi^{N}(\lfloor Nt \rfloor).
\end{equation}
We will determine the behaviour of the sequence of processes 
$(\boldsymbol{x}^{N}, \xi^{N})_{N \in \mathbb{N}}$ as $N \rightarrow \infty$. 
Note that by Assumption~\ref{Paper03_diffusion_limit_slow_environment} and 
classical arguments (e.g.~\cite[Theorem~10.1.1]{ethier2009markov}), the 
sequence of processes $(\xi^{N})_{N \in \mathbb{N}}$ converges in 
distribution in 
$\mathscr{D}\left(\mathbb{R}_{+}, [\xi_{\min}, \xi_{\max}]\right)$ 
to the solution $(\xi(t))_{t \geq 0}$ of the SDE
\begin{equation} 
\label{Paper03_SDE_slow_environment}
    d\xi = \alpha(\xi) \, dt + \eta(\xi) \, dW_{\textrm{env}}(t),
\end{equation}
where $(W_{\textrm{env}}(t))_{t \geq 0}$ is a standard Brownian motion. 
We would then expect that the sequence of stochastic processes 
$(\boldsymbol{x}^{N}, \xi^{N})_{N \in \mathbb{N}}$ should 
converge as $N \rightarrow \infty$ to a coupled system of SDEs. 
Before stating a formal result, we briefly explain some of the heuristics. 
Let $\{\mathcal{F}^N_t\}_{t \geq 0}$ be the natural filtration of the process 
$(\boldsymbol{x}^N(t), \xi^N(t))_{t \geq 0}$. Observe that by the dynamics 
of $(\boldsymbol{x}^{N}, \xi^{N})_{t \geq 0}$ and 
Assumption~\ref{Paper03_diffusion_limit_slow_environment}, we have, 
for all $N \in \mathbb{N}$ and $t \in [0, + \infty)$,
\begin{equation} 
\label{Paper03_source_ODE_slow_environment_env_coordinate}
\begin{aligned}
    \mathbb{E}\left[\xi^{N}\left(t + \tfrac{1}{N}\right)- \xi^{N}(t) 
\Big\vert \mathcal{F}^N_t\right] = \frac{\alpha(\xi^{N}(t))}{N} 
+ \mathcal{O}(N^{-1-\delta}).
\end{aligned}
\end{equation}
The expected increment in the number of mutants living in the 
mature population is given by
\begin{equation} 
\label{Paper03_source_ODE_slow_environment_first_coordinate}
\begin{aligned}
    & \mathbb{E}\left[x^{N}_{0}\left(t + \tfrac{1}{N}\right) - x^{N}_{0}(t) 
\Big\vert \, \sigma\left(\mathcal{F}^N_t, \xi^{N}\left(t + \tfrac{1}{N}\right) 
\right) \right]  \\[2ex] 
& \quad  = \frac{\xi^{N}\left(t + \frac{1}{N}\right)\sum_{i = 0}^{K} 
b_{i}x^{N}_{i}(t)}{\left(\xi^{N}(t) - x^{N}_{0}(t)\right) 
+ \sum_{i = 0}^{K} b_{i}x^{N}_{i}(t)} - x^{N}_{0}(t) \\[2ex] 
& \quad = \frac{\left[\xi^{N}\left(t + \frac{1}{N}\right)- \xi^{N}(t)\right]
\sum_{i = 0}^{K} b_{i}x^{N}_{i}(t)}{\left(\xi^{N}(t) - x^{N}_{0}(t)\right) 
+ \sum_{i = 0}^{K} b_{i}x^{N}_{i}(t)} 
+ \frac{\left(\xi^{N}(t) - x^{N}_{0}(t)\right)
\left(\sum_{i = 1}^{K} b_{i}x^{N}_{i}(t) - (1 - b_{0})x^{N}_{0}(t)\right)}
{\left(\xi^{N}(t) - x^{N}_{0}(t)\right) + \sum_{i = 0}^{K} b_{i}x^{N}_{i}(t)},
\end{aligned}
\end{equation}
while $x^{N}_{i}$ is given 
by~\eqref{Paper03_source_ODE_other_coordinates} for 
$i \in \llbracket K \rrbracket$. 

As in the case of constant environment, we are going to see a separation of 
timescales. The stochastic process $\boldsymbol{x}^N(t)$
will fluctuate around the deterministic flow determined by 
the ODE arising as a limit 
of~\eqref{Paper03_source_ODE_slow_environment_first_coordinate} with
time accelerated by the factor $N$.
Comparing to~\eqref{Paper03_source_ODE_first_coordinate}, 
the expression~\eqref{Paper03_source_ODE_slow_environment_first_coordinate}
has an extra term, corresponding to the increment in the population size over a 
single generation. To handle this we will need a more general
version of Katzenberger's Theorem.
Equation~\eqref{Paper03_source_ODE_slow_environment_env_coordinate} %, which 
tells us that this increment %in the population size over a single 
%generation 
is order $N^{-1}$, and looking ahead 
to~\eqref{Paper03_stochastic_process_general_discrete_N}
we see that this can 
be incorporated into the `well-behaved' semimartingale $z^N$ in
Katzenberger's method, and
%allows us to infer that in the fast timescale 
%the increment in population size over a single generation will be neglible.
%As a result,
only the second term on the right-hand side 
of~\eqref{Paper03_source_ODE_slow_environment_first_coordinate} should be 
included in the (accelerated) ODE. 
In other words, 
with a slowly changing environment, we should consider the flow field 
$\mathbf{F}^{\textrm{sl,env}} 
= (F^{\textrm{sl,env}}_0, \cdots, 
F^{\textrm{sl,env}}_K, F^{\textrm{sl,env}}_\textrm{env}) : 
[0, \xi_{\max}]^{K+1} \times [\xi_{\min}, \xi_{\max}] \rightarrow \mathbb{R}^{K+2}$ 
given, for all 
$(\boldsymbol{x}, \xi) \in [0, \xi_{\max}]^{K+1} 
\times [\xi_{\min}, \xi_{\max}]$ with $x_{0} \leq \xi$, by
\begin{equation}
\label{Paper03_flow_definition_WF_model_slowly_changing_environment}
    \left\{
    \arraycolsep=1.4pt\def\arraystretch{2.2}
    \begin{array}{cl} 
    F^{\textrm{sl,env}}_{0}( \boldsymbol{x}, \xi) 
& \defeq \displaystyle{\frac{(\xi - x_{0})
\left(\sum_{i = 1}^{K} b_{i}x_{i}- (1 - b_{0})x_{0}\right)}{(\xi - x_{0}) 
+ \sum_{i = 0}^{K} b_{i}x_{i}}}, \\
    F^{\textrm{sl,env}}_{i}(\boldsymbol{x}, \xi) & \defeq x_{i - 1} - x_{i} 
\quad \forall \, i \in \llbracket K \rrbracket, \\ 
F^{\textrm{sl,env}}_{\textrm{env}}(\boldsymbol{x}, \xi) & \defeq 0.
    \end{array}\right.
\end{equation}
Comparing~\eqref{Paper03_flow_definition_WF_model_slowly_changing_environment} 
to the flow field 
$\mathbf{F} = (F_i)_{i \in \llbracket K \rrbracket_0}: [0,1]^{K+1} 
\rightarrow \mathbb{R}$ 
given by~\eqref{Paper03_flow_definition_WF_model}, 
which defines the ODE arising from the Wright-Fisher model with a constant 
population size, 
we observe 
that for any $i \in \llbracket K \rrbracket_{0}$,
\begin{equation} 
\label{Paper03_scaling_time_change_slow_environment}
    F^{\textrm{sl,env}}_{i}(\xi, \boldsymbol{x}) = \xi F_{i}\left(\frac{\boldsymbol{x}}{\xi}\right),
\end{equation}
and, therefore, when viewed as acting on $\boldsymbol{x}/\xi$,
the flow given 
in~\eqref{Paper03_flow_definition_WF_model_slowly_changing_environment} is 
a time change of that in the constant environment case.
Recall that $\Gamma$ is the diagonal of the $(K+1)$-dimensional hypercube, 
and the attractor manifold associated to the ODE with flow given by 
$\mathbf{F}$. For any $\xi \in [\xi_{\min}, \xi_{\max}]$, 
let $\xi \Gamma \defeq \{\xi \boldsymbol{x}: \; 
\boldsymbol{x} \in \Gamma\}$, and set
\begin{equation} 
\label{Paper03_attractor_manifold_slow_changing_environment}
    \Gamma^{\; \textrm{sl,env}} \defeq \left\{(\boldsymbol{x}, \xi) 
\in [0, \xi_{\max}]^{K+1} \times [\xi_{\min}, \xi_{\max}]: \; 
\boldsymbol{x} \in \xi \Gamma\right\}.
\end{equation}
Analogous to the definition of $\boldsymbol{\Phi}$ 
in~\eqref{Paper03_definition_projection_map},
we define
\[
\boldsymbol{\Phi}^{\textrm{sl,env}}= 
(\Phi^{\textrm{sl,env}}_0, \cdots, \Phi^{\textrm{sl,env}}_K, 
\Phi^{\textrm{sl,env}}_\textrm{env}): \Big\{(\boldsymbol{x}, \xi) 
\in [0, \xi_{\max}]^{K+1} \times [\xi_{\min}, \xi_{\max}]: \; 
x_0 \leq \xi\Big\} \rightarrow \Gamma^{\textrm{sl,env}}
\] 
to be the projection map associated with the ODE corresponding to the 
flow field $\mathbf{F}^{\textrm{sl,env}}$. 
Since the population size changes occur on the timescale of evolution, 
they are not ‘felt' by the accelerated ODE, so
%i.e.~with respect to the ODE associated to the flow $\mathbf{F}^{\textrm{sl,env}}$ defined in~\eqref{Paper03_flow_definition_WF_model_slowly_changing_environment},we conclude that
\begin{equation} 
\label{Paper03_bijection_projection_maps_DEs_environment_coordinate}
    \Phi^{\textrm{sl,env}}_{\textrm{env}}(\boldsymbol{x}, \xi) 
= \xi + \int_{0}^{\infty} F^{\textrm{sl,env}}_{\textrm{env}}
(\boldsymbol{\phi}(t)) \, dt = \xi,
\end{equation}
where $(\boldsymbol{\phi}(t))_{t \geq 0}$ is the flow started 
from~$(\boldsymbol{x}, \xi)$.
Combining~\eqref{Paper03_scaling_time_change_slow_environment} 
and~\eqref{Paper03_definition_projection_map}, we conclude that 
for all $i \in \llbracket K \rrbracket_{0}$,
\begin{equation} 
\label{Paper03_bijection_projection_maps_DEs}
    \Phi^{\textrm{sl,env}}_{i}(\boldsymbol{x}, \xi) 
= \xi \Phi_{i}\left(\frac{\boldsymbol{x}}{\xi}\right).
\end{equation}
In particular, the ODE flow converges to $\Gamma^{\; \textrm{sl,env}}$.
We shall %use identities~\eqref{Paper03_bijection_projection_maps_DEs} 
use~\eqref{Paper03_bijection_projection_maps_DEs_environment_coordinate}
% in our computations regarding $\boldsymbol{\Phi}$, 
and Katzenberger's method (explained in  
Section~\ref{Paper03_subsection_katzenberger_overview}) 
to derive the limiting SDE for the size of the mutant population. 
Let $\boldsymbol{\rho}: \Gamma^{\; \textrm{sl,env}} \rightarrow [0,1]^{K+1}$ 
%be the map 
represent 
the vector of proportions of mutant individuals in the population, i.e.~for
%the map given by, for 
any $(\boldsymbol{x}, \xi) \in \Gamma^{\; \textrm{sl,env}}$,
\begin{equation} 
\label{Paper03_definition_proportion_mutations}
    \boldsymbol{\rho}(\boldsymbol{x}, \xi) 
\defeq \displaystyle{\frac{\boldsymbol{x}}{\xi}}.
\end{equation}

\begin{theorem} 
\label{Paper03_weak_convergence_WF_model_slow_env}
    Suppose that $(\boldsymbol{x}^{N}(0),\xi^{N}(0))$ converges to 
$(x(0), \xi(0)) \in \Gamma^{\; \textrm{sl,env}}$ as $N \rightarrow \infty$, 
and that Assumption~\ref{Paper03_diffusion_limit_slow_environment} holds. 
Then the sequence of processes 
$\left((\boldsymbol{x}^{N}(t), \xi^{N}(t))_{t \geq 0}\right)_{N \in \mathbb{N}}$ 
converges in distribution in 
$\mathscr{D}\left([0, \infty), [0, \xi_{\max}]^{K+1} 
\times [\xi_{\min}, \xi_{\max}] \right)$ 
to a diffusion process $(x(t), \xi(t))_{t \geq 0}$ with sample paths 
in $\Gamma^{\; \textrm{sl,env}}$ and such that $(x_{0}(t))_{t \geq 0}$ 
satisfies the coupled system of stochastic differential equations
    \begin{equation} 
\label{Paper03_SDE_slow_environment_formal}
        \xi(T) = \xi(0) + \int_{0}^{T} \alpha(\xi) \, dt 
+ \int_{0}^{T} \eta(\xi) \, dW_{\textrm{env}}(t),
    \end{equation}
    and
    \begin{equation} 
\label{Paper03_SDE_slow_environment_number_mutants}
    \begin{aligned}
        x_{0}(T) & = \; x_{0}(0) + \frac{1}{2} \int_{0}^{T} 
\frac{\partial^{2} \Phi_{0}}{\partial \rho_{0}^{2}}
\left(\boldsymbol{\rho}\right) \left(\frac{x_{0}(\xi - x_{0}) 
+ \eta^{2}(\xi)x_{0}^{2}}{\xi^{2}} \right) \, dt \\[2ex] 
& \quad \quad  +
       \int^{T}_{0} \frac{x_{0}\alpha(\xi)}{(B(\xi - x_{0}) + \xi)} \, dt
        - \int^{T}_{0} \frac{Bx^{2}_{0}\eta^{2}(\xi)}{\xi(B(\xi - x_{0})
+\xi)^{2}} \, dt \\[2ex] 
& \quad \quad + \int_{0}^{T} 
\frac{\sqrt{\xi x_{0}(\xi - x_{0})}}{(B(\xi - x_{0}) + \xi)} \, dW_{0}(t) 
+ \int_{0}^{T} \frac{x_{0}\eta(\xi)}{(B(\xi - x_{0}) + \xi)} \, 
dW_{\textrm{env}}(t),
    \end{aligned}
    \end{equation}
where $W_{0}$ and $W_{\textrm{env}}$ are two independent 
standard Brownian motions.
\end{theorem}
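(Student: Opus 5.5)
We follow the same strategy as for Theorem~\ref{Paper03_weak_convergence_diffusion_constant_environment}, but with the extended version of Katzenberger's theorem in which the driving semimartingale also contains a finite-variation part of order $N^{-1}$ per step.

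\textbf{Semimartingale decomposition.} We write the pair $(\boldsymbol{x}^N,\xi^N)$ in the form
\[
\boldsymbol{y}^N(T)=\boldsymbol{y}^N(0)+\int_0^T \sigma(\boldsymbol{y}^N)\,d\boldsymbol{z}^N+\int_0^T \mathbf{F}^{\textrm{sl,env}}(\boldsymbol{y}^N)\,d\lfloor Nt\rfloor ,
\]
where $\boldsymbol{z}^N$ gathers three pieces:
\begin{itemize}
\item the binomial sampling martingale $M^N_0$;
\item the environment martingale $M^N_{\textrm{env}}$;
\item the drift $\int\alpha(\xi^N)\,dt$ of $\xi^N$.
\end{itemize}
By the second line of~\eqref{Paper03_source_ODE_slow_environment_first_coordinate}, the first coordinate also contains the term $\frac{\Delta\xi^N\,\sum_i b_ix_i^N}{(\xi^N-x_0^N)+\sum_i b_ix_i^N}$. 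On $\Gamma^{\textrm{sl,env}}$ this is $\frac{x_0}{\xi}\Delta\xi^N$. So the first coordinate is driven by $dM^N_0$ and by $\frac{x_0}{\xi}\,d\xi^N$.

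We then verify Katzenberger's hypotheses:
\begin{itemize}
\item the total variation of the finite-variation parts stays tight, using Assumption~\ref{Paper03_diffusion_limit_slow_environment} and the $O(N^{-1-\delta})$ errors;
\item the quadratic variations converge: $d\langle M_0\rangle=\xi x_0(\xi-x_0)/\xi^2\cdot\xi\,dt$ (to be checked carefully from the binomial variance with $\lfloor\xi N\rfloor$ trials) and $d\langle M_{\textrm{env}}\rangle=\eta^2\,dt$, with zero cross-variation because the samplings are conditionally independent;
\item jumps vanish, via the third-moment bound;
\item $\Gamma^{\textrm{sl,env}}$ is an asymptotically stable attractor for $\mathbf{F}^{\textrm{sl,env}}$. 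This follows from~\eqref{Paper03_scaling_time_change_slow_environment} and Lemma~\ref{Paper03_eigenvalue_condition_constant_environment}: for fixed $\xi$ the flow is a rescaling of the constant-environment flow, so the Jacobian's nonzero eigenvalues are those of the constant case, and they are uniformly negative.
\end{itemize}
Katzenberger's theorem then gives convergence to $\boldsymbol{\Phi}^{\textrm{sl,env}}$ applied to the limit. The limit satisfies
\[
d y=\partial\Phi\,\sigma\,dz+\tfrac12\sum\partial^2\Phi\,\sigma\sigma^\top\,d[z].
\]

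\textbf{Identifying the coefficients.} We use~\eqref{Paper03_bijection_projection_maps_DEs}, $\Phi^{\textrm{sl,env}}_0(\boldsymbol{x},\xi)=\xi\Phi_0(\boldsymbol{x}/\xi)$, and~\eqref{Paper03_bijection_projection_maps_DEs_environment_coordinate} to differentiate in both $x_0$ and $\xi$. First derivatives on $\Gamma^{\textrm{sl,env}}$ come from~\eqref{Paper03_first_order_derivatives_projection} with $\rho_0=x_0/\xi$:
\[
\partial_{x_0}\Phi^{\textrm{sl,env}}_0=\frac{\xi}{B(\xi-x_0)+\xi}.
\]
The total coefficient of $d\xi$ collects two contributions: $\partial_\xi\Phi^{\textrm{sl,env}}_0$ and $\partial_{x_0}\Phi^{\textrm{sl,env}}_0\cdot x_0/\xi$ from the first coordinate. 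For $\partial_\xi\Phi^{\textrm{sl,env}}_0=\Phi_0-\sum_i\rho_i\partial_{\rho_i}\Phi_0$ we use that $\Phi_0$ fixes the diagonal, so $\sum_i\partial_{\rho_i}\Phi_0=1$ on $\Gamma$; this gives $\partial_\xi\Phi^{\textrm{sl,env}}_0=0$ there. The total coefficient is therefore $\frac{x_0}{B(\xi-x_0)+\xi}$. This yields both the $\alpha$ drift term and the $dW_{\textrm{env}}$ term.

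The Itô correction has three parts:
\begin{itemize}
\item from $\partial^2_{x_0}\Phi^{\textrm{sl,env}}_0=\xi^{-1}\partial^2_{\rho_0}\Phi_0$ against $d[M_0]$ and against $(x_0/\xi)^2\eta^2\,dt$, giving the $\frac{x_0(\xi-x_0)+\eta^2x_0^2}{\xi^2}$ term;
\item from mixed derivatives $\partial_{x_0}\partial_\xi$ and $\partial^2_\xi$ against $\eta^2$, expressed via~\eqref{Paper03_first_order_derivatives_projection} differentiated in the direction transverse to the diagonal;
\item from derivatives of the first-coordinate coefficient $\frac{x_0}{\xi}$, which must be combined with these.
\end{itemize}
The latter two should combine into $-\frac{Bx_0^2\eta^2}{\xi(B(\xi-x_0)+\xi)^2}$.

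\textbf{Main obstacle.} The main obstacle is this second-order bookkeeping. Second derivatives of $\Phi_0$ in directions other than $\rho_0$ (e.g.\ $\partial_{\rho_0}\partial_{\rho_i}\Phi_0$) appear. They must be reduced using homogeneity along the diagonal, i.e.\ differentiating $\sum_i\partial_{\rho_i}\Phi_0=1$ along $\Gamma$, together with the explicit derivative of $1/(B(1-\rho_0)+1)$ along $\Gamma$. I would first check that all $\eta^2$ contributions except the $\partial^2_{\rho_0}\Phi_0$ one collapse to the derivative of $\frac{x_0}{B(\xi-x_0)+\xi}$ in the $(x_0,\xi)$ direction along $\Gamma^{\textrm{sl,env}}$. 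The technical step of extending Katzenberger's theorem to the extra finite-variation driver is standard once the total-variation tightness is in hand.
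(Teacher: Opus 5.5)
Your route is the paper's: apply Katzenberger's theorem to $(\boldsymbol{x}^N,\xi^N)$ with forcing field $\mathbf{F}^{\textrm{sl,env}}$, then differentiate $\Phi^{\textrm{sl,env}}_0=\xi\Phi_0(\boldsymbol{x}/\xi)$. Putting $p(\boldsymbol{x},\xi)=\sum_i b_ix_i/((\xi-x_0)+\sum_i b_ix_i)$ into $U^N$ in front of $d\xi^N$, with a single binomial martingale, is an equivalent and slightly cleaner packaging of the paper's $M^N_0+M^N_{\textrm{aux}}+\widetilde{M}^N_{\textrm{env}}+\int\alpha\,p\,dt$ with $U^N=\mathbf{I}$. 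The finite-variation $\alpha$-part is already covered by Katzenberger's total-variation condition, so no extension of his theorem is needed.

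The genuine problem is the It\^o bookkeeping you single out as the main obstacle. The limit is $d\Phi=\partial\Phi\,U\,d\mathbf{z}+\tfrac12\sum_{ijhl}\partial_i\partial_j\Phi\,U_{ih}U_{jl}\,d[z_h,z_l]$, and $U$ is never differentiated in it. So your third source of corrections, ``derivatives of the first-coordinate coefficient $x_0/\xi$'', does not exist, and adding it would give a wrong drift.

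Your proposed consistency check also fails. The vector $(x_0/\xi,1)$ is parallel to $(x_0,\xi)$, and $x_0/(B(\xi-x_0)+\xi)$ is homogeneous of degree zero, so its derivative in that direction is $0$.

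The whole $\eta^2$ contribution is $\tfrac12\eta^2\bigl(\tfrac{x_0^2}{\xi^2}\partial^2_{x_0}+2\tfrac{x_0}{\xi}\partial_{x_0}\partial_\xi+\partial^2_\xi\bigr)\Phi^{\textrm{sl,env}}_0$, and it is elementary to compute:
\begin{itemize}
\item The set $\Gamma^{\textrm{sl,env}}=\{(c\boldsymbol{1},\xi):0\le c\le\xi\}$ is two-dimensional, and $\Phi^{\textrm{sl,env}}_0(c\boldsymbol{1},\xi)=c$ on it. Hence $\partial_\xi\Phi^{\textrm{sl,env}}_0=\partial^2_\xi\Phi^{\textrm{sl,env}}_0=0$ there.
\item Differentiating $\partial_{x_0}\Phi^{\textrm{sl,env}}_0(c\boldsymbol{1},\xi)=\xi/(B(\xi-c)+\xi)$ in $\xi$ gives $\partial_\xi\partial_{x_0}\Phi^{\textrm{sl,env}}_0=-Bx_0/(B(\xi-x_0)+\xi)^2$.
\item The cross term alone is then exactly $-Bx_0^2\eta^2/(\xi(B(\xi-x_0)+\xi)^2)$.
\end{itemize}
These derivatives are taken along $\Gamma^{\textrm{sl,env}}$, not ``transverse to the diagonal'' as your bullet says. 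This is your homogeneity remark made precise. It is lighter than the paper's derivation of the same identities, which goes through its Parsons--Rogers second-order formulas and $\boldsymbol{\Theta}^{\Delta}\boldsymbol{u}=0$.

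Two arithmetic points.
\begin{itemize}
\item With about $\xi N$ trials and success probability $x_0/\xi$, $d\langle M_0\rangle=\frac{x_0(\xi-x_0)}{\xi}\,dt$, not $x_0(\xi-x_0)\,dt$. Only this value gives the noise coefficient $\sqrt{\xi x_0(\xi-x_0)}/(B(\xi-x_0)+\xi)$ and the $x_0(\xi-x_0)/\xi^2$ drift term.
\item $\xi^{-1}\partial^2_{\rho_0}\Phi_0\cdot(x_0/\xi)^2\eta^2=\partial^2_{\rho_0}\Phi_0\,\eta^2x_0^2/\xi^3$, not $\eta^2x_0^2/\xi^2$ as you assert. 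The paper's penultimate display, combined with its own lemma $\partial^2_{x_0}\Phi^{\textrm{sl,env}}_0=\xi^{-1}\partial^2_{\rho_0}\Phi_0$, also gives $\xi^{-3}$. So the $\eta^2x_0^2$ in the stated drift appears to be missing a factor $1/\xi$; do not force your computation to match it.
\end{itemize}
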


In the model with constant population size, it is natural to work with the number
of mutants and to consider derivatives with respect to $x_0$.
When the population size varies, however, the relevant state variable is the
proportion of mutants.
Accordingly, in~\eqref{Paper03_SDE_slow_environment_number_mutants}, the quantity
\[
\frac{\partial^{2} \Phi_{0}}{\partial \rho_{0}^{2}}(\boldsymbol{\rho})
\]
denotes the second-order partial derivative of $\Phi_{0}$ with respect to the
first coordinate $\rho_0$ of the vector $\boldsymbol{\rho} \in \Gamma$.
With this interpretation,
$\frac{\partial^{2} \Phi_{0}}{\partial \rho_{0}^{2}}(\boldsymbol{\rho})$
satisfies the assumptions of
Proposition~\ref{Paper03_bound_drift_general_K_proposition}.
Note also that the independent Brownian motions $W_{0}$ and $W_{\textrm{env}}$ 
in~\eqref{Paper03_SDE_slow_environment_formal} 
and~\eqref{Paper03_SDE_slow_environment_number_mutants} reflect the different 
sources of noise: $W_{0}$ arises from genetic drift, whilst 
$W_{\textrm{env}}$ represents the noise arising from the evolution of the 
population size.
The proof of Theorem~\ref{Paper03_weak_convergence_WF_model_slow_env} will be 
given in Section~\ref{Paper03_subsection_diffusion_WF_model_slow_environment}. 

To understand how fluctuations in the population size affect the probability of 
fixation of the mutant type, it is convenient to express the SDE 
in~\eqref{Paper03_SDE_slow_environment_number_mutants} in terms of 
${\rho}_0$, rather than in terms of $x_{0}$. Since $(\xi(t))_{t \geq 0}$ is 
%almost surely 
bounded from below by $\xi_{\min} > 0$, we can obtain an SDE for 
$\boldsymbol{\rho}$ from an application of Itô's formula.
% to the definition of $\boldsymbol{\rho}$ (see for instance~\cite[Theorem~3.6]{karatzas1998brownian}) and derive an SDE for the proportion of mutants in the population. This is our next result.

\begin{corollary} 
\label{Paper03_corollary_SDE_proportion_slowly_varying_environment}
Let $(\xi(t),\boldsymbol{x}(t))_{t \geq 0}$ be a diffusion process 
in $\Gamma^{\textrm{sl,env}}$ satisfying the coupled system of 
SDEs~\eqref{Paper03_SDE_slow_environment_formal} 
and~\eqref{Paper03_SDE_slow_environment_number_mutants}, and for 
$T\geq 0$ let 
$\boldsymbol{\rho}(T) \defeq \boldsymbol{\rho}(\boldsymbol{x}(T), \xi(T))$ be given as in~\eqref{Paper03_definition_proportion_mutations}.
Then, the pair of stochastic processes 
$(\xi(t), \boldsymbol{\rho}(t))_{t \geq 0}$ satisfies the coupled system of 
stochastic differential equations
     \begin{equation*}
        \xi(T) = \xi(0) + \int_{0}^{T} \alpha(\xi) \, dt 
+ \int_{0}^{T} \eta(\xi) \, dW_{\textrm{env}}(t),
    \end{equation*}
    and
    \begin{equation} 
\label{Paper03_SDE_proportion_mut_WF_fluctuating_slow_env}
    \begin{aligned}
        \rho_{0}(T) & = \; \rho_{0}(0) + \frac{1}{2} \int_{0}^{T} 
\frac{\partial^{2} \Phi_{0}}{\partial \rho_{0}^{2}}
\left(\boldsymbol{\rho}\right) 
\left(\frac{\rho_{0}(1 - \rho_{0})}{\xi(t)} \right) \, dt -
     \int^{T}_{0} 
\frac{B\rho_{0}(1 - \rho_{0})\alpha(\xi(t))}{(B(1 - \rho_{0}) + 1)\xi(t)} \, 
dt \\[2ex] 
& \quad \quad  
        + \int^{T}_{0} 
\left[\frac{B \rho_{0}(1- \rho_{0}) \eta^{2}(\xi(t))}
{(B(1 - \rho_{0})+1)\xi^{2}(t)}
+\frac{\rho^{2}_{0}\eta^{2}(\xi(t))}{2\xi(t)}\left(\frac{\partial^{2} 
\Phi_{0}}{\partial \rho_{0}^{2}}\left(\boldsymbol{\rho}\right)  
- \frac{2B}{(B(1 - \rho_{0})+1)}\right)\right] \, dt \\[2ex] 
& \quad \quad + \int_{0}^{T} 
\frac{\sqrt{\rho_{0}(1 - \rho_{0})}}{(B(1 - \rho_{0}) + 1)\sqrt{\xi(t)}} \, 
dW_{0}(t) 
- \int_{0}^{T} \frac{B\rho_{0}(1-\rho_{0})\eta(\xi(t))}
{(B(1 - \rho_{0}) + 1)\xi(t)} \, dW_{\textrm{env}}(t),
    \end{aligned}
    \end{equation}
where $W_{0}$ and $W_{\textrm{env}}$ are two independent standard 
Brownian motions.
\end{corollary}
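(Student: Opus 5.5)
The plan is to apply It\^o's formula to $\rho_0 = x_0/\xi$, using the coupled system~\eqref{Paper03_SDE_slow_environment_formal}--\eqref{Paper03_SDE_slow_environment_number_mutants} from Theorem~\ref{Paper03_weak_convergence_WF_model_slow_env}. The map $f(x,\xi)=x/\xi$ is smooth on $[0,\xi_{\max}]\times[\xi_{\min},\xi_{\max}]$ because $\xi\ge\xi_{\min}>0$, so the formula applies directly. Its derivatives are
$f_x = 1/\xi$, $f_\xi=-x/\xi^2$, $f_{xx}=0$, $f_{x\xi}=-1/\xi^2$ and $f_{\xi\xi}=2x/\xi^3$. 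Hence
\[
d\rho_0 = \frac{dx_0}{\xi} - \frac{x_0\,d\xi}{\xi^2} - \frac{d\langle x_0,\xi\rangle}{\xi^2} + \frac{x_0\,d\langle \xi\rangle}{\xi^3}.
\]
The brackets come from the Brownian coefficients. Since $W_0$ and $W_{\textrm{env}}$ are independent, $d\langle\xi\rangle=\eta^2(\xi)\,dt$ and $d\langle x_0,\xi\rangle = x_0\eta^2(\xi)/(B(\xi-x_0)+\xi)\,dt$.

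Next I rewrite every term in $\rho_0$, using $x_0=\xi\rho_0$ and $B(\xi-x_0)+\xi = \xi(B(1-\rho_0)+1)$, and group the contributions by type.

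The $\alpha$-terms combine as $\frac{\rho_0\alpha}{\xi}\bigl(\frac{1}{B(1-\rho_0)+1}-1\bigr)$. This gives the stated drift $-\frac{B\rho_0(1-\rho_0)\alpha}{(B(1-\rho_0)+1)\xi}$.

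The $W_{\textrm{env}}$-terms combine in the same way. They give $-\frac{B\rho_0(1-\rho_0)\eta}{(B(1-\rho_0)+1)\xi}\,dW_{\textrm{env}}$.

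The $W_0$-term becomes $\frac{\sqrt{\xi^3\rho_0(1-\rho_0)}}{\xi^2(B(1-\rho_0)+1)}$, which is the stated coefficient.

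The cross-variation and $\langle\xi\rangle$ corrections combine to $\frac{\rho_0\eta^2}{\xi^2}\bigl(1-\frac{1}{B(1-\rho_0)+1}\bigr)$. This is the first term in the square bracket.

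The $\Phi$-term in~\eqref{Paper03_SDE_slow_environment_number_mutants}, divided by $\xi$, splits into a $\rho_0(1-\rho_0)/\xi$ piece and a $\rho_0^2\eta^2/\xi$ piece. What remains is the contribution $-\frac{Bx_0^2\eta^2}{\xi(B(\xi-x_0)+\xi)^2}$, divided by $\xi$.

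This last term is the step I expect to be the main obstacle. It must match $-\frac{\rho_0^2\eta^2}{2\xi}\cdot\frac{2B}{B(1-\rho_0)+1}$, and the powers of $\xi$ and of $B(1-\rho_0)+1$ have to be tracked very carefully for that to happen.

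To handle it, I would first pin down exactly which derivative the symbol $\partial^2\Phi_0/\partial\rho_0^2$ in~\eqref{Paper03_SDE_slow_environment_number_mutants} denotes. By~\eqref{Paper03_bijection_projection_maps_DEs}, $\Phi^{\textrm{sl,env}}_0(\boldsymbol x,\xi)=\xi\Phi_0(\boldsymbol x/\xi)$. Differentiating twice in $x_0$ gives $\partial^2_{x_0}\Phi^{\textrm{sl,env}}_0 = \xi^{-1}\partial^2_{\rho_0}\Phi_0$, and the first derivative is given by~\eqref{Paper03_first_order_derivatives_projection} evaluated at $\boldsymbol\rho$.

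With these identities I would recheck the normalisation of every $\eta^2$ term, going back to the derivation of~\eqref{Paper03_SDE_slow_environment_number_mutants} in Section~\ref{Paper03_subsection_diffusion_WF_model_slow_environment} if necessary. The goal is to make the $\rho_0^2\eta^2$ contributions assemble into $\frac{\rho_0^2\eta^2}{2\xi}\bigl(\partial^2_{\rho_0}\Phi_0 - \frac{2B}{B(1-\rho_0)+1}\bigr)$. Once the bookkeeping is consistent, substituting everything back yields~\eqref{Paper03_SDE_proportion_mut_WF_fluctuating_slow_env}. The $\xi$ equation is unchanged from Theorem~\ref{Paper03_weak_convergence_WF_model_slow_env}.
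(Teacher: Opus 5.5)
Your route, It\^o's formula for $x_0/\xi$ applied to~\eqref{Paper03_SDE_slow_environment_formal}--\eqref{Paper03_SDE_slow_environment_number_mutants}, is the same one-line argument the paper gives. Your treatment of the $\alpha$-drift, of the $W_0$ and $W_{\textrm{env}}$ coefficients, and of the two bracket corrections is correct. The gap is the step you defer: more careful bookkeeping cannot close it, because the terms genuinely disagree. Set $c \defeq B(1-\rho_0)+1$, so that $B(\xi-x_0)+\xi=\xi c$. The leftover term is then
\[
-\frac{1}{\xi}\cdot\frac{Bx_0^2\eta^2}{\xi\,(\xi c)^2}=-\frac{B\rho_0^2\eta^2}{\xi^2c^2},
\]
whereas the displayed formula requires $-\frac{\rho_0^2\eta^2}{2\xi}\cdot\frac{2B}{c}=-\frac{B\rho_0^2\eta^2}{\xi c}$. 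The two differ by the factor $\xi c$. This term contains no $\Phi$, so no reinterpretation of $\partial^2\Phi_0/\partial\rho_0^2$ can absorb the difference. Your closing step (``once the bookkeeping is consistent, substituting everything back yields~\eqref{Paper03_SDE_proportion_mut_WF_fluctuating_slow_env}'') therefore fails.

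What your proposed recheck actually uncovers is a misprint. Substitute $\partial^2_{x_0}\Phi^{\textrm{sl,env}}_0=\xi^{-1}\partial^2_{\rho_0}\Phi_0$ (Lemma~\ref{Paper03_derivatives_flow_manifold_moran_constant_env}) into~\eqref{Paper03_final_step_slow_environment}. The $\Phi$-term of the $x_0$-equation becomes $\frac12\partial^2_{\rho_0}\Phi_0\bigl(x_0(\xi-x_0)/\xi^2+\eta^2x_0^2/\xi^3\bigr)$, not the $\eta^2x_0^2/\xi^2$ printed in Theorem~\ref{Paper03_weak_convergence_WF_model_slow_env}.

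The cross derivative $-Bx_0/(\xi c)^2$ is right. For $\boldsymbol{x}$ on the diagonal, varying $\xi$ stays in $\Gamma^{\textrm{sl,env}}$, so you may simply differentiate $\xi/(B(\xi-x_0)+\xi)$ in $\xi$.

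With these corrections, your It\^o computation gives, in place of the square bracket,
\[
\frac{B\rho_0(1-\rho_0)\eta^2}{c\,\xi^2}+\frac{\rho_0^2\eta^2}{2\xi^2}\Bigl(\frac{\partial^2\Phi_0}{\partial\rho_0^2}(\boldsymbol{\rho})-\frac{2B}{c^2}\Bigr),
\]
with every other term as displayed.

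A scaling check supports this. Apart from genetic drift, the sampling probabilities depend only on ratios of population sizes. So every $\alpha$- and $\eta$-term in the $\rho_0$-equation must be invariant under $(\xi,\alpha,\eta)\mapsto(\lambda\xi,\lambda\alpha,\lambda\eta)$, and the printed prefactor $\rho_0^2\eta^2/(2\xi)$ is not.

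Your method is the right one, but what it proves is this corrected formula. You should conclude with that, rather than aim to force the terms into the displayed form.
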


We would like to describe the behaviour of the coupled stochastic 
differential equations given by 
Theorem~\ref{Paper03_weak_convergence_WF_model_slow_env} and 
Corollary~\ref{Paper03_corollary_SDE_proportion_slowly_varying_environment}. 
The analysis is not trivial, since there are unknown functions $\alpha$ and 
$\eta$ whose behaviour will influence the dynamics of the diffusion process. 
In order to glean some information about the impact of changes in 
the population size on the establishment of seed banks
%from a biological perspective, 
it is convenient to consider separately the cases in which the 
population size changes deterministically ($\eta\equiv 0$), and that in 
which it is stochastic ($\eta\not\equiv 0$).
%\begin{enumerate}[(i)]
%    \item The case when the population size changes in a deterministic 
%manner, i.e.~when $\eta \equiv 0$;
%    \item The case when the population size fluctuates on 
%the timescale of evolution, i.e.~$\eta \not\equiv 0$.
%\end{enumerate}

%We discuss this below.

\subsubsection{Case~$(i)$: $\eta \equiv 0$}

Suppose $\eta \equiv 0$, i.e.~that the process $\left(\xi(t)\right)_{t \geq 0}$ evolves according to the ODE
\begin{equation*}
    \frac{d \xi}{dt} = \alpha(\xi).
\end{equation*}
By Corollary~\ref{Paper03_corollary_SDE_proportion_slowly_varying_environment},
the evolution of the proportion $\rho_{0}$ of mutants
in the population is given by
\begin{equation} 
\label{Paper03_SDE_number_mut_deterministic_env}
    \begin{aligned}
        \rho_{0}(T) 
& = \; \rho_{0}(0) + \frac{1}{2} \int_{0}^{T} 
\frac{\partial^{2} \Phi_{0}}{\partial \rho_{0}^{2}}
\left(\boldsymbol{\rho}\right) \left(\frac{\rho_{0}(1 - \rho_{0})}{\xi} \right)
 \, dt 
- \int^{T}_{0} \frac{B\rho_{0}(1 - \rho_{0})\alpha(\xi)}
{(B(1 - \rho_{0}) + 1)\xi} \, dt \\[2ex] 
& \quad \quad + \int_{0}^{T} \frac{\sqrt{\rho_{0}(1 - \rho_{0})}}
{(B(1 - \rho_{0}) + 1)\sqrt{\xi}} \, dW_{0}(t).
    \end{aligned}
    \end{equation}
Equation~\eqref{Paper03_SDE_number_mut_deterministic_env} suggests a 
non-trivial impact of the evolution of the carrying capacity on the 
probability of fixation of a mutation that produces a seed bank. 
The principal difference between the 
SDE~\eqref{Paper03_SDE_number_mut_deterministic_env} 
and that describing the dynamics of the proportion of mutants when the
population size is
constant is the second integral on the right-hand side 
of~\eqref{Paper03_SDE_number_mut_deterministic_env}. When $\alpha < 0$, 
i.e.~when the population size is declining, the presence of a 
seed bank becomes more advantageous. Heuristically, this is explained by 
the fact that if the population size is decreasing, then previous generations 
of mutant plants may contribute a higher proportion of seeds to the pool
that is ready to germinate in a given generation, due to the 
lower number of wild type individuals in the current generation with which
they are competing. We see the opposite effect when $\alpha > 0$, 
i.e.~when the population size is growing. Under this scenario, previous 
generations of mutant plants contribute a lower proportion of 
the seeds from which the next generation is sampled than they would 
if the population size were constant.
In other words, our results suggest 
that when the population size declines over time, for example due to 
adverse environmental conditions, the presence of (finite-age) seed banks 
provides an evolutionary advantage.
Since we are supposing that the change in population size over the 
time period that a seed spends in the seed bank is $\mathcal{O}(1/N)$, it is
at first sight somewhat surprising that this effect is noticeable, but
the significant impact of the accumulation of many small changes over
a large number of generations is of course familiar from population
genetics.

To better illustrate these ideas, we consider the following toy model.
Assume that $K=1$, that is, seeds can persist for at most one generation
in the seed bank, so that $\frac{\partial^2 \Phi_0}{\partial \rho^2}(\boldsymbol{\rho})$ is given by~\eqref{Paper03_drift_K_1}. Let $b_0 \in (0,1]$ denote the proportion of seeds that are not dormant. Moreover, suppose the dynamics of the (scaled) population size $(\xi(t))_{t\ge0}$
satisfies the logistic growth equation
\begin{equation} \label{Paper03:logistic_growth}
\frac{d\xi}{dt} = r\,\xi(\xi_\infty - \xi),
\end{equation}
where $r,\xi_\infty>0$. We further assume that $\xi(0)=1$.
Biologically, this model represents a scenario in which the environment is
constant during the branching phase, but the carrying capacity of the
environment (reflected by $\xi_\infty$) changes at the end of this phase
due to external factors (for example, climate change). Let $(\rho_0(t))_{t \geq 0}$ be the diffusion associated to~\eqref{Paper03_SDE_number_mut_deterministic_env}. As in~\eqref{Paper03_stopping_time_border}, for any $a \in [0,1]$, let $T_{a}$ be the stopping time at which $(\rho_0(t))_{t \geq 0}$ reaches level $a$. Recall from Lemma~\ref{Paper03_lemma_correspondence_value_branching_phase} that larger values of the mean germination time $B = 1 - b_0$ are associated to a lower proportion of mature mutant individuals by the end of the branching phase. We are interested in understanding the impact of different choices of $\xi_{\infty}$ and $b_0$ on the fixation probability of the seed bank trait, i.e.~the probability of the event $\{T_1 < T_0 \}$ (see Figure~\ref{Paper03_fig:fixation_prob_environment_slow}). Since the dynamics of $(\rho_0(t))_{t \geq 0}$ is not autonomous, we compute the fixation probability by numerically solving a backward Kolmogorov equation (see e.g.~\cite[Section~4.2]{ewens2004mathematical}). As discussed after~\eqref{Paper03_SDE_number_mut_deterministic_env}, our simulations suggest the following statements:
\begin{enumerate}[(1)]
    \item For $\xi_\infty > 1$, i.e.~when the population size exhibits growth, the presence of the seed bank is deleterious (see Figure~\ref{Paper03_fig:fixation_prob_environment_slow}).
    \item For $\xi_\infty < 1$, there is population decline, which favours the presence of dormancy. This effect, however, may not be sufficiently large to compensate the smaller proportion of mature mutant individuals by the end of the branching phase (see e.g.~the plot of the fixation probability as a function of $b_0$ when $\xi_\infty= 0.9$ in Figure~\ref{Paper03_fig:fixation_prob_environment_slow}). Nonetheless, if $\xi_{\infty}$ is sufficiently small, then we see an advantage for the seed bank trait (with respect to the evolution of a neutral trait without dormancy). Even in this scenario, the fixation probability does not necessarily decrease monotonically with $b_0$ (see the plot for $\xi_\infty= 0.8$ in Figure~\ref{Paper03_fig:fixation_prob_environment_slow}), which reflects the smaller proportion of mature mutant individuals by the end of the branching phase (see Lemma~\ref{Paper03_lemma_correspondence_value_branching_phase}). In this case, there exists an optimal value of $b_0 \in (0,1)$ that maximises the fixation probability.
\end{enumerate}

\begin{figure}[t]
\centering
\includegraphics[width=0.75\linewidth, trim=0cm 0cm 0cm 0cm,clip=true]{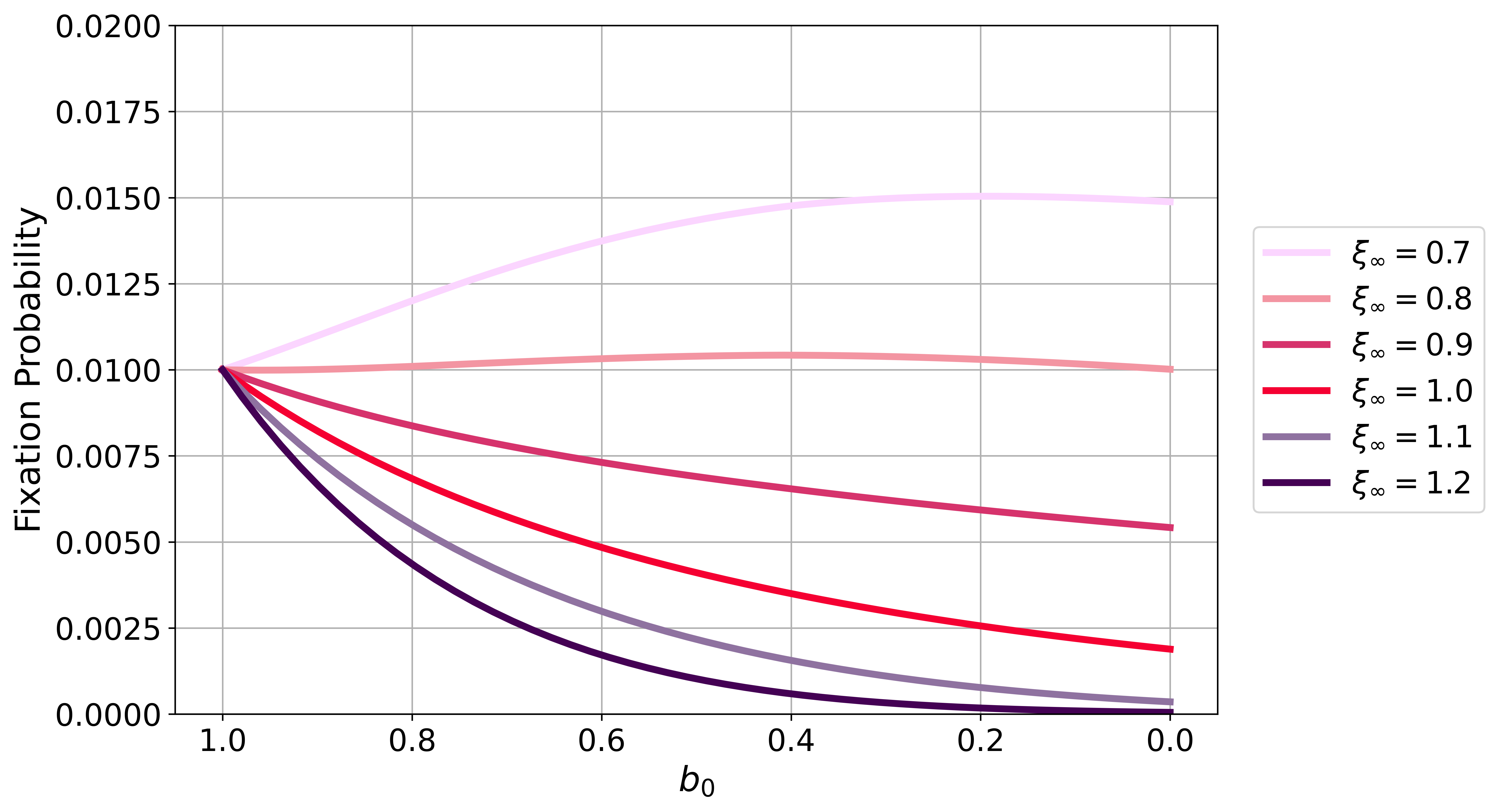}
\caption{\label{Paper03_fig:fixation_prob_environment_slow}~Fixation probability of $(\rho_0(t))_{t \geq 0}$ for different values of $\xi_{\infty} \in \{0.7, 0.8, 0.9, 1.0, 1.1, 1.2\}$ and $b_0 \in (0,1]$. We assume the dynamics of $(\xi(t))_{t \geq 0}$ is described by~\eqref{Paper03:logistic_growth} with $r = 20$, and that $(\rho_0(t))_{t \geq 0}$ solves the SDE~\eqref{Paper03_SDE_number_mut_deterministic_env} with $K = 1$, so that $\frac{\partial^2 \Phi_0}{\partial \rho^2}(\boldsymbol{\rho})$ is given by~\eqref{Paper03_drift_K_1}. To take into account the impact of dormancy on the branching phase (see Lemma~\ref{Paper03_lemma_correspondence_value_branching_phase}), we computed the fixation probability assuming that $\rho_0(0) = \psi_B(0.01)$ for all $B = 1 - b_0$, where $\psi_B$ is the function defined in Lemma~\ref{Paper03_lemma_correspondence_value_branching_phase}. Observe that in the absence of dormancy, i.e.~when $b_0 = 1$, the fixation probability of a neutral trait is not affected by the changes in population size. Our numerical simulations suggest that for sufficiently small values of $\xi_{\infty}$, i.e.~for sufficiently large declines in population size, the evolution of the seed bank trait is favoured with respect to the evolution of a neutral trait without dormancy.}
\end{figure}

\subsubsection{Case~$(ii)$: $\eta \neq 0$}

We now suppose that the limiting population size $(\xi(t))_{t \geq 0}$, 
is a diffusion process, satisfying %which reflects the carrying capacity of the environment, satisfies the SDE
\begin{equation*}
      d{\xi}= \alpha(\xi) dt +  \eta(\xi) dW_{\textrm{env}}(t).
\end{equation*}
%i.e.~the case when the population size is a diffusion process. 
We again consider the system of SDEs corresponding to the coupled 
evolution of the carrying capacity $\xi$ and the proportion of mutants 
in the population $\rho_{0}$ from 
Corollary~\ref{Paper03_corollary_SDE_proportion_slowly_varying_environment}.

To analyse the impact of dormancy in this setting, recall the 
definition of $\mathbb{S}^*_K$ from~\eqref{Paper03_K_dimensional_simplex}. 
For $K\in \mathbb{N}$, we define the real-valued function 
\[
g: \mathbb{S}^*_K \times [0,1] \times [\xi_{\min}, \xi_{\max}] 
\rightarrow \mathbb{R}
\]
by
\begin{equation} 
\label{Paper03_integrand_understand_impact_slow_fluctuations_environment}
\begin{aligned}
    g\Big((b_i)_{i \in \llbracket K \rrbracket_0}, \rho_0, \xi\Big) 
\defeq \rho_0 \left[\frac{B(1- \rho_{0})}{(B(1 - \rho_{0})+1)\xi}
+\frac{\rho_{0}}{2}\left(\frac{\partial^{2} \Phi_{0}}{\partial \rho_{0}^{2}}
\left(\boldsymbol{\rho}\right)  
- \frac{2B}{(B(1 - \rho_{0})+1)}\right)\right],
\end{aligned}
\end{equation}
where the parameter $B$ is the mean time spent in the seed bank 
(see~\eqref{Paper03_mean_age_germination}). Observe that the integrand 
inside the square brackets in the second line 
of~\eqref{Paper03_SDE_proportion_mut_WF_fluctuating_slow_env} is given by 
$\eta^2(\xi)\xi^{-1}g$. For positive values of $g$, fluctuations in 
population size favour the dormancy trait, and for negative values 
they disfavour it. The behaviour of the function $g$ defined 
in~\eqref{Paper03_integrand_understand_impact_slow_fluctuations_environment} 
is not trivial since it depends on at least three parameters 
(see Figure~\ref{Paper03_fig:impact_fluctuating_environment_slow}). 

\begin{figure}[!ht]
\centering
\includegraphics[width=\linewidth, trim=0cm 0cm 0cm 0cm,clip=true]{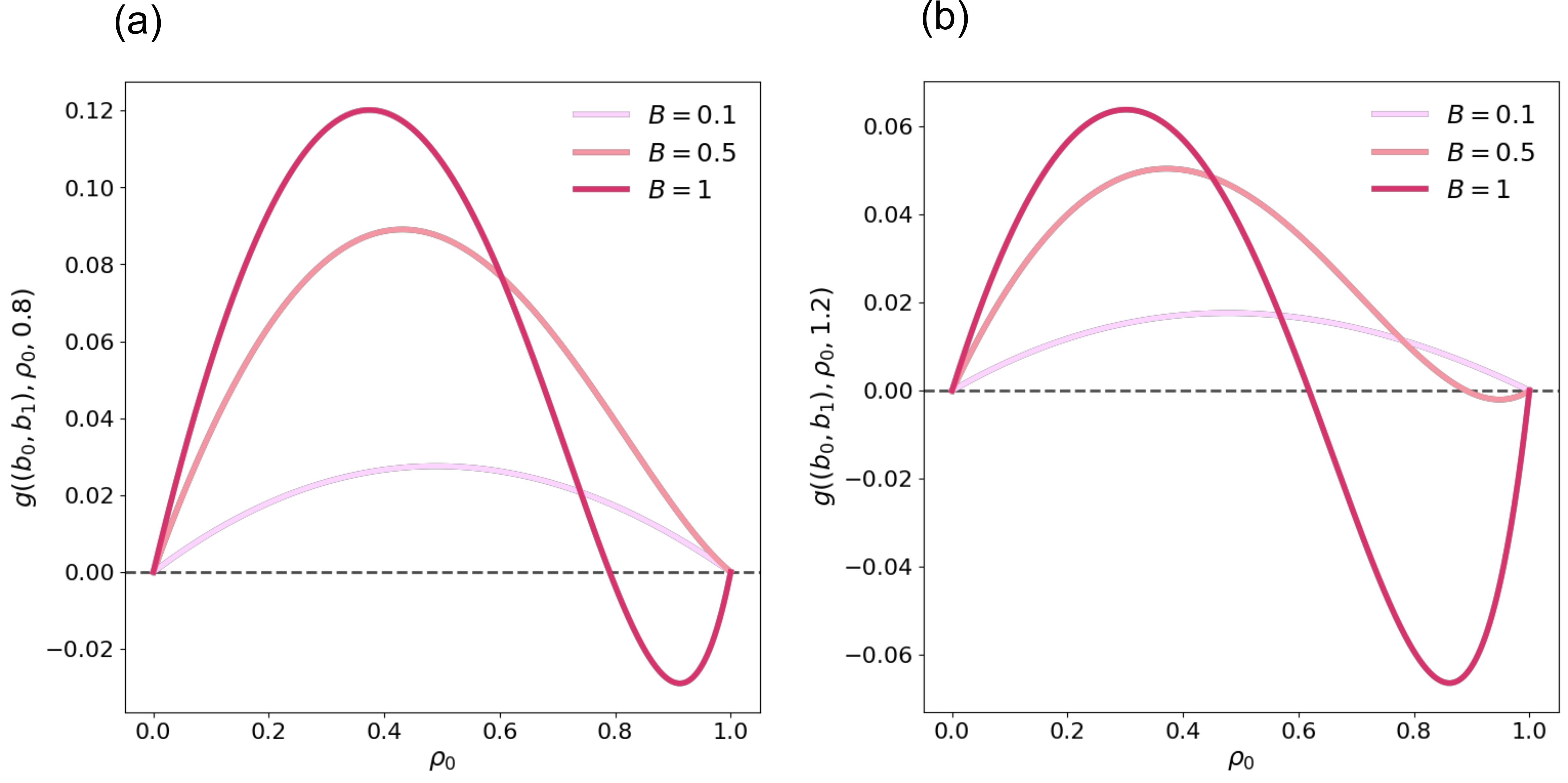}
\caption{\label{Paper03_fig:impact_fluctuating_environment_slow}~Plot of 
the function $g$ defined 
in~\eqref{Paper03_integrand_understand_impact_slow_fluctuations_environment} 
for $K = 1$ and $B = 1 - b_0 \in \{0.1,\, 0.5,\, 1.0\}$. 
The values of $g$ were computed by applying~\eqref{Paper03_drift_K_1} 
to~\eqref{Paper03_integrand_understand_impact_slow_fluctuations_environment}, 
fixing different values of $\xi$ in panels~(a) ($\xi = 0.8$) and~(b) ($\xi = 1.2$).
For large values of $B$, the function $g$ becomes negative when the 
proportion of mutants $\rho_0$ is sufficiently high. 
The threshold value of $B$ for which this occurs depends on the 
population size (parameter $\xi$); 
indeed, in contrast to panel~(a), in panel~(b) the function $g$ is 
negative for $B = 0.5$ and $\rho_0 > 0.9$.}
\end{figure}

\begin{proposition}
\label{Paper03_interpretation_impact_dormancy_slow_fluctuations_population_size}
    Let $0 < \xi_{\min} < \xi_{\max}$. 
The map $g: \mathbb{S}^*_K \times [0,1] 
\times [\xi_{\min}, \xi_{\max}] \rightarrow \mathbb{R}$ 
in~\eqref{Paper03_integrand_understand_impact_slow_fluctuations_environment} 
satisfies the following conditions:
    \begin{enumerate}[(i)]
        \item For any $K \in \mathbb{N}$, 
$\mathbf{b} = (b_{i})_{i \in \llbracket K \rrbracket_0} \in \mathbb{S}^*_K$ 
and $\xi \in [\xi_{\min}, \xi_{\max}]$, we have
        \begin{equation*}
            g(\boldsymbol{b}, 0, \xi) = g(\boldsymbol{b}, 1, \xi) = 0.
        \end{equation*}
        \item For any $K \in \mathbb{N}$, 
$\mathbf{b} = (b_{i})_{i \in \llbracket K \rrbracket_0} \in \mathbb{S}^*_K$ 
and $0 < \xi_{\min} < \xi_{\max}$, there exists 
$\rho_c = \rho_c(K, \boldsymbol{b}, \xi_{\max}) \in (0,1)$ such that for 
all $\rho_0 \in (0,\rho_c)$ and $\xi \in [\xi_{\min}, \xi_{\max}]$, we have 
$g(\boldsymbol{b}, \rho_0,\xi) > 0$.
        \item There exists a decreasing function 
$\xi \in (0, \infty) \mapsto B_c(\xi) > 0$ satisfying 
        \begin{equation} 
\label{Paper03_limit_population_size_impact_beneficial_events_dormancy_fluctuations}
            \lim_{\xi \rightarrow \infty} B_c(\xi) = 0
        \end{equation}
        such that if $K \in \mathbb{N}$ and 
$\mathbf{b} = (b_{i})_{i \in \llbracket K \rrbracket_0} \in \mathbb{S}^*_K$ 
are chosen in such way that $B > B_c(\xi_{\min})$, then there is 
$\rho_c = \rho_c(\xi_{\min}) \in (0,1)$ such that for any 
$\xi \in [\xi_{\min}, \xi_{\max}]$, we have 
$g(\boldsymbol{b}, \rho_0, \xi) < 0$ for $\rho_0 \in (\rho_c, 1)$. 
    \end{enumerate}
\end{proposition}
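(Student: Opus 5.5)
Write $D(\rho_0)\defeq \frac{\partial^2\Phi_0}{\partial\rho_0^2}(\rho_0,\dots,\rho_0)$ and $h(\rho_0)\defeq B(1-\rho_0)+1$. The whole argument uses only Proposition~\ref{Paper03_bound_drift_general_K_proposition} for $D$, together with elementary algebra on the explicit terms of $g$.

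\textbf{Part (i).} At $\rho_0=0$ the prefactor $\rho_0$ in $g$ vanishes. At $\rho_0=1$ the first term in the brackets vanishes because of the factor $(1-\rho_0)$. The second term equals $\tfrac12\bigl(D(1)-2B/h(1)\bigr)=\tfrac12(2B-2B)=0$, by Proposition~\ref{Paper03_bound_drift_general_K_proposition}(iii) and $h(1)=1$.

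\textbf{Part (ii).} Write $g=\rho_0\,\tilde g$. Then
\[
\tilde g(\boldsymbol b,\rho_0,\xi)\geq \frac{B(1-\rho_0)}{h(\rho_0)\xi_{\max}}+\frac{\rho_0}{2}\Bigl(D(\rho_0)-\frac{2B}{h(\rho_0)}\Bigr),
\]
using only $\xi\le\xi_{\max}$ (note that $D$ may be negative, so the second term needs care). Since $D$ is continuous and bounded on $[0,1]$, the second term is at least $-C\rho_0$ for a constant $C$ depending on $K$ and $\boldsymbol b$. The first term tends to $B/((B+1)\xi_{\max})>0$ as $\rho_0\to 0$. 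Hence there is $\rho_c$ with $\tilde g>0$ on $(0,\rho_c)$, uniformly in $\xi$. This needs $B>0$. If $B=0$, meaning $b_0=1$ and there is no dormancy, then $\tilde g=\rho_0 D/2$, and we need $D\ge0$, which should follow from (ii) of Proposition~\ref{Paper03_bound_drift_general_K_proposition} giving $D\le 0$ and, I expect, $D\equiv0$. I will treat $B=0$ as the degenerate case: either $g\equiv 0$ there or the statement tacitly assumes $B>0$.

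\textbf{Part (iii).} Put $s=1-\rho_0$. Proposition~\ref{Paper03_bound_drift_general_K_proposition}(ii) gives
\[
D-\frac{2B}{h}\le \frac{B(Bs+2)-2B(Bs+1)^2}{(Bs+1)^3}=\frac{-Bs\,(2B^2s+3B)}{(Bs+1)^3}.
\]
Therefore
\[
\tilde g\le \frac{Bs}{Bs+1}\Bigl[\frac1\xi-\frac{\rho_0(2B^2s+3B)}{2(Bs+1)^2}\Bigr].
\]
As $s\to 0$ the bracket tends to $\frac1\xi-\frac{3B}{2}$. So if $B>B_c(\xi)\defeq \frac{2}{3\xi}$, the bracket is negative for $s$ small, and $g<0$ on $(\rho_c,1)$.

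This choice of $B_c$ is decreasing and tends to $0$ as $\xi\to\infty$. Since the bracket is increasing in $1/\xi$, the worst case over $\xi\in[\xi_{\min},\xi_{\max}]$ is $\xi=\xi_{\min}$, which is why the condition reads $B>B_c(\xi_{\min})$.

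\textbf{Main obstacle.} Making $\rho_c$ uniform, i.e.\ depending only on $\xi_{\min}$ as the statement asserts, is the delicate step. Choosing $\rho_c$ to depend also on $B$ is harmless if the statement is read with $\boldsymbol b$ fixed. For genuine uniformity, I would use that for $B>B_c(\xi_{\min})$ the bracket is bounded above by $\frac{1}{\xi_{\min}}-\frac{3B\rho_0}{2(Bs+1)^2}$. With $Bs\le K s$, this fails to give $K$-free bounds, so I would instead state $\rho_c$ as depending on $(B,\xi_{\min})$, and check continuity in $s$ near $0$ explicitly.
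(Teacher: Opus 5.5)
Your parts (i) and (ii) are correct and follow the paper's argument. In (ii), replacing $1/\xi$ by $1/\xi_{\max}$ in the positive first term is a clean way to get uniformity in $\xi$. You are also right that (ii) needs $B>0$: if $b_0=1$ then $F_0\equiv 0$, so $\Phi_0(\boldsymbol x)=x_0$ and $g\equiv 0$. The paper's proof uses $B>0$ tacitly as well.

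The gap is in (iii). The proposition asserts $\rho_c=\rho_c(\xi_{\min})$, independent of $K$ and $\boldsymbol b$; contrast the explicit $\rho_c(K,\boldsymbol b,\xi_{\max})$ in (ii). You end up conceding a $\rho_c$ that depends on $B$, so you prove a weaker statement.

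Your diagnosis of why uniformity fails is also off: no upper bound on $B$, and hence on $K$, is needed. The real obstruction is your threshold $B_c(\xi)=2/(3\xi)$. Your bracket at $s=0$ equals $1/\xi_{\min}-3B/2$, which tends to $0$ from below as $B\downarrow B_c(\xi_{\min})$. So the region where the bound gives negativity collapses to $\rho_0=1$, and no $B$-free $\rho_c$ can come out of it.

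The statement only asks for \emph{some} decreasing $B_c$ tending to $0$, so take it larger. Multiply your bracket by $2\xi(Bs+1)^2>0$. It is negative if and only if
\[
B^2 s\,(s-\rho_0\xi)+B\bigl(2s-\tfrac32\rho_0\xi\bigr)+1<0,
\]
which is exactly the paper's factorisation.

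If $\rho_0>\rho_c>1/(1+\xi_{\min})$, then $s-\rho_0\xi=1-\rho_0(1+\xi)<0$ for every $\xi\ge\xi_{\min}$. So the $B^2$ term is negative for all $B$, and large $B$ only helps.

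Now take the paper's choice $\rho_c=(4+\xi_{\min})/(4+3\xi_{\min})$. It gives $2s-\tfrac32\rho_0\xi=2-\rho_0\bigl(2+\tfrac32\xi\bigr)\le-\xi_{\min}/2$. Hence $B>2/\xi_{\min}$ makes the remaining terms negative as well.

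So $B_c(\xi)=2/\xi$ works, and the resulting $\rho_c$ depends on $\xi_{\min}$ alone.
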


Biologically, 
Proposition~\ref{Paper03_interpretation_impact_dormancy_slow_fluctuations_population_size} can be interpreted as follows:
\begin{enumerate}[(1)]
    \item By assertion~(ii), the fluctuations in the population size favour 
the growth of the proportion of mutants carrying the dormancy trait when this 
proportion is sufficiently low.
    \item By assertion~(iii), if the mean time to germination is too large, 
the fluctuations in the population size favour the coexistence of the mutant 
and wild types. This phenomenon resembles the case of fluctuating selection, 
where heterozygotes are favoured when compared to homozygotes 
(see e.g.~\cite{haldane1963polymorphism}).
\end{enumerate}

Our findings suggest that, at least within a certain range of parameters, 
fluctuations in population size favour the dormancy trait. However, there 
exists a critical age such that, if the mean germination 
time exceeds this critical value, the wild type is favoured when a large 
proportion of mutants is present, which is in agreement with the predictions 
made by Cohen in~\cite{cohen1966optimizing}. 
%To better understand how environmental variability influences the establishment of seed banks, we next consider a regime in which environmental conditions fluctuate on a timescale much faster than the evolutionary dynamics.

\subsection{Wright-Fisher model in a fast-changing environment} \label{subsection:WF_fast_env_model}

In our final regime, we consider the impact of very rapid fluctuations in the 
environment. We are going to assume that the total population size is
fixed, but that these fluctuations affect the ability of mature plants
to produce seed. 
%Both mutant and non-mutant plants will be affected in the
%same way. 
%In contrast to the slowly varying model introduced in Section~\ref{Paper03_subsection_WF_slow_environment}, 
%we now assume that the population size is fixed and equal to $N \in \mathbb{N}$ at all times. 
Let $M>0$ be a large parameter. %To incorporate fast environmental fluctuations,
We modify the model of 
Section~\ref{Paper03_WF_model_const_environ_description} as follows. 
Let $(\Upsilon_0^N(t))_{t \in \mathbb{N}_0}$ be a sequence of 
i.i.d.~random variables taking values in $\{-1,0,1\}$, 
representing annual environmental variations (such as rainfall and 
temperature fluctuations~\cite{davis2005evolutionary}). In generation 
$t \in \mathbb{N}_0$, each wild type individual contributes
\[
\Poiss\!\left( M \, (1 + s_N \, \Upsilon_0^N(t+1)) \right)
\]
seeds, while for $i \in \llbracket K \rrbracket_0$, each mutant in 
generation $t-i$ contributes
\[
\Poiss\!\left( M b_i \, (1 + s_N \, \Upsilon_0^N(t+1-i)) \right)
\]
to the pool of seeds ready to germinate in generation $t + 1$. Here, 
$s_N>0$ controls the magnitude of environmental variation. Observe that this 
model is analogous to the Wright-Fisher model with fluctuating selection 
(see~\cite[Section~3]{biswas2021spatial}), with the distinction that in our 
model, the total production of seeds of both wild type and mutant individuals 
living in the same generation is equally affected by the environment. Thus,
as before, there is no intrinsic advantage to either carrying or not
carrying the seed bank mutation.
In the classical case of the Wright-Fisher model with fluctuating selection, 
in order to obtain a diffusive limit, 
the selection parameter (which is equivalent to the parameter $s_N$ in our 
model) is usually assumed to be of order $N^{-1/2}$ as $N \rightarrow \infty$. 
Following 
this strategy, we next impose analogous conditions on $s_N$ and the sequence of 
random variables  $(\Upsilon^N_0(t))_{t \in \mathbb{N}_0}$ that will allow 
us to establish convergence of our model (after scaling) to an SDE.

\begin{assumption}[Dynamics of the fast-changing environment] 
\label{Paper03_assumption_fast_changing_environment} 
The sequence $(s_N)_{N \in \mathbb{N}}$ is such that for every 
$N \in \mathbb{N}$, $s_N \in [0,1)$ and there exists $s \in (0,\infty)$ 
such that
\begin{equation} \label{Paper03_limit_scaling_fluctuating_fitness_parameter_assumption}
    \lim_{N \rightarrow \infty} N^{1/2}s_N = s.
\end{equation}
Moreover, there exists $p \in [0,1/2]$ such that for every 
$N \in \mathbb{N}$, $(\Upsilon_0^N(t))_{t \in \mathbb{N}_0}$ is a sequence 
of i.i.d.~random variables satisfying
\begin{equation} 
\label{Paper03_probability_distribution_fast_environment_assumption}
    \mathbb{P}(\Upsilon_0^N(0) = -1) = \mathbb{P}(\Upsilon_0^N(0) = 1) = \frac{1 - \mathbb{P}(\Upsilon_0^N(0) = 0)}{2} = p. 
\end{equation}
\end{assumption}

As before, $X^N_i(t)$ will denote the number of mutant individuals in 
generation $t-i$ for $i \in \llbracket K \rrbracket_0$. We also write 
$\boldsymbol{\Upsilon}^N(t) 
\defeq (\Upsilon^N_i(t))_{i \in \llbracket K-1 \rrbracket_0}$, 
where $\Upsilon^N_i(t) \defeq \Upsilon^N_0(t-i)$ for 
$i \in \llbracket K-1 \rrbracket_0$. Similarly to the case of a constant 
environment, we define the Markov process 
$(\boldsymbol{X}^N(t), 
\boldsymbol{\Upsilon}^N(t))_{t \in \mathbb{N}_0}$ as follows. 
For any $t \in \mathbb{N}_0$, %and for any given 
%configuration $(\boldsymbol{X}^N(t),\boldsymbol{\Upsilon}^N(t))$:
\begin{enumerate}[(i)]
    \item Given $(\boldsymbol{X}^N(t),\boldsymbol{\Upsilon}^N(t))$, 
the conditional distribution of $\Upsilon^N_0(t+1)$ 
follows~\eqref{Paper03_probability_distribution_fast_environment_assumption}.
    \item The conditional distribution of $X^N_0(t+1)$ given 
$(\boldsymbol{X}^N(t),\boldsymbol{\Upsilon}^N(t), 
\Upsilon^N_0(t+1))$ is
    \begin{equation} 
\label{Paper03_binomial_WF_fast_fluctuating_environment}
    \textrm{Bin}\left(N, \, 
\frac{b_0(1 + s_N\Upsilon^N_0(t+1))X^N_0(t) 
+ \sum_{i = 1}^{K}b_{i}(1 + s_N\Upsilon^N_{i-1}(t)) 
X^{N}_{i}(t)}{\left(N - X^{N}_{0}(t) + b_0X^N_0(t)\right) 
(1 + s_N\Upsilon^N_0(t+1)) 
+ \sum_{i = 1}^{K}b_{i} 
(1 + s_N\Upsilon^N_{i-1}(t))X^{N}_{i}(t)}\right).
    \end{equation}
\end{enumerate}
In~\eqref{Paper03_binomial_WF_fast_fluctuating_environment}, 
$(1 + s_N\Upsilon^N_0(t+1))$ reflects the environmental influence 
on the production of seeds by individuals in generation $t$, and 
$(1 + s_N\Upsilon^N_{i-1}(t))$ reflects the  environmental 
influence on seed production in generation $t - i$, for 
$i \in \llbracket K \rrbracket$.

We will define a suitably scaled process and show that it converges to an 
SDE as $N \to \infty$. The main difficulty here is the presence of three 
contributions to the dynamics, corresponding respectively to evolutionary 
change due to genetic drift, dormancy in the 
seed bank, and the fast environmental fluctuations. The usual approach to
identifying a diffusion limit for 
the Wright-Fisher model with fluctuating selection is based on an averaging 
of the action of the infinitesimal generator over the environmental 
fluctuations (see~\cite{kurtz1991averaging,biswas2021spatial}). The 
difficulty of applying this technique to our setting is that, as explained 
in Sections~\ref{Paper03_diffusion_phase_constant_environment} 
and~\ref{Paper03_subsection_WF_slow_environment}, it is not possible to 
establish the diffusive limit in the Wright-Fisher model with seed bank 
through a generator computation. However, as we shall see, Katzenberger's
result will still apply, essentially because, much as in the case of 
fluctuating selection, we choose $s_N$ in such a way that
the amplitude of the 
fast environmental fluctuations tends to zero and the 
corresponding quadratic variation process 
converges over evolutionary timescales.
%We will avoid this technical challenge by 
%defining the scaled process in such way that the environmental fluctuations 
%will happen on the same timescale as dormancy. 
%\textcolor{red}{I don't understand what you mean here.}
Formally, for each 
$N \in \mathbb{N}$, we define the process 
$(\boldsymbol{x}^N(t), \boldsymbol{\upsilon}^N(t))_{t \geq 0}$ by
\begin{equation} \label{Paper03_scaling_fast_environment}
    \boldsymbol{x}^N(t) \defeq \frac{\boldsymbol{X}^{N}(\lfloor Nt \rfloor)}{N} \quad 
\textrm{ and } \quad \boldsymbol{\upsilon}^{N}(t) 
\defeq s_N\boldsymbol{\Upsilon}^{N}(\lfloor Nt \rfloor).
\end{equation}
Let $\{\mathcal{F}^N_t\}_{t \geq 0}$ be the natural filtration of the 
process $(\boldsymbol{x}^N(t), \boldsymbol{\upsilon}^N(t))_{t \geq 0}$. 
Before stating the main convergence theorem of this section, we record the 
conditional expectations of the componentwise increments of the process 
$(\boldsymbol{x}^N(t), \boldsymbol{\upsilon}^N(t))_{t \geq 0}$.

\begin{lemma} 
\label{Paper03_lemma_computation_conditional_increments_fast_environment}
    For all $N \in \mathbb{N}$ and $t \in [0, +\infty)$, we have
    \begin{equation} \label{Paper03_conditional_increment_x_0_fast_environment}
    \begin{aligned}
        & \mathbb{E}\left[\left. x^N_0\left(t + \tfrac{1}{N}\right) - x^N_0(t) 
\right\vert \mathcal{F}^N_t\right] \\ 
& \quad = \frac{2ps_N^2(1 -x^N_0(t))\Big((1 -x^N_0(t)) 
+ b_0x^N_0(t)\Big)\left(\sum_{i = 1}^K 
b_i(1 + \upsilon^N_{i-1}(t))x^N_{i}(t)\right)}{\left[h\left(\boldsymbol{x}^N(t),
\boldsymbol{\upsilon}^N(t)\right)^2 - s_N^2(1 - (1-b_0)x^N_0(t))^2
\right]\cdot h\left(\boldsymbol{x}^N(t),\boldsymbol{\upsilon}^N(t)\right)} 
\\ 
& \quad \quad \quad + 
\frac{(1 - x^N_0(t))\left(\sum_{i = 1}^K 
b_i(1 + \upsilon^N_{i-1}(t))x^N_i(t) - (1 - b_0)x^N_0(t)\right)}{h\left(
\boldsymbol{x}^N(t),\boldsymbol{\upsilon}^N(t)\right)},
    \end{aligned}
    \end{equation}
where, for any $(\boldsymbol{x},\boldsymbol{\upsilon}) 
\in [0,1]^{K+1} \times (-1,1)^{K}$,
    \begin{equation} 
\label{Paper03_simplified_notation_fast_environment_drift}
        h(\boldsymbol{x},\boldsymbol{\upsilon}) \defeq \left(1 - x_0\right) 
+ b_0x_0 + \sum_{i = 1}^K b_i(1 + \upsilon_{i-1}(t))x_i.
    \end{equation}
Moreover, for $i \in \llbracket K \rrbracket$,
    \begin{equation} \label{Paper03_conditional_increment_x_i_fast_environment}
        \mathbb{E}\Bigg[x^N_i\left(t + \tfrac{1}{N}\right) - x^N_i(t) \Bigg\vert \mathcal{F}^N_t\Bigg] = x^N_{i -1}(t) - x^N_{i}(t),
    \end{equation}
    and for $i \in \llbracket K -1 \rrbracket_{0}$,
    \begin{equation} 
\label{Paper03_conditional_increment_upsilon_i_fast_environment}
        \mathbb{E}\Bigg[\upsilon^N_{i}\left(t + \tfrac{1}{N}\right) 
- \upsilon^N_{i}(t) \Bigg\vert \mathcal{F}^N_t\Bigg] 
= \mathds{1}_{\{i \geq 1\}}\upsilon^N_{i -1}(t) - \upsilon^N_{i}(t).
    \end{equation}
\end{lemma}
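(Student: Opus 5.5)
The plan is a direct computation. The identities~\eqref{Paper03_conditional_increment_x_i_fast_environment} and~\eqref{Paper03_conditional_increment_upsilon_i_fast_environment} are immediate from the definitions. Indeed, $X^N_i(t+1)=X^N_{i-1}(t)$ for $i\in\llbracket K\rrbracket$, and $\Upsilon^N_i(t+1)=\Upsilon^N_{i-1}(t)$ for $i\ge 1$. For $i=0$, the variable $\Upsilon^N_0(t+1)$ is independent of $\mathcal{F}^N_t$ and has mean zero by~\eqref{Paper03_probability_distribution_fast_environment_assumption}. After the scaling~\eqref{Paper03_scaling_fast_environment}, these facts give the stated increments. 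The substance of the proof is therefore~\eqref{Paper03_conditional_increment_x_0_fast_environment}.

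For the $x_0$ coordinate I would condition in two stages. First condition on $\sigma(\mathcal{F}^N_t,\Upsilon^N_0(t+1))$ and use that the binomial mean divided by $N$ is the success probability in~\eqref{Paper03_binomial_WF_fast_fluctuating_environment}. Abbreviate
\[
a\defeq (1-x_0)+b_0x_0=1-(1-b_0)x_0,\qquad c\defeq \sum_{i=1}^K b_i(1+\upsilon_{i-1})x_i,
\]
so that $h=a+c$ by~\eqref{Paper03_simplified_notation_fast_environment_drift}. Set $u\defeq 1+s_N\Upsilon^N_0(t+1)$. The success probability is then $f(u)\defeq (b_0x_0u+c)/(au+c)$.

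The key algebraic step is the decomposition
\[
f(u)=\frac{b_0x_0}{a}+\frac{c(1-x_0)}{a}\cdot\frac{1}{au+c},
\]
which uses $a-b_0x_0=1-x_0$. This isolates all dependence on $u$ in the term $1/(au+c)$.

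Next I would average over $\Upsilon^N_0(t+1)$, which takes the values $\pm1$ with probability $p$ each and $0$ otherwise. This gives
\[
\mathbb{E}f=f(1)+p\big[f(1+s_N)+f(1-s_N)-2f(1)\big].
\]
Two pieces then need to be computed. The first is
\[
f(1)-x_0=\frac{(1-x_0)\big(c-(1-b_0)x_0\big)}{h},
\]
which is a short expansion and yields the second term of~\eqref{Paper03_conditional_increment_x_0_fast_environment}. The second piece is the second difference
\[
\frac{1}{h+as_N}+\frac{1}{h-as_N}-\frac{2}{h}=\frac{2a^2s_N^2}{h\,(h^2-a^2s_N^2)}.
\]
Multiplying this by $c(1-x_0)/a$ and by $p$ gives exactly the first term of~\eqref{Paper03_conditional_increment_x_0_fast_environment}.

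The only point needing care is that all denominators are nonzero. Since $b_0>0$, we have $a\ge b_0>0$. Since $s_N<1$ and $\upsilon_{i-1}>-1$, we have $c\ge0$, so $h\ge a>0$ and $h^2-a^2s_N^2\ge a^2(1-s_N^2)>0$. I expect no real obstacle beyond keeping track of the algebra in the second-difference step.
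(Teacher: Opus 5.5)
Your argument is correct and is essentially the paper's proof. You condition on $\mathcal{F}^N_t$ together with the fresh environment variable $\upsilon^N_0(t+\tfrac1N)$, use the binomial mean, and average the resulting rational function over the three-point law via the tower property. The only difference is that the paper delegates this averaging to the general identity~\eqref{Paper03_eq:elementary_lemma_first_moment_fast_environment} of Lemma~\ref{Paper03_elementary_lemma_moments_fast_environment}, with $f_1=b_0x_0+c$, $f_2=b_0x_0$, $f_3=h$, $f_4=a$, proved by an equivalent partial-fraction manipulation. You instead carry out the computation inline and check explicitly that all denominators are positive. That is slightly cleaner, because $h$ can exceed $1$, so the lemma's stated hypothesis $f_3\in[0,1]$ is not literally satisfied.
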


Lemma~\ref{Paper03_lemma_computation_conditional_increments_fast_environment} 
will be proved in 
Section~\ref{Paper03_subsection_diffusion_proofs_fast_environment}. As in the 
cases of constant and slowly changing population sizes studied so far, we 
are going to see a separation of timescales. The stochastic process 
$(\boldsymbol{x}^N(t),\boldsymbol{\upsilon}^N(t))_{t \geq 0}$ will fluctuate 
around the deterministic flow determined by the ODE that arises as a limit 
of~\eqref{Paper03_conditional_increment_x_0_fast_environment},~\eqref{Paper03_conditional_increment_x_i_fast_environment} 
and~\eqref{Paper03_conditional_increment_upsilon_i_fast_environment} with 
time accelerated by a factor $N$. Just as in the 
case of a slowly varying population size we were able to discard the
$\mathcal{O}(1/N)$ increments of population size in writing down the 
deterministic flow,  
here~\eqref{Paper03_limit_scaling_fluctuating_fitness_parameter_assumption}, 
which tells us that the fluctuation of the seed production over a single 
generation is of order $N^{-1/2}$, will imply that in the fast 
timescale of the ODE flow the random fluctuation of seed production over 
a single 
generation will be negligible, although we still must record the 
fluctuations that happened over the past $K$ generations. 
As a result, the (accelerated) ODE will be determined by the second term on 
the right-hand side 
of~\eqref{Paper03_conditional_increment_x_0_fast_environment}, and by the 
right-hand sides of~\eqref{Paper03_conditional_increment_x_i_fast_environment} 
and~\eqref{Paper03_conditional_increment_upsilon_i_fast_environment}. 
In other words, with a fast-changing environment, we should consider the flow 
field 
$\boldsymbol{F}^{\textrm{fast}} 
= (F^{\textrm{fast}}_0, \cdots, F^{\textrm{fast}}_K, 
F^{\textrm{fast}}_{\textrm{env},0}, \cdots, 
F^{\textrm{fast}}_{\textrm{env},(K-1)}): [0,1]^{K+1} \times [-1,1]^K 
\rightarrow \mathbb{R}^{2K +1}$ given by
\begin{equation}
\label{Paper03_flow_definition_WF_model_fast_changing_environment}
    \left\{
    \arraycolsep=1.4pt\def\arraystretch{2.2}
    \begin{array}{cl} 
    F^{\textrm{fast}}_{0}( \boldsymbol{x}, \boldsymbol{\upsilon}) 
& \defeq \displaystyle{\frac{\left(1 - x_0\right)\left(\sum_{i = 1}^K b_i(1 + \upsilon_{i-1}(t))x_i - (1 - b_0)x_0\right)}{(1 -x_0) + b_0x_0 + \sum_{i = 1}^K b_i(1 + \upsilon_{i-1})x_i}}, \\
    F^{\textrm{fast}}_{i}(\boldsymbol{x}, \boldsymbol{\upsilon}) & \defeq x_{i - 1} - x_{i} 
\quad \forall \, i \in \llbracket K \rrbracket, \\ 
F^{\textrm{fast}}_{\textrm{env},i}(\boldsymbol{x}, \boldsymbol{\upsilon}) & \defeq \mathds{1}_{\{i \geq 1\}}\upsilon_{i -1} - \upsilon_{i} \quad \forall \, i \in \llbracket K -1 \rrbracket_0.
    \end{array}\right.
\end{equation}
Comparing~\eqref{Paper03_flow_definition_WF_model_fast_changing_environment} 
to the flow field $\boldsymbol{F} 
= (F_i)_{i \in \llbracket K \rrbracket_0}: [0,1]^{K+1} \rightarrow \mathbb{R}$ 
given by~\eqref{Paper03_flow_definition_WF_model}, which defines the ODE 
arising from the Wright-Fisher model in a constant environment, we observe 
that unlike the case of a slowly varyng population size, 
the fast environmental fluctuations impact the flow in a way that cannot 
be encoded as a simple time change.
%, as in the case of the ODE arising from 
%the Wright-Fisher model with slowly varying populaion size
%(see the discussion 
%after~\eqref{Paper03_flow_definition_WF_model_slowly_changing_environment}). 
Nevertheless,~\eqref{Paper03_flow_definition_WF_model_fast_changing_environment}
indicates that $\boldsymbol \upsilon^N$ should converge to $0$ as $N \rightarrow \infty$, 
so that we should still be able to apply Katzenberger's machinery in this 
context. Recall that $\Gamma$ is the diagonal of the $(K+1)$-dimensional 
hypercube, and the attractor manifold associated to the ODE with flow given 
by $\boldsymbol{F}$. Set
\begin{equation} 
\label{Paper03_attractor_manifold_fast_environment}
    \Gamma^{\, \textrm{fast}} \defeq \Big\{(\boldsymbol{x},
\boldsymbol{\upsilon}) \in [0,1]^{K+1} \times [-1,1]^{K}: \, 
\boldsymbol{F}^{\, \textrm{fast}}(\boldsymbol{x},\boldsymbol{\upsilon}) 
= 0\Big\} = \Gamma \times \{0\}^{K}.
\end{equation}
Analogous to the definition of $\boldsymbol{\Phi}$ in~\eqref{Paper03_definition_projection_map}, we define
\begin{equation*}
    \boldsymbol{\Phi}^{\textrm{fast}} = (\Phi^{\textrm{fast}}_0, \cdots, \Phi^{\textrm{fast}}_K, 0, \cdots, 0): [0,1]^{K+1} \times [-1,1]^{K} \rightarrow \Gamma^{\, \textrm{fast}}
\end{equation*}
to be the projection map associated with the ODE corresponding to the flow 
field $\boldsymbol{F}^{\textrm{fast}}$. % Once again, we shall use 
%Katzenberger's method 
%(explained in Section~\ref{Paper03_subsection_katzenberger_overview}) 
%to derive the limiting SDE for the proportion of mutants in the population. 
In what follows, %to avoid introducing excessive notation, we also let 
$\boldsymbol{0}$ denotes the $K$-dimensional null vector. 

\begin{theorem} 
\label{Paper03_thm_convergence_diffusion_fast_changing_environment}
    Suppose that $(\boldsymbol{x}^N(0), \boldsymbol{\upsilon}^N(0))$ converges 
to $(x(0), 0) \in \Gamma^{\, \textrm{fast}}$ as $N \rightarrow \infty$, and that 
Assumption~\ref{Paper03_assumption_fast_changing_environment} holds. Then the 
sequence of processes 
$\left((\boldsymbol{x}^{N}(t), \boldsymbol{\upsilon}^{N}(t))_{t \geq 0}\right)_{N \in \mathbb{N}}$ 
converges in distribution in 
$\mathscr{D}\left([0, \infty), [0, 1]^{K+1} 
\times [-1,1]^K \right)$ 
to a diffusion process with sample paths 
in $\Gamma^{\; \textrm{fast}}$, 
i.e.~$(\boldsymbol{\upsilon}^N)_{N \in \mathbb{N}}$ converges to 
$\boldsymbol{0}$, and $(\boldsymbol{x}^N)_{N \in \mathbb{N}}$ converges to 
a diffusion $(\boldsymbol{x}(t))_{t \geq 0}$ in $\Gamma$ such that 
$(x_{0}(t))_{t \geq 0}$ 
satisfies the stochastic differential equation
\begin{equation} \label{Paper03_limiting_SDE_fast_env}
\begin{aligned}
    x_0(T) & = \int_{0}^T \left[\frac{x_0(1-x_0)}{2}
\frac{\partial^2 \Phi_0}{\partial x_0^2}(\boldsymbol{x}) 
\left(1 + 2ps^2(1-b_0)^2x_0(1-x_0)\right) + 
ps^2 \frac{\partial^2 \Phi^{\, \textrm{fast}}_0}{\partial \upsilon_0^2} 
(\boldsymbol{x}, \boldsymbol{0}) \right] dt 
\\ 
& \quad \quad + 2ps^2(1-b_0) \int_{0}^T {x_0(1-x_0)} 
\Bigg[\frac{((1-x_0) + b_0x_0)}{(B(1-x_0)+1)} 
- \frac{\partial^2 \Phi^{\, \textrm{fast}}_0}{\partial x_0 \partial \upsilon_0}
 (\boldsymbol{x}, \boldsymbol{0}) \Bigg] dt 
\\ 
& \quad \quad + \int_0^T \frac{\sqrt{x_{0}(1 - x_{0})}}{(B(1 - x_{0}) + 1)} 
\, dW_{0}(t),
\end{aligned}
\end{equation}
where $(W_0(t))_{t \geq 0}$ is a standard Brownian motion.
\end{theorem}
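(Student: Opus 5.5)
We will apply Katzenberger's theorem (in the form recalled in Section~\ref{Paper03_subsection_katzenberger_overview}, allowing a well-behaved semimartingale $z^N$ as in the slowly varying case) to the $(2K+1)$-dimensional process $\boldsymbol{y}^N \defeq (\boldsymbol{x}^N,\boldsymbol{\upsilon}^N)$. We write
\[
\boldsymbol{y}^N(T)=\boldsymbol{y}^N(0)+\boldsymbol{Z}^N(T)+\int_0^T \boldsymbol{F}^{\textrm{fast}}(\boldsymbol{y}^N)\,d\lfloor Nt\rfloor .
\]
Here $\boldsymbol{Z}^N$ has three parts. The first is the binomial sampling martingale $M^N_0$ in the $x_0$ coordinate, with $d\langle M^N_0\rangle \approx x_0(1-x_0)dt$. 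The second is the environmental noise in the $\upsilon_0$ coordinate, namely $\upsilon^N_0(t+1/N) = s_N\Upsilon_0^N(\lfloor Nt\rfloor+1)$. This is a sum of i.i.d.\ increments of size $s_N=\mathcal{O}(N^{-1/2})$, so its quadratic variation converges to $2ps^2\,t$. The third is the finite-variation drift coming from the first term of \eqref{Paper03_conditional_increment_x_0_fast_environment}, which is $\mathcal{O}(s_N^2)=\mathcal{O}(N^{-1})$ per step and therefore $\mathcal{O}(1)$ over evolutionary time. Lemma~\ref{Paper03_lemma_computation_conditional_increments_fast_environment} supplies all of these conditional increments.

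We then verify Katzenberger's hypotheses. First, $\Gamma^{\textrm{fast}}=\Gamma\times\{0\}^K$ is a manifold of fixed points. Second, the Jacobian of $\boldsymbol{F}^{\textrm{fast}}$ there is block triangular: the $\boldsymbol{\upsilon}$-block is the nilpotent shift minus the identity, so all its eigenvalues equal $-1$, and the $\boldsymbol{x}$-block is the Jacobian of $\boldsymbol{F}$ on $\Gamma$, whose nonzero eigenvalues have negative real part by Lemma~\ref{Paper03_eigenvalue_condition_constant_environment}. Third, we need tightness of total variations, uniformly bounded jumps tending to $0$, and boundedness of $\boldsymbol{\upsilon}$ (since $|\upsilon|\le s_N<1$). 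It follows that $\boldsymbol{\Phi}^{\textrm{fast}}$ is $C^2$ near $\Gamma^{\textrm{fast}}$, and that $\boldsymbol{y}^N$ converges to the process $\boldsymbol{\Phi}^{\textrm{fast}}(\boldsymbol{y})$, which satisfies the projected SDE
\[
d\Phi_0=\sum_j\partial_j\Phi^{\textrm{fast}}_0\,dZ_j+\tfrac12\sum_{j,k}\partial_{jk}\Phi^{\textrm{fast}}_0\,d[Z_j,Z_k].
\]
In particular $\boldsymbol{\upsilon}^N\to\boldsymbol 0$.

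Next we identify each term. Because the environment variables enter the $x_0$ equation only through $\upsilon_{i-1}$ multiplying $x_i$, and because $\boldsymbol{F}^{\textrm{fast}}$ restricted to $\{\boldsymbol{\upsilon}=0\}$ equals $\boldsymbol{F}$, we have $\partial_{x_0}\Phi^{\textrm{fast}}_0=\partial_{x_0}\Phi_0=1/(B(1-x_0)+1)$ and $\partial^2_{x_0}\Phi^{\textrm{fast}}_0=\partial^2_{x_0}\Phi_0$ on $\Gamma$, using \eqref{Paper03_first_order_derivatives_projection}. Together with $d\langle M_0\rangle=x_0(1-x_0)dt$, this produces the Brownian term and the $\frac{x_0(1-x_0)}2\partial^2_{x_0}\Phi_0$ drift. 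The $\upsilon_0$ noise has quadratic variation $2ps^2dt$, giving the term $ps^2\partial^2_{\upsilon_0}\Phi^{\textrm{fast}}_0$. The first-order term $\partial_{\upsilon_0}\Phi^{\textrm{fast}}_0\,dZ_{\upsilon_0}$ must vanish or average out in the limit; we expect $\partial_{\upsilon_0}\Phi^{\textrm{fast}}_0=0$ on $\Gamma^{\textrm{fast}}$, since a perturbation of $\upsilon$ alone at a point of $\Gamma$ leaves the flow on $\Gamma$, because $\upsilon$ multiplies $x_i-x_{i-1}$-type differences only to second order. This needs checking, and an analogous term should survive via the cross-variation.

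The remaining contributions account for the terms involving $2ps^2(1-b_0)$. One source is the covariation $d[M_0,Z_{\upsilon_0}]$. Within a single step, $\Upsilon_0^N(t+1)$ enters the binomial probability \eqref{Paper03_binomial_WF_fast_fluctuating_environment}, so $x_0$ and $\upsilon_0$ jump together. A first-order expansion in $s_N$ gives a correlated deterministic jump of size $s_N\Upsilon\cdot(1-b_0)x_0(1-x_0)/(\cdots)$ in $x_0$, producing the mixed-derivative term $-2ps^2(1-b_0)x_0(1-x_0)\partial^2_{x_0\upsilon_0}\Phi^{\textrm{fast}}_0$. Precisely how to split this jump between $Z$ and the flow requires care. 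The other source is the $\mathcal{O}(s_N^2)$ drift in \eqref{Paper03_conditional_increment_x_0_fast_environment}, evaluated on $\Gamma$ and multiplied by $\partial_{x_0}\Phi_0$. On $\Gamma$ with $\boldsymbol\upsilon=0$ we have $h=1$ and $\sum_{i\ge1}b_ix_i=(1-b_0)x_0$, so this yields $2ps^2(1-b_0)x_0(1-x_0)((1-x_0)+b_0x_0)/(B(1-x_0)+1)$. Quadratic jump terms in $x_0$ of order $s_N^2$ combine with $\partial^2_{x_0}\Phi_0$ to give the factor $2ps^2(1-b_0)^2x_0(1-x_0)$.

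The main obstacle is this bookkeeping. The $\upsilon$-driven jump in $x_0$ is $\mathcal{O}(N^{-1/2})$, so it is not small enough to place in a finite-variation part. It has to be treated as part of the semimartingale noise that is correlated with the environment, and then all second-order Itô and covariation terms have to be matched exactly. We will try to do this first by explicitly decomposing $X_0^N(t+1)/N - x_0^N(t)$ into three pieces: the binomial noise, a term $s_N\Upsilon\,c(\boldsymbol{x},\boldsymbol{\upsilon})$, and a remainder of order $s_N^2$. We will then check the convergence of each joint quadratic variation.
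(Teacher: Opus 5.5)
Your overall route is the paper's. The three-piece decomposition in your last paragraph is exactly the paper's $M^N_0$ (binomial noise), $M^N_{\textrm{env}}$ (the $\mathcal{O}(s_N)$ environmental jump in $x_0$) and $A^N_0$ (the $\mathcal{O}(s_N^2)$ drift), together with $\widetilde{M}^N_{\textrm{env}}$ in the $\upsilon_0$ coordinate, all fed into Katzenberger's theorem. Your invariance argument for $\partial_{x_0}\Phi^{\,\textrm{fast}}_0=\partial_{x_0}\Phi_0$ and $\partial^2_{x_0}\Phi^{\,\textrm{fast}}_0=\partial^2_{x_0}\Phi_0$ is correct: $\{\boldsymbol{\upsilon}=\boldsymbol{0}\}$ is invariant and the flow there is $\mathbf{F}$. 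It is also simpler than the paper's Lyapunov-block argument.

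The genuine gap is your expectation that $\partial_{\upsilon_0}\Phi^{\,\textrm{fast}}_0=0$ on $\Gamma^{\,\textrm{fast}}$, which is false. In $F^{\,\textrm{fast}}_0$, $\upsilon_{i-1}$ multiplies $b_ix_i$, not a difference. So at a point of $\Gamma^{\,\textrm{fast}}$ you get $\partial F^{\,\textrm{fast}}_0/\partial\upsilon_{i-1}=b_ix(1-x)\neq 0$, as the first row of \eqref{Paper03_Jacobian_flow_fast_environment} shows. A perturbation of $\boldsymbol{\upsilon}$ alone therefore pushes $x_0$ off the diagonal at first order. The left null vector of the Jacobian has $\upsilon_0$-entry $(1-b_0)x(1-x)/(B(1-x)+1)$, so \eqref{Paper03_formula_Parsons_Rogers_first_order_derivatives} gives
\[
\frac{\partial\Phi^{\,\textrm{fast}}_0}{\partial\upsilon_0}(\boldsymbol{x},\boldsymbol{0})=\frac{(1-b_0)x(1-x)}{B(1-x)+1}=(1-b_0)x(1-x)\,\frac{\partial\Phi^{\,\textrm{fast}}_0}{\partial x_0}(\boldsymbol{x},\boldsymbol{0}).
\]

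This nonzero derivative is exactly what removes the environmental noise from the limit, and your proposal does not account for that. Differentiate the sampling probability in \eqref{Paper03_binomial_WF_fast_fluctuating_environment} with respect to the new environment at $\Gamma$: the result is $-(1-b_0)x(1-x)$. Hence, in the limit, $M_{\textrm{env}}=-(1-b_0)\int x_0(1-x_0)\,d\widetilde{M}_{\textrm{env}}$, and the two first-order terms in Katzenberger's formula cancel exactly:
\[
\frac{\partial\Phi^{\,\textrm{fast}}_0}{\partial x_0}\,dM_{\textrm{env}}+\frac{\partial\Phi^{\,\textrm{fast}}_0}{\partial\upsilon_0}\,d\widetilde{M}_{\textrm{env}}=0.
\]
Intuitively, a good year lowers the mutant share among seeds germinating now but boosts the mutant seeds entering the bank, and the projection sees no net effect.

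With your value $\partial_{\upsilon_0}\Phi^{\,\textrm{fast}}_0=0$, once you correctly place the $\mathcal{O}(N^{-1/2})$ environmental jump of $x_0$ in the semimartingale, you are left with a spurious martingale $\partial_{x_0}\Phi_0\,dM_{\textrm{env}}$. Its quadratic variation is $2ps^2(1-b_0)^2\int_0^T x_0^2(1-x_0)^2(B(1-x_0)+1)^{-2}\,dt$, which contradicts the statement. Dropping this term instead would be inconsistent with keeping its second-order contributions.

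Those second-order contributions are:
\begin{itemize}
\item $[M_{\textrm{env}}]=2ps^2(1-b_0)^2\int x_0^2(1-x_0)^2\,dt$, the extra factor on $\partial^2_{x_0}\Phi_0$;
\item $[\widetilde{M}_{\textrm{env}}]=2ps^2T$;
\item $[M_{\textrm{env}},\widetilde{M}_{\textrm{env}}]=-2ps^2(1-b_0)\int x_0(1-x_0)\,dt$, which gives the mixed-derivative term.
\end{itemize}
The mixed term comes from $M_{\textrm{env}}$, not from the binomial martingale as your wording $d[M_0,Z_{\upsilon_0}]$ suggests. Indeed $\langle M^N_0,\widetilde{M}^N_{\textrm{env}}\rangle=\langle M^N_0,M^N_{\textrm{env}}\rangle=0$, because the binomial noise has conditional mean zero given the new environment.
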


Comparing~\eqref{Paper03_limiting_SDE_fast_env} with the diffusion limit 
of the Wright--Fisher model in a constant 
environment~\eqref{Paper03_SDE_WF_constant_environment}, we see that the 
noise term in~\eqref{Paper03_limiting_SDE_fast_env} arises solely from 
genetic sampling. In the fast–changing regime the environmental fluctuation 
acts uniformly on all individuals of a generation, and in the diffusion 
limit this common factor averages out. This differs from classical 
fluctuating selection models, in which individuals of different genetic types
are affected differently by a given environment, and the environmental noise 
survives in the limit.

Nevertheless, the fast environment influences the limiting SDE through 
additional drift terms, since rapid fluctuations alter the forcing flow of 
the Wright-Fisher system. Although the %resulting 
flow~$\boldsymbol{F}^{\,\textrm{fast}}$ is too complex to analyse explicitly 
for general $K$, our methods allow us to compute the second-order derivatives 
of the projection map~$\boldsymbol{\Phi}^{\,\textrm{fast}}_{0}$ that 
appear on the right-hand side of~\eqref{Paper03_limiting_SDE_fast_env} for 
every specific $K \in \mathbb{N}$. The expressions become very complex and
so for biological interpretation, we 
focus on the case $K=1$.

\subsubsection{Biological interpretation for the case $K = 1$} 
\label{Paper03_sec_particular_analysis_fast_env_K_1}

%To better understand the impact of fast environmental fluctuations on the 
%fixation of seed banks in plants, we will restrict our attention to the 
%case $K=1$. 
Recall the formula for $\frac{\partial^2 \Phi_0}{\partial x_0^2}$ 
stated in~\eqref{Paper03_drift_K_1} for $K=1$. By using Parsons and Rogers 
representation of the second order derivatives of the projection map introduced
in~\cite{parsons2017dimension}, and our formula for the quadratic approximation
of the non-linear stable manifold 
(see Theorem~\ref{Paper03_prop_uniqueness_quadratic_term_nonlinear_manifold}), 
in Section~\ref{Paper03_sec_comp_lemmas_fast_env}
we will establish the
following result.

\begin{lemma} \label{Paper03_lem:explicit_drifdt_fast_env}
    Under the assumption that $K=1$, the following formulae hold for 
any $\boldsymbol{x} \in \Gamma$:
    \begin{equation*}
    \begin{aligned}
        \frac{\partial^2 \Phi^{\, \textrm{fast}}_0}{\partial x_0 \partial \upsilon_0} (\boldsymbol{x}, \boldsymbol{0}) & = \frac{(1-b_0)^2(1-x_0)+(1-b_0)(1-2x)}{((1-b_0)(1-x_0) + 1)^3} \\ & \quad \; - \frac{(1-b_0)^2(1-x_0)^2\Big[(1-b_0)^2x_0^2-(1-b_0)(4-b_0)x_0+(2-b_0)\Big]}{((1-b_0)(1-x_0) + 1)^3((1-b_0)(1-x_0)+2)},
    \end{aligned}
    \end{equation*}
    and
    \begin{equation*}
    \begin{aligned}
        \frac{\partial^2 \Phi^{\, \textrm{fast}}_0}{\partial \upsilon_0^2} (\boldsymbol{x}, \boldsymbol{0}) & = \frac{2(1-b_0)^2x_0(1-x_0)\left[(1-b_0)(1-x_0)^2+(1-2x_0)\right]}{((1-b_0)(1-x_0) + 1)^3} \\ & \quad \; + \frac{(1-b_0)^2x_0(1-x_0)^2\Big[(1-b_0)^2x_0^2-(1-b_0)(5-b_0)x_0+(4-b_0)\Big]}{((1-b_0)(1-x_0) + 1)^3((1-b_0)(1-x_0)+2)}.
    \end{aligned}
    \end{equation*}
\end{lemma}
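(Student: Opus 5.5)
For $K=1$ the state is $(x_0,x_1,\upsilon_0)\in\mathbb{R}^3$ and the attractor is $\Gamma^{\,\textrm{fast}}=\{(x,x,0)\}$. I will compute the second derivatives of $\Phi^{\,\textrm{fast}}_0$ at points of $\Gamma^{\,\textrm{fast}}$ with the Parsons--Rogers representation from Section~\ref{Paper03_subsection_computing_derivatives}. That representation expresses $\partial^2\Phi_0$ through three ingredients, all evaluated at a point $\boldsymbol{p}=(x,x,0)$:
\begin{itemize}
\item the Jacobian $J=D\boldsymbol{F}^{\,\textrm{fast}}(\boldsymbol{p})$;
\item the left null vector of $J$, which gives the gradient $\nabla\Phi_0$ on $\Gamma^{\,\textrm{fast}}$;
\item the quadratic coefficients of the nonlinear stable manifold through $\boldsymbol{p}$.
\end{itemize}
The quadratic coefficients come from Theorem~\ref{Paper03_prop_uniqueness_quadratic_term_nonlinear_manifold}, as the unique solution of a linear system.

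\textbf{Step 1: linearise.} Writing $\beta=1-b_0$, $F^{\,\textrm{fast}}_0$ at $\upsilon_0=0$ coincides with $F_0$. A direct differentiation at $\boldsymbol{p}$ gives
\[
\partial_{x_0}F^{\,\textrm{fast}}_0=-(1-x)\beta,\qquad \partial_{x_1}F^{\,\textrm{fast}}_0=(1-x)\beta,\qquad \partial_{\upsilon_0}F^{\,\textrm{fast}}_0=(1-x)\beta x .
\]
Here I use that the denominator equals $1$ on $\Gamma$ and that $b_1=\beta$. The remaining rows are $(1,-1,0)$ for $F_1$ and $(0,0,-1)$ for $F_{\textrm{env},0}$.

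Hence $J$ has a simple zero eigenvalue with right eigenvector $(1,1,0)$, the tangent to $\Gamma$. The left null vector, normalised by $\langle\ell,(1,1,0)\rangle=1$, is
\[
\ell=\frac{\bigl(1,\ (1-x)\beta,\ x(1-x)\beta\bigr)}{\beta(1-x)+1}.
\]
This recovers~\eqref{Paper03_first_order_derivatives_projection}, since $B=\beta$, and it gives $\partial_{\upsilon_0}\Phi_0=x(1-x)\beta/(\beta(1-x)+1)$. The stable eigenvalues are $-(\beta(1-x)+1)$ and $-1$, so the linear stable subspace is spanned by the corresponding eigenvectors. One of these lies in the $\upsilon_0$ direction, which couples through the last column of $J$.

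\textbf{Step 2: second derivatives of the flow.} I will compute the Hessians $D^2F^{\,\textrm{fast}}_0(\boldsymbol{p})$ in the variables $(x_0,x_1,\upsilon_0)$, expanding the quotient to second order as in~\eqref{Paper03_taylor_expansion_original_flow} but keeping the terms $b_1\upsilon_0x_1$ in both numerator and denominator. The other components are linear, so their Hessians vanish.

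\textbf{Step 3: quadratic stable manifold.} Feeding $J$ and $D^2F^{\,\textrm{fast}}_0$ into the linear system of Theorem~\ref{Paper03_prop_uniqueness_quadratic_term_nonlinear_manifold} determines the quadratic coefficients. The system is a Sylvester/Lyapunov-type equation, and it is uniquely solvable because no sum of two stable eigenvalues is zero. Solving it explicitly in the basis of Step 1 produces denominators $\beta(1-x)+1$ and $(\beta(1-x)+1)+1=\beta(1-x)+2$; the latter is the sum of the two eigenvalue magnitudes entering one mode of the Lyapunov solution. These match the denominators in the statement.

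\textbf{Step 4: assemble and check.} The Parsons--Rogers formula combines the Hessian of the flow, contracted against the stable-manifold solution, with $\ell$, and then contracts against $\mathbf{e}_0\otimes\mathbf{e}_{\upsilon_0}$ and $\mathbf{e}_{\upsilon_0}\otimes\mathbf{e}_{\upsilon_0}$. As a check, the same procedure in the $(x_0,x_0)$ direction must reproduce~\eqref{Paper03_drift_K_1}.

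\textbf{Main obstacle.} I expect the difficulty to be purely algebraic: simplifying the assembled rational functions into the stated form, with numerator polynomials such as $\beta^2x^2-\beta(4-b_0)x+(2-b_0)$. I would organise the calculation in the variable $u=\beta(1-x)$, so that the denominators become $(u+1)^3(u+2)$, and verify the final identities symbolically.
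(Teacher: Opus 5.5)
Your route is the paper's own: the Parsons--Rogers formula~\eqref{Paper03_formula_Parsons_Rogers_second_order_derivatives}, with $\boldsymbol\Theta$ obtained from Theorem~\ref{Paper03_prop_uniqueness_quadratic_term_nonlinear_manifold}. Your Step~1 data ($J$, $\ell$, and the stable eigenvalues) are correct. Throughout, write $\beta=1-b_0$, $a=1-x_0$, $D=\beta a+1$.

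Step~4 misdescribes the formula. Since $\langle\boldsymbol v,\boldsymbol\gamma'\rangle\equiv1$ and $\boldsymbol\gamma''=0$, it reads $\partial^2_{ij}\Phi_0=v_i'\,\partial_j\Phi_0+v_j'\,\partial_i\Phi_0-\theta_{ij}$. You therefore need the derivative of $\ell$ along $\Gamma$, not only its value at $\boldsymbol p$. These $\boldsymbol v'$-terms give exactly the first fraction in each stated formula; for example, $v_0'v_{\upsilon_0}+v_{\upsilon_0}'v_0=[\beta^2a+\beta(1-2x_0)]/D^3$. That part of the statement is fine, apart from the typo $1-2x$ for $1-2x_0$.

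The real problem is that your closing symbolic check will fail. The $\boldsymbol\Theta$-parts of the statement, i.e.\ the values of $\theta^{\mathrm{fast}}_{0,\upsilon_0}$ and $\theta^{\mathrm{fast}}_{\upsilon_0,\upsilon_0}$ recorded in the paper's proof, do not solve the Lyapunov system. Three checks show this.

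\emph{The Lyapunov system at $x_0=0$.} Here $J_{0\upsilon_0}=0$ and $\mathcal P_s\mathbf e_{\upsilon_0}=\mathbf e_{\upsilon_0}$. The $(0,\upsilon_0)$ entry of~\eqref{Paper03_lyapunov_equation} reduces to $-(2+\beta)\theta_{0\upsilon_0}=v_0(\mathcal P_s\mathbf e_0)^*\Hess(F^{\mathrm{fast}}_0)\mathbf e_{\upsilon_0}=-\beta/(1+\beta)^2$. Hence $\theta_{0\upsilon_0}=\beta/((1+\beta)^2(2+\beta))$, whereas the statement needs $\beta^2/((1+\beta)^2(2+\beta))$.

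\emph{A check with no manifold theory.} On the invariant line $x_0=x_1=0$ the flow is $\upsilon_0(t)=\delta e^{-t}$. Take the variational equation $\dot Y_0=-\beta Y_0+\beta(1+\delta e^{-t})Y_1$, $\dot Y_1=Y_0-Y_1$, $Y(0)=\mathbf e_0$. The quantity $(Y_0+\beta Y_1)/(1+\beta)$ changes at rate $\beta\delta e^{-t}Y_1/(1+\beta)$, and $Y_1=(1-e^{-(1+\beta)t})/(1+\beta)+O(\delta)$. Integrating gives $\partial^2_{x_0\upsilon_0}\Phi^{\mathrm{fast}}_0(\boldsymbol 0,\boldsymbol 0)=\beta/((1+\beta)(2+\beta))$. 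The stated formula gives $2\beta/((1+\beta)^2(2+\beta))$, and the two agree only when $b_0=0$.

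\emph{An exact relation between the two derivatives.} Differentiate $\nabla\Phi^{\mathrm{fast}}_0\cdot\mathbf F^{\mathrm{fast}}\equiv0$ twice in $\upsilon_0$ and restrict to $\Gamma^{\mathrm{fast}}$. This gives $\partial^2_{\upsilon_0}\Phi_0=\beta a x_0\,\partial^2_{x_0\upsilon_0}\Phi_0-\beta^2 a x_0^2/D$. The two stated formulas violate this already at first order in $x_0$.

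So the lemma cannot be proved as stated. Carried out correctly, your computation yields
\[
\frac{\partial^2\Phi^{\mathrm{fast}}_0}{\partial x_0\partial\upsilon_0}(\boldsymbol x,\boldsymbol 0)=\frac{\beta\bigl[aD^2-x_0(D+1)(1-\beta a-\beta^2a^2)\bigr]}{D^3(D+1)},
\]
with $\partial^2_{\upsilon_0}\Phi^{\mathrm{fast}}_0$ then given by the exact relation above.
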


Still under the assumption $K=1$, we 
examine how the parameter $b_0$ influences 
the additional drift terms that appear in the limiting SDE in the 
fast–changing environment (see~\eqref{Paper03_limiting_SDE_fast_env}) but 
are absent from the corresponding SDE in the constant–environment 
case~\eqref{Paper03_SDE_WF_constant_environment}. To this end, we define 
the map $h:[0,1]^2 \rightarrow \mathbb{R}$ by, for any $(x_0,b_0) \in [0,1]$,
\begin{equation}\label{Paper03_functionh_fast_env}
\begin{aligned}
    h(x_0,b_0) & \defeq (1-b_0)^2x_0(1-x_0)\frac{\partial^2 \Phi_0}{\partial x_0^2}(\boldsymbol{x}) +  \frac{1}{x_0(1-x_0)}\frac{\partial^2 \Phi^{\, \textrm{fast}}_0}{\partial \upsilon_0^2} (\boldsymbol{x}, \boldsymbol{0}) - 2(1-b_0)  \frac{\partial^2 \Phi^{\, \textrm{fast}}_0}{\partial x_0 \partial \upsilon_0} (\boldsymbol{x}, \boldsymbol{0}) \\ & \quad \quad + \frac{2(1-b_0)((1-x_0)+b_0x_0)}{((1-b_0)(1-x_0)+1)}.
\end{aligned}
\end{equation}
Comparing~\eqref{Paper03_limiting_SDE_fast_env} 
with~\eqref{Paper03_SDE_WF_constant_environment}, we see that the 
fast-changing environment contributes an additional drift term equal to
\begin{equation} \label{Paper03_extra_drift_fast_env}
    \int_0^T p s^2 x_0(1-x_0) \, h(x_0,b_0)\, dt.
\end{equation}
We plot the values of the function $h$ for different values of 
$(x_0,b_0) \in [0,1]^2$ in 
Figure~\ref{Paper03_fig:impact_fluctuating_environment_fast}.
\begin{figure}[t]
\centering
\includegraphics[width=0.6\linewidth, trim=0cm 0cm 0cm 0cm,clip=true]{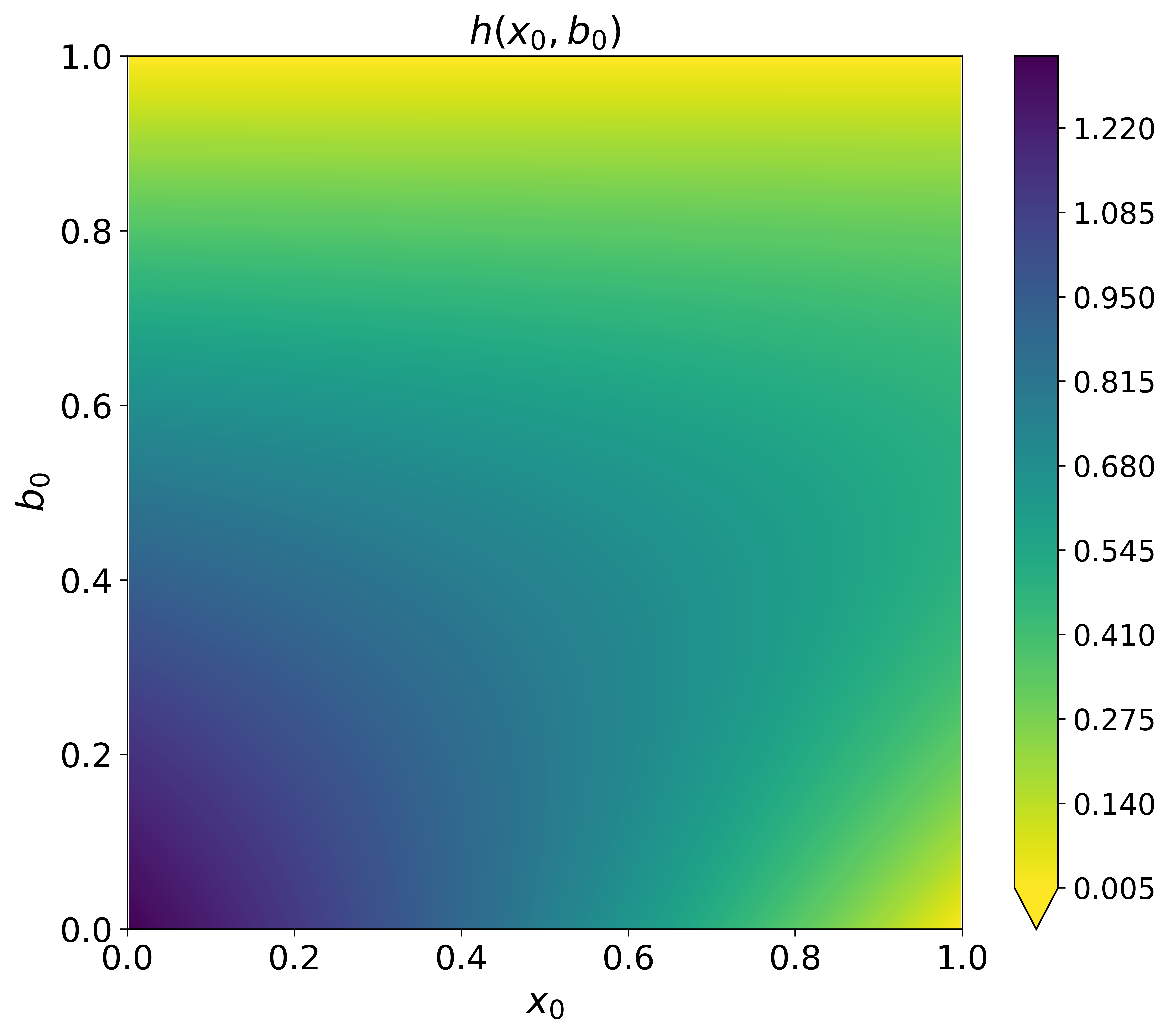}
\caption{\label{Paper03_fig:impact_fluctuating_environment_fast}~Contour plot 
of the function $h(x_0,b_0)$ of~\eqref{Paper03_functionh_fast_env}
on the domain $[0,1]^2$. Darker colours are 
associated with larger values of $h$, while lighter colours are associated 
with smaller values. For $K = 1$, $h$ is strictly non-negative, 
its maximum value is $4/3$ and is achieved at $(0,0)$. The plot suggests 
that in the fast-changing environment, the seed bank trait is favoured, 
as expected.}
\end{figure}
Observe that the map~$h$ is non-negative on its entire domain, 
suggesting that the fast-changing environment favours the seed bank trait.
Biologically, this reflects the buffering effect of dormancy against short 
environmental events. Moreover, since we may take the values of $s$ to be 
arbitrarily large in~\eqref{Paper03_extra_drift_fast_env}, 
the resulting contribution to the drift can exert a substantial influence on 
the dynamics.

We also note from Figure~\ref{Paper03_fig:impact_fluctuating_environment_fast} that the maximum value of \(h\) is attained at \((0,0)\), where it equals \(4/3\). Biologically, this corresponds to a situation in which the density of mature mutants is extremely low and all mutant seeds skip one generation before maturing. This suggests that the seed bank trait may be particularly effective in protecting a small mutant population in the presence of fast environmental fluctuations.

A limitation of this analysis is that we do not incorporate the impact of the 
fast fluctuations on the branching phase. The 
analysis of multi-type branching processes in random environments is complex, 
and it can rarely be done rigorously for models such as ours
with multiple parameters (see~\cite[Section~4]{athreya1971branching}). 
Nonetheless, biologically, it could be the case that a constant environment 
becomes a fast-changing one after some perturbation. In this case, if the 
seed bank trait survives the branching phase and enters the diffusion phase, 
the fluctuations can confer an evolutionary advantage to the dormancy trait.

\section{Dimension reduction for stochastic differential equations 
forced onto a manifold by large drift} 
\label{Paper03_mathematical_background}

In this section, we shall explain, review, and extend some results regarding 
the weak convergence of interacting particle systems to SDEs which are forced 
onto a manifold by a large drift, enabling a dimension reduction of the 
original problem. 
%The results reviewed and developed in this section are 
%essential for the study of our problem, since as explained in 
%Section~\ref{Paper03_main_results_section}, the presence of seed banks breaks 
%down the usual argument to justify the derivation of Wright-Fisher diffusion 
%through a generator computation. We organise this section as follows. 
In Section~\ref{Paper03_subsection_katzenberger_overview}, we explain 
Katzenberger's results concerning the convergence of stochastic dynamical 
systems in the presence of a large drift~\cite{katzenberger1991solutions}. 
We will then present, 
in Section~\ref{Paper03_subsection_computing_derivatives}, 
a method due to Parsons and Rogers~\cite{parsons2017dimension} for computing 
the derivatives of the projection map associated to an ODE whose attractor 
manifold is one-dimensional. %~\cite{parsons2017dimension}. 
In 
Section~\ref{Paper03_general_result_derivatives_manifold}, we apply the 
Lyapunov-Schmidt reduction method to develop a general formula for one of 
the parameters needed for the Parsons and Rogers method. We illustrate in 
Section~\ref{Paper03_derivatives_specific_ode} how to apply this formula in 
the case of the ODE with flow field given 
by~\eqref{Paper03_flow_definition_WF_model} arising from the analysis of 
the Wright-Fisher model in constant environment.

\subsection{Overview of Katzenberger results} 
\label{Paper03_subsection_katzenberger_overview}

In~\cite{katzenberger1991solutions}, Katzenberger considered a sequence 
$(\boldsymbol{x}^{N})_{N \in \mathbb{N}}$ of càdlàg 
$\mathbb{R}^{K+1}$-valued stochastic processes of the form
\begin{equation} \label{Paper03_stochastic_process_general_discrete_N}
\begin{aligned}
    \boldsymbol{x}^{N}(T) = \boldsymbol{x}^{N}(0) 
+ \int_{0}^{T} U^{N}(\boldsymbol{x}^{N}(t-)) \, d\mathbf{z}^{N}(t) 
+ \int_{0}^{T} \mathbf{F}(\boldsymbol{x}^{N}(t-)) 
\, d\lfloor Nt \rfloor + \epsilon^N,
\end{aligned}
\end{equation}
where $(\mathbf{z}^{N})_{N \in \mathbb{N}}$ is a sequence of well-behaved 
càdlàg $\mathbb{R}^{K+1}$-semimartingales, $(U^{N})_{N \in \mathbb{N}}$ is 
a sequence of continuous $(K+1) \times (K+1)$-matrix valued functions,
$\mathbf{F}$ is a vector field whose associated ODE has an asymptotic 
attractor manifold $\Gamma$ of dimension strictly less than $K+1$, 
and $\epsilon^N$ captures the jumps associated with the discrete 
approximation of the underlying dynamics. More generally, the integral with 
integrand $\mathbf{F}$ can be taken with respect to any finite variation 
process which asymptotically places infinite mass on compact time intervals.

Assume that for each $N \in \mathbb{N}$, $\boldsymbol{x}^{N}$ has sample 
paths in some connected subset $\mathscr{U} \subset \mathbb{R}^{K+1}$ such 
that $\mathscr{U}$ is contained in the basin of attraction of the manifold 
$\Gamma \subset \mathscr{U}$. In this case, since we are accelerating the 
timescale of the integral of the vector field $\mathbf{F}$, as 
$N \rightarrow \infty$, if the integrals of $U^{N}$ behave sufficiently well, 
the distance between the sample paths of $\boldsymbol{x}^{N}$ and $\Gamma$ 
should converge to $0$. If $\boldsymbol{\Phi}: \mathscr{U} \rightarrow \Gamma$ 
is the projection map associated to the ODE with flow field given by 
$\mathbf{F}$, then Itô's formula yields
\begin{equation} 
\label{Paper03_intermediate_step_convergence_integrals_katzenberger}
    \boldsymbol{\Phi}(\boldsymbol{x}^{N}(T)) 
= \boldsymbol{\Phi}(\boldsymbol{x}^{N}(0)) 
+ \int_{0}^{T} \mathcal{J}\boldsymbol{\Phi}U^{N}\, d\mathbf{z}^{N}(t) 
+ \frac{1}{2} \sum_{hijl} \int_{0}^{T} 
\frac{\partial^{2}\boldsymbol{\Phi}}{\partial x_{i} \partial x_{j}} 
U^{N}_{ih}U^{N}_{jl} \, d[z^{N}_{h}, z^{N}_{l}](t) + \tilde{\epsilon}^N,
\end{equation}
where $\mathcal{J}\boldsymbol{\Phi}$ denotes the Jacobian matrix of 
$\boldsymbol{\Phi}$ and $\tilde{\epsilon}^N$ captures the corrections due to 
jumps of the process. Here, we used the fact that by the definition of the 
projection map $\boldsymbol{\Phi}$, we have 
$\mathcal{J}\boldsymbol{\Phi}_{\boldsymbol{x}} \mathbf{F} (\boldsymbol{x}) 
\equiv 0$ for all $\boldsymbol{x} \in \mathscr{U}$, 
c.f.~\eqref{Paper03_definition_projection_mapA}.
If the sequence of semimartingales is sufficiently regular and the sample 
paths of $\boldsymbol{x}^{N}$ converge to a process in $\Gamma$, then 
$\boldsymbol{x}^{N} - \boldsymbol{\Phi}(\boldsymbol{x}^{N})$ must converge 
to zero, %which means that in the limit, $\boldsymbol{x}^{N}$ must 
%satisfy
so using~\eqref{Paper03_intermediate_step_convergence_integrals_katzenberger} 
in the limit we must obtain
\begin{equation} 
\label{Paper03_limiting_SDE_katzenberger_general}
    \boldsymbol{x}(T) = \boldsymbol{x}(0) 
+ \int_{0}^{T} \mathcal{J}\boldsymbol{\Phi}U \, d\mathbf{z}(t) 
+ \frac{1}{2} \sum_{hijl} \int_{0}^{T} 
\frac{\partial^{2}\boldsymbol{\Phi}}{\partial x_{i} \partial x_{j}} 
U_{ih}U_{jl} \, d[z_{h}, z_{l}](t),
\end{equation}
provided $\mathbf{z}^{N} \xrightarrow{\mathcal{D}} \mathbf{z}$, $U^{N} 
\rightarrow U$, and $\epsilon^{N} \xrightarrow{\mathcal{D}} 0$ 
as $N \rightarrow \infty$. 
Observe that the limiting SDE has sample paths in~$\Gamma$.

Not surprisingly, the proof of convergence requires some regularity 
conditions to be satisfied by the sequence 
$(\boldsymbol{x}^{N})_{N \in \mathbb{N}}$. For ease of reference, we recall 
these conditions and the results that we are going to use
from~\cite{katzenberger1991solutions}. 
First we introduce some notation. Define the 
open disc 
$D(1) \defeq \{\lambda \in \mathbb{C}: \; \vert \lambda + 1 \vert < 1 \}$, 
and let $\Gamma \subset \mathscr{U}$ be the $d$-dimensional connected 
attractor manifold of the ODE associated to the vector field~$\mathbf{F}$. 
%We require two assumptions on the attractor manifold.

\begin{assumption}[Attractor manifold]
\label{Paper03_assumption_manifold}
    We assume
    \begin{equation*}
        \Gamma \defeq \left\{\boldsymbol{x} \in \mathscr{U}: \; 
\mathbf{F}(\boldsymbol{x})=0\right\}
    \end{equation*}
    to be a twice-continuously differentiable connected $d$-dimensional 
manifold, and $\mathbf{F}$ to be twice-continuously differentiable. Moreover, 
for any $\boldsymbol{x} \in \Gamma$, $\mathcal{J}\mathbf{F}_{\boldsymbol{x}}$ 
has exactly (K+1-d) eigenvalues in~$D(1)$.
\end{assumption}

We now turn to the conditions required of the stochastic process. 
We denote the filtration of the process $\boldsymbol{x}^{N}$ by 
$\{\mathcal{F}^{N}_{t}\}_{t \geq 0}$, and assume that it satisfies the usual 
conditions. Since we will be applying Katzenberger's results to the analysis of
a modified Wright-Fisher model, we can and will improve readability by
making stronger assumptions than required by Katzenberger.
We assume that for each $N \in \mathbb{N}$, the semimartingale 
$\mathbf{z}^{N}$ jumps only when $(\lfloor Nt \rfloor)_{t \geq 0}$ jumps, 
i.e.~only at times in the set $\{\frac{i}{N}: \; i \in \mathbb{N}\}$. 
We also assume 
$\mathscr{U}$ to be compact (since this will be true in our case). 

For a finite 
variation process $\boldsymbol{A}: [0, \infty) \rightarrow \mathbb{R}^{K+1}$ 
and $T \geq 0$, let $\mathfrak{T}(\boldsymbol{A}^N, T)$ be the total 
variation of $\boldsymbol{A}$ on the time interval $[0,T]$.

\begin{assumption}[Uniform integrability and tightness] 
\label{Paper03_assumption_katzenberger_stochastic_process}
For $N \geq 1$, let $\mathbf{z}^{N}$ be an 
$\{\mathcal{F}^{N}_{t}\}$- semimartingale with sample paths in 
$\mathscr{D}([0, +\infty), \mathbb{R}^{K+1})$. Let 
$\mathbf{z}^{N} = \mathbf{M}^{N} + \mathbf{A}^{N}$, where $\mathbf{M}^{N}$ is 
a local martingale and $\mathbf{A}^{N}$ a finite variation process. We shall 
assume: % the conditions below.
    \begin{enumerate}[(i)]
        \item For any $T \geq 0$, 
$\left\{\left[\mathbf{M}^{N}\right](T) 
+ \mathfrak{T}(\mathbf{A}^{N}, T)\right\}_{N \in \mathbb{N}}$ 
is uniformly integrable;
        \item For any $T \geq 0$, $\varepsilon > 0$, we have
        \begin{equation*}
            \lim_{\delta \rightarrow 0^{+}} \, 
\limsup_{N \rightarrow \infty} \, 
\mathbb{P} \left(\sup_{t \leq T} \, \left(\mathfrak{T}(\mathbf{A}^{N}, 
t + \delta) - \mathfrak{T}(\mathbf{A}^{N}, t)\right) 
> \varepsilon \right) = 0.
        \end{equation*}
    \end{enumerate}
\end{assumption}

We now state Katzenberger's theorem, which is a simplified version of Theorem~7.2 in~\cite{katzenberger1991solutions}.

\begin{theorem}[Katzenberger~\cite{katzenberger1991solutions}] 
\label{Paper03_major_tool_katzenberger}
    Let the sequence of processes $(\boldsymbol{x}^{N})_{N \in \mathbb{N}}$ be 
given by~\eqref{Paper03_stochastic_process_general_discrete_N}. Under 
Assumptions~\ref{Paper03_assumption_manifold} 
and~\ref{Paper03_assumption_katzenberger_stochastic_process}, the sequence 
$(\boldsymbol{x}^{N}, \mathbf{z}^{N})_{N \in \mathbb{N}}$ is relatively compact
in the space of càdlàg functions 
$\mathscr{D}\left([0, + \infty), 
\mathbb{R}^{K+1} \times \mathbb{R}^{K+1}\right)$. 
Moreover, any limit point $(\boldsymbol{x}, \mathbf{z})$ is a continuous 
semimartingale such that $\boldsymbol{x}(t) \in \Gamma$ for every $t$ almost 
surely, and $(\boldsymbol{x}, \mathbf{z})$ satisfies the 
SDE~\eqref{Paper03_limiting_SDE_katzenberger_general}.
\end{theorem}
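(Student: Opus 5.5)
The statement is a specialisation of \cite[Theorem~7.2]{katzenberger1991solutions}, so the plan is not to reprove it. Instead I would write our setting in Katzenberger's notation and check, one by one, that the standing hypotheses of his theorem follow from Assumptions~\ref{Paper03_assumption_manifold} and~\ref{Paper03_assumption_katzenberger_stochastic_process}. Katzenberger works with processes
\[
\boldsymbol{x}^N = \boldsymbol{x}^N(0) + \int U^N(\boldsymbol{x}^N(t-))\,d\mathbf{z}^N + \int \mathbf{F}(\boldsymbol{x}^N(t-))\,dA^N,
\]
where $A^N$ is nondecreasing and asymptotically places infinite mass on every interval. We take $A^N(t)=\lfloor Nt\rfloor$, which is nondecreasing and tends to infinity on any interval $[s,t]$ with $s<t$, and whose jumps all have size exactly one.

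\textbf{Step 1: the spectral condition.} Katzenberger's hypothesis on the attractor is phrased for jumps $\Delta A^N$ of a prescribed size. Near $\Gamma$, a unit jump acts as the map $\boldsymbol{x}\mapsto \boldsymbol{x}+\mathbf{F}(\boldsymbol{x})$. Its linearisation at a point of $\Gamma$ is $I+\mathcal{J}\mathbf{F}_{\boldsymbol{x}}$, which contracts in the normal directions exactly when the corresponding eigenvalues $\lambda$ of $\mathcal{J}\mathbf{F}_{\boldsymbol{x}}$ satisfy $|1+\lambda|<1$, that is $\lambda\in D(1)$. The continuous-time flow needs only $\operatorname{Re}\lambda<0$, and $D(1)$ lies in that half-plane. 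Hence Assumption~\ref{Paper03_assumption_manifold}, together with the $C^2$ regularity of $\Gamma$ and $\mathbf{F}$, gives both Katzenberger's condition for jumps of size one and the existence and $C^2$ regularity of $\boldsymbol{\Phi}$ on a neighbourhood of $\Gamma$.

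\textbf{Step 2: the driving semimartingales and localisation.} Assumption~\ref{Paper03_assumption_katzenberger_stochastic_process}(i)--(ii) is exactly the uniform controlled variation / uniform tightness condition Katzenberger imposes on $\mathbf{z}^N=\mathbf{M}^N+\mathbf{A}^N$. Because $\mathscr{U}$ is compact and lies in the basin of attraction, his localising stopping times (exit from a compact subset of the basin) can be taken equal to $+\infty$. This removes the stopped-process formulation and gives relative compactness of the whole pair $(\boldsymbol{x}^N,\mathbf{z}^N)$ rather than of stopped versions. The jumps of $\mathbf{z}^N$ occur only on $N^{-1}\mathbb{N}$, so they are synchronised with those of $A^N$; this is the situation covered by his discrete-time corollary.

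\textbf{Step 3: the error term $\epsilon^N$.} I would absorb $\epsilon^N$ into $\mathbf{z}^N$ as an extra finite-variation component, with $U^N$ enlarged by an identity block. Assumption~\ref{Paper03_assumption_katzenberger_stochastic_process} then covers it, and $\epsilon^N\to0$ makes it disappear from the limit.

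\textbf{Step 4: identifying the limit.} Apply Itô's formula to $\boldsymbol{\Phi}(\boldsymbol{x}^N)$ as in~\eqref{Paper03_intermediate_step_convergence_integrals_katzenberger}, using $\mathcal{J}\boldsymbol{\Phi}\,\mathbf{F}\equiv0$. The jump corrections $\tilde\epsilon^N$ vanish by the uniform integrability in Assumption~\ref{Paper03_assumption_katzenberger_stochastic_process}(i) and the $C^2$ bounds on $\boldsymbol{\Phi}$ over the compact set $\mathscr{U}$. Next, $\boldsymbol{x}^N-\boldsymbol{\Phi}(\boldsymbol{x}^N)\to0$ uniformly on compacts in probability after an initial layer; if $\boldsymbol{x}^N(0)\to\boldsymbol{x}(0)\in\Gamma$ there is no initial layer. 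Finally, the continuity of stochastic integrals under uniformly-tight semimartingale convergence (Kurtz--Protter) passes to the limit and yields~\eqref{Paper03_limiting_SDE_katzenberger_general} with continuous paths in $\Gamma$.

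I expect the main obstacle to be Step~1. The point is to reconcile Katzenberger's precise hypothesis on $\Delta A^N$ with the $D(1)$ condition, and to make sure the discrete dynamics cannot overshoot out of the basin in a single step while $\boldsymbol{x}^N$ is still far from $\Gamma$. Compactness of $\mathscr{U}$, and the assumption that sample paths stay in $\mathscr{U}$, are what I would lean on there.
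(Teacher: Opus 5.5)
Your plan is the same as the paper's. The paper gives no argument beyond calling the statement a simplified form of \cite[Theorem~7.2]{katzenberger1991solutions}. However, the hypothesis check cannot be completed. The statement as formulated uses $\boldsymbol{\Phi}$, the projection of the ODE flow, together with the drift clock $\lfloor Nt\rfloor$, and in that form it is false in general for nonlinear $\mathbf{F}$. Your Step~4 is where this shows.

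Invariance of $\boldsymbol{\Phi}$ under the flow gives $\mathcal{J}\boldsymbol{\Phi}\,\mathbf{F}\equiv0$. That removes only the first-order effect of a drift step; it does not give $\boldsymbol{\Phi}(\boldsymbol{x}+\mathbf{F}(\boldsymbol{x}))=\boldsymbol{\Phi}(\boldsymbol{x})$. Each jump of $\boldsymbol{x}^N$ contains $\mathbf{F}(\boldsymbol{x}^N(t-))$, which is of the size of the distance to $\Gamma$, typically $N^{-1/2}$. The terms $\tfrac12\partial^2\boldsymbol{\Phi}(\mathbf{F},\mathbf{F})$ are therefore of order $N^{-1}$ at each of the $\lfloor NT\rfloor$ steps, and they sum to an $O(1)$ drift. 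Uniform integrability does not remove them.

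A concrete example. Take $K=1$, $\mathscr{U}=[0,1]\times[-1,1]$, $\mathbf{F}(x,y)=(ay^2(1-x),-y)$ with $0<a\le1$, and $U^N=\mathbf{I}$. Let $\mathbf{z}^N(t)=\bigl(0,N^{-1/2}\sum_{k\le\lfloor Nt\rfloor}\varepsilon_k\bigr)$ with i.i.d.\ fair signs $\varepsilon_k$, and start from $\boldsymbol{x}^N(0)=(x_0,0)\in\Gamma$.
\begin{itemize}
\item Both assumptions hold: the eigenvalues on $\Gamma$ are $0$ and $-1$, and $[\mathbf{z}^N](T)\le T$. The jumps of $\mathbf{z}^N$ even vanish.
\item Since $y^N(k/N)^2=1/N$ for $k\ge1$, one gets $1-x^N(t)\to(1-x_0)e^{-at}$.
\item The flow projection is $\Phi_0(x,y)=1-(1-x)e^{-ay^2/2}$, with $\partial_y^2\Phi_0=a(1-x)$ on $\Gamma$. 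So \eqref{Paper03_limiting_SDE_katzenberger_general} predicts $1-x(t)=(1-x_0)e^{-at/2}$, which is wrong.
\end{itemize}

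Your Step~1 already names the right object. $D(1)$ is the stability condition for the map $\boldsymbol{x}\mapsto\boldsymbol{x}+\mathbf{F}(\boldsymbol{x})$. The projection $\boldsymbol{\Phi}_d$ of that map is exactly invariant under a drift step, so your Itô argument goes through with $\boldsymbol{\Phi}_d$ in place of $\boldsymbol{\Phi}$. In the example $\Phi_{d,0}=x+ay^2(1-x)$, which gives the correct drift $a(1-x)$. The two projections share first derivatives on $\Gamma$ but not second derivatives. This already happens for the seed-bank flow with $K=1$, away from $x_0=1$. Alternatively, the flow projection is the right one if the steps of the drift clock tend to $0$. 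Either way the statement itself has to change; checking hypotheses more carefully will not fix it.

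Three further ingredients you use are not implied by the stated assumptions.
\begin{itemize}
\item \emph{Jumps of $\mathbf{z}^N$.} Assumption~\ref{Paper03_assumption_katzenberger_stochastic_process} controls $[\mathbf{M}^N](T)$ and makes the finite-variation part asymptotically continuous, but it does not make the jumps of $\mathbf{M}^N$ small. With the same $\mathbf{F}$, take $\mathbf{z}^N=(z^N,0)$, where $z^N$ is a $\{0,1\}$-valued chain flipping with probability $1/N$ at each time $k/N$, started from $\boldsymbol{x}^N(0)=0$. This satisfies every assumption, yet $x^N_0=z^N$ converges to a telegraph process. So continuity of the limit, and the remainder estimates in your Step~4, need $\sup_{t\le T}|\Delta\mathbf{z}^N(t)|\to0$ in probability as an explicit hypothesis.
\item \emph{Initial condition.} The conclusion $\boldsymbol{x}(t)\in\Gamma$ for every $t$ (in particular $t=0$), and relative compactness of $\boldsymbol{x}^N$ in $\mathscr{D}$, require $\boldsymbol{x}^N(0)$ to converge to a point of $\Gamma$. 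Compactness of $\mathscr{U}$ does not stop the unit-step map from overshooting far from $\Gamma$, because $D(1)$ is a condition on $\Gamma$ only.
\item \emph{Steps~3 and~4.} Step~3 needs an explicit assumption on $\epsilon^N$, and Step~4 needs $U^N\to U$ uniformly on compacts.
\end{itemize}
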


A major challenge in the practical application of 
Theorem~\ref{Paper03_major_tool_katzenberger} 
%to the analysis of stochastic models arising from population genetics and 
%other fields 
is the computation of the derivatives of the projection map 
$\boldsymbol{\Phi}$. When the associated ODE has an explicit solution, or 
when it admits some characteristic curve, one can compute the derivatives 
directly. However, in practice, the ODEs arising in applications are 
usually nonlinear, and generally lack an explicit solution. In the next 
section, we explain a formula developed by Parsons and Rogers 
in~\cite{parsons2017dimension} to overcome this problem.

\subsection{Parsons and Rogers approach to the computation of 
derivatives of projection map} 
\label{Paper03_subsection_computing_derivatives}

Parsons and Rogers~\cite{parsons2017dimension} observed that, when the 
attractor manifold~$\Gamma$ is one-dimensional, the computation of the 
derivatives can be achieved through the analysis of key features of the 
underlying dynamical system, concretely the tangent vector to $\Gamma$, the 
perpendicular vector to the flow field, the curvature of $\Gamma$, and the 
curvature of the nonlinear stable manifold near $\Gamma$. To understand 
these ideas, we shall briefly recall some concepts regarding dynamical systems.

Let $\mathscr{U} \subset \mathbb{R}^{K + 1}$ be a connected and compact set, and consider the ODE
\begin{equation} 
\label{Paper03_deterministic_flow}
\begin{aligned}
    \frac{d\boldsymbol{x}}{dt} & = \mathbf{F}(\boldsymbol{x}),
\end{aligned}
\end{equation}
where $\mathbf{F}: \mathscr{U} \rightarrow \mathbb{R}^{K+1}$ is a smooth map 
such that if $\boldsymbol{x}(0) \in \mathscr{U}$, then the flow defined 
by~\eqref{Paper03_deterministic_flow} never leaves $\mathscr{U}$, 
i.e.~$\boldsymbol{x}(0) \in \mathscr{U} \Rightarrow \boldsymbol{x}(t) 
\in \mathscr{U}$ for all $t \geq 0$. We define the smooth manifold
\begin{equation} 
\label{Paper03_definition_manifold}
    \Gamma \defeq \{\boldsymbol{x} = (x_{0}, \ldots, x_{K}) 
\in \mathscr{U}: \, \mathbf{F}(\boldsymbol{x}) = 0\}.
\end{equation}
From now on, we make the following assumptions on $\Gamma$ and $\mathbf{F}$.

\begin{assumption}[Geometric properties of $\Gamma$] 
\label{Paper03_geometric_properties_Gamma}
 Equation~\eqref{Paper03_definition_manifold} defines a 
one-dimensional manifold $\Gamma$, and both $\Gamma$ and the vector field 
$\mathbf{F}$ satisfy Assumption~\ref{Paper03_assumption_manifold}. 
Moreover, $\Gamma$ is a curve parametrised by the first coordinate of the 
system, i.e.~there exists a connected subset 
$\mathcal{I}_{\Gamma} \subset \mathbb{R}$ such that 
$\boldsymbol{x} \in \Gamma \Rightarrow x_{0} \in \mathcal{I}_{\Gamma}$, and 
a twice-continuously differentiable function 
$\boldsymbol{\gamma}: \mathcal{I}_{\Gamma} \rightarrow \Gamma$ such that
\begin{equation*}
 \boldsymbol{x} \in \Gamma \Leftrightarrow \boldsymbol{\gamma}(x_{0}) 
= \boldsymbol{x}.
    \end{equation*}
\end{assumption}

As observed in~\cite[Subsection~2.1]{parsons2017dimension}, the 
one-dimensional parametrisation can be relaxed to a local parametrisation, 
so that Assumption~\ref{Paper03_geometric_properties_Gamma} is not restrictive.
By classical results of dynamical systems theory~\cite{carr2012applications}, 
Assumption~\ref{Paper03_assumption_manifold} implies that $\Gamma$ is an 
attractor manifold for the ODE~\eqref{Paper03_deterministic_flow}. 
To formalise this idea, for any $\boldsymbol{x} \in \mathscr{U}$, we define 
the flow
\begin{equation*}
\begin{aligned}
    \boldsymbol{\phi}(\boldsymbol{x}, \cdot): [0, +\infty) & \rightarrow \mathscr{U} \\
    T & \mapsto \phi(\boldsymbol{x}, T) = \boldsymbol{x} + \int_{0}^{T} F(\boldsymbol{\phi}(\boldsymbol{x}, t)) \, dt.
\end{aligned}
\end{equation*}
Then, for any $\boldsymbol{x} \in \mathscr{U}$, we can define the map
\begin{equation*}
\begin{aligned}
    \boldsymbol{\Phi}: \mathscr{U} & \rightarrow \Gamma \\
    \boldsymbol{x} & \mapsto \boldsymbol{\Phi}(\boldsymbol{x}) = \lim_{T \rightarrow \infty} \boldsymbol{\phi}(\boldsymbol{x},T).
\end{aligned}
\end{equation*}
By Assumption~\ref{Paper03_assumption_manifold}, the map $\boldsymbol{\Phi}$ 
is well defined. Moreover, since by 
Assumption~\ref{Paper03_geometric_properties_Gamma}, the manifold $\Gamma$ 
can be parametrised in terms of the first coordinate, in order to obtain the 
first and second order derivatives of $\boldsymbol{\Phi}$ on $\Gamma$, it 
will suffice to compute the first and second order derivatives of $\Phi_{0}$ 
on $\Gamma$.

By Assumption~\ref{Paper03_assumption_manifold}, for any fixed 
$\boldsymbol{x} \in \Gamma$, $\mathcal{J}\mathbf{F}_{\boldsymbol{x}}$ has 
exactly one eigenvalue $0$ and $K$ eigenvalues with negative real part. 
We recall the standard nomenclature from the dynamical systems literature.  
Denote by $E_{c} \equiv E_{c}(\boldsymbol{x})$ the linear space spanned by 
the right eigenvector of $\mathcal{J}\mathbf{F}_{\boldsymbol{x}}$ associated 
with the eigenvalue~$0$, and $E_{s} \equiv E_{s}(\boldsymbol{x})$ the linear 
space spanned by the right eigenvectors associated with the other eigenvalues 
of $\mathcal{J}\mathbf{F}_{\boldsymbol{x}}$. We call $E_{c}$ the 
\emph{linear centre manifold}, and $E_{s}$ the \emph{linear stable manifold}. 
For any $\boldsymbol{x} \in \Gamma$, observe that $\Gamma$ is the 
non-linear centre manifold around $\boldsymbol{x}$ and by the Centre 
Manifold Theorem (see~\cite[Theorem~3.2.1]{guckenheimer02}), $E_c$ is tangent 
to $\Gamma$ at $\boldsymbol{x}$. Also by the Centre Manifold Theorem, 
there exists a unique invariant $K$-dimensional manifold 
$\mathfrak{W}_{s} \equiv \mathfrak{W}_{s}(\boldsymbol{x})$, tangent to 
$E_{s}$ at $\boldsymbol{x}$, such that for all 
$\boldsymbol{y} \in \mathfrak{W}_{s}$, we have 
$\boldsymbol{\Phi}(\boldsymbol{y}) = \boldsymbol{x}$.

For any $\boldsymbol{x} \in \Gamma$, 
let $\boldsymbol{v} \equiv \boldsymbol{v}(\boldsymbol{x}) \in \mathbb{R}^{K+1}$
be the left eigenvector of $\mathcal{J}\mathbf{F}_{\boldsymbol{x}}$ associated 
with the eigenvalue~$0$, 
i.e.~$\mathcal{J}\mathbf{F}_{\boldsymbol{x}}^{*}\boldsymbol{v}(\boldsymbol{x}) 
= 0$, with $\boldsymbol{v}(\boldsymbol{x}) \neq 0$. 
By Assumption~\ref{Paper03_geometric_properties_Gamma}, $\Gamma$ is a 
twice-continuously differentiable curve of the first coordinate, and so
without loss of generality we use the parametrisation
$\boldsymbol{v}(\boldsymbol{x}) \equiv \boldsymbol{v}(x_{0})$.
%, as a parametrisation of the left eigenvector $\boldsymbol{v}$ in terms of the first coordinate $x_{0}$. 
Parsons and Rogers observe (see Equation~(12) in~\cite{parsons2017dimension}) 
that for any $\boldsymbol{x} \in \Gamma$, the set of points 
$\boldsymbol{y} \in \mathbf{R}^{K+1} \cap \mathcal{B}(\boldsymbol{x}, 
\varepsilon)$ such that $\boldsymbol{y} \in \mathfrak{W}_{s}(\boldsymbol{x})$ 
is approximated up to second order by
\begin{equation} 
\label{Paper03_Parsons_Rogers_description_flow}
    \Big\langle \boldsymbol{v}(x_{0}), \boldsymbol{y} - 
\boldsymbol{\gamma}(x_{0}) \Big\rangle 
- \frac{1}{2} \Big\langle \boldsymbol{y} - \boldsymbol{\gamma}(x_{0}), 
\boldsymbol{\Theta}(x_{0})(\boldsymbol{y} - \boldsymbol{\gamma}(x_{0})) 
\Big\rangle + \mathcal{O}(\varepsilon^{3}) = 0,
\end{equation}
where $\boldsymbol{\Theta}(x_{0})$ is a symmetric matrix describing the 
curvature of~$ \mathfrak{W}_{s}(\boldsymbol{x})$.

\begin{remark}
    We emphasise that Equation~\eqref{Paper03_Parsons_Rogers_description_flow} 
differs from Equation~(12) in~\cite{parsons2017dimension} by the 
factor $\frac{1}{2}$ in front of the second term on the left-hand side.
% of~\eqref{Paper03_Parsons_Rogers_description_flow}, which implies 
As a result, the matrix $\boldsymbol{\Theta}(x_0)$ 
in~\eqref{Paper03_Parsons_Rogers_description_flow} is exactly 2 times 
the corresponding matrix $\Theta$ %that appears in Equation~(12) 
in~\cite{parsons2017dimension}. Although this results in a minor 
inconvenience when directly comparing the two papers, the normalisatation 
in~\eqref{Paper03_Parsons_Rogers_description_flow} will be convenient for 
our quadratic approximation of the nonlinear stable manifold 
$\mathfrak{W}_s$ in Section~\ref{Paper03_general_result_derivatives_manifold}.
\end{remark}

By taking a Taylor expansion up to second order of the left-hand side 
of~\eqref{Paper03_Parsons_Rogers_description_flow}, Parsons and Rogers 
are able to derive explicit expressions for the derivatives of $\Phi_{0}$. 
For the first order derivatives, for any $\boldsymbol{x} \in \Gamma$,
\begin{equation} 
\label{Paper03_formula_Parsons_Rogers_first_order_derivatives}
    \frac{\partial \Phi_{0}}{\partial x_{i}}(\boldsymbol{x}) 
= \frac{v_{i}}{\sum_{l = 0}^{K} v_{l} \gamma_{l}'} 
\quad \forall i \in \llbracket K \rrbracket_{0},
\end{equation}
and for the second order derivatives,
\begin{equation} 
\label{Paper03_formula_Parsons_Rogers_second_order_derivatives}
   \frac{\partial^{2} \Phi_{0}}{\partial x_{i} \partial x_{j}}(\boldsymbol{x}) 
= \frac{1}{\sum_{l = 0}^{K} v_{l} \gamma_{l}'} 
\left(v_{i}' \frac{\partial \Phi_{0}}{\partial x_{j}} 
+ v_{j}' \frac{\partial \Phi_{0}}{\partial x_{i}} - \theta_{ij} 
- \frac{\partial \Phi_{0}}{\partial x_{i}}
\frac{\partial \Phi_{0}}{\partial x_{j}} 
\sum_{l = 0}^{K}(2v_{l}'\gamma_{l}' + v_{l}\gamma_{l}'')\right) 
\quad \forall i,j \in \llbracket K \rrbracket_{0}.
\end{equation}

Observe that in 
both~\eqref{Paper03_formula_Parsons_Rogers_first_order_derivatives} 
and~\eqref{Paper03_formula_Parsons_Rogers_second_order_derivatives}, 
the derivatives of the coordinates of $\boldsymbol{v}$ and 
$\boldsymbol{\gamma}$ are taken with respect to~$x_{0}$. Hence, 
Parsons and Rogers reduce the problem of computing the first and second order 
derivatives of $\Phi_{0}$ to a problem of characterising the dynamical system 
near the attractor manifold. Moreover, neither the parametrisation of 
$\Gamma$ given by $\boldsymbol{\gamma}$ nor the left eigenvector 
$\boldsymbol{v}$ should be difficult to compute, since they follow from 
standard computations concerning the Jacobian of the vector field $\mathbf{F}$.

The challenge is then to compute the matrix $\boldsymbol{\Theta}$. 
In~\cite[Appendix~B]{parsons2017dimension}, Parsons and Rogers propose 
performing this 
computation by finding an explicit basis of generalised eigenvectors of 
$\mathcal{J}\mathbf{F}_{\boldsymbol{x}}$ and changing the coordinates near 
a point $\boldsymbol{x} \in \Gamma$. Although this is the usual approach
in the literature on dynamical systems, it is not feasible for our 
application for large values of $K$, since for $K \geq 5$, the characteristic 
polynomial of $\mathcal{J}\mathbf{F}_{\boldsymbol{x}}$ may not have explicit 
roots. In the next section, we find an alternative formula for 
$\boldsymbol{\Theta}$ which sidesteps the explicit computation of 
the eigenvalues of $\mathcal{J}\mathbf{F}_{\boldsymbol{x}}$.

\subsection{Lyapunov-Schmidt method for the approximation of the non-linear stable manifold}
\label{Paper03_general_result_derivatives_manifold}

In this section, we present an approach to computing the curvature of 
the nonlinear stable manifold that does not rely on the computation of a 
basis of generalised eigenvectors of the Jacobian matrix of the flow. 
We emphasise that although our primary aim is to apply this approach to 
the ODE arising from our model, the results of this section are applicable 
to any dynamical system satisfying 
Assumptions~\ref{Paper03_assumption_manifold} 
and~\ref{Paper03_geometric_properties_Gamma}.

We continue to work in the setting of 
Section~\ref{Paper03_subsection_computing_derivatives}, 
i.e.~assume we are studying the ODE~\eqref{Paper03_deterministic_flow} 
which has an invariant attractor manifold $\Gamma$ defined 
by~\eqref{Paper03_definition_manifold}, and  
Assumptions~\ref{Paper03_assumption_manifold} 
and~\ref{Paper03_geometric_properties_Gamma} are both satisfied. 
For any $\boldsymbol{x} \in \Gamma$, let 
$\boldsymbol{u} \equiv \boldsymbol{u}(\boldsymbol{x})$ be a right eigenvector 
of $\mathcal{J}\mathbf{F}_{\boldsymbol{x}}$ associated with the eigenvalue~$0$, 
and let $\boldsymbol{v} \equiv \boldsymbol{v}(\boldsymbol{x})$ be a left 
eigenvector of $\mathcal{J}\mathbf{F}_{\boldsymbol{x}}$ associated with the 
eigenvalue~$0$. We choose the pair of vectors $\boldsymbol{u}$ and 
$\boldsymbol{v}$ so that $\langle \boldsymbol{u}, \boldsymbol{v} \rangle = 1$. 
%Without loss of generality, we 
We assume that both $\boldsymbol{u}$ and $\boldsymbol{v}$ are known. 
Observe that for each $\boldsymbol{x} \in \Gamma$, the linear centre manifold 
$E_{c} \equiv E_{c}(\boldsymbol{x})$ is given by 
$E_{c} = \spanvector(\boldsymbol{u})$. 
Let $E_{s} \equiv E_{s}(\boldsymbol{x})$ be the linear stable manifold spanned 
by the right eigenvectors of $\mathcal{J}\mathbf{F}_{\boldsymbol{x}}$ 
associated with 
the eigenvalues of $\mathcal{J}\mathbf{F}_{\boldsymbol{x}}$ with 
negative real part. Recall that $\Gamma$ is a non-linear centre manifold, and 
let $\mathfrak{W}_{s} \equiv \mathfrak{W}_{s}(\boldsymbol{x})$ be the 
non-linear stable manifold tangent to $E_{s}$ at $\boldsymbol{x}$.

We will apply the Lyapunov-Schmidt reduction method to approximate 
$\mathfrak{W}_{s}$ up to second order. See for instance Section~I.3 and 
Chapter~VII in~\cite{golubitsky1985singularities}, or 
Chapter~5 in~\cite{kuznetsov1998elements} for applications 
of this method to the study of bifurcations.

Define the linear maps
\begin{equation} 
\label{Paper03_projection_linear_center_manifold}
\begin{aligned}
    \mathcal{P}_{c}: \mathbb{R}^{K+1} & \rightarrow E_{c} \\
    \boldsymbol{y} & \mapsto \mathcal{P}_{c}(\boldsymbol{y}) 
= \langle \boldsymbol{y}, \boldsymbol{v} \rangle \boldsymbol{u},
\end{aligned}
\end{equation}
and
\begin{equation} 
\label{Paper03_projection_linear_stable_manifold}
\begin{aligned}
    \mathcal{P}_{s}: \mathbb{R}^{K+1} & \rightarrow E_{s} \\
    \boldsymbol{y} & \mapsto \mathcal{P}_{s}(\boldsymbol{y}) 
= \boldsymbol{y} - \langle \boldsymbol{y}, \boldsymbol{v} \rangle 
\boldsymbol{u}.
\end{aligned}
\end{equation}
Note that we can also write $\mathcal{P}_{s} = \mathbf{I} - \mathcal{P}_{c}$, 
where $\mathbf{I}: \mathbb{R}^{K+1} \rightarrow \mathbb{R}^{K+1}$ is the 
identity map. Hence, $\mathcal{P}_{c}$ is the map which projects 
$\mathbb{R}^{K+1}$ onto the linear centre manifold $E_{c}(\boldsymbol{x})$, 
while $\mathcal{P}_{s}$ projects $\mathbb{R}^{K+1}$ onto the linear stable 
manifold $E_{s}(\boldsymbol{x})$.

Our next result shows that the matrix $\boldsymbol{\Theta}$ can be computed 
in terms of $\boldsymbol{u}$, $\boldsymbol{v}$, $\mathcal{P}_s$, and using 
the Jacobian and the Hessians of the flow map at $\boldsymbol{x}$.

\begin{theorem}
\label{Paper03_prop_uniqueness_quadratic_term_nonlinear_manifold}
    The matrix $\boldsymbol{\Theta}$ 
in~\eqref{Paper03_Parsons_Rogers_description_flow} is the unique symmetric 
square matrix of order $K+1$ satisfying the following identities
    \begin{align}
        \label{Paper03_lyapunov_equation}
       \mathcal{J}\mathbf{F}^{*}_{\boldsymbol{x}} \boldsymbol{\Theta} 
+ \boldsymbol{\Theta} \mathcal{J}\mathbf{F}_{\boldsymbol{x}} 
& = \mathcal{P}_{s}^{*} \left(\sum_{i = 0}^{K} v_{i} 
\Hess_{\boldsymbol{x}}(F_{i})\right) \mathcal{P}_{s}, \\ 
\boldsymbol{\Theta} \boldsymbol{u} & = 0 
\label{Paper03_restriction_theta_u_in_nullspace}. 
    \end{align}
\end{theorem}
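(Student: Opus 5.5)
The plan is to characterise $\boldsymbol{\Theta}$ through the invariance of $\mathfrak{W}_{s}(\boldsymbol{x})$ under the flow, expanded to second order around $\boldsymbol{x}\in\Gamma$. Fix $\boldsymbol{x}\in\Gamma$ and write $\mathbf{J}\defeq\mathcal{J}\mathbf{F}_{\boldsymbol{x}}$ and $\mathbf{H}\defeq\sum_{i}v_i\Hess_{\boldsymbol{x}}(F_i)$. For $\boldsymbol{y}=\boldsymbol{x}+\boldsymbol{w}$ near $\boldsymbol{x}$, define
\[
q(\boldsymbol{y})\defeq\langle\boldsymbol{v},\boldsymbol{w}\rangle-\tfrac12\langle\boldsymbol{w},\boldsymbol{\Theta}\boldsymbol{w}\rangle .
\]
By \eqref{Paper03_Parsons_Rogers_description_flow}, $q=\mathcal{O}(\varepsilon^3)$ on $\mathfrak{W}_s$. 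Note that \eqref{Paper03_Parsons_Rogers_description_flow} is written around $\boldsymbol{\gamma}(x_0)=\boldsymbol{x}$, so I use the base point $\boldsymbol{x}$ throughout.

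\textbf{Step 1: only the $E_s$-block of $\boldsymbol{\Theta}$ is determined, so fix it by normalisation.} Since $\mathfrak{W}_s$ is tangent to $E_s$, a point of $\mathfrak{W}_s$ can be written $\boldsymbol{w}=\boldsymbol{s}+c(\boldsymbol{s})\boldsymbol{u}$ with $\boldsymbol{s}\in E_s$ and $c(\boldsymbol{s})=\mathcal{O}(|\boldsymbol{s}|^2)$. Substituting into $q$, any entry of $\boldsymbol{\Theta}$ coupling $\boldsymbol{u}$ to another direction only contributes at order $|\boldsymbol{s}|^3$. Hence \eqref{Paper03_Parsons_Rogers_description_flow} determines only $\mathcal{P}_s^*\boldsymbol{\Theta}\mathcal{P}_s$, and the normalisation \eqref{Paper03_restriction_theta_u_in_nullspace} selects a unique representative.

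I would also check that \eqref{Paper03_restriction_theta_u_in_nullspace} is compatible with the Parsons--Rogers derivative formulas \eqref{Paper03_formula_Parsons_Rogers_second_order_derivatives}, since those are stated for a specific $\boldsymbol{\Theta}$. I expect this consistency check, rather than the algebra below, to be the main obstacle.

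Note that $\boldsymbol{\Theta}\boldsymbol{u}=0$ together with symmetry gives $\boldsymbol{\Theta}\mathcal{P}_c=\boldsymbol{\Theta}\boldsymbol{u}\boldsymbol{v}^{*}=0$. Hence
\[
\boldsymbol{\Theta}=\boldsymbol{\Theta}\mathcal{P}_s=\mathcal{P}_s^*\boldsymbol{\Theta}.
\]

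\textbf{Step 2: derive the Lyapunov equation from invariance.} Because $\mathfrak{W}_s$ is invariant, the derivative $\frac{d}{dt}q(\boldsymbol{\phi}(\boldsymbol{y},t))\big|_{t=0}=\nabla q(\boldsymbol{y})\cdot\mathbf{F}(\boldsymbol{y})$ must also be $\mathcal{O}(\varepsilon^3)$ on $\mathfrak{W}_s$. I would justify this by noting that $q$ is a second-order defining function of $\mathfrak{W}_s$ and the flow is tangent to $\mathfrak{W}_s$.

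Taylor expanding, $\mathbf{F}(\boldsymbol{y})=\mathbf{J}\boldsymbol{w}+\frac12(\boldsymbol{w}^*\Hess F_i\,\boldsymbol{w})_i+\mathcal{O}(|\boldsymbol{w}|^3)$. Using $\boldsymbol{v}^*\mathbf{J}=0$, this gives
\[
\nabla q\cdot\mathbf{F}=\tfrac12\boldsymbol{w}^*\mathbf{H}\boldsymbol{w}-\tfrac12\boldsymbol{w}^*(\mathbf{J}^*\boldsymbol{\Theta}+\boldsymbol{\Theta}\mathbf{J})\boldsymbol{w}+\mathcal{O}(|\boldsymbol{w}|^3).
\]
On $\mathfrak{W}_s$ we may replace $\boldsymbol{w}$ by $\boldsymbol{s}=\mathcal{P}_s\boldsymbol{w}$ at this order. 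Since $\boldsymbol{s}$ ranges over an open subset of $E_s$, the quadratic forms must agree on $E_s$:
\[
\mathcal{P}_s^*(\mathbf{J}^*\boldsymbol{\Theta}+\boldsymbol{\Theta}\mathbf{J})\mathcal{P}_s=\mathcal{P}_s^*\mathbf{H}\mathcal{P}_s .
\]
To remove the outer projections, use that $\mathbf{J}$ commutes with $\mathcal{P}_s$: we have $\mathbf{J}\boldsymbol{u}=0$ and $\boldsymbol{v}^*\mathbf{J}=0$, so $\mathbf{J}\mathcal{P}_c=0=\mathcal{P}_c\mathbf{J}$. Combined with Step 1, this gives $\mathcal{P}_s^*(\mathbf{J}^*\boldsymbol{\Theta}+\boldsymbol{\Theta}\mathbf{J})\mathcal{P}_s=\mathbf{J}^*\boldsymbol{\Theta}+\boldsymbol{\Theta}\mathbf{J}$, which is exactly \eqref{Paper03_lyapunov_equation}.

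\textbf{Step 3: uniqueness, and existence as a consistency check.} Consider the linear map $\mathcal{L}(\boldsymbol{\Theta})=\mathbf{J}^*\boldsymbol{\Theta}+\boldsymbol{\Theta}\mathbf{J}$ acting on symmetric matrices with $\boldsymbol{\Theta}\boldsymbol{u}=0$, which are identified with symmetric bilinear forms on $E_s$. Because $E_s$ is $\mathbf{J}$-invariant, this is the Lyapunov operator of $\mathbf{J}|_{E_s}$. Its eigenvalues are $\lambda_i+\lambda_j$, where the $\lambda_i$ are the eigenvalues of $\mathbf{J}|_{E_s}$. By Assumption~\ref{Paper03_assumption_manifold}, each $\lambda_i$ has negative real part, so $\lambda_i+\lambda_j\neq0$ and $\mathcal{L}$ is invertible on this space.

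Invertibility gives uniqueness. It also gives existence, with explicit formula $\boldsymbol{\Theta}=-\int_0^\infty e^{t\mathbf{J}^*}\mathcal{P}_s^*\mathbf{H}\mathcal{P}_s e^{t\mathbf{J}}\,dt$, which converges on $E_s$. The right-hand side of \eqref{Paper03_lyapunov_equation} is consistent with this framework, since it annihilates $\boldsymbol{u}$ and is symmetric. Since the true curvature matrix satisfies both identities by Step 2, it coincides with this unique solution.
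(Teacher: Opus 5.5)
Your proposal is correct and follows essentially the paper's route: second-order invariance of $\mathfrak{W}_s$ under the flow, together with the normalisation $\boldsymbol{\Theta}=\boldsymbol{\Theta}\mathcal{P}_s=\mathcal{P}_s^{*}\boldsymbol{\Theta}$, yields the Lyapunov equation (you use the defining function $q$, while the paper differentiates the graph map $V\colon E_s\to\mathbb{R}$ along trajectories), and uniqueness follows from the same spectral fact as the paper's commuting operators $\Lambda,\widetilde{\Lambda}$, which you phrase as the Lyapunov operator of $\mathcal{J}\mathbf{F}_{\boldsymbol{x}}|_{E_s}$ having eigenvalues $\lambda_i+\lambda_j$ with negative real part. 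The consistency check you flag is not needed for this statement, since the paper also just imposes the normalisation without loss of generality, and it does hold: differentiating the Parsons--Rogers relation implicitly twice produces, inside the bracket of their second-derivative formula, the extra terms $(\boldsymbol{\Theta}\boldsymbol{\gamma}')_i\,\partial_j\Phi_0+(\boldsymbol{\Theta}\boldsymbol{\gamma}')_j\,\partial_i\Phi_0-\langle\boldsymbol{\gamma}',\boldsymbol{\Theta}\boldsymbol{\gamma}'\rangle\,\partial_i\Phi_0\,\partial_j\Phi_0$, which vanish precisely because $\boldsymbol{\gamma}'$ is parallel to $\boldsymbol{u}$ and $\boldsymbol{\Theta}\boldsymbol{u}=0$.
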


Theorem~\ref{Paper03_prop_uniqueness_quadratic_term_nonlinear_manifold}, 
combined with Katzenberger's result 
(Theorem~\ref{Paper03_major_tool_katzenberger}) and 
the formulae~\eqref{Paper03_formula_Parsons_Rogers_first_order_derivatives} 
and~\eqref{Paper03_formula_Parsons_Rogers_second_order_derivatives} of 
Parsons and Rogers, allows us to explicitly characterise the limiting 
diffusion process for finite-dimensional stochastic dynamical systems of 
arbitrarily large order, provided that 
Assumptions~\ref{Paper03_assumption_manifold},~\ref{Paper03_assumption_katzenberger_stochastic_process} 
and~\ref{Paper03_geometric_properties_Gamma} are satisfied.  
The proof of 
Theorem~\ref{Paper03_prop_uniqueness_quadratic_term_nonlinear_manifold} 
relies on an application of the Centre Manifold Theorem. We 
postpone it until Section~\ref{Paper03_section_proofs_dynamical_systems}.

Note that Equation~\eqref{Paper03_lyapunov_equation} is a Lyapunov equation 
(also known as a Silvester equation), of semi-stable type. The term 
\emph{semi-stable} refers to the fact that the Jacobian matrix 
$\mathcal{J}\boldsymbol{F_x}$ has one eigenvalue $0$ and $K$ eigenvalues with 
negative real part.
%, which follows from Assumptions~\ref{Paper03_assumption_manifold} and~\ref{Paper03_geometric_properties_Gamma}. In this regard, s
Stable Lyapunov equations are matrix equations of the form
\begin{equation*}
    \boldsymbol{A}^*\boldsymbol{Y} + \boldsymbol{Y} \boldsymbol{A} = \boldsymbol{C},
\end{equation*}
in which $\boldsymbol{A}$, $\boldsymbol{C}$ and $\boldsymbol{Y}$ are square 
matrices of order $K+1$, all the eigenvalues of $\boldsymbol{A}$ have 
negative real part, and $\boldsymbol{C}$ is symmetric. Stable Lyapunov 
equations are often used in control theory to study the asymptotic behaviour 
of linear systems (see for instance~\cite[Chapter~5]{sontag2013mathematical}).

Although from an abstract perspective, 
Theorem~\ref{Paper03_prop_uniqueness_quadratic_term_nonlinear_manifold} 
completely characterises the matrix $\boldsymbol{\Theta}$, the linear 
Lyapunov equation in~\eqref{Paper03_lyapunov_equation} can be 
computationally expensive. In our setting, when $K$ is large, solving 
Equation~\eqref{Paper03_lyapunov_equation} 
analytically is not feasible. Instead, in order to understand the 
behaviour of the matrix $\boldsymbol{\Theta}$, 
we shall adapt a strategy 
borrowed from the theory of stable Lyapunov equations, 
%when the linear system is too large to be analytically solved, a common strategy is to try to 
and directly classify the solution matrix as positive or negative 
(semi)definite .

Recall that a real symmetric matrix $\mathbf{M}$ of order $K+1$ is called 
positive definite (resp.~positive semidefinite) if and only if 
$\langle \boldsymbol{y}, \mathbf{M} \boldsymbol{y} \rangle > 0$ for all 
$\boldsymbol{y} \in \mathbb{R}^{K+1}\setminus \{0\}$, 
(resp.~if and only if 
$\langle \boldsymbol{y}, \mathbf{M} \boldsymbol{y} \rangle \geq 0$ 
for all $\boldsymbol{y} \in \mathbb{R}^{K+1}$). 
Similarly, $\mathbf{M}$ is negative definite (resp.~negative semidefinite) 
if and only if $\langle \boldsymbol{y}, \mathbf{M} \boldsymbol{y} \rangle < 0$ 
for all $\boldsymbol{y} \in \mathbb{R}^{K+1}\setminus \{0\}$ 
(resp.~if and only if 
$\langle \boldsymbol{y}, \mathbf{M} \boldsymbol{y} \rangle \leq 0$ 
for all $\boldsymbol{y} \in \mathbb{R}^{K+1}$). 
A real symmetric matrix is positive semidefinite (resp.~negative semidefinite) 
if and only if all its eigenvalues are nonnegative (resp.~not positive),
see e.g.~\cite[Chapter~1]{bhatia2009positive} for a review.
% of positive and negative matrices). Consider 
We define a partial order $\succeq$ on the space of real symmetric
matrices by $\mathbf{M} \succeq \mathbf{Q}$ if and only if 
$\mathbf{M} - \mathbf{Q}$ is positive semidefinite. 
We also write $\mathbf{M} \succeq 0$ (resp.~$\preceq 0$) if and only if 
$\mathbf{M}$ is positive semidefinite (resp.~negative semidefinite). 
We are ready to state a formula for the solution of 
Equation~\eqref{Paper03_lyapunov_equation} that will give us information 
about the signs of the real part of the eigenvalues of $\boldsymbol{\Theta}$.

\begin{theorem} 
\label{Paper03_explicit_formula_solution_lyapunov_equation}
    Let $\boldsymbol{\Theta}$ be the unique symmetric matrix satisfying 
both~\eqref{Paper03_lyapunov_equation} 
and~\eqref{Paper03_restriction_theta_u_in_nullspace}. 
Then, $\boldsymbol{\Theta}$ satisfies the formula
    \begin{equation*}
        \boldsymbol{\Theta} 
= - \int_{0}^{+ \infty} 
\exp({\mathcal{J}\mathbf{F}^{*}_{\boldsymbol{x}}t})\mathcal{P}_{s}^{*} 
\left(\sum_{i = 0}^{K} v_{i} \Hess_{\boldsymbol{x}}(F_{i})\right) 
\mathcal{P}_{s}\exp({\mathcal{J}\mathbf{F}_{\boldsymbol{x}}t}) \, dt.
    \end{equation*}
    Moreover, if 
$\mathcal{P}_{s}^{*} \left(\sum_{i = 0}^{K} v_{i} 
\Hess_{\boldsymbol{x}}(F_{i})\right) \mathcal{P}_{s} \succeq 0$ 
(resp.~$\preceq 0$), then $\boldsymbol{\Theta} \preceq 0$ (resp.~$\succeq 0$).
\end{theorem}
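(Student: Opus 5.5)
Write $\mathbf{A} \defeq \mathcal{J}\mathbf{F}_{\boldsymbol{x}}$ and $\mathbf{C} \defeq \mathcal{P}_{s}^{*}\big(\sum_{i} v_{i}\Hess_{\boldsymbol{x}}(F_{i})\big)\mathcal{P}_{s}$. The plan is to show that the candidate
\[
\mathbf{Y} \defeq -\int_0^{\infty} e^{\mathbf{A}^* t}\,\mathbf{C}\,e^{\mathbf{A} t}\,dt
\]
is well defined, symmetric, satisfies \eqref{Paper03_lyapunov_equation} and \eqref{Paper03_restriction_theta_u_in_nullspace}, and then to invoke the uniqueness part of Theorem~\ref{Paper03_prop_uniqueness_quadratic_term_nonlinear_manifold} to conclude $\mathbf{Y}=\boldsymbol{\Theta}$.

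\emph{Convergence.} The key is that $\mathcal{P}_s$ commutes with $e^{\mathbf{A}t}$, since $E_c$ and $E_s$ are $\mathbf{A}$-invariant and $\mathcal{P}_s$ is the spectral projection along $E_c$ onto $E_s$. Indeed $\mathbf{A}\boldsymbol{u}=0$ and $\boldsymbol{v}^*\mathbf{A}=0$ give $\mathcal{P}_s\mathbf{A}=\mathbf{A}-\boldsymbol{u}\boldsymbol{v}^*\mathbf{A}=\mathbf{A}=\mathbf{A}\mathcal{P}_s$. Hence $\mathcal{P}_s e^{\mathbf{A}t}=e^{\mathbf{A}t}\mathcal{P}_s=\mathcal{P}_s e^{\mathbf{A}t}\mathcal{P}_s$. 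The restriction of $\mathbf{A}$ to $E_s$ has all eigenvalues with negative real part by Assumption~\ref{Paper03_assumption_manifold}, so $\|e^{\mathbf{A}t}\mathcal{P}_s\|\le Ce^{-ct}$ for some $c>0$. Since $\mathbf{C}=\mathcal{P}_s^*(\cdot)\mathcal{P}_s$, the integrand equals $(e^{\mathbf{A}t}\mathcal{P}_s)^*(\cdot)(e^{\mathbf{A}t}\mathcal{P}_s)$, which decays exponentially. The integral therefore converges absolutely, and $\mathbf{Y}$ is symmetric because each Hessian is symmetric.

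\emph{Verification.} Differentiating, $\frac{d}{dt}\big(e^{\mathbf{A}^*t}\mathbf{C}e^{\mathbf{A}t}\big)=\mathbf{A}^*(\cdot)+(\cdot)\mathbf{A}$. Integrating over $[0,\infty)$ and using that the boundary term vanishes at infinity gives
\[
\mathbf{A}^*\mathbf{Y}+\mathbf{Y}\mathbf{A}=-\big[e^{\mathbf{A}^*t}\mathbf{C}e^{\mathbf{A}t}\big]_0^\infty=\mathbf{C},
\]
which is \eqref{Paper03_lyapunov_equation}. For \eqref{Paper03_restriction_theta_u_in_nullspace}, we have $e^{\mathbf{A}t}\boldsymbol{u}=\boldsymbol{u}$ and $\mathcal{P}_s\boldsymbol{u}=\boldsymbol{u}-\langle\boldsymbol{u},\boldsymbol{v}\rangle\boldsymbol{u}=0$, so $\mathbf{C}e^{\mathbf{A}t}\boldsymbol{u}=0$ and therefore $\mathbf{Y}\boldsymbol{u}=0$. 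Uniqueness from Theorem~\ref{Paper03_prop_uniqueness_quadratic_term_nonlinear_manifold} then yields $\boldsymbol{\Theta}=\mathbf{Y}$.

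\emph{Sign.} For any $\boldsymbol{y}$,
\[
\langle\boldsymbol{y},\boldsymbol{\Theta}\boldsymbol{y}\rangle=-\int_0^\infty\langle e^{\mathbf{A}t}\boldsymbol{y},\mathbf{C}\,e^{\mathbf{A}t}\boldsymbol{y}\rangle\,dt.
\]
If $\mathbf{C}\succeq0$ the integrand is nonnegative, so $\boldsymbol{\Theta}\preceq0$; the case $\mathbf{C}\preceq0$ is symmetric and gives $\boldsymbol{\Theta}\succeq0$.

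The main obstacle is the convergence step. Because $\mathbf{A}$ has a zero eigenvalue, $e^{\mathbf{A}t}$ itself does not decay, so the argument depends on showing carefully that $\mathcal{P}_s$ is exactly the $\mathbf{A}$-invariant spectral projection. This in turn requires $0$ to be a simple eigenvalue, so that $\langle\boldsymbol{u},\boldsymbol{v}\rangle\neq0$ and $\mathbb{R}^{K+1}=E_c\oplus E_s$, which follows from the exact eigenvalue count in Assumption~\ref{Paper03_assumption_manifold}.
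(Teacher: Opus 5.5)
Your proposal is correct and follows the paper's proof essentially step for step. Both arguments use exponential decay of the projected semigroup to make the integral converge, differentiate the integrand and use the vanishing boundary term to verify \eqref{Paper03_lyapunov_equation}, use $\mathcal{P}_s\boldsymbol{u}=0$ for \eqref{Paper03_restriction_theta_u_in_nullspace}, invoke uniqueness to identify the integral with $\boldsymbol{\Theta}$, and read off the sign from the quadratic form along $e^{\mathcal{J}\mathbf{F}_{\boldsymbol{x}}t}\boldsymbol{y}$. The only minor difference is that you obtain the decay from the commutation $\mathcal{P}_s\mathcal{J}\mathbf{F}_{\boldsymbol{x}}=\mathcal{J}\mathbf{F}_{\boldsymbol{x}}\mathcal{P}_s$, which handles both factors at once via adjoints, whereas the paper splits $\boldsymbol{y}=\boldsymbol{y}_s+\langle\boldsymbol{v},\boldsymbol{y}\rangle\boldsymbol{u}$ and treats the adjoint factor separately.
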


The proof of Theorem~\ref{Paper03_explicit_formula_solution_lyapunov_equation} 
is also postponed until Section~\ref{Paper03_section_proofs_dynamical_systems}.
We observe that the presence of the projection matrices 
$\mathcal{P}_{s}$ and $\mathcal{P}^{*}_{s}$ inside the integral formula is 
pivotal for the proof of 
Theorem~\ref{Paper03_explicit_formula_solution_lyapunov_equation}. Indeed, 
if we do not include these matrices, the integral is not well defined. 
In particular, this construction with projection matrices could also be 
applied to other semistable Lyapunov equations. This would correct a small 
mistake made by Parsons and Rogers in Equation~(C.9) 
in~\cite{parsons2017dimension}, in which they represent the solution of a 
semi-stable Lyapunov equation in integral form without explicitly inserting 
the projection matrices inside the integrand.

\subsection{Application to a specific ODE} 
\label{Paper03_derivatives_specific_ode}

In this section, we will apply 
Theorems~\ref{Paper03_prop_uniqueness_quadratic_term_nonlinear_manifold} 
and~\ref{Paper03_explicit_formula_solution_lyapunov_equation} to study 
the particular ODE arising from our Wright-Fisher model with seed bank.

Recall the flow field $\mathbf{F}: [0,1]^{K+1} \rightarrow \mathbb{R}^{K+1}$ 
given by~\eqref{Paper03_flow_definition_WF_model}:
\begin{equation*}
     \left\{
    \arraycolsep=1.4pt\def\arraystretch{2.2}
    \begin{array}{cl} F_{0}(\boldsymbol{x}) 
& \defeq \displaystyle{\frac{(1 - x_{0})\left(\sum_{i = 1}^{K} 
b_{i}x_{i}- (1 - b_{0})x_{0}\right)}{(1 - x_{0}) 
+ \sum_{i = 0}^{K} b_{i}x_{i}}}, \\
    F_{i}(\boldsymbol{x}) & \defeq x_{i - 1} - x_{i} \quad 
\forall \, i \in \llbracket K \rrbracket.
    \end{array}\right.
\end{equation*}
As explained in Section~\ref{Paper03_main_results_section}, for each 
$i \in \llbracket K \rrbracket_{0}$, $b_{i}$ represents the probability of a 
seed from a mutant individual being ready to germinate in exactly $i$ 
generations. We assume that the vector 
$\mathbf{b} = (b_{i})_{i \in \llbracket K \rrbracket_0}$ 
satisfies~\eqref{Paper03_assumption_probability_distribution_bi}. 
Recall also that the non-linear centre manifold 
$\Gamma = \{\boldsymbol{x} \in [0,1]^{K+1}: \, 
\mathbf{F}(\boldsymbol{x}) = 0\}$ in this example
is the diagonal of the $(K+1)$-dimensional 
hypercube:
\begin{equation*}
 \Gamma \defeq \Big\{\boldsymbol{x} \in [0,1]^{K+1}: \, x_{0} = x_{1} 
= \ldots = x_{K}\Big\}.
\end{equation*}
The parametrisation of $\Gamma$ is then trivially
given by the map $\boldsymbol{\gamma} : [0,1] \rightarrow \Gamma$, where, 
for all $x \in [0,1]$,
\begin{equation} 
\label{Paper03_parametrisation_attractor_manifold}
    \boldsymbol{\gamma}(x) \defeq \left(\begin{array}{c} x \\ x \\ x \\ 
\vdots \\ x
        \end{array}\right).
\end{equation}
We wish to study the behaviour of $\mathbf{F}$ near $\Gamma$. 
For any $\boldsymbol{x} = (x, \ldots, x) \in \Gamma$, 
the Jacobian of $\mathbf{F}$ evaluated at $\boldsymbol{x}$ is given by
    \begin{equation} 
\label{Paper03_formula_jacobian_manifold}
    \begin{aligned}
        \mathcal{J}\mathbf{F}_{\boldsymbol{x}} = \left(\begin{array}{ccccc}
           - (1 - b_{0})(1-x) & b_{1}(1-x) & b_{2}(1-x) & \cdots & b_{K}(1-x) \\
          1 & -1 & 0 & \cdots & 0 \\ 0 & 1 & -1 & \cdots & 0 \\ \vdots & \vdots & \vdots & \ddots & \vdots \\ 0 & 0 & 0 & \cdots & -1
        \end{array}\right).
    \end{aligned}
    \end{equation}
We first check that $\Gamma$ satisfies 
Assumption~\ref{Paper03_assumption_manifold} with respect to the 
map $\mathbf{F}$.

\begin{lemma} 
\label{Paper03_eigenvalue_condition_constant_environment}
For every $\boldsymbol{x} \in \Gamma$, the Jacobian matrix of the flow 
$\mathbf{F}$ evaluated at $\boldsymbol{x}$ has exactly one eigenvalue equal 
to $0$ and $K$ eigenvalues in the open disk 
$D(1) = \{\lambda \in \mathbb{C}: \; \vert \lambda + 1 \vert < 1 \}$.
\end{lemma}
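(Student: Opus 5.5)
Set $c \defeq 1-x \in [0,1]$ and compute the characteristic polynomial of $\mathcal{J}\mathbf{F}_{\boldsymbol{x}}$ in~\eqref{Paper03_formula_jacobian_manifold} directly. The matrix is a companion-type (Leslie-like) matrix, so an eigenvector equation $\mathcal{J}\mathbf{F}_{\boldsymbol{x}}\boldsymbol{w} = \lambda \boldsymbol{w}$ can be solved recursively. The rows $i \geq 1$ give $w_{i-1} - w_i = \lambda w_i$, so $w_i = w_0(1+\lambda)^{-i}$. Substituting into the first row gives
\begin{equation*}
\lambda = -(1-b_0)c + c\sum_{i=1}^{K} b_i (1+\lambda)^{-i},
\end{equation*}
so, with $\mu \defeq 1+\lambda$, the eigenvalues (after clearing $\mu^K$) are the roots of
\begin{equation*}
\mu - 1 = c\Big(\sum_{i=1}^K b_i\mu^{-i} - (1-b_0)\Big) = c\sum_{i=1}^K b_i(\mu^{-i}-1).
\end{equation*}
The case $\mu=0$ is excluded since it would force $w_0=0$. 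The value $\mu=1$ ($\lambda=0$) is always a root, with eigenvector $(1,\dots,1)$, consistent with $\Gamma$ being the zero set. The claim $\lambda\in D(1)$ is equivalent to $|\mu|<1$, so I need every other root to satisfy $|\mu|<1$, and $\mu=1$ to be simple.

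\textbf{Locating the roots.} Factor out $\mu-1$: since $\mu^{-i}-1 = -(\mu-1)\mu^{-i}(1+\mu+\dots+\mu^{i-1})$, the remaining roots solve
\begin{equation*}
1 + c\sum_{i=1}^K b_i\sum_{j=1}^{i}\mu^{-j} = 0, \qquad\text{equivalently}\qquad 1 + c\sum_{j=1}^K \beta_j \mu^{-j}=0,\quad \beta_j \defeq \sum_{i\ge j} b_i \ge 0 .
\end{equation*}
Suppose $|\mu|\ge 1$. Then $\big|c\sum_j \beta_j\mu^{-j}\big| \le c\sum_j\beta_j = cB$. This bound is not below $1$ in general, since $B$ can reach $K(1-b_0)$, so the naive triangle inequality fails; this is the main obstacle. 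I would instead use a positive-coefficient polynomial argument. Multiplying by $\mu^K$ gives $\mu^K + c\sum_j \beta_j\mu^{K-j}=0$. Together with $\mu-1$, this recombines to the polynomial
\begin{equation*}
P(\mu)=\mu^{K+1} - (1-c(1-b_0))\mu^K - c\sum_{i=1}^K b_i\mu^{K-i}.
\end{equation*}
Since $b_0>0$ and $c \le 1$, we have $1-c(1-b_0)\ge b_0>0$. So $P(\mu)=\mu^{K+1}-\sum_k a_k\mu^k$ with nonnegative coefficients $a_k$ summing to $(1-c(1-b_0))+c(1-b_0)=1$, and $a_K>0$.

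\textbf{Conclusion via Eneström--Kakeya/Cauchy.} For such $P$, any root with $|\mu|\ge1$ satisfies $|\mu|^{K+1}\le \sum_k a_k|\mu|^k \le |\mu|^{K}\sum_k a_k = |\mu|^K$. This forces $|\mu|=1$, with equality in the triangle inequality. Equality requires all terms $a_k\mu^k$ with $a_k>0$ to be aligned with $\mu^{K+1}$; since $a_K>0$ this means $\mu^{K+1}$ and $\mu^K$ are positively aligned, so $\mu=1$. Finally, simplicity of $\mu=1$: $P'(1)=K+1-\sum_k k a_k \ge K+1-K>0$. Hence $\lambda=0$ is simple and the other $K$ eigenvalues satisfy $|1+\lambda|<1$, i.e.\ lie in $D(1)$. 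At $c=0$ ($x=1$) the matrix is lower triangular with eigenvalues $0$ (from $-(1-b_0)c=0$) and $-1$ repeated $K$ times, which lie in $D(1)$, matching the general argument.
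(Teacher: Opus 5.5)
Your proof is correct, and it takes a genuinely different route from the paper.

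\textbf{How the two arguments differ.} The paper also passes to $\mathcal{J}\mathbf{F}_{\boldsymbol{x}}+\mathbf{I}_{K+1}$, whose characteristic polynomial is, up to the sign $(-1)^{K+1}$, exactly your $P$. It then uses the Perron--Frobenius theorem to get a simple real eigenvalue that strictly dominates all the others in modulus. The polynomial is used only to show, by contradiction, that this Perron root equals $1$.

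You instead locate all the roots directly. You write $P(\mu)=\mu^{K+1}-\sum_k a_k\mu^k$ with $a_k\ge 0$, $\sum_k a_k=1$ and $a_K\ge b_0>0$. The Cauchy/Enestr\"om--Kakeya bound, together with the equality case of the triangle inequality, then forces every root with $|\mu|\ge 1$ to equal $1$. Finally, $P'(1)\ge 1$ gives simplicity.

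\textbf{What each route buys.} Your route is more self-contained and more robust. The paper calls $\mathcal{J}\mathbf{F}_{\boldsymbol{x}}+\mathbf{I}_{K+1}$ strictly positive, which it is not. Strict dominance of the Perron root requires primitivity, and primitivity fails at $x=1$ and whenever $b_K=0$, since the matrix is then reducible. The lemma's conclusion still holds in these cases. Your argument covers them uniformly. Here $a_K=1-(1-b_0)(1-x)>0$ plays exactly the role of the positive $(0,0)$ diagonal entry that would make an irreducible matrix primitive.

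The paper's approach is quicker to state, and it is reused directly in Lemma~\ref{Paper03_criticality_branching_process}. Your argument would transfer there just as well, since $\mathbf{M}$ has characteristic polynomial $P$ evaluated at $x=0$.

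\textbf{Two small repairs.}
\begin{enumerate}
\item $\mu=0$ is not always excluded. When $(1-x)b_K=0$, the vector $\mathbf{e}_K$ is an eigenvector with $\lambda=-1$. This is harmless, because $-1\in D(1)$ and $P(0)=-(1-x)b_K$ already records it.
\item The eigenvector recursion identifies only the \emph{set} of eigenvalues. To count ``exactly one'' and ``$K$'' with algebraic multiplicity, you should note that $P(\mu)=\det\bigl(\mu\mathbf{I}_{K+1}-\mathcal{J}\mathbf{F}_{\boldsymbol{x}}-\mathbf{I}_{K+1}\bigr)$. The same identification is needed for $P'(1)\neq 0$ to say anything about $\mathcal{J}\mathbf{F}_{\boldsymbol{x}}$. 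It follows from a cofactor expansion along the first row: each $(0,j)$ minor is block triangular with determinant $(-1)^j\mu^{K-j}$.
\end{enumerate}
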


\begin{proof}
    Observe that $\lambda \in \mathbb{C}$ is an eigenvalue of 
$\mathcal{J}\mathbf{F}_{\boldsymbol{x}}$ if and only $\lambda + 1$ is an 
eigenvalue of $\mathcal{J}\mathbf{F}_{\boldsymbol{x}} + \mathbf{I}_{K+1}$, 
where $\mathcal{J}\mathbf{F}_{\boldsymbol{x}} + \mathbf{I}_{K+1}$ is the 
square matrix of order $K + 1$ given by
    \begin{equation*}
        \mathcal{J}\mathbf{F}_{\boldsymbol{x}} + \mathbf{I}_{K+1} 
= \left(\begin{array}{ccccc}
       - (1 - b_{0})(1-x) + 1 & b_{1}(1-x) & b_{2}(1-x) & \cdots & b_{K}(1-x) \\
            1 & 0 & 0 & \cdots & 0 \\ 0 & 1 & 0 & \cdots & 0 \\ \vdots & \vdots
 & \vdots & \ddots & \vdots \\ 0 & 0 & 0 & \cdots & 0
        \end{array}\right).
    \end{equation*}
By directly computing the characteristic polynomial $\tilde{p}$ of 
$\mathcal{J}\mathbf{F}_{\boldsymbol{x}} + \mathbf{I}_{K+1}$, we obtain that 
for all $\tilde{\lambda} \in \mathbb{C}$,
   \begin{equation} \label{Paper03_characteristic_polynomial_J_+_I_const_env}
    \tilde{p}(\tilde{\lambda}) 
= (-1)^{K}\tilde{\lambda}^{K}({- (1 - b_{0})(1-x)} + 1 - 
\tilde{\lambda}) 
+ (-1)^{K}{(1 - x)}\sum_{i = 1}^{K}b_{i}\tilde{\lambda}^{K - i}.
   \end{equation}
To prove our claim, it suffices to establish that $\tilde{p}$ has exactly $K$ 
roots satisfying $\vert \tilde{\lambda} \vert < 1$. To see that this holds, 
note that since $ \mathcal{J}\mathbf{F}_{\boldsymbol{x}} + \mathbf{I}_{K+1}$ 
is a strictly positive matrix, by the Perron-Frobenius Theorem,
$ \mathcal{J}\mathbf{F}_{\boldsymbol{x}} + \mathbf{I}_{K+1}$ has a simple 
eigenvalue 
$\tilde{\lambda}_{ \left(\mathcal{J}\mathbf{F}_{\boldsymbol{x}} 
+ \mathbf{I}_{K+1}\right)}$ which is a real positive number, 
and which is strictly greater than the modulus of all other
eigenvalues.
%such that all other eigenvalues have modulus strict less than 
%$\tilde{\lambda}_{ \left(\mathcal{J}\mathbf{F}_{\boldsymbol{x}} 
%+ \mathbf{I}_{K+1}\right)}$. 
   
We claim that 
$\tilde{\lambda}_{ \left(\mathcal{J}\mathbf{F}_{\boldsymbol{x}} 
+ \mathbf{I}_{K+1}\right)} = 1$. Indeed, since 
$(1 - b_{0}) = \sum_{i = 1}^{K} b_{i}$, $\tilde{\lambda} = 1$ is a root 
of $\tilde{p}$. Suppose now that 
$\tilde{\lambda}_{ \left(\mathcal{J}\mathbf{F}_{\boldsymbol{x}} 
+ \mathbf{I}_{K+1}\right)} > 1$. 
In this case, by dividing the characteristic polynomial $\tilde{p}$ by 
$(\tilde{\lambda}_{ \left(\mathcal{J}\mathbf{F}_{\boldsymbol{x}} 
+ \mathbf{I}_{K+1}\right)})^K$, we obtain
   \begin{equation*}
    \begin{aligned}
        0 & = {- (1 - b_{0})(1- x)} + 1 
- \tilde{\lambda}_{ \left(\mathcal{J}\mathbf{F}_{\boldsymbol{x}} 
+ \mathbf{I}_{K+1}\right)} + {(1- x)} 
\sum_{i = 1}^{K}b_{i}\tilde{\lambda}_{ \left(\mathcal{J}
\mathbf{F}_{\boldsymbol{x}} + \mathbf{I}_{K+1}\right)}^{- i} \\ 
& < {- (1 - b_{0})(1- x)} + 1 
- \tilde{\lambda}_{ \left(\mathcal{J}\mathbf{F}_{\boldsymbol{x}} 
+ \mathbf{I}_{K+1}\right)} + {(1 - b_{0})(1- x)} \\ & < 0,
    \end{aligned}
   \end{equation*}
where we used that $1 - b_{0} = \sum_{i = 1}^{K}b_{i}$ to derive the second 
inequality, and the assumption 
$\tilde{\lambda}_{ \left(\mathcal{J}\mathbf{F}_{\boldsymbol{x}} 
+ \mathbf{I}_{K+1}\right)} > 1$ to obtain the final one. 
Since this is a contradiction, we conclude that 
$\tilde{\lambda}_{ \left(\mathcal{J}\mathbf{F}_{\boldsymbol{x}} 
+ \mathbf{I}_{K+1}\right)} = 1$, and the result follows.
\end{proof}

Lemma~\ref{Paper03_eigenvalue_condition_constant_environment} implies that 
the diagonal $\Gamma$ is an attractor manifold, and therefore we can define 
the projection map $\boldsymbol{\Phi}: [0,1]^{K+1} \rightarrow \Gamma$ as 
in~\eqref{Paper03_definition_projection_map}. We would like to apply 
the formulae~\eqref{Paper03_formula_Parsons_Rogers_first_order_derivatives} 
and~\eqref{Paper03_formula_Parsons_Rogers_second_order_derivatives} of
Parsons and Rogers to compute the first and second order derivatives of 
$\Phi_{0}$. With this in mind, let 
$\boldsymbol{u} \equiv \boldsymbol{u}(\boldsymbol{x}) \in \mathbb{R}^{K+1}$ 
be a right eigenvector of $\mathcal{J}\mathbf{F}_{\boldsymbol{x}}$ associated
with the eigenvalue $0$. To simplify computations, we set
\begin{equation} 
\label{Paper03_right_eigenvector_Jacobian}
    \boldsymbol{u} = \left(\begin{array}{c} 1 \\ 1 \\ 1 \\ \vdots \\ 1
        \end{array}\right).
\end{equation}
We want to take $\boldsymbol{v} \equiv \boldsymbol{v}({\boldsymbol{x}})$ 
to be the left 
eigenvector of $\mathcal{J}\mathbf{F}_{\boldsymbol{x}}$ associated with the 
eigenvalue $0$ 
(i.e.~$\mathcal{J}\mathbf{F}_{\boldsymbol{x}}^{*} \boldsymbol{v} = 0$)
for which $\left\langle \boldsymbol{v}, \boldsymbol{u}\right\rangle = 1$. 
This gives
\begin{equation} 
\label{Paper03_left_eigenvector_jacobian}
    \boldsymbol{v} 
= \frac{1}{(B(1 - x) + 1)}\left(\begin{array}{c} 1 \\ (1 - b_{0})(1 - x) \\ 
\left(\sum_{i = 2}^{K} b_{i}\right)(1 - x) \\ 
\vdots \\ b_{K} (1 - x)
        \end{array}\right),
\end{equation}
where as usual $B = \sum_{i=1}^{K}ib_{i}$ represents the mean germination time.
% of germination (see~\eqref{Paper03_mean_age_germination}). 
We are now in a position to use
Equation~\eqref{Paper03_formula_Parsons_Rogers_first_order_derivatives}
%, we are already in condition 
to compute the first order derivates of $\Phi_{0}$ and 
prove identity~\eqref{Paper03_first_order_derivatives_projection}.

\begin{lemma} 
\label{Paper03_lemma_first_order_derivatives_projection_map}
 For any $K \in \mathbb{N}$, $\mathbf{b} = (b_{i})_{i = 0}^{K}$ satisfying 
Assumption~\ref{Paper03_assumption_probability_distribution_bi}, and $\boldsymbol{x} \in \Gamma$, we have
 \begin{equation*}
 \frac{\partial \Phi_{0}}{\partial x_{0}} (\boldsymbol{x}) 
= \frac{1}{(B(1-x_{0})+1)} \quad \textrm{ and } \quad 
\frac{\partial \Phi_{0}}{\partial x_{i}} (\boldsymbol{x}) 
= \frac{\left(\sum_{j = i}^{K}b_{j}\right)(1-x_{0})}{(B(1-x_{0})+1)} \; 
\quad\forall i \in \llbracket K \rrbracket.
    \end{equation*}
\end{lemma}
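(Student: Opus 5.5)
The plan is to apply the Parsons--Rogers first-order formula~\eqref{Paper03_formula_Parsons_Rogers_first_order_derivatives}, with the parametrisation $\boldsymbol{\gamma}$ of~\eqref{Paper03_parametrisation_attractor_manifold} and the left eigenvector $\boldsymbol{v}$ of~\eqref{Paper03_left_eigenvector_jacobian}. Since $\boldsymbol{\gamma}(x)=(x,\dots,x)$, we have $\boldsymbol{\gamma}'=\boldsymbol{u}=(1,\dots,1)$. Hence $\sum_l v_l\gamma_l' = \langle \boldsymbol{v},\boldsymbol{u}\rangle$, and this should equal $1$ by the normalisation. The formula then reduces to $\partial\Phi_0/\partial x_i = v_i$, so the lemma follows as soon as $\boldsymbol{v}$ is verified.

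The main step, and the only real computation, is to check that the vector in~\eqref{Paper03_left_eigenvector_jacobian} is indeed the normalised left null vector of $\mathcal{J}\mathbf{F}_{\boldsymbol{x}}$ in~\eqref{Paper03_formula_jacobian_manifold}. First drop the normalising constant and write $w_0=1$ and $w_i=(1-x)\sum_{j\ge i}b_j$ for $i\in\llbracket K\rrbracket$. We need $\mathcal{J}\mathbf{F}_{\boldsymbol{x}}^*\boldsymbol{w}=0$, checked column by column.

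\begin{itemize}
\item Column $0$ gives $-(1-b_0)(1-x)w_0 + w_1 = 0$. This holds because $\sum_{j\ge1}b_j = 1-b_0$.
\item Column $i$ with $1\le i\le K-1$ gives $b_i(1-x)w_0 - w_i + w_{i+1}=0$. This holds because $w_i-w_{i+1}=(1-x)b_i$.
\item Column $K$ gives $b_K(1-x) - w_K = 0$, which holds by definition of $w_K$.
\end{itemize}

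Next, the normalisation: $\langle \boldsymbol{w},\boldsymbol{u}\rangle = 1+(1-x)\sum_{i=1}^K\sum_{j\ge i} b_j$. Exchanging the order of summation gives $\sum_{i=1}^K\sum_{j\ge i}b_j=\sum_j j b_j = B$, so $\langle \boldsymbol{w},\boldsymbol{u}\rangle = B(1-x)+1$. Dividing by this gives $\boldsymbol{v}$ with $\langle\boldsymbol{v},\boldsymbol{u}\rangle=1$. The division is legitimate since $B(1-x)+1\ge 1>0$.

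Lemma~\ref{Paper03_eigenvalue_condition_constant_environment} guarantees that $0$ is a simple eigenvalue, so the left null space is one-dimensional and $\boldsymbol{v}$ is determined up to scaling. The Parsons--Rogers expression is invariant under rescaling of $\boldsymbol{v}$, so the choice of normalisation does not matter. Lemma~\ref{Paper03_eigenvalue_condition_constant_environment} also ensures that Assumptions~\ref{Paper03_assumption_manifold} and~\ref{Paper03_geometric_properties_Gamma} hold, which is what licenses the formula.

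Substituting, we get $\partial\Phi_0/\partial x_0 = v_0 = 1/(B(1-x_0)+1)$ and $\partial\Phi_0/\partial x_i = v_i = (\sum_{j\ge i}b_j)(1-x_0)/(B(1-x_0)+1)$, with $x=x_0$ on $\Gamma$. This is exactly the claim. I expect no serious obstacle; the only care needed is the summation exchange that produces $B$.
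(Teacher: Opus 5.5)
Your proposal is correct and follows the paper's proof. Both set $\boldsymbol{\gamma}'=\boldsymbol{u}=(1,\dots,1)$, so that $\sum_l v_l\gamma_l'=\langle\boldsymbol{v},\boldsymbol{u}\rangle=1$, and substitute the left eigenvector~\eqref{Paper03_left_eigenvector_jacobian} into the Parsons--Rogers formula~\eqref{Paper03_formula_Parsons_Rogers_first_order_derivatives}. Your column-by-column check that this $\boldsymbol{v}$ is the normalised left null vector, including the summation exchange that produces $B$, supplies a computation the paper only states.
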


\begin{proof}
    From~\eqref{Paper03_parametrisation_attractor_manifold}, the 
parametrisation curve~$\boldsymbol{\gamma}$ satisfies
    \begin{equation*}
        \gamma'(x_{0}) = \boldsymbol{1} = (1, 1, 1, \ldots, 1)^* \quad \textrm{and} \quad \gamma''(x_{0}) = 0, 
    \end{equation*}
    so that 
    \begin{equation*}
        \sum_{l = 0}^{K} v_{l} \gamma'_{l} \equiv 1.
    \end{equation*}
Substituting in~\eqref{Paper03_formula_Parsons_Rogers_first_order_derivatives} 
(using~\eqref{Paper03_left_eigenvector_jacobian}) completes the proof.
\end{proof}

We now turn to properties of the second order derivatives. 
Since our aim is to derive these properties for general $K \in \mathbb{N}$, 
including in particular $K > 5$, we 
%cannot follow the approach suggested by Parsons and Rogers to compute the curvature of the non-linear stable manifold (observe that as shown in the proof of Lemma~\ref{Paper03_eigenvalue_condition_constant_environment}, it is not possible to determine for any $K \in \mathbb{N}$ the roots of the characteristic polynomial of $\mathcal{J}\mathbf{F}_{\boldsymbol{x}}$). We shall
apply the Lyapunov-Schmidt method described in the previous section. 
We start by observing that the linear space spanned by $\boldsymbol{u}$ is the 
linear centre manifold $E_{c} \equiv E_{c}(\boldsymbol{x})$ at 
$\boldsymbol{x}$. By~\eqref{Paper03_projection_linear_center_manifold}, the 
projection $\mathcal{P}_{c}$ onto $E_{c}$ is given by 
$\mathcal{P}_{c}(\boldsymbol{y}) 
= \langle \boldsymbol{y}, \boldsymbol{v} \rangle \boldsymbol{u}$. 
It will be convenient to represent $\mathcal{P}_{c}$ as a matrix
\begin{equation} 
\label{Paper03_computation_projection_matrix_center_linear_manifold}
    \mathcal{P}_{c} = \frac{1}{(B(1 - x) + 1)}\left(\begin{array}{ccccc}
        1 & (1-b_{0})(1 - x) & \left(\sum_{i = 2}^{K} b_{i}\right)(1 - x) 
& \cdots & b_{K} (1 - x) \\
          1 & (1-b_{0})(1 - x) & \left(\sum_{i = 2}^{K} b_{i}\right)(1 - x) 
& \cdots & b_{K} (1 - x) \\ 1 & (1-b_{0})(1 - x) & 
\left(\sum_{i = 2}^{K} b_{i}\right)(1 - x) & \cdots & b_{K} (1 - x) \\ 
\vdots & \vdots & \vdots & \ddots & \vdots \\ 
1 & (1-b_{0})(1 - x) & \left(\sum_{i = 2}^{K} b_{i}\right)(1 - x) 
& \cdots & b_{K} (1 - x)
    \end{array}\right).
\end{equation}
To simplify notation, for all $j \in \llbracket K \rrbracket$, we define
\begin{equation} 
\label{Paper03_auxiliary_B_j}
    B_{j} \defeq B - \sum_{q = j}^{K} b_{q}.
\end{equation}
From identity~\eqref{Paper03_projection_linear_stable_manifold}, 
the projection matrix $\mathcal{P}_{s}$ onto the linear stable manifold 
$E_{s}$ is given by $\mathcal{P}_{s} = \mathbf{I}_{K+1} - \mathcal{P}_{c}$. 
Substituting the 
expression~\eqref{Paper03_computation_projection_matrix_center_linear_manifold},
we find
\begin{equation} 
\label{Paper03_computation_projection_matrix_stable_linear_manifold}
\begin{aligned}
    & (B(1 - x) + 1)\mathcal{P}_{s} \\ & \,  = \left(\begin{array}{ccccc}
        B(1 - x) & - (1-b_{0})(1 - x) & - \left(\sum_{i = 2}^{K} b_{i}\right)(1 - x) & \cdots & - b_{K} (1 - x) \\
          -1 & B_{1}(1 - x) + 1 & -\left(\sum_{i = 2}^{K} b_{i}\right)(1 - x) 
& \cdots & -b_{K} (1 - x) \\ -1 & - (1-b_{0})(1 - x)& B_{2}(1 - x) + 1 
& \cdots & -b_{K} (1 - x) \\ 
\vdots & \vdots & \vdots & \ddots & \vdots \\ 
-1 & -(1-b_{0})(1 - x) & -\left(\sum_{i = 2}^{K} b_{i}\right)(1 - x) 
& \cdots & B_{K} (1 - x) + 1
    \end{array}\right),
\end{aligned}
\end{equation}
with transpose $\mathcal{P}_{s}^{*}$
\begin{equation} 
\label{Paper03_computation_transpose_projection_matrix_stable_linear_manifold}
\begin{aligned}
    & (B(1 - x) + 1)\mathcal{P}_{s}^{*} \\ & \,  
= \left(\begin{array}{ccccc}
        B(1 - x) & - 1 & - 1 & \cdots & - 1 \\
          - (1-b_{0})(1 - x) & B_{1}(1 - x) + 1 & - (1-b_{0})(1 - x) 
& \cdots & - (1-b_{0})(1 - x) \\ 
-\left(\sum_{i = 2}^{K} b_{i}\right)(1 - x) & 
-\left(\sum_{i = 2}^{K} b_{i}\right)(1 - x) & B_{2}(1 - x) + 1 
& \cdots & -\left(\sum_{i = 2}^{K} b_{i}\right)(1 - x) \\ 
\vdots & \vdots & \vdots & \ddots & \vdots \\ 
-b_{K} (1 - x) & -b_{K} (1 - x) & -b_{K} (1 - x) & \cdots & B_{K} (1 - x) + 1
    \end{array}\right).
\end{aligned}
\end{equation}
Let $\boldsymbol{\Theta}$ be the matrix describing the curvature of the 
non-linear stable manifold $\mathfrak{W}_{s} \equiv \mathfrak{W}_{s}(\boldsymbol{x})$. We apply Theorem~\ref{Paper03_prop_uniqueness_quadratic_term_nonlinear_manifold}. 
In particular, $\boldsymbol{\Theta}$ is a symmetric square matrix of 
order $K+1$
%, i.e.
%\begin{equation*}
%    \boldsymbol{\Theta} = (\theta_{ij})_{i,j \in \llbracket K \rrbracket_{0}} = \left(\begin{array}{ccccc}
%        \theta_{00} & \theta_{01} & \theta_{02} & \cdots & \theta_{0K} \\
%        \theta_{01} & \theta_{11} & \theta_{12} & \cdots & \theta_{1K} \\ \theta_{02} & \theta_{12} & \theta_{22} & \cdots & \theta_{2K} \\ \vdots & \vdots & \vdots & \ddots & \vdots \\ \theta_{0K} & \theta_{1K} & \theta_{2K} & \cdots & \theta_{KK}
%    \end{array}\right),
%\end{equation*}
satisfying $\boldsymbol{\Theta} \boldsymbol{u} = 0$. By the definition of 
the right eigenvector $\boldsymbol{u}$ 
in~\eqref{Paper03_right_eigenvector_Jacobian} and the fact that
$\boldsymbol{\Theta}$ is symmetric, this last condition can be written as
\begin{equation} 
\label{Paper03_null_space_theta}
    \left\{\begin{array}{ccc}
         \theta_{00} + \theta_{01} + \theta_{02} + \cdots + \theta_{0K} & = & 0 \\
        \theta_{01} + \theta_{11} + \theta_{12} + \cdots + \theta_{1K} & 
= & 0 \\ 
\theta_{02} + \theta_{12} + \theta_{22} + \cdots + \theta_{2K} & 
= & 0 \\ 
\vdots & \vdots & \vdots \\ 
\theta_{0K} + \theta_{1K} + \theta_{2K} + \cdots + \theta_{KK} & = & 0
    \end{array}\right. .
\end{equation}
Moreover, %By Lemma~\ref{Paper03_computation_quadratic_term_nonlinear_manifold_first_step},
$\boldsymbol{\Theta}$ satisfies the semistable Lyapunov equation
\begin{equation} 
\label{Paper03_linear_system_original_ODE}
       \mathcal{J}\mathbf{F}^{*}_{\boldsymbol{x}} \boldsymbol{\Theta} 
+ \boldsymbol{\Theta} \mathcal{J}\mathbf{F}_{\boldsymbol{x}} 
= \mathcal{P}_{s}^{*} \left(\sum_{i = 0}^{K} v_{i} 
\Hess_{\boldsymbol{x}}(F_{i})\right) \mathcal{P}_{s}.
    \end{equation}
%Corollary~\ref{Paper03_computation_symmetric_matrix_easier_linear_system} 
%implies that in order to compute $\boldsymbol{\Theta}$, it is enough to 
We wish then to solve~\eqref{Paper03_linear_system_original_ODE} 
and~\eqref{Paper03_null_space_theta}. First we compute the terms that 
appear on each side of~\eqref{Paper03_linear_system_original_ODE}. Starting 
with the left-hand side, note that by~\eqref{Paper03_formula_jacobian_manifold},
the transpose of the Jacobian of $\mathbf{F}$ is given by
\begin{equation} 
\label{Paper03_formula_transpose_jacobian}
\mathcal{J}\mathbf{F}^{*}_{\boldsymbol{x}} = \left(\begin{array}{ccccc}
    - (1 - b_{0})(1 - x) & 1 & 0 & \cdots & 0 \\
    b_{1}(1 - x) & -1 & 1 & \cdots & 0 \\  b_{2}(1 - x) & 0 & -1 & \cdots & 0 \\ \vdots & \vdots & \vdots & \ddots & \vdots \\ b_{K}(1 - x) & 0 & 0 & \cdots & -1
\end{array}\right).
\end{equation}
Combining~\eqref{Paper03_formula_jacobian_manifold} 
and~\eqref{Paper03_formula_transpose_jacobian}, we have that for all 
$i,j \in \llbracket K \rrbracket$,
\begin{equation} 
\label{Paper03_left_hand_side_partial_terms}
    \left\{\begin{array}{ccl}
        \left(\mathcal{J}\mathbf{F}^{*}_{\boldsymbol{x}} \boldsymbol{\Theta} 
+ \boldsymbol{\Theta} \mathcal{J}\mathbf{F}_{\boldsymbol{x}}\right)_{00} & 
= & -2 (1 - b_{0})(1 - x) \theta_{00} + 2\theta_{01}, \\
         \left(\mathcal{J}\mathbf{F}^{*}_{\boldsymbol{x}} \boldsymbol{\Theta} 
+ \boldsymbol{\Theta} \mathcal{J}\mathbf{F}_{\boldsymbol{x}}\right)_{0j} & 
= & b_{j}(1- x) \theta_{00} + (-(1 - b_{0})(1 - x) - 1)\theta_{0j} + 
\mathds{1}_{\{j < K\}}\theta_{0,(j+1)} + \theta_{1j}, \\ 
\left(\mathcal{J}\mathbf{F}^{*}_{\boldsymbol{x}} \boldsymbol{\Theta} 
+ \boldsymbol{\Theta} \mathcal{J}\mathbf{F}_{\boldsymbol{x}}\right)_{ij} 
& = & b_{i}(1 - x)\theta_{0j} + b_{j} (1 - x) \theta_{0i} - 2 \theta_{ij} 
+ \mathds{1}_{\{i < K\}} \theta_{i+1, j} 
+ \mathds{1}_{\{j < K\}} \theta_{i, j+1}.
    \end{array}\right.
\end{equation}

We now compute the right-hand side 
of~\eqref{Paper03_linear_system_original_ODE}. 
Observe that from the definition of the 
flow~\eqref{Paper03_flow_definition_WF_model}, for all 
$i \in \llbracket K \rrbracket$ we have 
$\Hess_{\boldsymbol{x}}(F_{i}) \equiv 0$. Therefore, we conclude that
\begin{equation*}
    \mathcal{P}_{s}^{*} \left(\sum_{i = 0}^{K} v_{i} 
\Hess_{\boldsymbol{x}}(F_{i})\right) \mathcal{P}_{s} 
= v_{0} \mathcal{P}_{s}^{*}\Hess_{\boldsymbol{x}}(F_{0}) \mathcal{P}_{s}.
\end{equation*}
Recalling the definition of $\mathbf{F}$ 
in~\eqref{Paper03_flow_definition_WF_model}, we can compute the second order 
derivatives of $F_{0}$ on $\Gamma$, obtaining for all 
$\boldsymbol{x} = (x, \ldots, x) \in \Gamma$,
\begin{equation}  
\label{Paper03_computation_hessian_manifold}
\left\{\arraycolsep=1.4pt\def\arraystretch{2.2}\begin{array}{ccc}
    \displaystyle{\frac{\partial^{2}F_{0}}{\partial x_{0}^{2}}(\boldsymbol{x})}
 & = & 2(1 - b_{0}) - 2(1-b_{0})^{2}(1 - x), \\
    \displaystyle{\frac{\partial^{2}F_{0}}{\partial x_{0} \partial x_{j}}
(\boldsymbol{x})} 
& = & 2b_{j}(1-b_{0})(1 - x) - b_{j} \quad 
\forall j \in \llbracket K \rrbracket, \\ 
\displaystyle{\frac{\partial^{2}F_{0}}{\partial x_{i} \partial x_{j}}
(\boldsymbol{x})} & = & -2b_{i}b_{j}(1 - x) \quad \forall i,j 
\in \llbracket K \rrbracket.
\end{array}\right.
\end{equation}
It will be convenient to decompose $ \Hess_{\boldsymbol{x}}(F_{0})$ as
\begin{equation} 
\label{Paper03_decomposition_hessian_original_flow}
  \Hess_{\boldsymbol{x}}(F_{0}) = \Hess_{\boldsymbol{x}}(G_{0}) + 2(1-x)\Delta, 
\end{equation}
where
\begin{equation} 
\label{Paper03_hessian_alternative_ODE}
    \Hess_{\boldsymbol{x}}(G_{0}) \defeq \left(\begin{array}{ccccc}
    2(1 - b_{0}) & -b_{1} & -b_{2} & \cdots & -b_{K} \\
  -b_{1} & 0 & 0 & \cdots & 0 \\  -b_{2} & 0 & 0 & \cdots & 0 \\ 
\vdots & \vdots & \vdots & \ddots & \vdots \\ -b_{K} & 0 & 0 & \cdots & 0
\end{array}\right),
\end{equation}
and
\begin{equation} 
\label{Paper03_difference_matrix_between_hessians}
    \Delta \defeq \left(\begin{array}{ccccc}
  -(1 - b_{0})^{2} & b_{1}(1-b_{0}) & b_{2}(1-b_{0}) 
& \cdots & b_{K}(1 - b_{0}) \\
    b_{1}(1-b_{0}) & -b_{1}^{2} & -b_{1}b_{2} & \cdots & -b_{1}b_{K} \\  
b_{2}(1-b_{0}) & -b_{1}b_{2} & -b_{2}^{2} & \cdots & -b_{2}b_{K} \\ 
\vdots & \vdots & \vdots & \ddots & \vdots \\ 
b_{K}(1-b_{0}) & -b_{1}b_{K} & -b_{2}b_{K} & \cdots & -b_{K}^{2}
\end{array}\right).
\end{equation}
Observe from~\eqref{Paper03_hessian_alternative_ODE} that 
$\Hess_{\boldsymbol{x}}(G_{0})$ is the Hessian of the function 
$G_{0}:[0,1]^{K+1} \rightarrow \mathbb{R}$ given 
in~\eqref{Paper03_modified_flow_definition_WF_model}.
% and which corresponds 
%from a biological point of view to the dynamics of mature mutant individuals 
%in a population in which seeds would thrive without competition between them. 
Since the Lyapunov equation is linear, it is enough to find symmetric matrices 
$\boldsymbol{\Theta}^{G}$ and $\boldsymbol{\Theta}^{\Delta}$ 
solving Equations~\eqref{Paper03_null_space_theta} 
and~\eqref{Paper03_linear_system_original_ODE}, where in the last equation 
we replace $\Hess_{\boldsymbol{x}}(F_{0})$ by $\Hess_{\boldsymbol{x}}(G_{0})$ 
and $2(1-x)\Delta$, respectively. There are no straightforward techniques for 
finding solutions to the systems of linear equations that are valid for any 
$K \in \mathbb{N}$. One possible strategy is to solve the linear system for 
small values of $K$ in order to get some intuition for what the solution 
should look like, and then prove directly that the candidate solution satisfies 
the linear system. If uniqueness holds, as it does in our case, this strategy, 
when applicable, allows us to rigorously establish the solution to the system. 
This is the strategy that we will follow to compute $\boldsymbol{\Theta}^{G}$.

\begin{lemma} \label{Paper03_explicit_formula_theta_G_component}
    For any $K \in \mathbb{N}$ and $\mathbf{b} = (b_{i})_{i = 0}^{K}$ 
satisfying Equation~\eqref{Paper03_assumption_probability_distribution_bi}, 
the symmetric matrix 
$\boldsymbol{\Theta}^{G} \defeq (\theta^{G}_{ij})_{i,j\in 
\llbracket K \rrbracket_{0}}$ is such that for any 
$i,j \in \llbracket K \rrbracket$, the following formulae hold
    \begin{equation*}
    \begin{aligned}
      \theta^{G}_{00} = -\frac{B^{2}(1-x)}{(B(1-x)+1)^{3}}, \; \theta^{G}_{0i} 
= \frac{B\left(\sum_{l = i}^{K}b_{l}\right)(1-x)}{(B(1-x)+1)^{3}} \; 
\textrm{ and } \theta^{G}_{ij} 
= - \frac{\left(\sum_{l = i}^{K}b_{l}\right)
\left(\sum_{q = j}^{K}b_{q}\right)(1-x)}{(B(1-x)+1)^{3}}. 
    \end{aligned}
    \end{equation*}
\end{lemma}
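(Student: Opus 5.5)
The plan is to use the uniqueness part of Theorem~\ref{Paper03_prop_uniqueness_quadratic_term_nonlinear_manifold}. By that theorem, $\boldsymbol{\Theta}^{G}$ is the unique symmetric matrix with $\boldsymbol{\Theta}^{G}\boldsymbol{u}=0$ that solves~\eqref{Paper03_linear_system_original_ODE} with $\Hess_{\boldsymbol{x}}(F_0)$ replaced by $\Hess_{\boldsymbol{x}}(G_0)$. It therefore suffices to check that the displayed candidate satisfies both conditions.

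To organise the computation, write $c\defeq(B(1-x)+1)^{-1}$ and $\beta_i\defeq\sum_{l=i}^K b_l$ for $i\in\llbracket K\rrbracket$. Set $\beta_0\defeq -B$, which is also the natural value since $\boldsymbol{v}=c(1,\beta_1(1-x),\dots,\beta_K(1-x))$. Then the candidate is the rank one matrix
\[
\boldsymbol{\Theta}^{G}=-(1-x)c^{3}\,\boldsymbol{w}\boldsymbol{w}^{*},\qquad \boldsymbol{w}\defeq(-B,\beta_1,\dots,\beta_K)^{*}.
\]
Symmetry is immediate. Since $\sum_{i=1}^K\beta_i=\sum_i ib_i=B$, we get $\langle\boldsymbol{w},\boldsymbol{u}\rangle=0$, which gives~\eqref{Paper03_null_space_theta}.

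For the Lyapunov equation I first compute $\mathcal{J}\mathbf{F}^{*}_{\boldsymbol{x}}\boldsymbol{w}$ from~\eqref{Paper03_formula_transpose_jacobian}. Its $0$-th entry is $(1-b_0)(1-x)B+\beta_1$. For $1\le i<K$ its $i$-th entry is $-b_i(1-x)B-\beta_i+\beta_{i+1}=-b_i((1-x)B+1)$, and the $K$-th entry is $-b_K(1-x)B-\beta_K=-b_K(B(1-x)+1)$. Using $\beta_1=1-b_0$, the $0$-th entry equals $(1-b_0)(B(1-x)+1)$. Hence
\[
\mathcal{J}\mathbf{F}^{*}_{\boldsymbol{x}}\boldsymbol{w}=c^{-1}\boldsymbol{r},\qquad \boldsymbol{r}\defeq(1-b_0,-b_1,\dots,-b_K)^{*}.
\]
The left-hand side of the Lyapunov equation is then $-(1-x)c^{2}(\boldsymbol{r}\boldsymbol{w}^{*}+\boldsymbol{w}\boldsymbol{r}^{*})$.

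For the right-hand side, note that $\Hess_{\boldsymbol{x}}(G_0)=\boldsymbol{e}_0\boldsymbol{r}^{*}+\boldsymbol{r}\boldsymbol{e}_0^{*}$ by~\eqref{Paper03_hessian_alternative_ODE}, and that $v_0=c$. It is also easier to use $\mathcal{P}_s=\mathbf{I}-\boldsymbol{u}\boldsymbol{v}^{*}$ than the explicit matrices. Then $\mathcal{P}_s^{*}\boldsymbol{e}_0=\boldsymbol{e}_0-v_0\boldsymbol{v}$. Moreover $\mathcal{P}_s^{*}\boldsymbol{r}=\boldsymbol{r}-\langle\boldsymbol{u},\boldsymbol{r}\rangle\boldsymbol{v}=\boldsymbol{r}$, since $\langle \boldsymbol{u},\boldsymbol{r}\rangle=1-b_0-\sum_{i\ge1}b_i=0$. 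So the right-hand side is $c(\boldsymbol{a}\boldsymbol{r}^{*}+\boldsymbol{r}\boldsymbol{a}^{*})$ with $\boldsymbol{a}\defeq\boldsymbol{e}_0-c\boldsymbol{v}$. Its entries are $a_0=1-c^2=c^2B(1-x)(B(1-x)+2)$ and $a_i=-c^2\beta_i(1-x)$ for $i\ge1$.

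The identity now reduces to checking $\boldsymbol{a}\boldsymbol{r}^{*}+\boldsymbol{r}\boldsymbol{a}^{*}=-(1-x)c(\boldsymbol{r}\boldsymbol{w}^{*}+\boldsymbol{w}\boldsymbol{r}^{*})$. This would follow if $\boldsymbol{a}=-(1-x)c\,\boldsymbol{w}+\lambda\boldsymbol{r}$ for some scalar $\lambda$, with an extra antisymmetric correction that vanishes after symmetrisation. The main obstacle is this component matching: the $0$-th entry does not fit naively, because $a_0$ contains the factor $B(1-x)+2$ while $-(1-x)cw_0=(1-x)cB$.

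I will handle it by allowing the $\lambda\boldsymbol{r}$ correction and comparing entries. If that fails, I will fall back on a direct entrywise check of~\eqref{Paper03_left_hand_side_partial_terms} against the computed Hessian, separately for the $(0,0)$, $(0,j)$ and $(i,j)$ entries. If a mismatch persists, it would indicate that the lemma's $\boldsymbol{\Theta}^G$ is only the part of the solution needed to compute $\partial^2\Phi^G_0/\partial x_0^2$. In that case I would recheck the normalisation of $\boldsymbol{\Theta}$ (the factor $\tfrac12$) before concluding.
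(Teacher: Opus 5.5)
Your overall route is the paper's: check that the candidate is symmetric, annihilates $\boldsymbol{u}$, and solves the Lyapunov equation with $\Hess_{\boldsymbol{x}}(G_0)$ on the right, then invoke the uniqueness in Theorem~\ref{Paper03_prop_uniqueness_quadratic_term_nonlinear_manifold}. Your rank-one packaging is correct, namely $\boldsymbol{\Theta}^G=-(1-x)c^3\boldsymbol{w}\boldsymbol{w}^*$, $\mathcal{J}\mathbf{F}^*_{\boldsymbol{x}}\boldsymbol{w}=c^{-1}\boldsymbol{r}$, $\Hess_{\boldsymbol{x}}(G_0)=\boldsymbol{e}_0\boldsymbol{r}^*+\boldsymbol{r}\boldsymbol{e}_0^*$ and $\mathcal{P}_s^*\boldsymbol{r}=\boldsymbol{r}$. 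It is much shorter than the paper's entry-by-entry computation of $v_0\Hess_{\boldsymbol{x}}(G_0)\mathcal{P}_s$ followed by multiplication by $\mathcal{P}_s^*$.

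However, as written the proposal does not prove the lemma. It stops at an apparent mismatch in the $(0,0)$ entry and then speculates that the lemma might be incomplete or a factor $\tfrac12$ misplaced. That mismatch comes from one wrong step. Since $\mathcal{P}_s=\mathbf{I}-\boldsymbol{u}\boldsymbol{v}^*$, its transpose is $\mathcal{P}_s^*=\mathbf{I}-\boldsymbol{v}\boldsymbol{u}^*$. Hence $\mathcal{P}_s^*\boldsymbol{e}_0=\boldsymbol{e}_0-\langle\boldsymbol{u},\boldsymbol{e}_0\rangle\boldsymbol{v}=\boldsymbol{e}_0-\boldsymbol{v}$, not $\boldsymbol{e}_0-v_0\boldsymbol{v}$. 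You used the coefficient $\langle\boldsymbol{v},\boldsymbol{e}_0\rangle=v_0$, which belongs to $\mathcal{P}_s\boldsymbol{e}_0=\boldsymbol{e}_0-v_0\boldsymbol{u}$, even though you applied the correct rule $\mathcal{P}_s^*\boldsymbol{r}=\boldsymbol{r}-\langle\boldsymbol{u},\boldsymbol{r}\rangle\boldsymbol{v}$ one line later.

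With $\boldsymbol{a}=\boldsymbol{e}_0-\boldsymbol{v}$ you get $a_0=1-c=cB(1-x)$ and $a_i=-c\beta_i(1-x)$ for $i\in\llbracket K\rrbracket$. That is, $\boldsymbol{a}=-(1-x)c\,\boldsymbol{w}$ exactly. The right-hand side is then $c(\boldsymbol{a}\boldsymbol{r}^*+\boldsymbol{r}\boldsymbol{a}^*)=-(1-x)c^2(\boldsymbol{w}\boldsymbol{r}^*+\boldsymbol{r}\boldsymbol{w}^*)$, which is precisely your left-hand side $-(1-x)c^3\bigl((\mathcal{J}\mathbf{F}^*_{\boldsymbol{x}}\boldsymbol{w})\boldsymbol{w}^*+\boldsymbol{w}(\mathcal{J}\mathbf{F}^*_{\boldsymbol{x}}\boldsymbol{w})^*\bigr)$. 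No $\lambda\boldsymbol{r}$ correction, antisymmetric term, or renormalisation of $\boldsymbol{\Theta}$ is needed. Expanding entrywise recovers exactly the three expressions in~\eqref{Paper03_testing_theta_alternative_flow}.

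With this single correction your argument is a complete proof. It also explains the shape of the formula: $\boldsymbol{w}$ is, up to the scalar $-(1-x)c$, just $\mathcal{P}_s^*\boldsymbol{e}_0$, so $\boldsymbol{\Theta}^G$ is built from that one projected direction.
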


\begin{proof}
%    Since we must prove our claim for any $K \in \mathbb{N}$, it is 
%easier to prove Lemma~\ref{Paper03_explicit_formula_theta_G_component} by directly 

We proceed directly by plugging the given expressions into 
Equations~\eqref{Paper03_null_space_theta} 
and~\eqref{Paper03_linear_system_original_ODE}. First, since we can write
    \begin{equation*}
        B = \sum_{i = 1}^{K}ib_{i} = \sum_{i = 1}^{K} \sum_{l = i}^{K} b_{l},
    \end{equation*}
it follows immediately thta
%    we conclude that $\boldsymbol{\Theta}^{G}$ solves~\eqref{Paper03_null_space_theta}, i.e.~that 
$\boldsymbol{\Theta}^{G}\boldsymbol{u} = 0$. 
We must check that it %$\boldsymbol{\Theta}^{G}$ 
also satisfies~\eqref{Paper03_linear_system_original_ODE}. %By plugging the formulas of this lemma 
Substituting into~\eqref{Paper03_left_hand_side_partial_terms}, we conclude 
that for all $i,j \in \llbracket K \rrbracket$,
    \begin{equation} 
\label{Paper03_testing_theta_alternative_flow}
    \begin{aligned}
      \left(\mathcal{J}\mathbf{F}^{*}_{\boldsymbol{x}} \boldsymbol{\Theta}^{G} 
+ \boldsymbol{\Theta}^{G} \mathcal{J}\mathbf{F}_{\boldsymbol{x}}\right)_{00} 
& = \frac{2B(1-b_{0})(1-x)}{(B(1-x)+1)^{2}}, \\[2ex] 
\left(\mathcal{J}\mathbf{F}^{*}_{\boldsymbol{x}} \boldsymbol{\Theta}^{G} 
+ \boldsymbol{\Theta}^{G} \mathcal{J}\mathbf{F}_{\boldsymbol{x}}\right)_{0j} 
& = - \frac{\left(B b_{j} + (1-b_{0})\left(\sum_{q = j}^{K} b_{q}\right)\right)
(1-x)}{(B(1-x)+1)^{2}}, \\[2ex] 
\left(\mathcal{J}\mathbf{F}^{*}_{\boldsymbol{x}} \boldsymbol{\Theta}^{G} 
+ \boldsymbol{\Theta}^{G} \mathcal{J}\mathbf{F}_{\boldsymbol{x}}\right)_{ij} 
& = \frac{\left(b_{i}\left(\sum_{l = j}^{K} b_{l}\right) 
+ b_{j}\left(\sum_{l = i}^{K} b_{l}\right)\right)(1-x)}{(B(1-x)+1)^{2}}.
    \end{aligned}
    \end{equation}
To conclude the proof, we compute the terms 
of~$v_{0} \mathcal{P}_{s}^{*}\Hess_{\boldsymbol{x}}(G_{0}) \mathcal{P}_{s}$. 
We start by computing $v_{0} \Hess_{\boldsymbol{x}}(G_{0}) \mathcal{P}_{s}$. 
Observe that 
combining~\eqref{Paper03_left_eigenvector_jacobian},~\eqref{Paper03_computation_projection_matrix_stable_linear_manifold},~\eqref{Paper03_hessian_alternative_ODE} 
and the fact that $\sum_{j = 1}^{K} b_{j} = 1 - b_{0}$, we have
\begin{equation} 
\label{Paper03_coordinate_00_intermediate_matrix_step_i}
\begin{aligned}
    \Big(v_{0}\Hess_{\boldsymbol{x}}(G_{0})\mathcal{P}_{s}\Big)_{00} 
& = \frac{1}{(B(1-x)+1)^{2}} 
\left(B(1-x)\frac{\partial^{2}G_{0}}{\partial x_{0}^{2}}(\boldsymbol{x}) 
- \sum_{j = 1}^{K}\frac{\partial^{2}G_{0}}{\partial x_{0} \partial x_{j}}
(\boldsymbol{x})\right) \\[2ex] 
& = \frac{2B(1-b_{0})(1-x) + (1- b_{0})}{(B(1-x)+1)^{2}} \\[2ex] 
& = \frac{(1-b_{0})(2B(1- x)+1)}{(B(1 - x) + 1)^{2}}.
\end{aligned}
\end{equation}
Moreover, for all $j \in \llbracket K \rrbracket$, we have
\begin{equation} 
\label{Paper03_coordinates_first_row_intermediate_matrix_step_i}
\begin{aligned}
    & \Big(v_{0}\Hess_{\boldsymbol{x}}(G_{0})\mathcal{P}_{s}\Big)_{0j} \\ 
& \quad = \frac{1}{(B(1 - x) + 1)^{2}} 
\Biggl[- \Biggl(\sum_{q = j}^{K} b_{q}\Biggl)(1 - x) 
\Biggl(\sum_{\substack{l = 0, \\ 
l \neq j}}^{K} \frac{\partial^{2}G_{0}}{\partial x_{0} \partial x_{l}}\Biggl) 
+ (B_{j}(1- x) + 1)
\frac{\partial^{2}G_{0}}{\partial x_{0} \partial x_{j}}\Biggl] \\[2ex] 
& \quad = -\left(\frac{(1-b_{0})\left(\sum_{q = j}^{K} b_{q}\right)(1-x) 
+ (B(1 - x) + 1)b_{j}}{(B(1 - x) + 1)^{2}}\right),
\end{aligned}
\end{equation}
where for the second equality we used that, 
by~\eqref{Paper03_auxiliary_B_j}, $B_j = B - \sum_{q = j}^{K} b_{q}$. 
By the definition of $\mathcal{P}_{s}$ given 
in~\eqref{Paper03_computation_projection_matrix_stable_linear_manifold} and
the computation of the Hessian~\eqref{Paper03_hessian_alternative_ODE}, 
we also have for all $i \in \llbracket K \rrbracket$,
\begin{equation} 
\label{Paper03_coordinates_first_column_intermediate_matrix_step_i}
\begin{aligned}
    \Big(v_{0}\Hess_{\boldsymbol{x}}(G_{0})\mathcal{P}_{s}\Big)_{i0} 
& = \frac{1}{(B(1 - x) + 1)^{2}} 
\Biggl(B(1 - x)\frac{\partial^{2} G_{0}}{\partial x_{0} \partial x_{i}}
(\boldsymbol{x}) 
- \sum_{j = 1}^{K} 
\frac{\partial^{2} G_{0}}{\partial x_{j} \partial x_{i}}\Biggl) \\ 
&  \quad = \frac{-Bb_{i}(1-x)}{(B(1 - x) + 1)^{2}}.
\end{aligned}
\end{equation}
Returning to the computation of 
$v_{0} \Hess_{\boldsymbol{x}}(G_{0}) \mathcal{P}_{s}$, for all 
$i,j \in \llbracket K \rrbracket$, we have, since 
$\frac{\partial^{2} G_{0}}{\partial x_{i} \partial x_{j}} \equiv 0$ for 
$i,j \in \llbracket K \rrbracket$,
\begin{equation} 
\label{Paper03_coordinates_general_term_intermediate_matrix_step_i}
\begin{aligned}
    \Big(v_{0}\Hess_{\boldsymbol{x}}(G_{0})\mathcal{P}_{s}\Big)_{ij} 
& = v_{0}\frac{\partial^{2} G_{0}}{\partial x_{i} \partial x_{0}} 
(\boldsymbol{x}) (\mathcal{P}_{s})_{0j} 
+ v_{0}\frac{\partial^{2} G_{0}}{\partial x_{i} \partial x_{j}} 
(\boldsymbol{x}) (\mathcal{P}_{s})_{jj} + v_{0} \sum_{\substack{l = 1, \\ 
l \neq j}}^{K} \frac{\partial^{2} G_{0}}{\partial x_{i} \partial x_{l}} 
(\mathcal{P}_{s})_{lj} \\ 
& = \frac{b_{i}\Big(\sum_{q = j}^{K} b_q\Big)(1 - x)}{(B(1 - x) + 1)^{2}}.
\end{aligned}
\end{equation}
We now compute 
$v_{0} \mathcal{P}_{s}^{*}\Hess_{\boldsymbol{x}}(G_{0}) \mathcal{P}_{s}$. 
Using the expression~\eqref{Paper03_computation_transpose_projection_matrix_stable_linear_manifold} 
for $\mathcal{P}_{s}^{*}$ 
and the identities~\eqref{Paper03_coordinate_00_intermediate_matrix_step_i},~\eqref{Paper03_coordinates_first_column_intermediate_matrix_step_i} 
and~\eqref{Paper03_testing_theta_alternative_flow}, we have
\begin{equation*}
\begin{aligned}
  \Big(v_{0} \mathcal{P}_{s}^{*}\Hess_{\boldsymbol{x}}(G_{0}) 
\mathcal{P}_{s}\Big)_{00} 
& = \frac{B(1 - x)\Big(v_0\Hess_{\boldsymbol{x}}(G_{0}) 
\mathcal{P}_{s}\Big)_{00} 
- \sum_{i = 1}^{K} \Big(v_0\Hess_{\boldsymbol{x}}(G_{0}) 
\mathcal{P}_{s}\Big)_{i0}}{(B(1 - x) + 1)} \\[1ex] 
& = \frac{2B(1-b_{0})(1-x)}{(B(1-x)+1)^{2}} \\[1ex]
& = \left(\mathcal{J}\mathbf{F}^{*}_{\boldsymbol{x}} \boldsymbol{\Theta}^{G} 
+ \boldsymbol{\Theta}^{G} \mathcal{J}\mathbf{F}_{\boldsymbol{x}}\right)_{00},
\end{aligned}
\end{equation*}
as desired. 
Moreover by identities~\eqref{Paper03_computation_transpose_projection_matrix_stable_linear_manifold},~\eqref{Paper03_coordinates_first_row_intermediate_matrix_step_i},~\eqref{Paper03_coordinates_general_term_intermediate_matrix_step_i} and~\eqref{Paper03_testing_theta_alternative_flow}, 
we conclude that for any $j \in \llbracket K \rrbracket$,
\begin{equation*}
\begin{aligned}
    \Big(v_{0} \mathcal{P}_{s}^{*}\Hess_{\boldsymbol{x}}(G_{0}) 
\mathcal{P}_{s}\Big)_{0j} 
& = \frac{B(1 - x)\Big(v_0\Hess_{\boldsymbol{x}}(G_{0}) 
\mathcal{P}_{s}\Big)_{0j} 
- \sum_{i = 1}^{K} \Big(v_0\Hess_{\boldsymbol{x}}(G_{0}) 
\mathcal{P}_{s}\Big)_{ij}}{(B(1 - x) + 1)} \\[2ex] 
& = - \frac{\left(B b_{j} + (1-b_{0})
\left(\sum_{q = j}^{K} b_{q}\right)\right)(1-x)}{(B(1-x)+1)^{2}} \\[2ex] 
& = \left(\mathcal{J}\mathbf{F}^{*}_{\boldsymbol{x}} \boldsymbol{\Theta}^{G} 
+ \boldsymbol{\Theta}^{G} \mathcal{J}\mathbf{F}_{\boldsymbol{x}}\right)_{0j}.
\end{aligned}
\end{equation*}
It remains then to compute the terms that do not belong to the first row
of the matrix. 
By identities~\eqref{Paper03_testing_theta_alternative_flow},~\eqref{Paper03_computation_transpose_projection_matrix_stable_linear_manifold}, ~\eqref{Paper03_coordinates_first_row_intermediate_matrix_step_i}, and~\eqref{Paper03_coordinates_general_term_intermediate_matrix_step_i}, 
we have for $i,j \in \llbracket K \rrbracket$, 
\begin{equation*}
\begin{aligned}
     & \Big(v_{0} \mathcal{P}_{s}^{*}\Hess_{\boldsymbol{x}}(F_{0}) 
\mathcal{P}_{s}\Big)_{ij} \\ 
& \quad  = \Big(\mathcal{P}_{s}^{*}\Big)_{i0}
\Big(v_0\Hess_{\boldsymbol{x}}(G_{0}) \mathcal{P}_{s}\Big)_{0j} 
+ \Big(\mathcal{P}_{s}^{*}\Big)_{ii}\Big(v_0\Hess_{\boldsymbol{x}}(G_{0}) 
\mathcal{P}_{s}\Big)_{ij} + \sum_{\substack{r = 1, \\ 
r \neq i}}^{K} \Big(\mathcal{P}_{s}^{*}\Big)_{ir}
\Big(v_0\Hess_{\boldsymbol{x}}(G_{0}) \mathcal{P}_{s}\Big)_{rj} \\[1ex]  
& \quad = \frac{\Big(\sum_{l = i}^{K} b_{l}\Big)
\Big(\sum_{q = j}^{K} b_{q}\Big)(1 - b_{0})(1 - x)^{2} 
+ b_{j}
\Big(\sum_{l = i}^{K} b_{l}\Big)(1 - x)(B(1 - x) + 1)}{(B(1 - x) + 1)^{3}} \\ 
& \quad \quad \quad 
+ \frac{ b_{i}\Big(\sum_{q = j}^{K} b_{q}\Big)(1 - x)(B_{i}(1 - x) + 1) 
- \Big(\sum_{l = i}^{K} b_{l}\Big) \Big(\sum_{q = j}^{K} b_{q}\Big)
\Big(\sum_{r = 1, \, r \neq i}^{K}b_{r}\Big)(1 - x)^{2}}{(B(1 - x) + 1)^{3}} 
\\[1ex] 
&  \quad = \frac{\left(b_{i}\left(\sum_{l = j}^{K} b_{l}\right) 
+ b_{j}\left(\sum_{l = i}^{K} b_{l}\right)\right)(1-x)}{(B(1-x)+1)^{2}} \\[1ex]
 & \quad 
= \left(\mathcal{J}\mathbf{F}^{*}_{\boldsymbol{x}} \boldsymbol{\Theta}^{G} 
+ \boldsymbol{\Theta}^{G} \mathcal{J}\mathbf{F}_{\boldsymbol{x}}\right)_{ij},
\end{aligned}
\end{equation*}
where we used $B_{i} \defeq B - \sum_{l = i}^{K} b_{l}$ and 
$\sum_{r = 1, \, r \neq i}^{K}b_{r} = 1 - b_{0} - b_{i}$ in the third equality.
Since all the terms agree, we conclude $\boldsymbol{\Theta}^{G}$ is indeed the 
desired matrix.
\end{proof}

It remains to analyse $\boldsymbol{\Theta}^{\Delta}$. Unfortunately, here we 
have not been able to derive a general formula which is valid for any 
$K \in \mathbb{N}$. Clearly, if one has some specific value of $K$ in mind, 
then our approach allows the computation of $\boldsymbol{\Theta}^{\Delta}$ 
for that particular case. We would like however to derive properties of the 
system that will hold for at least a large set of possible values of $K$. 
We will then use 
Theorem~\ref{Paper03_explicit_formula_solution_lyapunov_equation} to obtain 
some qualitative behaviour of $\boldsymbol{\Theta}$. To follow this strategy, 
we observe that by~\eqref{Paper03_difference_matrix_between_hessians}, 
$\Delta$ is a real symmetric matrix such that $\Delta \boldsymbol{u} = 0$. 
Therefore, $\mathcal{P}_{s}^{*}\Delta \mathcal{P}_{s} = \Delta$. 
This observation simplifies our analysis.

\begin{lemma} 
\label{Paper03_auxiliar_analysis_difference_matrix}
    Let $\Delta$ be the square matrix given 
by~\eqref{Paper03_difference_matrix_between_hessians}. Then, for any 
fixed $K \in \mathbb{N}$, the characteristic polynomial of $\Delta$ is 
given by
    \begin{equation*}
        p_{\Delta}(\lambda) = (-1)^{K+1}\lambda^{K}\left(\lambda + (1-b_{0})^{2} + \sum_{i = 1}^{K}b_{i}^{2}\right).
    \end{equation*}
    In particular, $\Delta$ is negative semidefinite.
\end{lemma}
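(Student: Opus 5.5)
Write $\boldsymbol{w} \defeq (-(1-b_0), b_1, b_2, \ldots, b_K)^{*} \in \mathbb{R}^{K+1}$. The plan is to observe directly from~\eqref{Paper03_difference_matrix_between_hessians} that $\Delta$ is a rank-one matrix, namely
\begin{equation*}
    \Delta = -\boldsymbol{w}\boldsymbol{w}^{*}.
\end{equation*}
Indeed, the $(0,0)$ entry of $-\boldsymbol{w}\boldsymbol{w}^{*}$ is $-(1-b_0)^2$, the $(0,j)$ entries are $b_j(1-b_0)$, and the $(i,j)$ entries for $i,j \in \llbracket K \rrbracket$ are $-b_ib_j$. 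These agree entry by entry with~\eqref{Paper03_difference_matrix_between_hessians}. This identification is the only real step. It also explains the earlier observation $\Delta\boldsymbol{u}=0$: since $\sum_{i=1}^K b_i = 1-b_0$, we have $\langle \boldsymbol{w},\boldsymbol{u}\rangle = 0$.

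From the rank-one structure, the spectrum follows at once. Every vector orthogonal to $\boldsymbol{w}$ lies in the kernel of $\Delta$, which gives the eigenvalue $0$ with multiplicity $K$ (here $\boldsymbol{w}\neq 0$, since $b_0<1$ or, if $b_0=1$, the claimed formula still holds trivially with $\Delta=0$). Moreover $\Delta\boldsymbol{w} = -\|\boldsymbol{w}\|^2\boldsymbol{w}$, so the remaining eigenvalue is $-\|\boldsymbol{w}\|^2 = -\big((1-b_0)^2+\sum_{i=1}^K b_i^2\big)$. The characteristic polynomial $\det(\Delta-\lambda\mathbf{I}_{K+1})$ of a $(K+1)\times(K+1)$ matrix with these eigenvalues is
\begin{equation*}
    (-1)^{K+1}\lambda^{K}\Big(\lambda + (1-b_0)^2 + \sum_{i=1}^K b_i^2\Big),
\end{equation*}
which matches the claimed $p_\Delta$. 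As an alternative check, the matrix determinant lemma gives $\det(-\lambda\mathbf{I} - \boldsymbol{w}\boldsymbol{w}^*) = (-\lambda)^{K+1}(1+\|\boldsymbol{w}\|^2/\lambda)$, which is the same expression.

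Negative semidefiniteness then follows either from the fact that all eigenvalues are $\leq 0$ (the matrix being real symmetric), or directly from
\begin{equation*}
    \langle \boldsymbol{y},\Delta\boldsymbol{y}\rangle = -\langle \boldsymbol{w},\boldsymbol{y}\rangle^2 \leq 0 \quad \text{for all } \boldsymbol{y}.
\end{equation*}
No genuine obstacle is expected. The only point needing care is the sign convention of the characteristic polynomial: the factor $(-1)^{K+1}$ corresponds to the convention $\det(\Delta-\lambda\mathbf{I})$, consistent with~\eqref{Paper03_characteristic_polynomial_J_+_I_const_env}.
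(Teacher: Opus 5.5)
Your proof is correct. It rests on the same structural fact as the paper's argument: the entries factor as $\Delta_{ij} = -\delta_i\delta_j$ with $\delta_0 = 1-b_0$ and $\delta_i = -b_i$ for $i \in \llbracket K \rrbracket$, so your $\boldsymbol{w}$ is just $-\boldsymbol{\delta}$. You exploit that fact differently, however.

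The paper never writes down the global factorisation. It expands $p_\Delta$ in terms of the sums $c_i$ of principal minors of order $i$. It then notes that every principal submatrix of order at least $2$ has linearly dependent rows, so $c_i = 0$ for $i \geq 2$, and reads off the only surviving coefficient $c_1 = \tr(\Delta)$. Negative semidefiniteness is then deduced from the signs of the eigenvalues.

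Your route writes $\Delta = -\boldsymbol{w}\boldsymbol{w}^{*}$ explicitly. The spectrum then comes directly from the eigenvector $\boldsymbol{w}$ and the $K$-dimensional kernel $\boldsymbol{w}^{\perp}$, or alternatively from the matrix determinant lemma. Negative semidefiniteness follows in one line from $\langle \boldsymbol{y}, \Delta \boldsymbol{y}\rangle = -\langle \boldsymbol{w}, \boldsymbol{y}\rangle^2$, without going through eigenvalues at all.

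Your version also makes the identity $\Delta\boldsymbol{u} = 0$, used just before the lemma, transparent. It treats the degenerate case $b_0 = 1$, where $\Delta = 0$, explicitly; there the paper's phrase ``one negative eigenvalue'' is slightly inaccurate, since that eigenvalue is $0$. For this rank-one matrix your argument is the shorter and more informative of the two.
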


\begin{proof}
    It is convenient to express the characteristic polynomial in terms of 
the principal minors of $\Delta$. Recall that a submatrix 
$\mathbf{M}$ of $\Delta$ is called a principal submatrix of order $n$ if 
there exist $0 \leq i_{1} < i_{2} < \ldots < i_{n} \leq K$ such that
    \begin{equation*}
        \mathbf{M} = \left(\begin{array}{cccc}
    \Delta_{i_{1}i_{1}} & \Delta_{i_{1}i_{2}}& \cdots &  \Delta_{i_{1}i_{n}} \\
    \Delta_{i_{1}i_{2}} &  \Delta_{i_{2}i_{2}} & \cdots 
&  \Delta_{i_{2}i_{n}} \\ \vdots & \vdots & \ddots & \vdots \\  
\Delta_{i_{1}i_{n}} &  \Delta_{i_{2}i_{n}} & \cdots &  \Delta_{i_{n}i_{n}}
\end{array}\right).
    \end{equation*}
Denoting by $c_{i}$ the sum of the determinants of the principal submatrices 
of order $i$ of $\Delta$, we can express the characteristic polynomial of 
$\Delta$ as (see for instance~\cite[Theorem~4.3.1]{brualdi2008combinatorial})
    \begin{equation} 
\label{Paper03_equation_characteristic_minors_delta_difference_hessians}
    p_{\Delta}(\lambda) 
= (-1)^{K+1}\lambda^{K+1} + \sum_{i = 0}^{K}(-1)^{i}\lambda^{i}c_{K+ 1- i}.
    \end{equation}
To prove our claim, it will be enough to compute the sum of the minors. 
For $i = K$, we have 
$c_{1} = \tr(\Delta) = - (1-b_{0})^{2} - \sum_{i = 1}^{K} b_{i}^{2}$. 
We now claim that $c_{i} = 0$ for $i > 1$. In fact, we shall verify that 
the determinant of any principal submatrix of $\Delta$ with order higher 
than $1$ is zero. Indeed, writing $\delta_{0} = (1-b_{0})$ and 
$\delta_{i} = - b_{i}$ for $i \in \llbracket K \rrbracket$, by the 
definition of $\Delta$
in~\eqref{Paper03_difference_matrix_between_hessians}, 
the determinant of any principal minor $\mathbf{M}$ of order $n$ of $\Delta$ 
can be written as
    \begin{equation*}
    \det(\mathbf{M}) = (-1)^{n}\left\vert\begin{array}{cccc}
    \delta_{i_{1}} \delta_{i_{1}} & \delta_{i_{1}} \delta_{i_{2}}& \cdots 
&  \delta_{i_{1}} \delta_{i_{n}} \\
    \delta_{i_{1}} \delta_{i_{2}} &  \delta_{i_{2}} \delta_{i_{2}} & \cdots 
&  \delta_{i_{2}} \delta_{i_{n}} \\ 
\vdots & \vdots & \ddots & \vdots \\  
\delta_{i_{1}} \delta_{i_{n}} &  \delta_{i_{2}} \delta_{i_{n}} & \cdots 
&  \delta_{i_{n}} \delta_{i_{n}}
\end{array}\right\vert = 0,
    \end{equation*}
    since the rows are linearly dependent. Our claim now follows
from~\eqref{Paper03_equation_characteristic_minors_delta_difference_hessians}.
This implies that $\Delta$ has the eigenvalue $0$ with algebraic 
multiplicity $K$, and one negative eigenvalue equal to 
$- \left((1-b_{0})^{2} + \sum_{i = 1}^{K} b_{i}^{2}\right)$. 
In particular, $\Delta$ is negative semidefinite, which completes the proof.
\end{proof}

We are now in position to prove 
Proposition~\ref{Paper03_bound_drift_general_K_proposition}.

\begin{proof}[Proof of Proposition~\ref{Paper03_bound_drift_general_K_proposition}]
By the 
formula~\eqref{Paper03_formula_Parsons_Rogers_second_order_derivatives} of
Parsons and Rogers, for every $i,j \in \llbracket K \rrbracket_0$, we have
\begin{equation*}
    \begin{aligned}
      \frac{\partial^{2} \Phi_{0}}{\partial x_{i} \partial x_j}(\boldsymbol{x})
 = \frac{1}{\sum_{l = 0}^{K} v_{l} \gamma_{l}'} 
\left(v_{i}' \frac{\partial \Phi_{0}}{\partial x_{j}} 
+ v_{j}' \frac{\partial \Phi_{0}}{\partial x_{i}} - \theta^{G}_{ij} 
- \theta^{\Delta}_{ij}\right).
    \end{aligned}
    \end{equation*}
From the expression for the left eigenvector $v$  
in~\eqref{Paper03_left_eigenvector_jacobian}, and the fact that the 
diagonal $\Gamma$ has zero curvature, we have 
$\sum_{l = 0}^{K} v_{l} \gamma_{l}' = 1$, 
$\sum_{l = 0}^{K} v_{l}' \gamma_{l}' = 0$ and $\boldsymbol{\gamma}'' = 0$. 
Differentiating $\boldsymbol{v}$ %in~\eqref{Paper03_left_eigenvector_jacobian} 
with respect to $x_{0}$, recalling the formula for the first order derivative 
in Lemma~\ref{Paper03_lemma_first_order_derivatives_projection_map}, and 
applying Lemma~\ref{Paper03_explicit_formula_theta_G_component}, 
we conclude that
     \begin{equation} 
\label{Paper03_real_formula_second_order_derivative}
    \begin{aligned}
        \frac{\partial^{2} \Phi_{0}}{\partial x_{0}^{2}}(\boldsymbol{x}) 
= \frac{B(B(1-x)+2)}{(B(1-x)+1)^{3}} - \theta^{\Delta}_{00},
    \end{aligned}
    \end{equation}
    while for $i \in \llbracket K \rrbracket$,
     \begin{equation} 
\label{Paper03_real_formula_second_order_derivative_0_i}
    \begin{aligned}
      \frac{\partial^{2} \Phi_{0}}{\partial x_{0} \partial x_i}(\boldsymbol{x}) 
= - \frac{\sum_{l = i}^K b_l}{(B(1-x)+1)^{3}} - \theta^{\Delta}_{0i},
    \end{aligned}
    \end{equation}
    and finally for $i,j \in \llbracket K \rrbracket$,
     \begin{equation} 
\label{Paper03_real_formula_second_order_derivative_i_j}
    \begin{aligned}
     \frac{\partial^{2} \Phi_{0}}{\partial x_{i} \partial x_j}(\boldsymbol{x}) 
= - \frac{\left(\sum_{l = i}^K b_l\right)
\left(\sum_{q = j}^K b_q\right)(1 - x)}{(B(1-x)+1)^{3}} - \theta^{\Delta}_{ij}.
    \end{aligned}
    \end{equation}
    Now, by Lemma~\ref{Paper03_auxiliar_analysis_difference_matrix}, $\Delta$ is negative semidefinite, and therefore Proposition~\ref{Paper03_explicit_formula_solution_lyapunov_equation} implies $\boldsymbol{\Theta}^{\Delta}$ is positive semidefinite. In particular, the diagonal terms of $\boldsymbol{\Theta}^{\Delta}$ are nonnegative, and therefore by~\eqref{Paper03_real_formula_second_order_derivative}, we conclude that
     \begin{equation*}
    \begin{aligned}
        \frac{\partial^{2} \Phi_{0}}{\partial x_{0}^{2}}(\boldsymbol{x}) \leq \frac{B(B(1-x)+2)}{(B(1-x)+1)^{3}} \; \forall \, x \in [0,1].
    \end{aligned}
    \end{equation*}
    Moreover, when $x = 1$, the decomposition of $\Hess_{\boldsymbol{x}}(F_{0})$
in~\eqref{Paper03_decomposition_hessian_original_flow} yields 
$\Hess_{\boldsymbol{x}}(F_{0}) = \Hess_{\boldsymbol{x}}(G_{0})$.
% which means that for $x = 1$, $\boldsymbol{\Theta}^{\Delta} = 0$. 
Therefore,
    \begin{equation*}
          \frac{\partial^{2} \Phi_{0}}{\partial x_{0}^{2}}(\boldsymbol{1}) = 2B.
    \end{equation*}
    It remains then to verify the monotonicity property. To see this, observe 
that for $x \in [0,1]$,
    \begin{equation*}
        \frac{d}{dx} \left(\frac{B(B(1-x)+2)}{(B(1-x)+1)^{3}} \right)
= \frac{B^{2}(2B(1-x)+5)}{(B(1-x)+1)^{4}} > 0,
    \end{equation*}
    whenever $b_{0} < 1$ (see~\eqref{Paper03_mean_age_germination}). Moreover, since the function 
$x \mapsto \frac{(1-x)}{(B(1-x)+1)}$ is decreasing in $[0,1]$, by 
Lemma~\ref{Paper03_auxiliar_analysis_difference_matrix}, we have for any 
$0 \leq x \leq \tilde{x} \leq 1$,
    \begin{equation*}
        \frac{(1-x)}{(B(1-x)+1)} \Delta 
\preceq \frac{(1-\tilde{x})}{(B(1-\tilde{x})+1)}\Delta,
    \end{equation*}
    which implies (by 
Proposition~\ref{Paper03_explicit_formula_solution_lyapunov_equation}) 
that the function $x \mapsto \theta^{\Delta}_{00}(x)$ is monotonically 
non-increasing on $[0,1]$. 
From~\eqref{Paper03_real_formula_second_order_derivative}, we conclude 
that the function 
$x \mapsto \frac{\partial^{2} \Phi_{0}}{\partial x_{0}^{2}}(x,x,\ldots,x)$ 
is strictly increasing on $[0,1]$ as desired, and the proof is complete.
\end{proof}

\section{Analysis of the Branching Phase}
\label{Paper03_section_proof_results_branching}

In this section, we provide the (rather standard)
proof of Theorem~\ref{Paper03_branching_approximation_constant_environment}, 
concerning the multitype branching process approximation to the 
Wright-Fisher model in a constant environment, valid while the mature mutant
plants constitute a negligible proportion of the population. 
Recall from
Section~\ref{Paper03_WF_model_const_environ_description} 
that $(Z_{0}(t))_{t \in \mathbb{N}_{0}}$ represents the number of 
mature mutant individuals in the population and that the mean
matrix $\mathbf{M}$ 
of the process $\mathbf{Z}$ (which also records numbers and time until
germination of seeds in the seed bank) is given 
by~\eqref{Paper03_mean_matrix_branching_constant_environment}.

Since all entries of $\mathbf{M}$ are nonnegative, by the 
Perron-Frobenius Theorem, $\mathbf{M}$ has a maximal eigenvalue 
$\lambda_{\mathbf{M}}$ which is strictly positive, and such that all the 
other eigenvalues have modulus strictly less then $\lambda_{\mathbf{M}}$. 
To confirm that the branching process $\mathbf{Z}$ is critical, we compute 
the value of $\lambda_{\mathbf{M}}$ 
(see\cite[Section~V.7]{athreya2004branching}).

\begin{lemma} 
\label{Paper03_criticality_branching_process}
For any $K \in \mathbf{N}$ and $\mathbf{b} = (b_{i})_{i = 0}^{K}$, the 
Perron-Frobenius eigenvalue $\lambda_{\mathbf{M}}$ of $\mathbf{M}$ is 
equal to $1$, i.e.~the multitype branching process $\mathbf{Z}$ is critical.
\end{lemma}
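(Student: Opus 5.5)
We will exhibit an explicit positive left eigenvector of $\mathbf{M}$ for the eigenvalue $1$ and then invoke Perron--Frobenius. Since $\mathbf{M}$ is nonnegative but not strictly positive, we first check irreducibility. Let $\mathbf{M}$ act on a column index $j$ through type $j$'s offspring. Type $0$ produces type $i$ whenever $b_i>0$, type $i$ matures into type $i-1$, and type $1$ produces type $0$. Hence the types $\{0\}\cup\{i: i\le \max\{j: b_j>0\}\}$ form a single communicating class. Types beyond the largest $j$ with $b_j>0$ are never produced, so we discard them without loss of generality (equivalently, reduce $K$). Alternatively we can avoid irreducibility altogether, as explained below.

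\textbf{Direct approach via the characteristic polynomial.} We compute $\det(\mathbf{M}-\lambda\mathbf{I})$ by expanding along the first row, exactly as in the proof of Lemma~\ref{Paper03_eigenvalue_condition_constant_environment}. The subdiagonal chain $1/M,1,\dots,1$ gives, up to sign,
\[
q(\lambda)=\lambda^{K+1}-b_0\lambda^{K}-\sum_{i=1}^{K} b_i\lambda^{K-i}.
\]
The factor $M\cdot\frac1M=1$ cancels because each path from type $0$ through the seed chain passes through one entry $Mb_i$ and the germination probability $1/M$. Since $\sum_{i=0}^K b_i=1$, we have $q(1)=0$, so $1$ is an eigenvalue.

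\textbf{Showing no eigenvalue has modulus greater than $1$.} Suppose $|\lambda|>1$ and $q(\lambda)=0$. Dividing by $\lambda^{K+1}$ gives $1=\sum_{i=0}^K b_i\lambda^{-(i+1)}$. The right-hand side has modulus at most $\sum_i b_i|\lambda|^{-(i+1)}<1$, which is a contradiction. Therefore the spectral radius, which by Perron--Frobenius for nonnegative matrices is itself an eigenvalue $\lambda_{\mathbf{M}}\ge0$, equals $1$. This argument needs no irreducibility.

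\textbf{Left eigenvector.} For the criticality statement in the sense of~\cite[Section~V.7]{athreya2004branching}, and for later use in computing the limits in Theorem~\ref{Paper03_branching_approximation_constant_environment}, we also record the eigenvectors. The left eigenvector $\boldsymbol{w}$ with $\boldsymbol{w}\mathbf{M}=\boldsymbol{w}$ satisfies $w_i=M\sum_{j\ge i}b_j\,w_0$ for $i\ge1$. This is checked by the recursion $w_i=Mb_iw_0+w_{i+1}$, and the zeroth column gives $b_0w_0+w_1/M=w_0$, which holds. The right eigenvector is $(1,1/M,\dots,1/M)$. Both are nonnegative, confirming that the Perron root is $1$.

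The only delicate point is the determinant expansion with the mixed $M$ and $1/M$ factors. We expect it to reduce by the same companion-matrix structure as before, so there is no real obstacle.
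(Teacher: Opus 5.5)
Your proof is correct and is essentially the paper's argument. The paper observes that $\mathbf{M}$ has the same characteristic polynomial as $\mathcal{J}\mathbf{F}_{\boldsymbol{0}}+\mathbf{I}_{K+1}$ (the two are similar via $\mathrm{diag}(1,1/M,\dots,1/M)$), and then reuses the proof of Lemma~\ref{Paper03_eigenvalue_condition_constant_environment}: $1$ is a root, and a Perron root exceeding $1$ contradicts $\sum_i b_i=1$, exactly as in your computation with $q$. Your version is marginally cleaner, since bounding every complex root needs only the weak Perron--Frobenius theorem, whereas the paper appeals to strict positivity of the matrix, which it lacks. Your closing ``confirmation'' should, however, rest on the strict positivity of the right eigenvector $(1,1/M,\dots,1/M)$ rather than on mere nonnegativity of the eigenvectors.
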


\begin{proof}
    It suffices to note that the matrix $\mathbf{M}$ has the same 
characteristic polynomial as the matrix 
$\mathcal{J}\mathbf{F}_{\boldsymbol{0}} + \mathbf{I}_{K+1}$ that appears in 
the proof of Lemma~\ref{Paper03_eigenvalue_condition_constant_environment}, 
%where $\mathcal{J}\mathbf{F}_{\boldsymbol{0}}$ is the Jacobian matrix of the 
%vector field $\mathbf{F}$ of the ODE arising from the dynamics of the 
%Wright-Fisher model with seed banks evaluated at $\boldsymbol{x} = \boldsymbol{0} = (0, 0, \ldots, 0)$, i.e.
%    \begin{equation*}
%    {p}_{\mathbf{M}}({\lambda}) = (-1)^{K}{\lambda}^{K}\left({- (1 - b_{0})(1-x)} + 1 - {\lambda}\right) + (-1)^{K}{(1 - x)}\sum_{i = 1}^{K}b_{i}{\lambda}^{K - i}.
%   \end{equation*}
%    Therefore, by applying the same argument as in the proof of Lemma~\ref{Paper03_eigenvalue_condition_constant_environment}, we conclude
for which we already proved that $\lambda_{\mathbf{M}} = 1$, as desired.
\end{proof}
%It is not surprising that $\mathbf{M}$ and $\mathcal{J}\mathbf{F}_{\boldsymbol{0}} + \mathbf{I}_{K+1}$ share the same eigenvalues. Indeed, $\mathcal{J}\mathbf{F}_{\boldsymbol{0}}$ would be the infinitesimal generator of the branching process $\mathbf{Z}$ if the parameter $M$ was taken to be equal to $1$. This happens since the branching process $\mathbf{Z}$ approximates the dynamics when the number of mutants in the population is negligible when compared to the total number of individuals, and in this case the dynamics could also be approximated by the ODE flow near $\boldsymbol{0}$.
%We would like to understand the asymptotic behaviour of $\mathbf{Z}$. 
Since~$\mathbf{Z}$ is a critical multitype branching process, we can 
apply the results of Mullikin~\cite{mullikin1963limiting} to understand its 
asymptotic behaviour. For this, we must find the right and left eigenvectors 
$\boldsymbol{u}$ and $\boldsymbol{v}$ of the mean matrix $\mathbf{M}$, 
associated to the eigenvalue $1$, and normalised so that 
$\langle \boldsymbol{u}, \boldsymbol{1} \rangle = 
\langle \boldsymbol{u}, \boldsymbol{v} \rangle = 1$. 
By~\eqref{Paper03_mean_matrix_branching_constant_environment},
\begin{equation} 
\label{Paper03_frobenius_eigenvectors}
    \boldsymbol{u} = \frac{1}{(M+K)}\left(\begin{array}{c} M \\ 1 \\ 1 \\ 
\vdots \\ 1
        \end{array}\right) 
\quad \textrm{ and } \quad \boldsymbol{v} 
= \frac{(M+K)}{M(B+1)}\left(\begin{array}{c} 1 \\ M(1-b_{0}) \\ 
M\Big(\sum_{i = 2}^{K} b_{i}\Big) \\ \vdots \\ Mb_{K}
        \end{array}\right).
\end{equation}
In particular, 
\begin{equation} 
\label{Paper03_parameter_branching_phase}
    u_{0}v_{0} = \frac{1}{(B+1)},
\end{equation}
where $B$ is the mean germination time of the seed bank.
% given in~\eqref{Paper03_mean_age_germination}. We will see shortly that the expression in~\eqref{Paper03_parameter_branching_phase} will determine the values of the asymptotic survival probability and expected number of mature individuals conditioned on survival. We are in position of proving Theorem~\ref{Paper03_branching_approximation_constant_environment}.

\begin{proof}[Proof of Theorem~\ref{Paper03_branching_approximation_constant_environment}]
Let $Z^{(i)}_{j}$ be the number of particles of type $j$ generated by the 
branching of a particle of type $i$. For any 
$l \in \llbracket K \rrbracket_{0}$, we let
\begin{equation*}
    q^{(l)}(i,j) \defeq \mathbb{E}\left[Z^{(l)}_{i}Z^{(l)}_{j} 
- \delta_{ij}Z^{(l)}_{i}\right],
\end{equation*}
where $\delta_{ij}$ denotes the Kronecker delta. 
By direct computation and recalling that each type $0$ particle generates 
$\Poiss(b_{0})$ offspring of type $0$ and $\Poiss(Mb_{i})$ offspring of 
type $i$, for $i \in \llbracket K \rrbracket$, and that each non-mature 
seed gives birth to at most one offspring during a `branching' event, 
we conclude that
\begin{equation*}
    \left\{\begin{array}{ccll}
       q^{(0)}(0,0)  & = & b_{0}^{2}, & \\
        q^{(0)}(0,j) & = & M b_{0} b_{j} & \forall \, j 
\in \llbracket K \rrbracket, \\ 
q^{(0)}(i,j) & = & M^{2}b_{i}b_{j} &\forall \, i,j 
\in \llbracket K \rrbracket, \\ 
q^{(l)}(i,j) & = & 0 &\forall \, l 
\in \llbracket K \rrbracket \textrm{ and } 
\forall \, i,j \in \llbracket K \rrbracket_{0}.
    \end{array}\right.
\end{equation*}
For $l \in \llbracket K \rrbracket_{0}$, and 
$\boldsymbol{w} \in \mathbb{R}^{K+1}$, we define the quadratic form
\begin{equation*}
    Q^{(l)}(\boldsymbol{w}) \defeq \frac{1}{2} \sum_{i = 0}^{K} 
\sum_{j = 0}^{K} w_{i} q^{(l)}(i,j) w_{j}.
\end{equation*}
For any $\boldsymbol{w} \in \mathbb{R}^{K+1}$, let 
$\mathbf{Q}(\boldsymbol{w}) \in \mathbb{R}^{K+1}$ be given by 
$\mathbf{Q}(\boldsymbol{w}) \defeq ( Q^{(0)}(\boldsymbol{w}), 
\cdots, Q^{(K)}(\boldsymbol{w}))$. Then, by explicitly computing 
$\mathbf{Q}$, we conclude that for 
$\boldsymbol{w} \in \mathbb{R}^{K+1}$, we have
\begin{equation*}
Q^{(0)}(\boldsymbol{w}) = \frac{M^{2}}{2} \sum_{i = 1}^{K} \, 
\sum_{j = 1} b_{i}b_{j}w_{i}w_{j} + Mb_{0}w_{0} 
\sum_{j = 1}^{K}b_{j}w_{j} + \frac{b_{0}^{2}w_{0}^{2}}{2},
\end{equation*}
and $Q^{(l)}(\boldsymbol{w}) = 0$ for $l \in \llbracket K \rrbracket$. 
For the particular case $\boldsymbol{w} = \boldsymbol{u}$, where 
$\boldsymbol{u}$ is the right Frobenius eigenvector of $\mathbf{M}$, we 
conclude from~\eqref{Paper03_frobenius_eigenvectors} that
\begin{equation} 
\label{Paper03_quadratic_form_branching}
\begin{aligned}
    Q^{(0)}(\boldsymbol{u}) = \frac{u_{0}^{2}}{2}.
\end{aligned}
\end{equation}
Let $\mathbf{e}_{i}$ be the vector in $\mathbb{R}^{K+1}$ which has 
coordinate $1$ in position $i$ and $0$ otherwise. Then, using the
results of Mullikin~\cite{mullikin1963limiting}, 
if the branching process $\mathbf{Z}$ starts from one mature mutant plant, 
i.e.~if $Z_{0} = \mathbf{e}_{0}$, we have
\begin{equation*}
\begin{aligned}
    \lim_{t \rightarrow \infty} t \mathbb{P}_{\mathbf{e}_{0}}
\left(\mathbf{Z}(t)\neq 0\right) 
= \frac{\langle \mathbf{e}_{0},  \boldsymbol{u}\rangle}{\langle \boldsymbol{v}, \mathbf{Q}(\boldsymbol{u}) \rangle} = \frac{u_{0}}{v_{0}Q^{(0)}(\boldsymbol{u})}
 = \frac{2}{u_{0}v_{0}} = 2(B+1),
\end{aligned}
\end{equation*}
where, as usual, $B$ is the mean germination time of the seed bank, and
%, and it given by~\eqref{Paper03_mean_age_germination}. Observe that w
we substituted the expressions~\eqref{Paper03_quadratic_form_branching} 
and~\eqref{Paper03_parameter_branching_phase}. Moreover, 
%by applying Mullikin's results to characterise the limiting distribution of a linear functional of $\mathbf{Z}$ conditioned on the survival of the process (
using~\cite[Theorem~9]{mullikin1963limiting}, for any $y \geq 0$,
     \begin{equation*}
     \begin{aligned}
       \lim_{t \rightarrow \infty} \mathbb{P}_{\mathbf{e}_{0}}
\left(\frac{Z_{0}(t)}{t} \geq y \, \Big \vert \, \mathbf{Z}(t) \neq 0\right) 
& = 1 - \exp\left(\frac{y}{\langle \mathbf{e}_{0}, \boldsymbol{v} \rangle 
\cdot \langle \mathbf{Q}(\boldsymbol{u}), \boldsymbol{v} \rangle}\right) \\ 
& = 1 - \exp\left(-2y(B+1)^2\right),
    \end{aligned}
   \end{equation*}
where we used Equations~\eqref{Paper03_quadratic_form_branching} 
and~\eqref{Paper03_parameter_branching_phase} to obtain the last equality. 
Our claim then holds.
\end{proof}

We now proceed to the proof of 
Lemma~\ref{Paper03_lemma_correspondence_value_branching_phase}.

\begin{proof}[Proof of Lemma~\ref{Paper03_lemma_correspondence_value_branching_phase}]
    Since $\psi_B(0) = 0$, 
by~\eqref{Paper03_correspondence_between_branching_phase_monotype_seed_bank_ii},
 in order to compute an explicit formula for $\psi_B(y)$, it is enough to 
observe that for any $y \in [0, 1)$, we have
    \begin{equation*}
    \begin{aligned}
      (B+1)(1 - \exp(-2\psi_B(y)(B+1)^2)) = 1 - \exp(-2y),
    \end{aligned}
    \end{equation*}
    which implies that $\psi_B(y)$ must be given by
    \begin{equation*}
        \psi_B(y) = \frac{1}{2(B+1)^2} \log\left(\frac{B+1}{B + e^{-2y}}\right).
    \end{equation*}
    %To conclude the proof of the lemma, we observe that the formula for 
%$\psi_B(y)$ 
Differentiating with respect to $B$,
    \begin{equation} 
\label{Paper03_derivative_B_correspondence_branching_phase}
    \begin{aligned}
        \frac{\partial}{\partial B} \psi_B(y) 
= -\frac{1}{(B+1)^3} \log\left(\frac{B+1}{B+e^{-2y}}\right) 
- \frac{(1 - e^{-2y})}{2(B+1)^3(B+e^{-2y})} < 0 \, 
\forall (B,y) \in [0, \infty) \times [0,1),
    \end{aligned}
    \end{equation}
which establishes property~(i) of 
Lemma~\ref{Paper03_lemma_correspondence_value_branching_phase}. To establish 
property~(ii) we observe that %the 
%explicit formula for $\psi_B(y)$ yields
    \begin{equation*}
        \frac{\partial}{\partial y} \psi_B(y) 
= \frac{e^{-2y}}{(B+1)^2(B+ e^{-2y})} > 0, \, 
\qquad\forall (B,y) \in [0, \infty) \times [0,1),
    \end{equation*}
    which completes the proof.
\end{proof}

\section{Analysis of the diffusion phase} \label{Paper03_proof_results_concerning_diffusion}

In this section we turn to the proofs of our results for the diffusion phase.
%shall prove the theorems regarding the weak convergence 
%of the discrete Wright-Fisher model with seed banks to a diffusion. 
For didactic reasons, we shall divide them %the section 
according to the type 
of environment considered.

\subsection{Diffusion in constant environment} 
\label{Paper03_subsection_diffusion_WF_model_constant_environment}

In this section, we aim to prove 
Theorem~\ref{Paper03_weak_convergence_diffusion_constant_environment} 
regarding the weak convergence of the Wright-Fisher model in a constant 
environment described in 
Section~\ref{Paper03_WF_model_const_environ_description}, and 
Theorem~\ref{Paper03_bound_fixation_probability_any_K} regarding the 
fixation probability of the mutant. We begin with
the proof of %shall start by establishing convergence,
%i.e.~by proving 
Theorem~\ref{Paper03_weak_convergence_diffusion_constant_environment}. 
Although this theorem can be understood as a particular case of the 
corresponding result 
in a slowly changing environment, the proof is more digestible if 
presented in this setting first.
%we include the proof for 
%didactic reasons.

Recall that the discrete process is represented by the vector-valued process
\begin{equation*}
    (\boldsymbol{X}^{N}(t))_{t \in \mathbf{N}_{0}} = (X^{N}_{0}(t), X^{N}_{1}(t), \ldots, X^{N}_{K}(t))_{t \in \mathbb{N}_{0}},
\end{equation*}
where $X^{N}_{i}(t)$ indicates the number of mutant individuals living in generation $t-i$. As explained in  Section~\ref{Paper03_diffusion_phase_constant_environment}, we consider the scaled vector-valued process $(\boldsymbol{x}^N(t))_{t \geq 0}$ defined in~\eqref{Paper03_scaled_process_WF_model}.
Our aim is to write the dynamics of $(\boldsymbol{x}^{N}(t))_{t \geq 0}$ in such a way that we can apply Katzenberger's results explained in Section~\ref{Paper03_subsection_katzenberger_overview}. We will follow closely the ideas introduced by Katzenberger in~\cite[Section~8]{katzenberger1991solutions}. 

Observe that by construction the process $(\boldsymbol{x}^{N}(t))_{t \geq 0}$ has sample paths in $[0,1]^{K+1}$, for all $N \in \mathbb{N}$. For each $N \in \mathbb{N}$, let $\left\{ \zeta^{N, \tau}_{j} \, \vert \, \tau \in \mathbb{N}_{0}, \, j \in \mathbb{N}  \right\}$ be a set of i.i.d.~random variables taking values in the space of measurable functions from $[0,1]^{K+1}$ to $\{0,1\}$ such that for all $\boldsymbol{x} \in [0,1]^{K+1}$,
\begin{equation} \label{Paper03_definition_zeta_N_constant_environment}
    \mathbb{E}\left[\zeta^{N, \tau}_{j}(\boldsymbol{x})\right] = \frac{\sum_{i = 0}^{K}b_{i}x_{i}}{(1 -x_{0}) + \sum_{i = 0}^{K}b_{i}x_{i}}.
\end{equation}
For fixed $N \in \mathbb{N}$, $\tau \in \mathbb{N}_{0}$ and $j \in [N]$,
conditional on $\boldsymbol{x}^N(\tau)$, 
$\zeta^{N, \tau}_{j}(\boldsymbol{x}^{N}(\tau))$ is a Bernoulli random variable 
that is equal to $1$ if the $j$-th individual of generation $\tau + 1$ is a 
mutant. %, i.e.~if the $j$-th individual harbours the seed bank property. 
Therefore, for any $N \in \mathbb{N}$ and $\tau \in \mathbb{N}_{0}$, we 
can write $X^{N}_{0}(\tau + 1)$ as
\begin{equation*}
    X^{N}_{0}(\tau + 1) = \sum_{j = 1}^{N} \zeta^{N, \tau}_{j}\left(\frac{\boldsymbol{X}^{N}(\tau)}{N}\right).
\end{equation*}
In order to capture the fluctuations of the system, for each $N \in \mathbb{N}$, let $\left\{\theta^{N,\tau} \, \vert \, \tau \in \mathbb{N}_{0} \right\}$ be a set of i.i.d.~random variables taking values in the space of measurable functions from $[0,1]^{K+1}$ to $\mathbb{R}$ given by
\begin{equation} \label{Paper03_definition_theta_aux_rv_construction_martingale_constant_environment}
    \theta^{N,\tau} (\boldsymbol{x}) \defeq \frac{1}{\sqrt{N}} \sum_{j = 1}^{N} \left(\zeta^{N,\tau}_{j}(\boldsymbol{x}) - \frac{\sum_{i = 0}^{K} b_{i}x_{i}}{(1 - x_{0}) + \sum_{i = 0}^{K} b_{i}x_{i}}\right).
\end{equation}
Recall the map $\mathbf{F}: [0,1]^{K+1} \rightarrow \mathbb{R}^{K+1}$ defined in~\eqref{Paper03_flow_definition_WF_model}. We shall now describe the dynamics of $x^{N}_{0}$ in terms of $\theta^{N}$ and the map $\mathbf{F}$. By the definition of $(\boldsymbol{x}^N(t))_{t \geq 0}$ in~\eqref{Paper03_scaled_process_WF_model}, we have for all $T \in \mathbb{R}_{+}$,
\begin{equation} \label{Paper03_semimartingale_formulation_constant_environment_discrete}
\begin{aligned}
    x^{N}_{0}(T) & = x^{N}_{0}(0) + \sum_{t = 0}^{\lfloor NT \rfloor -1} \; \Bigg(\Bigg(\frac{1}{N} \sum_{j = 1}^{N} \zeta^{N,t}_{j}(\boldsymbol{x}^{N}(t/N))\Bigg) - x^{N}_{0}(t /N)\Bigg) \\
    & = x^{N}_{0}(0) + M^{N}_{0}(T) + \int_{0}^{T} F_{0}(\boldsymbol{x}^{N}) \, d \lfloor Nt \rfloor,
\end{aligned}
\end{equation}
where the càdlàg process $(M^{N}_{0}(t))_{t \in \mathbb{R}_{+}}$ is given by, for all $N \in \mathbb{N}$ and all $T \in \mathbb{R}_{+}$,
\begin{align}
    M^{N}_{0}(T) \defeq \frac{1}{\sqrt{N}} \sum_{t = 0}^{\lfloor NT \rfloor - 1} \theta^{N,t}(\boldsymbol{x}^{N}(t/N)) = \int_{0}^{\lfloor NT \rfloor - 1} \, \frac{1}{\sqrt{N}} \, d \theta^{N,t}(\boldsymbol{x}^{N}(t/N)). \label{Paper03_martingale_seeds_WF_constant_environment}
\end{align}
Heuristically, $M^{N}_{0}$ corresponds to the martingale term arising from 
random sampling of genetic types at each generation, while the integral of 
$F_{0}$ corresponds to the large drift that will force the process in the limit
to have sample paths in the diagonal $\Gamma$ of the $(K+1)$-dimensional 
hypercube~$[0,1]^{K+1}$. Observe also that, % for all 
%$i \in \llbracket K \rrbracket$, 
since $X^{N}_{i}(t) \defeq X^{N}_{0}(t-i)$, 
we can write, for all $i \in \llbracket K \rrbracket$ and all 
$T \in \mathbb{R}_+$,
\begin{equation} \label{Paper03_dynamics_seed_bank_WF_constant_environment}
    x^{N}_{i}(t) = x^{N}_{i}(0) + \int_{0}^{t} F_{i}(\boldsymbol{x}^{N}) \, d\lfloor N\tau \rfloor.
\end{equation}

As explained in Section~\ref{Paper03_subsection_katzenberger_overview}, in 
order to prove the desired convergence, we must verify two conditions: that 
the map~$\mathbf{F}$ is (sufficiently) asymptotically stable 
(i.e.~that~$\mathbf{F}$ satisfies Assumption~\ref{Paper03_assumption_manifold}),
and that~$M^{N}_{0}$ is sufficiently well-behaved 
(i.e.~$M^{N}_{0}$ must satisfy 
Assumption~\ref{Paper03_assumption_katzenberger_stochastic_process}). 
By Lemma~\ref{Paper03_eigenvalue_condition_constant_environment}, 
proved in Section~\ref{Paper03_derivatives_specific_ode}, the 
map~$\mathbf{F}$ satisfies Assumption~\ref{Paper03_assumption_manifold}, 
and the diagonal $\Gamma$ of the $(K+1)$-dimensional hypercube is an attractor 
manifold. It remains then to establish that $M^{N}_{0}$ satisfies 
Assumption~\ref{Paper03_assumption_katzenberger_stochastic_process}. Since we 
will need to repeatedly establish the uniform integrability of functions of 
purely discontinuous martingales, we state a lemma with conditions that 
are easily verified in our scenario.

\begin{lemma} \label{Paper03_auxiliary_lemma_uniform_integrability_martingales}
    Let $\Big((M^N(t))_{t \geq 0}\Big)_{N \in \mathbb{N}}$ be a sequence of real-valued càdlàg martingales satisfying the following conditions:
    \begin{enumerate}[(i)]
        \item For every $N \in \mathbb{N}$, jumps of $(M^N(t))_{t \geq 0}$ can only occur at times $\{jN^{-1}: \, j \in \mathbb{N}_0\}$, i.e.
        \begin{equation*}
            \vert M^N(t) - M^N(t-) \vert > 0 \Rightarrow t \in \{jN^{-1}: \, j \in \mathbb{N}_0\}.
        \end{equation*}
        \item For any $T \in [0,\infty)$, there exist $C_{T,1}, C_{T,2} > 0$ such that the following relations hold almost surely for every $N \in \mathbb{N}$
    \begin{equation*}
        \langle M^N \rangle(T) \leq C_{T,1} \quad \textrm{and} \quad \sup_{t \in [0,T]} \; \vert M^N(t + 1/N) - M^N(t) \vert \leq C_{T,2}.
    \end{equation*}
    \end{enumerate}
    Then the sequences of random variables $\Big((M^N(T))^2\Big)_{N \in \mathbb{N}}$ and $\Big(\left[M^N\right](T)\Big)_{N \in \mathbb{N}}$ are uniformly integrable, for any $T \in [0, \infty)$.
\end{lemma}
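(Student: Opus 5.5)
The plan is to obtain a uniform bound on the second moments $\mathbb{E}[(M^N(T))^2]$ and on $\mathbb{E}[[M^N](T)^2]$. Uniform integrability then follows from de la Vallée-Poussin's criterion, once we note that $(M^N(T))^2$ and $[M^N](T)$ are bounded in $L^2$. We may assume $M^N(0)=0$, or at least that $M^N(0)$ is uniformly bounded, which is the situation in our applications.

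\emph{Step 1: $(M^N(T))^2$.} Because $M^N$ jumps only on the grid $\{jN^{-1}\}$ and is a martingale, it is a discrete-time martingale observed along that grid. For $L^2$-boundedness, Doob's identity gives $\mathbb{E}[(M^N(T))^2]=\mathbb{E}[\langle M^N\rangle(T)]\le C_{T,1}$. We need slightly more, namely a bound on $\mathbb{E}[(M^N(T))^4]$. The Burkholder--Davis--Gundy inequality gives $\mathbb{E}[\sup_{t\le T}(M^N(t))^4]\lesssim \mathbb{E}[[M^N](T)^2]$. So it suffices to bound $\mathbb{E}[[M^N](T)^2]$ uniformly in $N$, and this also handles the second sequence.

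\emph{Step 2: controlling $[M^N](T)$ through $\langle M^N\rangle(T)$.} Write $D^N(t)\defeq [M^N](t)-\langle M^N\rangle(t)$. This is a purely discontinuous martingale whose jumps are
\[
\Delta [M^N](jN^{-1}) - \Delta\langle M^N\rangle(jN^{-1}) = (\Delta M^N_j)^2 - \mathbb{E}\bigl[(\Delta M^N_j)^2\mid \mathcal{F}_{(j-1)/N}\bigr].
\]
By the jump bound, each $|\Delta M^N_j|\le C_{T,2}$, so every jump of $D^N$ is at most $C_{T,2}^2$ in absolute value. Therefore
\[
\mathbb{E}[(D^N(T))^2]=\mathbb{E}\Bigl[\sum_j \operatorname{Var}\bigl((\Delta M^N_j)^2\mid\mathcal{F}\bigr)\Bigr]\le C_{T,2}^2\,\mathbb{E}\Bigl[\sum_j \mathbb{E}\bigl[(\Delta M^N_j)^2\mid \mathcal{F}\bigr]\Bigr]=C_{T,2}^2\,\mathbb{E}[\langle M^N\rangle(T)]\le C_{T,2}^2C_{T,1}.
\]
It follows that
\[
\mathbb{E}[[M^N](T)^2]\le 2C_{T,1}^2+2C_{T,2}^2C_{T,1},
\]
uniformly in $N$.

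\emph{Step 3: conclusion.} Both families $\{[M^N](T)\}$ and $\{(M^N(T))^2\}$ are bounded in $L^2$, using BDG for the latter, so each is uniformly integrable.

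The main point needing care is the use of the jump bound in Step 2. The conditional variance of the squared increment must be bounded by $C_{T,2}^2$ times the conditional second moment. This requires the jump bound to hold almost surely uniformly over all grid times in $[0,T]$, which is exactly what the hypothesis provides. Everything else is standard discrete-time martingale calculus.
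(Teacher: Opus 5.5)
Your argument is correct for martingales that are constant between grid times, which covers every application in the paper. It reaches the same $L^2$ bounds as the paper by a different, more elementary route.

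\textbf{The paper's route.} The paper invokes the Burkholder form of the BDG inequality for c\`adl\`ag martingales,
$\mathbb{E}[\sup_{t\le T}|M(t)|^p]\lesssim_p \mathbb{E}[\langle M\rangle(T)^{p/2}+\sup_{t\le T}|M(t)-M(t-)|^p]$, at $p=4$. This bounds $\mathbb{E}[(M^N(T))^4]$ by a constant depending on $C_{T,1}$ and $C_{T,2}$. The ordinary BDG inequality then transfers the bound to $\mathbb{E}[[M^N](T)^2]$.

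\textbf{Your route.} You go in the opposite order. You bound $\mathbb{E}[[M^N](T)^2]$ directly by computing the second moment of the martingale $[M^N]-\langle M^N\rangle$. The key estimate is $\operatorname{Var}(X^2\mid\mathcal{F})\le C_{T,2}^2\,\mathbb{E}[X^2\mid\mathcal{F}]$ for $|X|\le C_{T,2}$. Only afterwards do you use the ordinary BDG inequality to reach $(M^N(T))^4$. In effect you prove the $p=4$ case of the refined inequality by hand.

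\textbf{Comparison.} Your version is self-contained and gives explicit constants. The paper's citation is shorter, works for every $p$, and does not use the grid structure.

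\textbf{One caveat.} Hypothesis (i) alone does not make $M^N$ a discrete-time martingale. It only restricts where jumps occur, and a continuous martingale part is still allowed. Your Step 2 survives this if you read $\Delta M^N_j$ as the jump at $j/N$, since $[M^N]-\langle M^N\rangle$ is then still the compensated sum of squared jumps at the fixed times $j/N$. However, deducing $|\Delta M^N(j/N)|\le C_{T,2}$ from (ii) then needs a short extra argument: the continuous move after the jump has conditional mean zero, so it cannot cancel the jump. In the paper's setting the martingales are piecewise constant, so this is moot.

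You are right to impose $M^N(0)=0$. The BDG inequality the paper uses assumes it as well, and every martingale to which the lemma is applied starts at $0$.
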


The proof of Lemma~\ref{Paper03_auxiliary_lemma_uniform_integrability_martingales} is standard, and we postpone it until Section~\ref{Paper03_section_proof_lemma_uniform_integrability_martingales}.

\begin{lemma} \label{Paper03_martingales_WF_model_constant_env}
     For all $N \in \mathbb{N}$, the process $M^{N}_{0}$ is a square-integrable martingale. Furthermore, for all $T \in \mathbb{R}_+$, the sequences of random variables $\left(\left[M^{N}_{0}\right](T)\right)_{N \in \mathbb{N}}$ and $\Big((M^{N}_{0}(T))^2\Big)_{N \in \mathbb{N}}$ are uniformly integrable.
\end{lemma}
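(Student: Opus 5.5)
The plan is to verify the hypotheses of Lemma~\ref{Paper03_auxiliary_lemma_uniform_integrability_martingales} for $M^N_0$ and then invoke it.

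\textbf{Martingale property.} By construction,
\[
M^N_0(T) = \sum_{t=0}^{\lfloor NT\rfloor -1} N^{-1/2}\,\theta^{N,t}(\boldsymbol{x}^N(t/N)),
\]
and the process is piecewise constant with jumps only at times $j/N$, so condition~(i) holds trivially. Conditionally on $\mathcal{F}^N_{t/N}$, the variables $\zeta^{N,t}_j(\boldsymbol{x}^N(t/N))$, $j\in\llbracket N\rrbracket$, are i.i.d.\ Bernoulli with mean
\[
p(\boldsymbol{x}) = \frac{\sum_i b_i x_i}{(1-x_0)+\sum_i b_i x_i},
\]
using the independence of the family $\zeta^{N,t}$ from the past. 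Hence each increment $N^{-1/2}\theta^{N,t}$ has zero conditional mean. Each increment is also bounded, since $|\theta^{N,t}|\le \sqrt N$, so the increment is at most $1$ in absolute value. Finitely many bounded increments up to any time $T$ give square integrability, and $M^N_0$ is a square-integrable martingale with respect to the natural filtration.

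\textbf{Bounds for condition~(ii).} The conditional variance of a single increment is
\[
N^{-1}\,\mathbb{E}\big[(\theta^{N,t})^2\,\big|\,\mathcal{F}^N_{t/N}\big] = N^{-1}\, p(1-p) \le \tfrac{1}{4N}.
\]
Summing over at most $NT$ generations gives the almost sure bound $\langle M^N_0\rangle(T)\le T/4 =: C_{T,1}$. For the jump bound,
\[
|M^N_0(t+1/N)-M^N_0(t)| = \Big| N^{-1}\sum_j \big(\zeta^{N,t}_j - p\big)\Big| \le 1 =: C_{T,2}.
\]
Both constants are uniform in $N$, so condition~(ii) holds, and Lemma~\ref{Paper03_auxiliary_lemma_uniform_integrability_martingales} yields uniform integrability of both $((M^N_0(T))^2)_N$ and $([M^N_0](T))_N$.

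\textbf{Main obstacle.} The only delicate point is the measurability and independence setup: one must confirm that $p(\boldsymbol{x})$ is well defined on $[0,1]^{K+1}$, and that $\zeta^{N,t}$ is independent of $\mathcal{F}^N_{t/N}$, so that conditioning can be computed by freezing $\boldsymbol{x}$. The denominator is at least $(1-x_0)+b_0x_0\ge b_0>0$, so it never vanishes, and this is exactly where the assumption $b_0>0$ is used. The independence follows directly from the i.i.d.\ construction in~\eqref{Paper03_definition_zeta_N_constant_environment}.
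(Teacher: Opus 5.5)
Your proposal is correct and follows the paper's proof essentially step by step. Both arguments use the zero conditional mean of the increments $N^{-1/2}\theta^{N,t}$ (via the tower property), the almost sure bound $\langle M^N_0\rangle(T)\le T/4$ from the Bernoulli variances, and the jump bound $1$, and then invoke Lemma~\ref{Paper03_auxiliary_lemma_uniform_integrability_martingales}. Your check that the denominator satisfies $1-(1-b_0)x_0\ge b_0>0$ is a detail the paper leaves implicit.
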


\begin{proof}
By~\eqref{Paper03_definition_theta_aux_rv_construction_martingale_constant_environment}, 
for each $N \in \mathbb{N}$, 
$\left\{\theta^{N,\tau} \, \vert \, \tau \in \mathbb{N}_{0}\right\}$ is a set of 
i.i.d.~mean $0$ random variables, and therefore $M^{N}_{0}$ is a martingale 
with respect to the natural filtration of the process 
$\{\mathcal{F}^{N}_{T}\}_{T \geq 0}$ %, where for all $T \geq 0$, 
given by $\mathcal{F}^{N}_{T} \defeq 
\sigma(\boldsymbol{x}^{N}(t): \, t \leq T)$. Indeed, for any 
$0 \leq t_{1} \leq t_{2} -\frac{1}{N}$, we have, by the tower property, that
    \begin{equation*}
        \mathbb{E}\left[\theta^{N,t_{2}}(\boldsymbol{x}^{N}(t_{2})) \, \Big\vert \, \mathcal{F}^{N}_{t_{1}} \right] = \mathbb{E}\left[\mathbb{E}\left[\theta^{N,t_{2}}(\boldsymbol{x}^{N}(t_{2})) \Big\vert \, \mathcal{F}^{N}_{t_{2}} \right]\, \Big\vert \, \mathcal{F}^{N}_{t_{1}} \right] = 0,
    \end{equation*}
    so that $M^{N}_{0}$ is a $\{\mathcal{F}^{N}_{T}\}$-martingale. To complete the proof, observe that by the definition of $M^N_0$ in~\eqref{Paper03_martingale_seeds_WF_constant_environment}, and by~\eqref{Paper03_definition_zeta_N_constant_environment} and~\eqref{Paper03_definition_theta_aux_rv_construction_martingale_constant_environment}, the predictable bracket process $(\langle M^N_0 \rangle(t))_{t \geq 0}$ satisfies for all $T \geq 0$ and $N \in \mathbb{N}$ the following relation almost surely
    \begin{equation} \label{Paper03_predictable_process_bound_constant_environment}
    \begin{aligned}
        & \langle M^N_0 \rangle(T) \\ & \quad = \frac{1}{N} \sum_{t = 0}^{\lfloor NT \rfloor - 1} \frac{\sum_{i = 0}^{K} b_{i}x^N_{i}(t/N)}{(1 - x^N_{0}(t/N)) + \sum_{i = 0}^{K} b_{i}x^N_{i}(t/N)} \cdot \left(1 - \frac{\sum_{i = 0}^{K} b_{i}x^N_{i}(t/N)}{(1 - x^N_{0}(t/N)) + \sum_{i = 0}^{K} b_{i}x^N_{i}(t/N)}\right) \\ & \quad \leq \frac{T}{4}.
    \end{aligned}
    \end{equation}
    Also, note that by the definition of the collection of random variables $\left\{ \zeta^{N, \tau}_{j} \, \vert \, \tau \in \mathbb{N}_{0}, \, j \in \mathbb{N}  \right\}$ introduced before~\eqref{Paper03_definition_zeta_N_constant_environment}, the following relation holds almost surely for every $N \in \mathbb{N}$, $j \in [N]$ and $\tau \in \mathbb{N}_0$,
    \begin{equation*}
        \sup_{\boldsymbol{x} \in [0,1]^{K+1}} \Bigg\vert \zeta^{N,\tau}_j(\boldsymbol{x}) - \frac{\sum_{i=0}^K b_ix_i}{(1 - x_0) + \sum_{i=0}^K b_ix_i} \Bigg\vert \leq 1,
    \end{equation*}
    and therefore~\eqref{Paper03_martingale_seeds_WF_constant_environment} and~\eqref{Paper03_definition_theta_aux_rv_construction_martingale_constant_environment} imply that for any $T \geq 0$ and $N \in \mathbb{N}$,
    \begin{equation} \label{Paper03_martingale_bound_jumps_constant_environment}
        \sup_{t \in [0,T]} \vert M^N_0(t +1/N) - M^N_0(t) \vert \leq 1.
    \end{equation}
    Hence,~\eqref{Paper03_predictable_process_bound_constant_environment},~\eqref{Paper03_martingale_bound_jumps_constant_environment} and Lemma~\ref{Paper03_auxiliary_lemma_uniform_integrability_martingales} imply that the sequences $\Big((M^{N}_{0}(T))^2\Big)_{N \in \mathbb{N}}$ and $\Big(\left[M^{N}_{0}\right](T)\Big)_{N \in \mathbb{N}}$ are uniformly integrable, which completes the proof.
\end{proof}

We are now in a position to prove Theorem~\ref{Paper03_weak_convergence_diffusion_constant_environment}.

\begin{proof}[Proof of Theorem~\ref{Paper03_weak_convergence_diffusion_constant_environment}]
    From the dynamics of $\boldsymbol{x}^{N}$ given in~\eqref{Paper03_semimartingale_formulation_constant_environment_discrete} and~\eqref{Paper03_dynamics_seed_bank_WF_constant_environment}, and by Lemmas~\ref{Paper03_eigenvalue_condition_constant_environment} and~\ref{Paper03_martingales_WF_model_constant_env}, we conclude that the sequence of stochastic processes $(\boldsymbol{x}^{N}, M^{N}_{0})_{N \in \mathbb{N}}$ satisfies Assumptions~\ref{Paper03_assumption_manifold} and~\ref{Paper03_assumption_katzenberger_stochastic_process}. Therefore, Theorem~\ref{Paper03_major_tool_katzenberger} is applicable, which means the sequence $(\boldsymbol{x}^{N}, M^{N}_{0})_{N \in \mathbb{N}}$ is tight and any limiting process $(\boldsymbol{x}, M_{0})$ is a continuous semimartingale such that $\boldsymbol{x}(T) \in \Gamma$ for all $T \geq 0$ almost surely, where $\Gamma$ is the diagonal of the hypercube. In particular, by the uniform integrability of the sequence~$\left(\left[M^{N}_{0}\right](T)\right)_{N \in \mathbb{N}}$ stated in Lemma~\ref{Paper03_martingales_WF_model_constant_env}, any limiting process $M_{0}$ is also a martingale. To characterise its quadratic variation process, we first define, for all $N \in \mathbb{N}$ and $t \geq 0$, the process $(Q^{N}(t))_{N \in \mathbb{N}}$ by
    \begin{equation} \label{Paper03_quadratic_variation_prelimit_characterisation}
    \begin{aligned}
        Q^{N}(T) & \defeq (M^{N}_{0}(T))^{2} - \int_{0}^{T} \frac{\sum_{i = 0}^{K} b_{i}x^{N}_{i}}{(1 - x^{N}_{0}) + \sum_{i = 0}^{K} b_{i}x^{N}_{i}} \left(1 - \frac{\sum_{i = 0}^{K} b_{i}x^{N}_{i}}{(1 - x^{N}_{0}) + \sum_{i = 0}^{K} b_{i}x^{N}_{i}}\right) \, d(\lfloor Nt \rfloor/N) \\ & = (M^{N}_{0}(T))^{2} - \int_{0}^{T} \frac{(1 - x^{N}_{0})\left(\sum_{i = 0}^{K} b_{i}x^{N}_{i}\right)}{\left((1 - x^{N}_{0}) + \sum_{i = 0}^{K} b_{i}x^{N}_{i}\right)^{2}} \, d(\lfloor Nt \rfloor/N).
    \end{aligned}
    \end{equation}
    Then by~\eqref{Paper03_martingale_seeds_WF_constant_environment}, $(Q^N(t))_{t \geq 0}$ is a martingale. By Lemma~\ref{Paper03_martingales_WF_model_constant_env}, $\left((M^{N}_{0}(T))^{2}\right)_{N \in \mathbb{N}}$ is uniformly integrable, and therefore, taking $N \rightarrow \infty$ in~\eqref{Paper03_quadratic_variation_prelimit_characterisation} and recalling that on the diagonal $\Gamma$ of the hypercube, $x_{0} = \ldots = x_{K}$, we conclude that the process $(Q(t))_{t \geq 0}$ given by
    \begin{equation*}
        Q(T) \defeq M_{0}(T)^{2} - \int_{0}^{T} x_{0}(1-x_{0}) \, dt
    \end{equation*}
    is also a martingale, i.e.~$[M_{0}](T) = \int_{0}^{T} x_{0}(1-x_{0}) \, dt$, for all $T \geq 0$. Hence, there exists a standard Brownian motion $W_{0}$ adapted to the natural filtration of $(\boldsymbol{x}, M_{0})$ such that for all $T \geq 0$,
    \begin{equation*}
        M_{0}(T) = \int_{0}^{T} \sqrt{x_{0}(1-x_{0})} \, dW_0(t).
    \end{equation*}
    Recalling that $\boldsymbol{\Phi}: [0,1]^{K+1} \rightarrow \Gamma$ is the projection map associated to the ODE with vector field $\mathbf{F}$, we conclude 
from Theorem~\ref{Paper03_major_tool_katzenberger} that for all $T \geq 0$,
     \begin{equation} \label{Paper03_last_step_proof_SDE_WF_constant_environment}
    \begin{aligned}
        x_{0}(T) & = x_{0} + \int_{0}^{T} \frac{\partial \Phi_{0}}{\partial x_{0}} \sqrt{x_{0}(1 - x_{0})} \, dW_{0}(t) + \frac{1}{2}\int_{0}^{T} x_{0}(1-x_{0}) \frac{\partial^{2} \Phi_{0}}{\partial x_{0}^{2}} \, dt.
    \end{aligned}
    \end{equation}
    The proof is then complete by applying Lemma~\ref{Paper03_lemma_first_order_derivatives_projection_map} to~\eqref{Paper03_last_step_proof_SDE_WF_constant_environment}.
\end{proof}

We can now establish Theorem~\ref{Paper03_bound_fixation_probability_any_K}.

\begin{proof}[Proof of Theorem~\ref{Paper03_bound_fixation_probability_any_K}]
We start by establishing that the fixation probability is bounded by the map $(B,y) \mapsto \Psi(B,y)$ defined in~\eqref{Paper03_identity_fixation_probability_bound}. Since we do not have a closed expression for the second order derivative $\frac{\partial^{2} \Phi_{0}}{\partial x_{0}^{2}}$ which holds for any $K \in \mathbb{N}$, we will use property~$(ii)$ of Proposition~\ref{Paper03_bound_drift_general_K_proposition}, i.e.~that for any $\boldsymbol{x} \in \Gamma$,
\begin{equation*}
    \frac{\partial^{2} \Phi_{0}}{\partial x_{0}^{2}}(\boldsymbol{x}) \leq \frac{B(B(1-x_{0}) + 2)}{(B(1-x_{0}) + 1)^{3}}.
\end{equation*}
Therefore, instead of analysing directly~\eqref{Paper03_SDE_WF_constant_environment}, we consider the alternative SDE
\begin{equation} \label{Paper03_WF_diffusion_constant_environment_alternative}
    \tilde{x}_{0}(T)= \; {x}_{0}(0) + \frac{1}{2} \int_{0}^{T} \left(\tilde{x}_{0}(1 - \tilde{x}_{0})\frac{B(B(1-\tilde{x}_{0}) + 2)}{(B(1-\tilde{x}_{0}) + 1)^{3}}\right) \, dt + \int_{0}^{T} \frac{\sqrt{\tilde{x}_{0}(1 - \tilde{x}_{0})}}{(B(1 - \tilde{x}_{0}) + 1)} \, dW_{0}(t),
\end{equation}
starting from the same initial condition as $x_{0}$. Note that to bound the fixation probability of $(x_{0}(t))_{t \geq 0}$, it will suffice to bound the fixation probability of $(\tilde{x}_{0}(t))_{t \geq 0}$. For any $a \in [0,1]$, recall the definition of $T_a$ in~\eqref{Paper03_stopping_time_border}, and define the stopping time $$\widetilde{T}_{a} \defeq \inf \{t \geq 0 \, \vert \, \tilde{x}_{0}(t) = a\}.$$ Then, we have for any $y \in [0, 1)$,
\begin{equation} \label{Paper03_upper_bound_fixation_probability_step_i}
\begin{aligned}
    \mathbb{P}\left(T_{1} < T_{0} \, \Big\vert \, x_{0}(0) = \psi_B(y) \right) & \leq \mathbb{P}\left(\widetilde{T}_{1} < \widetilde{T}_{0} \, \Big\vert \, \tilde{x}_{0}(0) = \psi_B(y)\right).
\end{aligned}
\end{equation}
The right hand side is most easily calulated using the scale function for 
the diffusion.
%We shall apply the speed and scale theory of SDEs (see for instance~\cite[Section~8.1]{ethier2009markov}) to bound the right-hand side of~\eqref{Paper03_upper_bound_fixation_probability_step_i}.
Writing the SDE in~\eqref{Paper03_WF_diffusion_constant_environment_alternative} as
\begin{equation*}
    d\tilde{x}_{0} = \mu_{0}(\tilde{x}_{0}) \, dt + \eta_{0}(\tilde{x}_{0}) \, dW_{0}(t),
\end{equation*}
the scale function $S: [0,1] \rightarrow [0, +\infty)$ can be written
\begin{equation} \label{Paper03_definition_scale_function}
    S(v) = \int_{0}^{v} \exp\left(-2\int_{0}^{w} \frac{\mu_{0}(z)}{\eta_{0}^{2}(z)} \, dz\right) \, dw,
\end{equation}
where
%Observe that we can define the function $S$ for both $0$ and $1$ in our case since we have
\begin{equation*}
    -2\frac{\mu_{0}(z)}{\eta_{0}^{2}(z)} = -\frac{B(B(1-z) + 2)}{(B(1-z) + 1)}, \; \qquad \forall z \in [0,1].
\end{equation*}
Hence, for any $w \in [0,1]$,
\begin{equation*}
\begin{aligned}
    -2\int_{0}^{w} \frac{\mu_{0}(z)}{\eta_{0}^{2}(z)} \, dz & = - \int_{0}^{w} \frac{B(B(1-z) + 2)}{(B(1-z) + 1)} dz \\ & = - B\int_{0}^{w} \left(1 + \frac{1}{(B(1-z) + 1)} \right)  dz \\ & = -Bw - \log \left(\frac{B+1}{B(1 - w) + 1}\right).
\end{aligned}
\end{equation*}
%By applying~\eqref{Paper03_computation_speed_scale_i} to
Substituting in~\eqref{Paper03_definition_scale_function}, %we conclude that 
for any $v \in [0,1]$,
\begin{equation}
\label{Paper03_definition_scale_function_i}
\begin{aligned}
    S(v) = \frac{1}{(B+1)} \int_{0}^{v} e^{-B w} (B(1 - w) + 1) \, dw = \frac{1}{(B+1)}\left({1 - e^{-Bv} + ve^{-Bv}}\right).
\end{aligned}
\end{equation}
Thus, %by the speed and scale theory of diffusion 
(see e.g.~\cite[Equation~(4.17)]{ewens2004mathematical}), %we have that 
for any $y \in [0,1)$,
\begin{equation} \label{Paper03_upper_bound_fixation_probability_step_ii} \mathbb{P}\left(\widetilde{T}_{1} < \widetilde{T}_{0} \, \Big\vert \, \tilde{x}_{0}(0) = \psi_B(y) \right) = \frac{S(\psi_B(y)) - S(0)}{S(1) - S(0)} = \Psi(B,y),
\end{equation}
where the last equality follows 
from~\eqref{Paper03_definition_scale_function_i} and the definition of 
$\Psi(B,y)$ in~\eqref{Paper03_identity_fixation_probability_bound}. The 
bound on the fixation probability stated in 
Theorem~\ref{Paper03_bound_fixation_probability_any_K} then follows from 
applying~\eqref{Paper03_upper_bound_fixation_probability_step_ii} 
to~\eqref{Paper03_upper_bound_fixation_probability_step_i}.

To conclude our proof, it remains to establish the properties of the map $B \mapsto \Psi(B,y)$, for any $y \in (0,1)$. By the definition of $\psi_B$ in Lemma~\ref{Paper03_lemma_correspondence_value_branching_phase}, we have that for any $y \in [0,1)$,
\begin{equation*}
    \lim_{B \rightarrow 0^+} \psi_B(y) = y \quad \textrm{and} \quad \lim_{B \rightarrow \infty} \psi_B(y) = 0.
\end{equation*}
Since by~\eqref{Paper03_definition_scale_function_i}, the function $v \mapsto S(v)$ is continuous, 
we conclude that for any $y \in (0,1)$,
\begin{equation} \label{Paper03_bounding_fixation_probability_auxilliary_iii}
    \lim_{B \rightarrow 0^+} \Psi(B,y) = y \quad \textrm{and} \quad \lim_{B \rightarrow \infty} \Psi(B,y) = 0.
\end{equation}
It remains to establish that the map $B \mapsto \Psi(B,y)$ is strictly decreasing for any $y \in (0,1)$. By~\eqref{Paper03_identity_fixation_probability_bound}, we have for any $y \in (0,1)$,
\begin{equation} \label{Paper03_bounding_fixation_probability_auxilliary_iv}
\begin{aligned}
    \frac{\partial}{\partial B} \Psi(B,y) = \left(\psi_B(y) + B\frac{\partial}{\partial B} \psi_B(y) + \frac{\partial}{\partial B} \psi_B(y) + \psi_B^2(y) + B \psi_B(y) \frac{\partial}{\partial B} \psi_B(y)\right)e^{-B\psi_B(y)}.
\end{aligned}
\end{equation}
By applying Lemma~\ref{Paper03_lemma_correspondence_value_branching_phase} and~\eqref{Paper03_derivative_B_correspondence_branching_phase} to~\eqref{Paper03_bounding_fixation_probability_auxilliary_iv} and rearranging terms, we conclude that
\begin{equation} \label{Paper03_bounding_fixation_probability_auxilliary_v}
\begin{aligned}
     \frac{\partial}{\partial B} \Psi(B,y) & = \frac{e^{-B\psi_B(y)}}{2(B+1)^2} \log \left(\frac{B+1}{B + e^{-2y}}\right) \left[\frac{(1 - B)}{2(B+1)^3}\log\left(\frac{B+1}{B + e^{-2y}}\right)- 1 \right] \\ & \quad \quad - \frac{e^{-B\psi_B(y)}(1 - e^{-2y})}{2(B+1)^3(B+ e^{-2y})}\left[B + 1 + \frac{1}{2(B+1)^2} \log\left(\frac{B+1}{B+ e^{-2y}}\right)\right].
\end{aligned}
\end{equation}
In order to prove that the map $B \mapsto \Psi(B,y)$ is strictly decreasing for any $y \in (0,1)$, it will suffice to establish that
\begin{equation} \label{Paper03_bounding_fixation_probability_auxilliary_vi}
    \frac{(1 - B)}{2(B+1)^3}\log\left(\frac{B+1}{B + e^{-2y}}\right) \leq 1 \quad \forall (B,y) \in [0, \infty) \times (0,1).
\end{equation}
Since for any fixed $y \in (0, 1)$, the map
\begin{equation*}
    B \mapsto \frac{(1 - B)}{2(B+1)^3}\log\left(\frac{B+1}{B + e^{-2y}}\right),
\end{equation*}
is strictly decreasing in $B$, we conclude that for any $y \in (0,1)$ and $B \in [0, \infty)$,
\begin{equation*}
    \frac{(1 - B)}{2(B+1)^3}\log\left(\frac{B+1}{B + e^{-2y}}\right) \leq \frac{1}{2}\log(e^{-2y}) = y < 1,
\end{equation*}
and therefore~\eqref{Paper03_bounding_fixation_probability_auxilliary_vi} holds. By applying~\eqref{Paper03_bounding_fixation_probability_auxilliary_vi} to~\eqref{Paper03_bounding_fixation_probability_auxilliary_v}, we conclude that for any $y \in (0,1)$, and any $B \in [0, \infty)$, we have
\begin{equation} \label{Paper03_bounding_fixation_probability_auxilliary_vii}
    \frac{\partial}{\partial B} \Psi(B,y) < 0.
\end{equation}
Hence, Theorem~\ref{Paper03_bound_fixation_probability_any_K} follows from applying~\eqref{Paper03_upper_bound_fixation_probability_step_ii} to~\eqref{Paper03_upper_bound_fixation_probability_step_i}, and then from~\eqref{Paper03_bounding_fixation_probability_auxilliary_iii} and~\eqref{Paper03_bounding_fixation_probability_auxilliary_vii}.
\end{proof}

\subsection{Diffusion in slowly changing environment} \label{Paper03_subsection_diffusion_WF_model_slow_environment}

In this section, we prove the weak convergence of the Wright-Fisher model 
with seed bank with a slowly changing population size, i.e.~we prove 
Theorem~\ref{Paper03_weak_convergence_WF_model_slow_env}. Our arguments will 
be closely related to %the ones used in our proof of weak convergence of the Wright-Fisher model in constant environment 
those in 
Section~\ref{Paper03_subsection_diffusion_WF_model_constant_environment}. 
In contrast to the case of constant population size, we must now control two
sources of noise:
%when the population size changes, there are two noises that we must control: the noise 
that arising from the random process of sampling the genetic types in 
each generation, and the environmental noise. 

Recall 
that $(\Xi^{N}(t))_{t \in \mathbb{N}_{0}}$ records the change in 
the population size over time, and, 
from~\eqref{Paper03_scaled_process_WF_model}, that
$x^N(t)=X^N\left(\lfloor Nt\rfloor \right)/N$ and 
$\xi^N(t)=\Xi^N\left(\lfloor Nt\rfloor\right)$.
By Assumption~\ref{Paper03_diffusion_limit_slow_environment}, there exist functions $\alpha, \eta: [\xi_{\min}, \xi_{\max}]$ so that as $N \rightarrow \infty$, $\xi^{N}$ converges weakly to the diffusion
\begin{equation*}
    d\xi = \alpha(\xi) \,dt + \eta(\xi) \, dW_{\textrm{env}},
\end{equation*}
where $W_{\textrm{env}}$ is a standard Brownian motion. To introduce the proof, we define
\begin{equation} \label{Paper03_open_set_flow_slowly_changing_environment}
    \mathscr{U}^{\textrm{sl}} \defeq \left\{(\xi, x_{0}, \ldots, x_{K}) \in [\xi_{\min}, \xi_{\max}] \times \left(\mathbb{R}_{+}\right)^{K + 1}: \; \xi \geq x_{0} \textrm{ and } \xi_{\max} \geq x_i \; \forall \, i \in \llbracket K \rrbracket \right\}.
\end{equation}
By construction the process $(\boldsymbol{x}^{N}, \xi^{N})$ 
has sample paths in $\mathscr{U}^{\textrm{sl}}$, for all $N \in \mathbb{N}$. 
Following the same strategy as was used in the proof of 
Theorem~\ref{Paper03_weak_convergence_diffusion_constant_environment}, we shall
define
$\{0,1\}$-valued random variables that record, for each individual
in a new generation, whether or not it is a mutant. 
In contrast to the case of constant environment, the parameters of these 
Bernoulli-simile random variables will depend not only on 
$\boldsymbol{x}^{N}$, but also on $\xi^{N}$.

Formally, for each $N \in \mathbb{N}$, let $\left\{ \zeta^{N, \tau}_{j} \, 
\vert \, \tau \in \mathbb{N}_{0}, \, j \in \mathbb{N}  \right\}$ be a set 
of i.i.d.~random variables taking values in the space of measurable functions 
from $\mathscr{U}^{\textrm{sl}}$ to $\{0,1\}$ such that for all 
$(\boldsymbol{x}, \xi) \in \mathscr{U}^{\textrm{sl}}$,
\begin{equation} \label{Paper03_definition_zeta_N}
    \mathbb{E}\left[\zeta^{N, \tau}_{j}(\boldsymbol{x}, \xi)\right] = \frac{\sum_{i = 0}^{K}b_{i}x_{i}}{(\xi - x_{0}) + \sum_{i = 1}^{K}b_{i}x_{i}}.
\end{equation}
Note that the denominator in~\eqref{Paper03_definition_zeta_N} 
differs from that in~\eqref{Paper03_definition_zeta_N_constant_environment} 
because when the carrying capacity is not kept constant, the number of wild 
type individuals (divided by $N$) in the population is given by $\xi - x_{0}$ 
instead of $1 - x_{0}$. The other terms are the same because we are keeping 
track of the number of mutants in each generation, rather than the proportion. 
Then, for every $N \in \mathbb{N}$ and $t \in \mathbb{N}_{0}$, we can write 
$X^{N}_{0}(t + 1)$ as
\begin{equation} 
\label{Paper03_equation_mature_mutants_WF_first_part}
    X^{N}_{0}(t + 1) = \sum_{j = 1}^{\lfloor \Xi^{N}(t + 1)N \rfloor} \zeta^{N, \tau}_{j}\left(\frac{\boldsymbol{x}^{N}(t)}{N}, \Xi^{N}(t) \right).
\end{equation}
As in the proof of Theorem~\ref{Paper03_weak_convergence_diffusion_constant_environment}, in order to study the fluctuations arising from the genetic sampling, we introduce for each $N\in \mathbb{N}$, the set of i.i.d.~random variables $\left\{\theta^{N,\tau} \, \vert \, \tau \in \mathbb{N}_{0} \right\}$ taking values in the space of measurable functions from $\mathscr{U}$ to $\mathbb{R}$ given by
\begin{equation} \label{Paper03_definition_theta_aux_rv_construction_martingale}
    \theta^{N,\tau} (\boldsymbol{x}, \xi) \defeq \frac{1}{\sqrt{N}} \sum_{j = 1}^{\lfloor \xi N \rfloor} \left(\zeta^{N,\tau}_{j}(\boldsymbol{x}, \xi) - \frac{\sum_{i = 0}^{K} b_{i}x_{i}}{(\xi - x_{0}) + \sum_{i = 0}^{K} b_{i}x_{i}}\right)
\end{equation}

In order to apply Theorem~\ref{Paper03_major_tool_katzenberger}, we must describe the dynamics of $\boldsymbol{x}^{N}$ and $\xi^{N}$ in terms of semimartingales and a forcing flow. Starting with $\xi^{N}$, we have that for all $t \in \mathbb{N}$,
\begin{equation*}
    \Xi^{N}(t) = \Xi^{N}(0) + \sum_{\tau = 0}^{t-1} \frac{\alpha(\Xi^{N}(\tau))}{N} + \sum_{\tau = 0}^{t - 1} \left(\Xi^{N}(\tau + 1) - \Xi^{N}(\tau) - \frac{\alpha(\Xi^{N}(\tau))}{N}\right),
\end{equation*}
where $\alpha: [\xi_{\min}, \xi_{\max}] \rightarrow \mathbb{R}$ is the continuous function satisfying Assumption~\ref{Paper03_diffusion_limit_slow_environment} and that corresponds to the drift of the limiting environmental diffusion process~$\xi$. Hence, recalling~\eqref{Paper03_rescaling_processes_slowly_changing_environment}, we have for all $T \in \mathbb{R}_+$,
\begin{equation} \label{Paper03_dynamics_slow_env_WF}
\begin{aligned}
    \xi^{N}(T) & = \xi^{N}(0) + \int_{0}^{T} \alpha\left(\xi^{N}\left(\frac{t}{N}\right)\right) \, d\left({\lfloor Nt\rfloor}/{N}\right) + M_{\textrm{env}}^{N}(T)  \\ & = \xi^{N}(0) + \int_{0}^{T} \alpha\left(\xi^{N}\left(\frac{t}{N}\right)\right) \, d\left({\lfloor Nt\rfloor}/{N}\right) + M_{\textrm{env}}^{N}(T) + \int_{0}^{T} F^{\, \textrm{sl,env}}_{\textrm{env}}(\boldsymbol{x}^{N}, \xi^{N}) \, d\lfloor Nt\rfloor,
\end{aligned}
\end{equation}
where the process $M^{N}_{\textrm{env}}$ is given by, for all $T \in \mathbb{R}_{+}$,
\begin{equation} \label{Paper03_martingale_slow_environment}
    M_{\textrm{env}}^{N}(T) \defeq \sum_{\tau = 0}^{\lfloor NT \rfloor - 1} \left(\xi^{N}\left(\frac{t + 1}{N}\right) - \xi^{N}\left(\frac{t}{N}\right) - \frac{1}{N}{\alpha\left(\xi^{N}\left(\frac{t}{N}\right)\right)}\right),
\end{equation}
and $F^{\, \textrm{sl,env}}_{\textrm{env}} \equiv 0$ is the coordinate indicating the impact of the forcing flow field $\mathbf{F}^{{\, \textrm{sl,env}}}$, given in~\eqref{Paper03_flow_definition_WF_model_slowly_changing_environment}, on the environment. 
The forcing field does not affect the population size
and $M_{\textrm{env}}^{N}$ represents the noise arising from the environmental dynamics. We also emphasise that, as expected, the process $\boldsymbol{x}^{N}$ does not appear in the dynamics of $\xi^{N}$. This reflects our underlying assumption that the population size is not influenced by the genetic composition of the population in this model.

We now describe the dynamics of $x^{N}_{0}$. Recall the definition of the 
flow field $\mathbf{F}^{\textrm{sl,env}}:\mathscr{U} \rightarrow \mathbb{R}^{K+2}$ in~\eqref{Paper03_flow_definition_WF_model_slowly_changing_environment}. Following the same strategy as we used to derive~\eqref{Paper03_semimartingale_formulation_constant_environment_discrete}, we write $x^{N}_{0}$ in terms of the sums of the family of random variables $\zeta^{N,\tau}_{j}$, i.e.~we write, for any $T \in \mathbb{R}_+$,
\begin{equation} \label{Paper03_dynamics_x_0_discrete_slow_environment}
\begin{aligned}
    x^{N}_{0}(T) & = x^{N}_{0}(0) + \sum_{t = 0}^{\lfloor NT \rfloor -1} \; \Bigg(\Bigg(\frac{1}{N} \sum_{j = 1}^{\lfloor N\xi^{N}((t + 1)/N) \rfloor} \zeta^{N,t}_{j}(\boldsymbol{x}^{N}(t/N) , \xi^{N}(t /N))\Bigg) - x^{N}_{0}(t/N)\Bigg) \\
    & = x^{N}_{0}(0) + M^{N}_{0}(T) + \widetilde{M}^{N}_{\textrm{env}}(T) + {M}^{N}_{\textrm{aux}}(T) + \int_{0}^{T} \frac{\alpha(\xi^{N})\sum_{i = 0}^{K} b_{i}x^{N}_{i}}{(\xi^{N} - x^{N}_{0}) + \sum_{i = 0}^{K} b_{i}x^{N}_{i}} \, d\left(\lfloor Nt \rfloor /N \right) \\ & \quad \quad + \int_{0}^{T} F^{\, \textrm{sl,env}}_{0}(\boldsymbol{x}^{N}, \xi^{N}) \, d \lfloor N\tau \rfloor,
\end{aligned}
\end{equation}
where the processes $(M^{N}_{0}(t))_{t \in \mathbb{R}_{+}}$, $(\widetilde{M}^{N}_{\textrm{env}}(t))_{t \in \mathbb{R}_{+}}$ and $({M}^{N}_{\textrm{aux}}(t))_{t \in \mathbb{R}_{+}}$ are given by, for all $N \in \mathbb{N}$ and all $T \in \mathbb{R}_{+}$,
\begin{align}
    & M^{N}_{0}(T) \defeq \int_{0}^{\lfloor NT\rfloor-1} \, \frac{1}{\sqrt{N}} \, d\theta^{N,t}(\boldsymbol{x}^{N}(t/N), \xi^{N}(t/N)), \label{Paper03_martingale_seeds_WF} \\[2ex] & \widetilde{M}^{N}_{\textrm{env}}(T) \defeq \int_{0}^{\lfloor NT\rfloor-1} \Bigg(\frac{\sum_{i = 0}^{K} b_{i}x^{N}_{i}(t/N)}{(\xi^{N}(t/N) - x^{N}_{0}(t/N)) + \sum_{i = 0}^{K} b_{i}x^{N}_{i}(t/N)}\Bigg) \, dM^{N}_{\textrm{env}}(t/N), \label{Paper03_martingale_impact_env_on_seeds_WF} \\[2ex] & M^{N}_{\textrm{aux}}(T) \nonumber \\  & \quad  \defeq \frac{1}{N} \sum_{t = 0}^{\lfloor NT \rfloor - 1} \Bigg(\mathds{1}_{\left\{\xi^{N}(t/N) < \xi^{N}((t + 1)/N)\right\}} \sum_{j = \left\lfloor N\xi^{N}(t/N) \right\rfloor + 1}^{\left\lfloor N\xi^{N}((t+1)/N) \right\rfloor} \beta_{j}^{N,t}(\boldsymbol{x}^{N}(t/N), \xi^{N}(t/N)) \nonumber \\[2ex] & \quad \quad \quad \quad \quad \quad \quad \quad - \mathds{1}_{\left\{\xi^{N}((t+1)/N) < \xi^{N}(t/N)\right\}} \sum_{j = \left\lfloor N\xi^{N}((t+1)/N) \right\rfloor + 1}^{\left\lfloor N\xi^{N}(t/N) \right\rfloor} \beta_{j}^{N,t}(\boldsymbol{x}^{N}(t/N), \xi^{N}(t/N))  \Bigg). \label{Paper03_martingale_both_seeds_slow_env_WF}
\end{align}
In~\eqref{Paper03_martingale_impact_env_on_seeds_WF}, $M^{N}_{\textrm{env}}$ 
is as defined in~\eqref{Paper03_martingale_slow_environment}, and for all 
$N \in \mathbb{N}$, $t \in \mathbb{N}_{0}$, $j \in \mathbb{N}$ and 
$(\boldsymbol{x}, \xi) \in \mathscr{U}$, $\beta^{N,t}_j(\boldsymbol{x},\xi)$ 
in~\eqref{Paper03_martingale_both_seeds_slow_env_WF}
is given by
\begin{equation} \label{Paper03_definition_error_beta_slowly_changing_environment}
    \beta^{N,t}_{j}(\boldsymbol{x}, \xi) \defeq \zeta^{N,t}_{j}(\boldsymbol{x}, \xi) - \frac{\sum_{i = 0}^{K} b_{i}x_{i}}{(\xi - x_{0}) + \sum_{i = 0}^{K} b_{i}x_{i}}.
\end{equation}
Finally, for all $i \in \llbracket K \rrbracket$, since $X^{N}_{i}(t) \defeq X^{N}_{0}(t-i)$, we can write, for all $i \in \llbracket K \rrbracket$ and all $t \geq 0$,
\begin{equation} \label{Paper03_dynamics_seed_bank_WF}
    x^{N}_{i}(t) = x^{N}_{i}(0) + \int_{0}^{t} F^{\, \textrm{sl,env}}_{i}(\boldsymbol{x}^{N}, \xi^{N}) \, d\lfloor N\tau \rfloor.
\end{equation}

The three fluctuation terms $M_0^N$, $\widetilde{M}_{\textrm{env}}^N$, and 
$M^N_{\textrm{aux}}$ account, respectively, for the intrinsic randomness due
to sampling in the Wright-Fisher model, the fluctuations due to changes in 
population size, and fluctuations due to {\em both} changes in population
size and competition with wild type individuals.
%Heuristically, $M^{N}_{0}$ represents fluctuations in the number of mutant individuals which are due to the intrinsic randomness of the way the type of individual in a generation is chosen in our Wright-Fisher model, while $\widetilde{M}^{N}_{\textrm{env}}$ indicates fluctuations in the number of mutant individuals that occur due to fluctuations in the environment. Furthermore, $M^{N}_{\textrm{aux}}$ takes into account the fluctuations that occur both due to the environment and the competition between the wild-type and mutant individuals. 
Our next result shows that these processes capture the randomness of our Wright-Fisher model.

\begin{lemma} \label{Paper03_martingales_WF_model_slow_env}
     For all $N \in \mathbb{N}$, the processes $M^{N}_{0}$ and ${M}^{N}_{\textrm{aux}}$ are square integrable martingales. Moreover, there exist processes $(\varepsilon^{N}(t))_{t \geq 0}$ and $(\tilde{\varepsilon}^{N}(t))_{t \geq 0}$, such that $M^{N}_{\textrm{env}} + \varepsilon^{N}$ and $\widetilde{M}^{N}_{\textrm{env}} + \tilde{\varepsilon}^{N}$ are square integrable martingales for all $N \in \mathbb{N}$, and such that for all $T \in \mathbb{R}_+$, $\mathbb{E}\left[\vert\varepsilon^{N}(T)\vert^{p}\right] \rightarrow 0$ and $\mathbb{E}\left[\vert\tilde{\varepsilon}^{N}(T)\vert^{p}\right] \rightarrow 0$ as $N \rightarrow \infty$, for all $p \geq 1$. Furthermore, for all $T \in \mathbb{R}_+$, the sequences of random variables $\left(\left[M^{N}_{0}\right](T)\right)_{N \in \mathbb{N}}$, $\left(\left[M^{N}_{\textrm{aux}}\right](T)\right)_{N \in \mathbb{N}}$, $\left(\left[M^{N}_{\textrm{env}} + \varepsilon^{N}\right](T)\right)_{N \in \mathbb{N}}$, $\left(\left[\widetilde{M}^{N}_{\textrm{env}} + \tilde{\varepsilon}^{N}\right](T)\right)_{N \in \mathbb{N}}$, $\Big((M^{N}_{0}(T))^2\Big)_{N \in \mathbb{N}}$ and $\Big((M^{N}_{\textrm{env}}(T) + \varepsilon^{N}(T))^2\Big)_{N \in \mathbb{N}}$ are uniformly integrable.
\end{lemma}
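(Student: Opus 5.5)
I will treat the four processes one at a time, modelling each argument on the proof of Lemma~\ref{Paper03_martingales_WF_model_constant_env}, and use Lemma~\ref{Paper03_auxiliary_lemma_uniform_integrability_martingales} for every uniform integrability claim.

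\emph{The sampling martingale $M^{N}_{0}$.} Conditional on $\mathcal{F}^N_{t/N}$, and on $\xi^N((t+1)/N)$ where needed, the variables $\zeta^{N,t}_j$ are i.i.d.\ Bernoulli with the mean~\eqref{Paper03_definition_zeta_N}. Hence $\theta^{N,t}$ has conditional mean zero, and the tower property shows that $M^N_0$ is a martingale, exactly as in the constant environment. Its predictable bracket is at most $\frac{1}{N}\sum_{t<\lfloor NT\rfloor}\lfloor \xi N\rfloor/(4N)\le \xi_{\max}T/4$. Each increment of $M^N_0$ is $N^{-1/2}\theta^{N,t}$, which is bounded by $\xi_{\max}$ (or by a fixed multiple of $\xi_{\max}$, depending on the normalisation), so it is bounded uniformly in $N$. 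Lemma~\ref{Paper03_auxiliary_lemma_uniform_integrability_martingales} then gives uniform integrability of $(M^N_0(T))^2$ and of $[M^N_0](T)$.

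\emph{The auxiliary martingale $M^{N}_{\textrm{aux}}$.} Given $\mathcal{F}^N_{t/N}$ together with $\xi^N((t+1)/N)$, each $\beta^{N,t}_j$ with $j$ in the extra index range is centred, bounded by $1$, and independent of the range. Therefore the increments of $M^N_{\textrm{aux}}$ have zero conditional mean. The conditional variance of one increment is at most $N^{-2}\,\mathbb{E}\bigl[N|\xi^N((t+1)/N)-\xi^N(t/N)|+1\bigr]$. By the third-moment condition in Assumption~\ref{Paper03_diffusion_limit_slow_environment} this is $O(N^{-3/2})$, so $\langle M^N_{\textrm{aux}}\rangle(T)=O(N^{-1/2})$ in $L^1$. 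To obtain the deterministic bound that Lemma~\ref{Paper03_auxiliary_lemma_uniform_integrability_martingales} needs, I will use the crude bounds $|\Delta M^N_{\textrm{aux}}|\le \xi_{\max}$ and $\langle M^N_{\textrm{aux}}\rangle(T)\le \xi_{\max}T$. These hold because the number of added or removed indices is at most $N\xi_{\max}$.

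\emph{The environmental terms $M^{N}_{\textrm{env}}$ and $\widetilde M^{N}_{\textrm{env}}$.} The increments of $M^N_{\textrm{env}}$ fail to be centred only by $w^N_1(\xi^N(t/N))$. I will therefore set $\varepsilon^N(T)\defeq\sum_{t<\lfloor NT\rfloor}w^N_1(\xi^N(t/N))$. This is deterministically bounded by $NT\cdot O(N^{-1-\delta})=O(N^{-\delta})$, so every moment of it tends to zero, and with the sign convention chosen suitably $M^N_{\textrm{env}}+\varepsilon^N$ is a martingale. For $\widetilde M^N_{\textrm{env}}$ I will take $\tilde\varepsilon^N$ to be the same sum, with each summand weighted by the predictable integrand in~\eqref{Paper03_martingale_impact_env_on_seeds_WF}. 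That integrand lies in $[0,1]$, so the same bound holds.

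For the brackets, the second-moment identity in Assumption~\ref{Paper03_diffusion_limit_slow_environment} gives
\[
\langle M^N_{\textrm{env}}+\varepsilon^N\rangle(T)\le T\sup_\xi\eta^2(\xi)+o(1).
\]
The jumps are bounded by $\xi_{\max}-\xi_{\min}+O(N^{-1})$, because $\xi^N$ takes values in $[\xi_{\min},\xi_{\max}]$. Lemma~\ref{Paper03_auxiliary_lemma_uniform_integrability_martingales} then applies to both environmental terms; for the weighted version I will argue via $[\widetilde M]\le[M_{\textrm{env}}+\varepsilon]$ up to the correction.

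\emph{Main obstacle.} The hard step is the martingale property of $M^N_{\textrm{aux}}$, which requires a careful choice of filtration. The $\zeta$'s must be independent of $\Xi^N(t+1)$ given the past, and $\xi^N$ at time $(t+1)/N$ must be measurable at the step at which the sum is evaluated. I will work with the enlarged filtration $\sigma(\mathcal{F}^N_{t/N},\xi^N((t+1)/N))$ and check that the other three processes remain martingales with respect to the corresponding interleaved filtration.
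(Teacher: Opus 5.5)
Your proposal is correct and is essentially the paper's proof. It uses conditional centring of the Bernoulli sums plus the tower property for the martingale properties, and the correction $\varepsilon^N(T)=-\sum_{t<\lfloor NT\rfloor}w^N_1(\xi^N(t/N))$ together with its predictably weighted analogue for $\widetilde M^N_{\textrm{env}}$ (which the paper only sketches). Uniform integrability then comes from Lemma~\ref{Paper03_auxiliary_lemma_uniform_integrability_martingales} with almost-sure bounds on predictable brackets and jumps; your crude bound $\langle M^N_{\textrm{aux}}\rangle(T)\le \xi_{\max}T$ is enough here. On your filtration point, the enlarged filtration must contain the individual $\zeta^{N,t}_j$, not only $\xi^N((t+1)/N)$: $M^N_0$ uses only the first $\lfloor N\xi^N(t/N)\rfloor$ of them, so neither $M^N_0$ nor $M^N_{\textrm{aux}}$ is separately adapted to the natural filtration of $(\boldsymbol{x}^N,\xi^N)$ that the paper nominally uses.
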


\begin{proof}
    We shall study each of the processes in the statement of the lemma separately. 
    
    \medskip

    \noindent \underline{Step~$(i)$: Characterisation of $M^N_0$:}

    \medskip
    
    Note that by~\eqref{Paper03_definition_theta_aux_rv_construction_martingale}, for each $N \in \mathbb{N}$, $\left\{\theta^{N,\tau} \, \vert \, \tau \in \mathbb{N}_{0}\right\}$ is a set of i.i.d.~mean $0$ random variables, and therefore $M^{N}_{0}$ is a martingale. To establish the required uniform integrability, we first note that~\eqref{Paper03_martingale_seeds_WF},~\eqref{Paper03_definition_zeta_N} and~\eqref{Paper03_definition_theta_aux_rv_construction_martingale} imply that the following relation holds almost surely for every $N \in \mathbb{N}$ and $T \in [0, \infty)$:
    \begin{equation} \label{Paper03_bound_predict_variation_martingale_genetic_sampling_slow_environment}
    \begin{aligned}
        & \langle M^N_0\rangle (T) \\ & \quad = \frac{1}{N} \sum_{t = 0}^{\lfloor NT \rfloor -1} \left(\frac{\lfloor \xi^N N \rfloor}{N} \cdot \frac{\sum_{i = 0}^{K} b_{i}x^N_{i}}{(\xi^N - x^N_{0}) + \sum_{i = 0}^{K} b_{i}x^N_{i}} \cdot \left(1 - \frac{\sum_{i = 0}^{K} b_{i}x^N_{i}}{(\xi^N - x^N_{0}) + \sum_{i = 0}^{K} b_{i}x^N_{i}}\right)\right)(t/N) \\ & \quad \leq \frac{T\xi_{\max}}{4}.
    \end{aligned}
    \end{equation}
    Moreover, by the same argument as was used to 
derive~\eqref{Paper03_martingale_bound_jumps_constant_environment}, 
for every $N \in \mathbb{N}$ and $T \in [0, \infty)$, the following relation holds almost surely
    \begin{equation} \label{Paper03_bound_jump_martingale_genetic_sampling_slow_environment}
        \sup_{t \in [0,T]} \; \vert M^N_0(t+1/N) - M^N_0(t) \vert \leq 1.
    \end{equation}
 Combining~\eqref{Paper03_bound_predict_variation_martingale_genetic_sampling_slow_environment} with~\eqref{Paper03_bound_jump_martingale_genetic_sampling_slow_environment}, and applying Lemma~\ref{Paper03_auxiliary_lemma_uniform_integrability_martingales}, we conclude that the sequences of random variables $\Big((M^{N}_{0}(T))^2\Big)_{N \in \mathbb{N}}$ and $\Big(\left[M^{N}_{0}\right](T)\Big)_{N \in \mathbb{N}}$ are uniformly integrable.

    \medskip

    \noindent \underline{Step~$(ii)$: Characterisation of $M^{N}_{\textrm{aux}}$:}

    \medskip    
    
   Let $\{\mathcal{F}^{N}_{t}\}_{t \geq 0}$ be the natural filtration of $(\boldsymbol{x}^{N}(t), \xi^{N}(t))_{t \geq 0}$. Then, by the definition of $\zeta^{N,\tau}$ given in~\eqref{Paper03_definition_zeta_N} and the definition of $M^{N}_{\textrm{aux}}$ in~\eqref{Paper03_martingale_both_seeds_slow_env_WF}, we conclude that for all $t \geq 0$,
    \begin{equation*}
        \mathbb{E}\left[M^{N}_{\textrm{aux}}(t + 1/N) - M^{N}_{\textrm{aux}}(t) \, \vert \mathcal{F}^{N}_{t} \times \sigma(\xi^N(t +1/N)) \right] = 0.
    \end{equation*}
    Thus, by the tower property of conditional expectations, $M^{N}_{\textrm{aux}}$ is a martingale. Moreover, since the collection of random variables $\left\{\zeta^{N,t}_j: \, j \in \mathbb{N}_0\right\}$ is conditionally independent given $\mathcal{F}^N_t$, we have
    \begin{equation*}
    \begin{aligned}
        & \mathbb{E}\left[(M^{N}_{\textrm{aux}}(t+1/N) - M^{N}_{\textrm{aux}}(t))^{2} \, \vert \, \mathcal{F}^{N}_{t} \times \sigma(\xi^N(t +1/N)) \right] \\ & \quad \lesssim N^{-2}\sum_{j = \lfloor \xi_{\min}N \rfloor}^{\lfloor \xi_{\max}N \rfloor} \mathds{1}_{\left\{N\left(\xi^{N}(T) \wedge \xi^{N}\left(t + \frac{1}{N}\right)\right) \leq j \leq N\left(\xi^{N}(t) \vee \xi^{N}\left(t + \frac{1}{N}\right)\right)\right\}} \\ & \quad \quad \quad \cdot \mathbb{E}\Bigg[\Bigg(\zeta^{N,\tau}_{j}(\boldsymbol{x}^N(t),\xi^{N}(t)) - \frac{\sum_{i = 0}^{K} b_{i}x^{N}_{i}(t)}{(\xi^{N}(t) - x^{N}_{0}(t)) + \sum_{i = 0}^{K} b_{i}x^{N}_{i}(t)}\Bigg)^{2} \Bigg\vert \, \mathcal{F}^{N}_{t} \times \sigma(\xi^N(t +1/N))\Bigg] \\ & \quad \lesssim N^{-1}\left\vert \xi^{N}(t + 1/N) - \xi^{N}(t) \right\vert.
    \end{aligned}
    \end{equation*}
    Hence, by the tower property, for all $t \geq 0$, we have
    \begin{equation} \label{Paper03_boundedness_M_aux_WF}
    \begin{aligned}
        & \mathbb{E}\left[\left(M^{N}_{\textrm{aux}}(t+1/N) - M^{N}_{\textrm{aux}}(t)\right)^{2} \, \vert \, \mathcal{F}^N_t\right] \\ & \quad \lesssim \frac{1}{N}\mathbb{E}\left[\left\vert \xi^{N}(t + 1/N) - \xi^{N}(t) \right\vert \vert \, \mathcal{F}^N_t \right] \\ & \quad \leq \frac{1}{N}\mathbb{E}\left[\left( \xi^{N}(t + 1/N) - \xi^{N}(t) \right)^{2} \vert \, \mathcal{F}^N_t \right]^{1/2} \\ & \quad \leq {\vert \vert \eta \vert \vert_{\infty}^{1/2}}N^{-3/2} + \mathcal{O}\left(N^{-3/2 - \delta}\right)
    \end{aligned}
    \end{equation}
    for some $\delta > 0$, where we applied Cauchy-Schwarz to obtain the 
second and third inequalities, and 
Assumption~\ref{Paper03_diffusion_limit_slow_environment} on the dynamics 
of the environment for the last one.
    Since $M^N_{\textrm{aux}}$ is a purely discontinuous martingale, we conclude that the following estimate holds almost surely for every $N \in \mathbb{N}$ and all $T \geq 0$
    \begin{equation} \label{Paper03_bound_predictable_process_auxiliary_martingale_slow_env}
        \langle M^N_\textrm{aux} \rangle(T) = \sum_{t = 0}^{\lfloor NT \rfloor - 1} \mathbb{E}\left[(M^{N}_{\textrm{aux}}(t+1/N) - M^{N}_{\textrm{aux}}(t))^{2} \, \vert \, \mathcal{F}^N_t\right] \leq  T{\vert \vert \eta \vert \vert_{\infty}^{1/2}}N^{-1/2} + \mathcal{O}(N^{-1/2 - \delta}),
    \end{equation}
    where the last inequality follows from~\eqref{Paper03_boundedness_M_aux_WF}. Moreover, from the definition of $M^N_{\textrm{aux}}$ in~\eqref{Paper03_martingale_both_seeds_slow_env_WF} and from Assumption~\ref{Paper03_diffusion_limit_slow_environment}, we have that for every $N \in \mathbb{N}$, the following estimate holds almost surely
    \begin{equation} \label{Paper03_bound_jump_auxiliary_martingale_slow_env}
        \sup_{t \in [0, \infty)} \vert M^N_{\textrm{aux}}(t + 1/N) - M^N_{\textrm{aux}}(t-) \vert \leq \xi_{\max} - \xi_{\min}.
    \end{equation}
    Therefore,~\eqref{Paper03_bound_predictable_process_auxiliary_martingale_slow_env},~\eqref{Paper03_bound_jump_auxiliary_martingale_slow_env} and Lemma~\ref{Paper03_auxiliary_lemma_uniform_integrability_martingales} imply that $\Big(\left[M^N_{\textrm{aux}}\right](T)\Big)_{N \in \mathbb{N}}$ is uniformly integrable for every $T \geq 0$, as desired.

    \medskip

    \noindent \underline{Step~$(iii)$: Characterisation of $M^{N}_{\textrm{env}}$ and of $\widetilde{M}^{N}_{\textrm{env}}$:}

    \medskip
    
    Let $w^N_1,w^N_2,w^N_3: [\xi_{\min}, \xi_{\max}] \rightarrow \mathbb{R}$ be the maps satisfying Assumption~\ref{Paper03_diffusion_limit_slow_environment}, and for every $ N \in \mathbb{N}$, let the process $(\varepsilon^N(t))_{t \geq 0}$ be given by, for all $T \in [0, \infty)$,
    \begin{equation} \label{Paper03_correction_environment_martingale}
        \varepsilon^N(T) \defeq - \sum_{t = 0}^{\lfloor NT \rfloor - 1} w^N_1\left(\xi^N\left(\frac{t}{N}\right)\right).
    \end{equation}
    Then, by Assumption~\ref{Paper03_diffusion_limit_slow_environment}, $\left((M^N_{\textrm{env}} + \varepsilon^N)(t)\right)_{t \geq 0}$ is a martingale. Moreover, by applying the triangle inequality and Assumption~\ref{Paper03_diffusion_limit_slow_environment} to~\eqref{Paper03_correction_environment_martingale}, we conclude that there exists $\delta > 0$ such that for every $N \in \mathbb{N}$ and $T \in [0, \infty)$, the following relation holds almost surely
    \begin{equation} \label{Paper03_control_correction_martingale_slow_environment}
        \sup_{t \in [0, T]} \vert \varepsilon^N(t) \vert \lesssim T N^{-\delta}.
    \end{equation}
    Observe also that for every $N \in \mathbb{N}$, Assumption~\ref{Paper03_diffusion_limit_slow_environment} and~\eqref{Paper03_martingale_slow_environment} imply that almost surely
    \begin{equation} \label{Paper03_bound_jump_environmental_noise_slow}
        \sup_{t \in [0, \infty)} \, \vert (M^N_{\textrm{env}} + \varepsilon^N)(t + 1/N) -(M^N_{\textrm{env}} + \varepsilon^N)(t) \vert \leq \xi_{\max} - \xi_{\min} + \mathcal{O}(N^{-1}).
    \end{equation}
    Moreover,~\eqref{Paper03_martingale_slow_environment},~\eqref{Paper03_correction_environment_martingale} and Assumption~\ref{Paper03_diffusion_limit_slow_environment} imply that for every $N \in \mathbb{N}$, we have almost surely
    \begin{equation*}
        \sup_{t \in [0, \infty)} \, \mathbb{E}\Big[\Big( (M^N_{\textrm{env}} + \varepsilon^N)(t + 1/N) -(M^N_{\textrm{env}} + \varepsilon^N)(t) \Big)^2 \, \Big\vert \, \mathcal{F}^N_t\Big] \leq \vert \vert \eta \vert \vert_{\infty}N^{-1} + \mathcal{O}(N^{-1-\delta}),
    \end{equation*}
    and therefore
    \begin{equation} \label{Paper03_bound_predictable_bracket_process_environmental_noise_slow}
        \langle M^N_{\textrm{env}} \rangle(T) \leq T\vert \vert \eta \vert \vert_{\infty} + \mathcal{O}(N^{-\delta}) \; \textrm{almost surely}. 
    \end{equation}
    It follows from~\eqref{Paper03_bound_jump_environmental_noise_slow},~\eqref{Paper03_bound_predictable_bracket_process_environmental_noise_slow} and Lemma~\ref{Paper03_auxiliary_lemma_uniform_integrability_martingales} that, for all $T \in [0, \infty)$, both the sequences of random variables $\Big((M^{N}_{\textrm{env}}(T))^2\Big)_{N \in \mathbb{N}}$ and $\Big(\left[M^{N}_{\textrm{env}}\right](T)\Big)_{N \in \mathbb{N}}$ are uniformly integrable.
    
    The analogous results concerning the process $\widetilde{M}^{N}_{\textrm{env}}$, defined in~\eqref{Paper03_martingale_impact_env_on_seeds_WF}, hold due to the fact that the process
    \begin{equation*}
        \left(\frac{\sum_{i = 0}^{K} b_{i}x^N_{i}(t-)}{\xi^N(t-) - x^N_{0}(t-) + \sum_{i = 0}^{K} b_{i}x^N_{i}(t-)}\right)_{t \geq 0}
        \end{equation*}
        is predictable with respect to the filtration $\{\mathcal{F}^N_t\}_{t \geq 0}$ introduced before~\eqref{Paper03_source_ODE_slow_environment_env_coordinate}, and then by observing that the map
    \begin{equation} \label{Paper03_ratio_proportion_seeds}
        (\boldsymbol{x},\xi) \mapsto \frac{\sum_{i = 0}^{K} b_{i}x_{i}}{(\xi - x_{0}) + \sum_{i = 0}^{K} b_{i}x_{i}}
    \end{equation}
    is uniformly bounded by 1. The proof is then complete.
\end{proof}

We now characterise the evolution of $\xi^{N}$ and $\boldsymbol{x}^{N}$ in terms of a well-behaved semimartingale. For any $N \in \mathbb{N}$, we define the $K + 2$-dimensional process 
\[
\left(\boldsymbol{\Lambda}^{N}(t)\right)_{t \geq 0} = \left(\Lambda^{N}_{0}(t), \ldots, \Lambda^{N}_{K}(t), \Lambda^{N}_{\textrm{env}}(t)\right)_{t \geq 0}
\]
such that, for all $T \geq 0$,
\begin{equation} \label{Paper03_semimartingale_discrete_process_WF}
\begin{aligned}
    \Lambda^{N}_{\textrm{env}}(T) & \defeq \int_{0}^{T} \alpha(\xi^{N}) \, d\left({\lfloor Nt\rfloor}/{N}\right) + M_{\textrm{env}}^{N}(T), \\ \Lambda^{N}_{0}(T) & \defeq M^{N}_{0}(T) + \widetilde{M}^{N}_{\textrm{env}}(T) + {M}^{N}_{\textrm{aux}}(T) + \int_{0}^{T} \frac{\alpha(\xi^{N})\sum_{i = 0}^{K} b_{i}x^{N}_{i}}{\xi^{N} - (1 - b_{0})x^{N}_{0} + \sum_{i = 1}^{K} b_{i}x^{N}_{i}} \, d\left(\lfloor Nt \rfloor /N \right), \\
    \Lambda^{N}_{i}(T) & \defeq 0 \, \textrm{ for all } i \in \llbracket K \rrbracket.
\end{aligned}
\end{equation}
Note that, by the description of $(\boldsymbol{x}^{N}(t), \xi^N(t))_{t \geq 0}$, we can write for all $T \geq 0$,
\begin{equation} \label{Paper03_writing_slow_environment_in_terms_Katzenberger}
    (\boldsymbol{x}^{N}(T), \xi^N(T)) = (\boldsymbol{x}^{N}(0), \xi^{N}(0)) + \boldsymbol{\Lambda}^{N}(T) + \int_{0}^{T} \boldsymbol{F}^{\textrm{sl,env}}(\boldsymbol{x}^{N}, \xi^N) \, d\lfloor Nt \rfloor.
\end{equation}
Our next result is a corollary of Lemma~\ref{Paper03_martingales_WF_model_slow_env}. Recall that the total variation of a càdlàg process over the interval $[0,T]$ is denoted $\mathfrak{T}(\cdot,t)$.

\begin{corollary}
\label{Paper03_semimartingale_behaviour_WF_model}
    The sequence of processes $(\boldsymbol{\Lambda}^{N})_{N \in \mathbb{N}}$ is relatively compact in $\mathscr{D}\left([0, \infty), \mathbb{R}^{K+2}\right)$. Moreover, for any $T \geq 0$, the sequence of random variables $(\boldsymbol{\Lambda}^{N}(T))_{N \in \mathbb{N}}$ is uniformly integrable, and for each $T \geq 0$, the sequences of processes
    \begin{equation*}
    \begin{aligned}
        & \left(\mathfrak{T}\left(\left(\int_{0}^{t} \alpha(\xi^{N}) \, d\left({\lfloor Nt'\rfloor}/{N}\right)\right)_{t \geq 0}, T \right)\right)_{N \in \mathbb{N}}, \\ & \left(\mathfrak{T}\left(\left(\int_{0}^{t} \frac{\alpha(\xi^{N})\sum_{i = 0}^{K} b_{i}x^{N}_{i}}{\xi^{N} - (1 - b_{0})x^{N}_{0} + \sum_{i = 1}^{K} b_{i}x^{N}_{i}} \, d\left(\lfloor Nt' \rfloor /N\right)\right)_{t \geq 0}, T\right)\right)_{N \in \mathbb{N}},
    \end{aligned}
    \end{equation*}
    are both relatively compact in $\mathscr{D}([0, + \infty), \mathbb{R})$.
\end{corollary}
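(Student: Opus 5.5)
The plan is to read off all three claims from Lemma~\ref{Paper03_martingales_WF_model_slow_env} together with elementary bounds on the finite-variation parts of $\boldsymbol{\Lambda}^N$. First write each nonzero coordinate of $\boldsymbol{\Lambda}^N$ as a martingale part plus a finite-variation part. For $\Lambda^N_{\textrm{env}}$, the martingale part is $M^N_{\textrm{env}}+\varepsilon^N$ and the finite-variation part is $\int_0^T\alpha(\xi^N)\,d(\lfloor Nt\rfloor/N)-\varepsilon^N(T)$. For $\Lambda^N_0$, the martingale part is $M^N_0+\widetilde M^N_{\textrm{env}}+\tilde\varepsilon^N+M^N_{\textrm{aux}}$ and the finite-variation part is the $\alpha$-integral minus $\tilde\varepsilon^N$. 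The coordinates $\Lambda^N_i$, $i\in\llbracket K\rrbracket$, are identically zero and need no treatment.

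\textbf{Finite-variation parts.} Since $\alpha$ is continuous on the compact interval $[\xi_{\min},\xi_{\max}]$, it is bounded. The ratio appearing in the second integrand is bounded by $1$, by the same observation as for~\eqref{Paper03_ratio_proportion_seeds}; its denominator is at least $\xi_{\min}>0$ on $\mathscr{U}^{\textrm{sl}}$. Hence both integrands are bounded by $\|\alpha\|_\infty$. The total variations over $[t,t+\delta]$ are then at most $\|\alpha\|_\infty(\delta+1/N)$, uniformly in $t$ and deterministically.

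This gives three consequences. The families $\mathfrak{T}(\cdot,T)$ are uniformly bounded. They are Lipschitz up to an $O(1/N)$ jump, hence C-tight. So by Arzel\`a--Ascoli (or the modulus-of-continuity criterion in $\mathscr{D}$) they are relatively compact, which is the last assertion. For the correction terms, \eqref{Paper03_control_correction_martingale_slow_environment} gives $\sup_{t\le T}|\varepsilon^N(t)|\lesssim TN^{-\delta}$; I would take $\tilde\varepsilon^N$ analogously as the predictable compensator weighted by the bounded ratio, which obeys the same bound. Both therefore vanish uniformly and do not affect tightness or uniform integrability.

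\textbf{Martingale parts.} Lemma~\ref{Paper03_martingales_WF_model_slow_env} gives uniform integrability of the quadratic variations at time $T$ for each of the martingales. By the Burkholder--Davis--Gundy inequality (or Doob's $L^2$ inequality), $\mathbb{E}[\sup_{t\le T}|M(t)|^2]\lesssim \mathbb{E}[[M](T)]$, which is bounded uniformly in $N$. Hence $(\boldsymbol{\Lambda}^N(T))_N$ is bounded in $L^2$ and in particular uniformly integrable.

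For relative compactness in $\mathscr{D}$, apply the Aldous criterion to each martingale, using the predictable brackets bounded in the proof of Lemma~\ref{Paper03_martingales_WF_model_slow_env}. For any stopping time $\tau_N\le T$,
\[
\mathbb{E}\bigl[(M(\tau_N+\delta_N)-M(\tau_N))^2\bigr]=\mathbb{E}\bigl[\langle M\rangle(\tau_N+\delta_N)-\langle M\rangle(\tau_N)\bigr].
\]
By~\eqref{Paper03_bound_predict_variation_martingale_genetic_sampling_slow_environment}, \eqref{Paper03_bound_predictable_process_auxiliary_martingale_slow_env} and~\eqref{Paper03_bound_predictable_bracket_process_environmental_noise_slow}, the increments of $\langle M\rangle$ over a window of length $\delta$ are bounded by $C(\delta+1/N)$ (plus $o(1)$). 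For $\widetilde M^N_{\textrm{env}}$ the same holds because its integrand is bounded by $1$. Together with the compact containment already obtained, this gives tightness of each coordinate; a sum of C-tight-up-to-vanishing-jumps sequences is tight, so $(\boldsymbol{\Lambda}^N)_N$ is relatively compact.

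\textbf{Main obstacle.} The main difficulty I anticipate is the jump structure of $M^N_{\textrm{env}}$, whose single-step increments are only bounded by $\xi_{\max}-\xi_{\min}$. This could spoil C-tightness, and sums of tight sequences in $\mathscr{D}$ need not be tight. I would handle it with the third-moment condition of Assumption~\ref{Paper03_diffusion_limit_slow_environment}, which gives
\[
\mathbb{E}\Bigl[\sum_{t\le NT}|\Delta\xi^N|^3\Bigr]=O(N^{-1/2}).
\]
This forces the maximal jump to $0$ in probability, so all limits are continuous and tightness of the vector follows coordinatewise.
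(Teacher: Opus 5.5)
Your proposal is correct and follows the paper's route: the paper's proof is a brief appeal to Lemma~\ref{Paper03_martingales_WF_model_slow_env}, the boundedness of $\alpha$ and of the ratio~\eqref{Paper03_ratio_proportion_seeds}, and a standard relative-compactness criterion in $\mathscr{D}$ (\cite[Theorem~3.8.6(c)]{ethier2009markov}), and you make this explicit through Aldous's criterion and per-step bounds on the predictable brackets. Two small remarks: the denominator of the second integrand is bounded below only by $b_0\xi_{\min}$, not $\xi_{\min}$, which is harmless since the ratio is still at most $1$; and your jump-size detour is unnecessary, because Aldous's criterion applied directly to the $\mathbb{R}^{K+2}$-valued process $\boldsymbol{\Lambda}^N$ gives tightness without any C-tightness argument.
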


\begin{proof}
    By Lemma~\ref{Paper03_martingales_WF_model_slow_env}, the fact that both $\alpha$ and the map defined in~\eqref{Paper03_ratio_proportion_seeds} are bounded in $[\xi_{\min}, \xi_{\max}]$ and in $\mathscr{U}^{\textrm{sl}}$, and by standard results regarding the Skorokhod topology of $\mathscr{D}\left([0, \infty), \mathbb{R}^{K+2}\right)$, see e.g.~\cite[Theorem~3.8.6(c)]{ethier2009markov}, we conclude that $(\boldsymbol{\Lambda}^{N})_{N \in \mathbb{N}}$ is relatively compact in $\mathscr{D}\left([0, \infty), \mathbb{R}^{K+2}\right)$. Uniform integrability also follows from Lemma~\ref{Paper03_martingales_WF_model_slow_env} and the boundedness of $\alpha$ and the map given in~\eqref{Paper03_ratio_proportion_seeds}. Finally, for any $T \geq 0$, and any Lebesgue integrable function $g: [0,T] \rightarrow \mathbb{R}$, we have
    \begin{equation*}
        \mathfrak{T}\left(\left(\int_{0}^{t} g(t') \, dt'\right)_{t \geq 0}, \, T\right) = \int_{0}^{T} \vert g(t) \vert \, dt.
    \end{equation*}
    Hence, the relative compactness of the total variation processes follows from the almost surely boundedness of the integrands, as desired.
\end{proof}

Having verified 
Assumption~\ref{Paper03_assumption_katzenberger_stochastic_process}, we now 
turn to the forcing flow $\boldsymbol{F}^{\textrm{sl,env}}$ defined 
in~\eqref{Paper03_flow_definition_WF_model_slowly_changing_environment} and associated to the projection map $\boldsymbol{\Phi}^{\textrm{sl,env}}$.
Recall 
the map
$\boldsymbol{\Phi}$ from~\eqref{Paper03_definition_projection_map},
the manifold $\Gamma^{\; \textrm{sl,env}}$ 
from~\eqref{Paper03_attractor_manifold_slow_changing_environment}, and 
the notation 
$\boldsymbol{\rho}(\boldsymbol{x},\xi) \defeq \boldsymbol{x}/\xi$, 
for any~$(\boldsymbol{x},\xi) \in \Gamma^{\; \textrm{sl,env}}$.
To emphasise that, in contrast to the case of a constant population size,
$\boldsymbol{\Phi}$ is defined as a function of $\boldsymbol{\rho}(\boldsymbol{x},\xi)$
rather than $\boldsymbol{x}$,
we shall use the notation $\partial/\partial \rho_i$ to indicate 
differentiation of $\boldsymbol{\Phi}$ with respect to 
the $i$th component; it is
important to keep this in mind when we use 
Lemma~\ref{Paper03_lemma_first_order_derivatives_projection_map} in the proof of
our next result.

\begin{lemma} 
\label{Paper03_derivatives_flow_manifold_moran_constant_env}
    For any $(\boldsymbol{x}, \xi) \in \Gamma^{\; \textrm{sl,env}}$, we have
    \begin{equation} \label{Paper03_derivatives_slow_env_wrt_x_0}
    \begin{aligned}
        \frac{\partial \Phi^{\textrm{sl,env}}_{0}}{\partial x_{0}} (\boldsymbol{x}, \xi) = \frac{\xi}{B(\xi-x_{0}) + \xi} \quad \textrm{ and } \quad \frac{\partial^{2} \Phi_{0}^{\textrm{sl,env}}}{\partial x^{2}_{0}} (\boldsymbol{x},\xi) = \frac{1}{\xi} \frac{\partial^{2} \Phi_{0}}{\partial \rho^{2}_{0}}(\boldsymbol{\rho}(\boldsymbol{x}, \xi)).
        \end{aligned}
        \end{equation}
        Moreover, 
        \begin{equation} \label{Paper03_derivatives_slow_env_wrt_x_0_and_environment}
        \begin{aligned}
            \frac{\partial \Phi_{0}^{\textrm{sl,env}}}{\partial \xi} (\boldsymbol{x}, \xi) =  \frac{\partial^{2} \Phi_{0}^{\textrm{sl,env}}}{\partial \xi^{2}} (\boldsymbol{x},\xi) = 0 
\quad \textrm{ and } \quad \frac{\partial^{2} \Phi^{\textrm{sl,env}}_{0}}{\partial \xi \partial x_{0}} (\boldsymbol{x},\xi) = \frac{-B x_{0}}{(B(\xi - x_{0}) + \xi)^{2}}.
        \end{aligned}
        \end{equation}
        Furthermore,
\begin{equation}
\label{Paper03_derivatives_slow_env_environment_component_wrt_x_0_and_environment}
        \begin{aligned}
        \frac{\partial \Phi^{\textrm{sl,env}}_{\textrm{env}}}{\partial \xi} (\boldsymbol{x},\xi) & = 1, \;  \frac{\partial \Phi^{\textrm{sl,env}}_{\textrm{env}}}{\partial x_i} (\boldsymbol{x},\xi) = 0, \\ 
\textrm{and} \qquad \; \frac{\partial^2 \Phi^{\textrm{sl,env}}_{\textrm{env}}}{\partial \xi^2} (\boldsymbol{x},\xi) & = \frac{\partial^2 \Phi^{\textrm{sl,env}}_{\textrm{env}}}{\partial \xi \partial x_i}  (\boldsymbol{x},\xi) = \frac{\partial^2 \Phi^{\textrm{sl,env}}_{\textrm{env}}}{\partial x_i \partial x_j}(\boldsymbol{x},\xi) = 0 \qquad\; \forall \, i,j \in \llbracket K \rrbracket_0.
        \end{aligned}
        \end{equation}
\end{lemma}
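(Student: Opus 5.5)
The whole lemma will follow from the scaling identity~\eqref{Paper03_bijection_projection_maps_DEs}, $\Phi^{\textrm{sl,env}}_{i}(\boldsymbol{x},\xi)=\xi\Phi_i(\boldsymbol{x}/\xi)$, together with~\eqref{Paper03_bijection_projection_maps_DEs_environment_coordinate}, $\Phi^{\textrm{sl,env}}_{\textrm{env}}(\boldsymbol{x},\xi)=\xi$. Both identities hold on the whole domain $\{x_0\le\xi\}$, not only on $\Gamma^{\textrm{sl,env}}$, so they may be differentiated freely. After differentiating, I will evaluate at points of $\Gamma^{\textrm{sl,env}}$, where $\boldsymbol{\rho}=\boldsymbol{x}/\xi\in\Gamma$. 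There I can use Lemma~\ref{Paper03_lemma_first_order_derivatives_projection_map} in the variable $\boldsymbol{\rho}$.

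\textbf{The environment coordinate.} Since $\Phi^{\textrm{sl,env}}_{\textrm{env}}(\boldsymbol{x},\xi)=\xi$ identically, the identities~\eqref{Paper03_derivatives_slow_env_environment_component_wrt_x_0_and_environment} are immediate.

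\textbf{Derivatives in $x_0$.} By the chain rule,
\[
\partial_{x_0}\Phi^{\textrm{sl,env}}_0=\partial_{\rho_0}\Phi_0(\boldsymbol{x}/\xi),
\qquad
\partial^2_{x_0}\Phi^{\textrm{sl,env}}_0=\xi^{-1}\partial^2_{\rho_0}\Phi_0(\boldsymbol{x}/\xi).
\]
Lemma~\ref{Paper03_lemma_first_order_derivatives_projection_map} gives $\partial_{\rho_0}\Phi_0=1/(B(1-\rho_0)+1)$. Substituting $\rho_0=x_0/\xi$ gives $\xi/(B(\xi-x_0)+\xi)$, which is~\eqref{Paper03_derivatives_slow_env_wrt_x_0}.

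\textbf{Derivatives in $\xi$.} I would compute
\[
\partial_\xi\Phi^{\textrm{sl,env}}_0=\Phi_0(\boldsymbol{\rho})-\sum_i\rho_i\,\partial_{\rho_i}\Phi_0(\boldsymbol{\rho}).
\]
On $\Gamma$ we have $\rho_i\equiv\rho_0$ and $\Phi_0(\boldsymbol{\rho})=\rho_0$. Lemma~\ref{Paper03_lemma_first_order_derivatives_projection_map} gives $\sum_i\partial_{\rho_i}\Phi_0=(1+(1-\rho_0)\sum_{i\ge1}\sum_{j\ge i}b_j)/(B(1-\rho_0)+1)=1$, using $\sum_{i\ge1}\sum_{j\ge i}b_j=B$. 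Hence $\partial_\xi\Phi^{\textrm{sl,env}}_0=0$ on $\Gamma^{\textrm{sl,env}}$.

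A cleaner route is to note that $\Gamma^{\textrm{sl,env}}$ is fixed pointwise by $\boldsymbol{\Phi}^{\textrm{sl,env}}$. The gradient of $\Phi_0$ therefore pairs with the tangent vector $(\boldsymbol{1},1)$ of $\Gamma^{\textrm{sl,env}}$ to give $1$, and with $(\boldsymbol{1},0)$ to give $1$. This forces the $\xi$-component of the gradient to vanish.

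\textbf{Mixed derivative.} Differentiate $\partial_{x_0}\Phi^{\textrm{sl,env}}_0=\partial_{\rho_0}\Phi_0(\boldsymbol{x}/\xi)$ in $\xi$:
\[
\partial_\xi\partial_{x_0}\Phi^{\textrm{sl,env}}_0=-\xi^{-1}\sum_j\rho_j\,\partial_{\rho_j}\partial_{\rho_0}\Phi_0.
\]
On $\Gamma$ this equals $-\xi^{-1}\rho_0\frac{d}{d\rho}\big[\partial_{\rho_0}\Phi_0(\rho,\dots,\rho)\big]$, the derivative along the diagonal. This is legitimate because $\partial_{\rho_0}\Phi_0$ is $C^1$ and the directional derivative along $\boldsymbol{1}$ at a point of $\Gamma$ is the derivative of its restriction to $\Gamma$. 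That restriction is $1/(B(1-\rho)+1)$, with derivative $B/(B(1-\rho)+1)^2$. Multiplying by $-\rho_0/\xi$ and substituting $\rho_0=x_0/\xi$ gives $-Bx_0/(B(\xi-x_0)+\xi)^2$, as claimed.

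\textbf{Second $\xi$-derivative.} This is the step I expect to need the most care. I would argue that $\partial_\xi\Phi^{\textrm{sl,env}}_0$ vanishes identically along $\Gamma^{\textrm{sl,env}}$. Its derivative along the tangent $(\boldsymbol{1},1)$ is then zero, so
\[
\partial^2_\xi\Phi_0^{\textrm{sl,env}}=-\sum_i\partial_{x_i}\partial_\xi\Phi_0^{\textrm{sl,env}}.
\]
It remains to compute the mixed terms $\partial_{x_i}\partial_\xi\Phi^{\textrm{sl,env}}_0$, which equal $-\xi^{-1}\sum_j\rho_j\,\partial_{\rho_j}\partial_{\rho_i}\Phi_0$. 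I would try to show that their sum over $i$ vanishes, using the second-order relations coming from $\Phi_0(\rho\boldsymbol{1})=\rho$, i.e.\ $\sum_{i,j}\partial_{\rho_i}\partial_{\rho_j}\Phi_0=0$ on $\Gamma$.

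An alternative is to compute $\partial^2_\xi$ directly: $\partial^2_\xi[\xi\Phi_0(\boldsymbol{x}/\xi)]=\xi^{-1}\sum_{i,j}\rho_i\rho_j\,\partial_{\rho_i}\partial_{\rho_j}\Phi_0$, which on $\Gamma$ equals $\rho_0^2\xi^{-1}\sum_{i,j}\partial_{\rho_i}\partial_{\rho_j}\Phi_0=0$ by differentiating $\Phi_0(\rho\boldsymbol{1})=\rho$ twice. I would present this direct computation as the proof.
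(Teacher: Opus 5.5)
Your proof is correct. For the environment coordinate, the $x_0$-derivatives and $\partial_\xi\Phi^{\textrm{sl,env}}_0=0$, it is the paper's argument. The two second-order identities are where you genuinely diverge.

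\textbf{The paper's route.} It reduces $\partial_\xi^2\Phi^{\textrm{sl,env}}_0$ and $\partial_\xi\partial_{x_0}\Phi^{\textrm{sl,env}}_0$ on $\Gamma^{\textrm{sl,env}}$ to $\sum_{i,j}\partial_{\rho_i}\partial_{\rho_j}\Phi_0$ and $\sum_j\partial_{\rho_0}\partial_{\rho_j}\Phi_0$. It then substitutes the explicit Parsons--Rogers expressions \eqref{Paper03_real_formula_second_order_derivative}--\eqref{Paper03_real_formula_second_order_derivative_i_j}. The $\boldsymbol{\Theta}^{\Delta}$ contributions drop out because $\boldsymbol{\Theta}^{\Delta}\boldsymbol{u}=0$. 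The remaining terms, coming from $\boldsymbol{v}'$ and $\boldsymbol{\Theta}^{G}$, sum to $0$ and to $B/(B(1-\rho_0)+1)^2$ respectively.

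\textbf{Your route.} You use only that $\Gamma$ is a straight line of fixed points. The two sums are then simply the second derivative of $\rho\mapsto\Phi_0(\rho\boldsymbol{1})=\rho$ and the first derivative of $\rho\mapsto\partial_{\rho_0}\Phi_0(\rho\boldsymbol{1})=1/(B(1-\rho)+1)$.

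\textbf{Comparison.} Your route needs only $C^2$ regularity of $\boldsymbol{\Phi}$ and Lemma~\ref{Paper03_lemma_first_order_derivatives_projection_map}, with no curvature matrix or Lyapunov equation. In exchange, the paper's route doubles as a consistency check on its explicit second-order formulas. Your prefactor $\xi^{-1}\sum_{i,j}\rho_i\rho_j\,\partial_{\rho_i}\partial_{\rho_j}\Phi_0=\xi^{-3}\sum_{i,j}x_ix_j\,\partial_{\rho_i}\partial_{\rho_j}\Phi_0$ is the correct one. The paper's intermediate display has $\xi^{-2}$ there, which is harmless since the sum vanishes.

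\textbf{A shortcut for the step you flagged.} The step you called most delicate is easier still. Note that $\Gamma^{\textrm{sl,env}}=\{(y\boldsymbol{1},\xi):0\le y\le\xi\}$ contains the segments $\xi\mapsto(y\boldsymbol{1},\xi)$. Along these, $\Phi^{\textrm{sl,env}}_0\equiv y$ and $\partial_{x_0}\Phi^{\textrm{sl,env}}_0=\xi/(B(\xi-y)+\xi)$. Differentiating in $\xi$ gives $\partial_\xi\Phi^{\textrm{sl,env}}_0=\partial_\xi^2\Phi^{\textrm{sl,env}}_0=0$ and $\partial_\xi\partial_{x_0}\Phi^{\textrm{sl,env}}_0=-By/(B(\xi-y)+\xi)^2$ immediately.
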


\begin{proof}
    Observe that~\eqref{Paper03_derivatives_slow_env_environment_component_wrt_x_0_and_environment} follows directly from~\eqref{Paper03_bijection_projection_maps_DEs_environment_coordinate}. To prove~\eqref{Paper03_derivatives_slow_env_wrt_x_0}, we first observe that by applying the chain rule to~\eqref{Paper03_bijection_projection_maps_DEs}, we have
    \begin{equation} \label{Paper03_easy_identitfication_derivatives_slow_environment}
    \begin{aligned}
        \frac{\partial \Phi^{\textrm{sl,env}}_{0}}{\partial x_0}(\boldsymbol{x},\xi) = \frac{\partial \Phi_0}{\partial \rho_0}(\boldsymbol{\rho}(\boldsymbol{x},\xi)).
    \end{aligned}
    \end{equation}
    Hence, the first identity in~\eqref{Paper03_derivatives_slow_env_wrt_x_0} follows from applying 
Lemma~\ref{Paper03_lemma_first_order_derivatives_projection_map} 
to~\eqref{Paper03_easy_identitfication_derivatives_slow_environment}, while the second %identity in~\eqref{Paper03_derivatives_slow_env_wrt_x_0} 
follows from differentiating both sides of~\eqref{Paper03_easy_identitfication_derivatives_slow_environment} and applying the chain rule.

    It remains to establish~\eqref{Paper03_derivatives_slow_env_wrt_x_0_and_environment}. By differentiating both sides 
of~\eqref{Paper03_bijection_projection_maps_DEs} and applying the chain rule, we have
    \begin{equation}
    \label{Paper03_intermediate_step_coomputing_derivatives_wrt_slow_environment}
    \frac{\partial \Phi^{\textrm{sl,env}}_{0}}{\partial \xi}(\boldsymbol{x},\xi) = \Phi_0(\boldsymbol{\rho}(\boldsymbol{x}, \xi)) - \frac{1}{\xi} \sum_{i = 0}^{K} x_i \frac{\partial \Phi_0}{\partial \rho_i}(\boldsymbol{\rho}(\boldsymbol{x}, \xi)).
    \end{equation}
    Using the fact that on $\Gamma^{\; \textrm{sl,env}}$, 
\[\Phi_0(\boldsymbol{\rho}(\boldsymbol{x},\xi)) = \frac{x_0}{\xi} = \frac{x_i}{\xi} \qquad \forall \, i \in \llbracket K \rrbracket,
\]
applying Lemma~\ref{Paper03_lemma_first_order_derivatives_projection_map} to~\eqref{Paper03_intermediate_step_coomputing_derivatives_wrt_slow_environment}, and then observing that by~\eqref{Paper03_mean_age_germination}, $B = \sum_{i=0}^{K} \sum_{j = i}^{K} b_i$, we conclude that
    \begin{equation}
    \label{Paper03_intermediate_step_coomputing_derivatives_wrt_slow_environment_i}
    \frac{\partial \Phi^{\textrm{sl,env}}_0}{\partial \xi}(\boldsymbol{x}, \xi) = 0.
    \end{equation}
    Differentiating both sides of~\eqref{Paper03_intermediate_step_coomputing_derivatives_wrt_slow_environment} with respect to $\xi$, and then using that $\boldsymbol{\rho}(\boldsymbol{x}, \xi) \defeq \boldsymbol{x}/\xi$ and the chain rule, we conclude that
    \begin{equation}
    \label{Paper03_intermediate_step_coomputing_derivatives_wrt_slow_environment_ii}
    \begin{aligned}
     & \frac{\partial^2 \Phi^{\textrm{sl,env}}_{0}}{\partial \xi^2}(\boldsymbol{x},\xi) \\ & \quad  = - \frac{1}{\xi^2} \sum_{i = 0}^{K} x_i \frac{\partial \Phi_0}{\partial \rho_i}(\boldsymbol{\rho}(\boldsymbol{x}, \xi)) + \frac{1}{\xi^2} \sum_{i = 0}^{K} x_i \frac{\partial \Phi_0}{\partial \rho_i}(\boldsymbol{\rho}(\boldsymbol{x}, \xi)) + \frac{1}{\xi^2} \sum_{i = 0}^{K} \sum_{j = 0}^{K} x_i x_j \frac{\partial^2 \Phi_0}{\partial \rho_i \partial \rho_j}(\boldsymbol{\rho}(\boldsymbol{x}, \xi)) \\ & \quad = \frac{1}{\xi^2} \sum_{i = 0}^{K} \sum_{j = 0}^{K} x_i x_j \frac{\partial^2 \Phi_0}{\partial \rho_i \partial \rho_j}(\boldsymbol{\rho}(\boldsymbol{x}, \xi)).
    \end{aligned}
    \end{equation}
    To compute the right-hand side of this equation, %of~\eqref{Paper03_intermediate_step_coomputing_derivatives_wrt_slow_environment_ii},
we first observe that on $\Gamma^{\; \textrm{sl,env}}$, $x_0 = \cdots = x_K$, and therefore~\eqref{Paper03_intermediate_step_coomputing_derivatives_wrt_slow_environment_ii} yields
    \begin{equation} \label{Paper03_intermediate_step_coomputing_derivatives_wrt_slow_environment_iii}
        \frac{\partial^2 \Phi^{\textrm{sl,env}}_{0}}{\partial \xi^2}(\boldsymbol{x},\xi) =  \left(\frac{x_0}{\xi}\right)^2\sum_{i = 0}^{K} \sum_{j = 0}^{K} \frac{\partial^2 \Phi_0}{\partial \rho_i \partial \rho_j}(\boldsymbol{\rho}(\boldsymbol{x}, \xi)).
    \end{equation}
    Applying~\eqref{Paper03_real_formula_second_order_derivative},~\eqref{Paper03_real_formula_second_order_derivative_0_i} and~\eqref{Paper03_real_formula_second_order_derivative_i_j} to~\eqref{Paper03_intermediate_step_coomputing_derivatives_wrt_slow_environment_iii}, we have 
    \begin{equation}
    \label{Paper03_intermediate_step_coomputing_derivatives_wrt_slow_environment_iv}
        \frac{\partial^2 \Phi^{\textrm{sl,env}}_{0}}{\partial \xi^2}(\boldsymbol{x},\xi) = - \left(\frac{x_0}{\xi}\right)^2 \sum_{i = 0}^{K} \sum_{j= 0}^{K} \theta^{\Delta}_{ij},
    \end{equation}
    where $\boldsymbol{\Theta}^{\Delta} = \left(\theta^{\Delta}_{ij}\right)_{i,j \in \llbracket K \rrbracket_0}$ is defined after~\eqref{Paper03_difference_matrix_between_hessians}. By combining~\eqref{Paper03_null_space_theta},~\eqref{Paper03_right_eigenvector_Jacobian} and~\eqref{Paper03_intermediate_step_coomputing_derivatives_wrt_slow_environment_iv}, we conclude that
    \begin{equation}
    \label{Paper03_intermediate_step_coomputing_derivatives_wrt_slow_environment_v}
        \frac{\partial^2 \Phi^{\textrm{sl,env}}_{0}}{\partial \xi^2}(\boldsymbol{x},\xi) = 0.
    \end{equation}
    It remains to establish the last identity in~\eqref{Paper03_derivatives_slow_env_wrt_x_0_and_environment}. By differentiating both sides of~\eqref{Paper03_intermediate_step_coomputing_derivatives_wrt_slow_environment} with respect to $x_0$, and then using the definition of $\boldsymbol{\rho}$
% given before Lemma~\ref{Paper03_derivatives_flow_manifold_moran_constant_env}
and the chain rule, we have
    \begin{equation}
    \label{Paper03_intermediate_step_coomputing_derivatives_wrt_slow_environment_vi}
        \frac{\partial^{2} \Phi^{\textrm{sl,env}}_{0}}{\partial \xi \partial x_{0}} (\boldsymbol{x},\xi) = - \frac{1}{\xi^2}\sum_{i = 0}^{K} x_i \frac{\partial ^2 \Phi_0}{\partial \rho_0 \rho_i}(\boldsymbol{\rho}(\boldsymbol{x}, \xi)).  
    \end{equation}
    Substituting~\eqref{Paper03_real_formula_second_order_derivative} 
and~\eqref{Paper03_real_formula_second_order_derivative_0_i}, the fact 
that $\boldsymbol{\Theta}^{\Delta}\boldsymbol{u} = 0$, and the definition 
of $\boldsymbol{u}$ from~\eqref{Paper03_right_eigenvector_Jacobian}, we
conclude
from~\eqref{Paper03_intermediate_step_coomputing_derivatives_wrt_slow_environment_vi} that the last identity in~\eqref{Paper03_derivatives_slow_env_wrt_x_0_and_environment} holds, which completes the proof.
\end{proof}

We are now ready to prove our convergence result.

\begin{proof}[Proof of Theorem~\ref{Paper03_weak_convergence_WF_model_slow_env}]
    Our proof consists of an application of 
Theorem~\ref{Paper03_major_tool_katzenberger}. Recall the definition of 
the multidimensional semimartingale $\boldsymbol{\Lambda}^{N}$ given 
in~\eqref{Paper03_semimartingale_discrete_process_WF}, and
identity~\eqref{Paper03_writing_slow_environment_in_terms_Katzenberger}. 
By Theorem~\ref{Paper03_major_tool_katzenberger}, 
Lemma~\ref{Paper03_martingales_WF_model_slow_env}, and 
Corollary~\ref{Paper03_semimartingale_behaviour_WF_model}, the sequence 
of processes  $(\boldsymbol{x}^{N}, \xi^{N}, \boldsymbol{\Lambda}^{N})_{N \in \mathbb{N}}$ is relatively compact in~$\mathscr{D}\left([0, \infty), \mathscr{U}^{\textrm{sl}} \times \mathbb{R}^{K + 2}\right)$. Moreover, Theorem~\ref{Paper03_major_tool_katzenberger} also implies that any 
subsequential limit $(\boldsymbol{x}(t), \xi(t), \boldsymbol{\Lambda}(t))_{t \geq 0}$ is a diffusion with sample paths in $\Gamma^{\; \textrm{sl,env}} \times \mathbb{R}^{K+2}$.  From now on, let $(\boldsymbol{x}(t), \xi(t), \boldsymbol{\Lambda}(t))_{t \geq 0}$ be a subsequential limit. By Assumption~\ref{Paper03_diffusion_limit_slow_environment} on the dynamics of the environment, it is immediate that $\xi$ must be the (unique in law) solution to the SDE
    \begin{equation*}
        d \xi = \alpha(\xi) \, dt + \eta(\xi) \, dW_{\textrm{env}}(t).
    \end{equation*}
    To compute the drift and the noise terms of the SDE describing the dynamics of $\boldsymbol{x}$, we need to characterise the limiting process $\boldsymbol{\Lambda}$ in terms of $(\boldsymbol{x}(t), \xi(t))_{t \geq 0}$. Bearing this aim in mind, we shall characterise the limit points of each of the martingales $M^{N}_{0}$, $M^{N}_{\textrm{aux}}$ and $\widetilde{M}_{\textrm{env}}^{N}$ that appear in the dynamics of the discrete process (see Equations~\eqref{Paper03_martingale_seeds_WF},~\eqref{Paper03_martingale_impact_env_on_seeds_WF} and~\eqref{Paper03_martingale_both_seeds_slow_env_WF}). Starting with $M^{N}_{\textrm{aux}}$, we note that by estimate~\eqref{Paper03_boundedness_M_aux_WF}, we have for all $t \geq 0$,
    \begin{equation*}
        \lim_{N \rightarrow \infty} \mathbb{E}\left[\left[M^{N}_{\textrm{aux}}\right](T)\right] = 0,
    \end{equation*}
    and therefore, in the limit, $M^{N}_{\textrm{aux}}$ vanishes. Now, regarding $M^{N}_{0}$, we note that by Lemma~\ref{Paper03_martingales_WF_model_slow_env}, $M^{N}_{0}$ converges as $N \rightarrow \infty$ to a stochastic process $M_{0}$ which is also a martingale. Moreover, by~\eqref{Paper03_martingale_seeds_WF}, we have that for all $N \in \mathbb{N}$, the process $\left(Q^{N}(t)\right)_{t \geq 0}$ given by, for all $T \geq 0$
    \begin{equation} \label{Paper03_auxiliary_identity_martingale_WF_slow_environment_[]}
    \begin{aligned}
        & Q^{N}(T) \\ & \quad \defeq \left(M^{N}_{0}(T)\right)^{2} - \int_{0}^{T} \frac{\left(\sum_{i = 0}^{K}b_{i}x^{N}_{i}\right)\left(\xi^{N} - (1 - b_{0})x_{0} + \sum_{i = 1}^{K} b_{i}x^{N}_{i}\right) - \sum_{i = 0}^{K}b_{i}x^{N}_{i}}{\left(\xi^{N} - (1 - b_{0})x_{0} + \sum_{i = 1}^{K} b_{i}x^{N}_{i}\right)} \; d(\lfloor Nt \rfloor/N),
    \end{aligned}
    \end{equation}
    is a martingale. Then, by Lemma~\ref{Paper03_martingales_WF_model_slow_env}, we have uniform integrability for the sequence $\left((M^{N}_{0}(T))^{2}\right)_{N \in \mathbb{N}}$, and therefore, taking $N \rightarrow \infty$ in~\eqref{Paper03_auxiliary_identity_martingale_WF_slow_environment_[]} and recalling that on $\Gamma^{\textrm{sl,env}}$, $x_{0} = \ldots = x_{K}$, we conclude that the process $(Q(t))_{t \geq 0}$ given by
    \begin{equation*}
        Q(T) = M_{0}(T)^{2} - \int_{0}^{T} \xi_{s}\left(\frac{x_{0}}{\xi} - \frac{x_{0}^{2}}{\xi^{2}}\right) \, dt,
    \end{equation*}
    is also a martingale. This characterises the quadratic variation of $M_{0}$ and we deduce that there exists a standard Brownian motion $W_{0}$ such that
    \begin{equation} \label{Paper03_BM_representation_WF_noise_slowly_changing_environment}
        M_{0}(T) = \int_{0}^{T} \sqrt{\frac{x_{0}(\xi - x_{0})}{\xi}} \, dW_{0}(t).
    \end{equation}

By identities~\eqref{Paper03_martingale_slow_environment} 
and~\eqref{Paper03_martingale_impact_env_on_seeds_WF}, we can write 
$\widetilde{M}^{N}_{\textrm{env}}$ as a stochastic integral 
    \begin{equation*}
        \widetilde{M}^{N}_{\textrm{env}}(T) = \int_{0}^{T} \frac{\sum_{i = 0}^{K}b_{i}x^{N}_{i}}{\left((\xi^{N} - x^N_0) + \sum_{i = 0}^{K} b_{i}x^{N}_{i}\right)} dM^{N}_{\textrm{env}}(t).
    \end{equation*}
Using Lemma~\ref{Paper03_martingales_WF_model_slow_env}, the convergence 
result stated in Theorem~7.2 in~\cite{katzenberger1991solutions}, and 
the fact that on $\Gamma$, $x_{0} = \ldots = x_{K}$, we deduce that in the 
limit, $\widetilde{M}^{N}_{\textrm{env}}$ converges to a martingale 
$\widetilde{M}_{\textrm{env}}$ that can be written as 
%the stochastic integral with respect to the noise term regarding the fluctuations in the environment, i.e.
    \begin{equation*}
        \widetilde{M}_{\textrm{env}}(T) = \int_{0}^{T} \frac{x_{0}}{\xi} \, d\left(\int_{0}^{t} \eta(\xi) \, dW_{\textrm{env}}(t')\right) = \int_{0}^{T}  \frac{x_{0}}{\xi_{s}} \eta(\xi) \, dW_{\textrm{env}}(t).
    \end{equation*}
    We must now establish that the Brownian motions $W_{0}$ and 
$W_{\textrm{env}}$ are independent. By Knight's theorem, it is enough to 
establish that in the limit, the covariation process satisfies almost surely, 
for any $T \geq 0$,
    \begin{equation} \label{Paper03_necessary_sufficient_condition_independence}
       \left[M_0,M_{\textrm{env}}\right] \equiv 0.
    \end{equation}
    Let $\varepsilon^N$ be the process defined in~\eqref{Paper03_correction_environment_martingale}. By the definition of $M^N_0$ and~$M^N_{\textrm{env}}$ in~\eqref{Paper03_martingale_seeds_WF} and in~\eqref{Paper03_martingale_slow_environment}, respectively, we have for every $N \in \mathbb{N}$ and $T \geq 0$,
    \begin{equation} \label{Paper03_independence_noise_sources_slow_environment_i}
    \begin{aligned}
        & [M^N_0, M^N_{\textrm{env}} + \varepsilon^N](T) \\ & \quad = \frac{1}{\sqrt{N}}\sum_{t = 0}^{\lfloor NT \rfloor -1} \theta^{N,t}(\boldsymbol{x}^N(t/N), \xi^N(t/N)) \Bigg(\xi^{N}\left(\frac{t + 1}{N}\right) - \xi^{N}\left(\frac{t}{N}\right) - \frac{1}{N}{\alpha\left(\xi^{N}\left(\frac{t}{N}\right)\right)} \\ & \quad \quad \quad  \quad \quad \quad   \quad \quad \quad  \quad \quad \quad  \quad \quad \quad  \quad \quad \quad \quad \quad \quad - w^N_1\left(\xi^N\left(\frac{t}{N}\right)\right)\Bigg).
    \end{aligned}
    \end{equation}
    Since for any $t \geq 0$, $$\theta^{N,t}(\boldsymbol{x}^N(t/N), \xi^N(t/N)) \; \textrm{and} \; \Bigg(\xi^{N}\left(\frac{t + 1}{N}\right) - \xi^{N}\left(\frac{t}{N}\right) - \frac{1}{N}{\alpha\left(\xi^{N}\left(\frac{t}{N}\right)\right)} - w^N_1\left(\xi^N\left(\frac{t}{N}\right)\right)\Bigg)$$ are conditionally independent given $\mathcal{F}^N_t$, it follows that $\left(\left[M^N_0, M^N_{\textrm{env}} + \varepsilon^N\right](t)\right)_{t \geq 0}$ is a martingale, for every $N \in \mathbb{N}$. By Lemma~\ref{Paper03_martingales_WF_model_slow_env}, both $(M^N_0)_{N \in \mathbb{N}}$ and $(M^N_{\textrm{env}} + \varepsilon^N)_{N \in \mathbb{N}}$ satisfy Assumption~\ref{Paper03_assumption_katzenberger_stochastic_process}, and therefore Proposition~4.3 of~\cite{katzenberger1991solutions} implies that the sequence of càdlàg processes $\Big(\left(\left[M^N_0, M^N_{\textrm{env}} + \varepsilon^N\right](t)\right)_{t \geq 0}\Big)_{N \in \mathbb{N}}$ 
converges weakly in $\mathscr{D}([0, \infty), \mathbb{R})$ to $\Big([M_0, M_{\textrm{env}}](t)\Big)_{t \geq 0}$ as $N \rightarrow 0$. By the uniform integrability of the sequences $\left(\left[M^{N}_{0}\right](T)\right)_{N \in \mathbb{N}}$ and $\left(\left[M^{N}_{\textrm{env}} + \varepsilon^{N}\right](T)\right)_{N \in \mathbb{N}}$ stated for all $T \geq 0$ in Lemma~\ref{Paper03_martingales_WF_model_slow_env}, we conclude that $\Big([M_0, M_{\textrm{env}}](t)\Big)_{t \geq 0}$ is also a martingale. Since $M_0$ and $M_{\textrm{env}}$ are continuous, $\Big([M_0, M_{\textrm{env}}](t)\Big)_{t \geq 0}$ is a continuous finite variation process which is also a martingale, i.e.~\eqref{Paper03_necessary_sufficient_condition_independence} holds. This concludes the proof that $W_0$ and $W_{\textrm{env}}$ are independent Brownian motions. Therefore, from~\eqref{Paper03_semimartingale_discrete_process_WF}, the process $\Lambda_{0}$ is given by, for all $T \geq 0$,
    \begin{equation}
    \begin{aligned}
        \Lambda_{0}(T) & = M_{0}(T) + \widetilde{M}_{\textrm{env}}(T) + \int_{0}^{T} \frac{\alpha(\xi)x_{0}}{\xi} \, dt \\  & =  \int_{0}^{T} \sqrt{\frac{x_{0}(\xi - x_{0})}{\xi_{s}}} \, dW_{0}(t) + \int_{0}^{T}  \frac{\eta(\xi_{s})x_{0}}{\xi_{s}} \, dW_{\textrm{env}}({\tau}) + \int_{0}^{T} \frac{\alpha(\xi)x_{0}}{\xi} \, dt.
    \end{aligned}
    \end{equation}
    Finally, by Theorem~\ref{Paper03_major_tool_katzenberger}, we conclude that the limiting process $(x_0(t))_{t \geq 0}$ satisfies the SDE
    \begin{equation} \label{Paper03_final_step_slow_environment}
    \begin{aligned}
        & x_{0}(T) - x_0(0)\\ & \quad = \int_{0}^{T} \frac{\partial \Phi_{0}^{\textrm{sl,env}}}{\partial x_{0}} \sqrt{\frac{x_{0}(\xi - x_{0})}{\xi}} \, dW_{0}(t) + \int_{0}^{T}  \frac{\partial \Phi_{0}^\textrm{sl,env}}{\partial x_{0}} \frac{\eta(\xi)x_{0} }{\xi} \, dW_{\textrm{env}}(t) + \int_{0}^{T}  \frac{\partial \Phi_{0}^\textrm{sl,env}}{\partial x_{0}}\frac{\alpha(\xi)x_{0}}{\xi} \, dt \\ & \quad \quad + \frac{1}{2}\int_{0}^{T} \frac{\partial^{2} \Phi_{0}^\textrm{sl,env}}{\partial x_{0}^{2}} \frac{x_{0}(\xi - x_{0})}{\xi} \, dt + \frac{1}{2} \int_{0}^{t} \frac{\partial^{2} \Phi_{0}^\textrm{sl,env}}{\partial x_{0}^{2}} \frac{\eta^{2}(\xi)x^{2}_{0} }{\xi^{2}} \, dt + \int_{0}^{T} \frac{\partial^{2} \Phi_{0}^\textrm{sl,env}}{\partial x_{0} \partial \xi} \frac{\eta^{2}(\xi)x_{0} }{\xi} \, dt.
    \end{aligned}
    \end{equation}
   Applying Lemma~\ref{Paper03_derivatives_flow_manifold_moran_constant_env} to~\eqref{Paper03_final_step_slow_environment} completes the proof.
\end{proof}

We now proceed to the proof of 
Proposition~\ref{Paper03_interpretation_impact_dormancy_slow_fluctuations_population_size}, which gives a qualitative description of the impact of fluctuations in the population size on the evolution of the dormancy trait.

\begin{proof}[Proof of Proposition~\ref{Paper03_interpretation_impact_dormancy_slow_fluctuations_population_size}]
    We will divide the proof into steps corresponding to each of the assertions (i) - (iv).

    \medskip

    \noindent \underline{Proof of assertion~(i):}

    \medskip

    The fact that $g(\boldsymbol{b},0,\xi) \equiv 0$ follows directly from~\eqref{Paper03_integrand_understand_impact_slow_fluctuations_environment}, while the fact that $g(\boldsymbol{b},1,\xi) \equiv 0$ follows from applying Proposition~\ref{Paper03_bound_drift_general_K_proposition}(iii) to~\eqref{Paper03_integrand_understand_impact_slow_fluctuations_environment}.

    \medskip

    \noindent \underline{Proof of assertion~(ii):}

    \medskip

    Observe that for every $K \in \mathbb{N}$, all $\boldsymbol{b} \in \mathbb{S}_K^*$ and all $0 < \xi_{\min} < \xi_{\max}$, we have for all $\xi \in [\xi_{\min},\xi_{\max}]$
    \begin{equation*}
    \begin{aligned}
        \lim_{\rho_0 \rightarrow 0} \frac{B(1- \rho_{0})}{(B(1 - \rho_{0})+1)\xi}+\frac{\rho_{0}}{2}\left(\frac{\partial^{2} \Phi_{0}}{\partial \rho_{0}^{2}}\left(\boldsymbol{\rho}\right)  - \frac{2B}{(B(1 - \rho_{0})+1)}\right) = \frac{B}{(B+1)\xi} \geq \frac{B}{(B+1)\xi_{\max}} > 0.
    \end{aligned}
    \end{equation*}
    Hence, the result follows by the continuity of the map $g: \mathbb{S}^*_K \times [0,1] \times [\xi_{\min}, \xi_{\max}] \rightarrow \mathbb{R}$ defined in~\eqref{Paper03_integrand_understand_impact_slow_fluctuations_environment}.

    \medskip

    \noindent \underline{Proof of assertion~(iii):}

    \medskip

    By applying Proposition~\ref{Paper03_bound_drift_general_K_proposition}(ii) to~\eqref{Paper03_integrand_understand_impact_slow_fluctuations_environment}, we conclude that
    \begin{equation} \label{Paper03_intermediate_estimate_impact_slow_environmental_fluctuations_dormancy_i}
    \begin{aligned}
        g(\boldsymbol{b}, \rho_0,\xi) & \leq \rho_0\left[\frac{B(1- \rho_{0})}{(B(1 - \rho_{0})+1)\xi}+\frac{\rho_{0}}{2}\left(\frac{B(B(1-\rho_0) + 2)}{(B(1-\rho_0) + 1)^3}  - \frac{2B}{(B(1 - \rho_{0})+1)}\right)\right] \\ & = \frac{B\rho_0(1-\rho_0)}{(B(1 - \rho_{0})+1)^3\xi}\left[B^2(1 - \rho_0)(1 - \rho_0(1 + \xi)) + B \left(2 - \rho_0\left(2 + \frac{3\xi}{2}\right)\right) + 1\right].
    \end{aligned}
    \end{equation}
    Let
    \begin{equation} \label{Paper03_intermediate_estimate_impact_slow_environmental_fluctuations_dormancy_ii}
        \rho_c \defeq \frac{4 + \xi_{\min}}{4 + 3\xi_{\min}}.
    \end{equation}
    To establish assertion~(iii), we will first prove that for $B$ sufficiently large, the term on the right-hand side of~\eqref{Paper03_intermediate_estimate_impact_slow_environmental_fluctuations_dormancy_i} is negative for $\rho_0 \in (\rho_c,1)$. Since $\xi_{\min} > 0$, $\rho_c > 1/(1 + \xi_{\min}) \geq 1/(1 + \xi)$, for any $\xi \in [\xi_{\min}, \xi_{\max}]$, and therefore for $\rho_0 \in (\rho_c,1)$, we have
    \begin{equation} \label{Paper03_intermediate_estimate_impact_slow_environmental_fluctuations_dormancy_iii}
        B^2(1-\rho_0)(1 - \rho_0(1 + \xi)) < 0.
    \end{equation}
   Moreover, for $\rho_0 \in (\rho_c,1)$ and $\xi \in [\xi_{\min}, \xi_{\max}]$, we have
   \begin{equation} \label{Paper03_intermediate_estimate_impact_slow_environmental_fluctuations_dormancy_iv}
       2 - \rho_0\left(2 + \frac{3\xi}{2}\right) \leq - \frac{\xi_{\min}}{(4 + 3\xi_{\min})}\left(2 + \frac{3\xi}{2}\right) \leq - \frac{\xi_{\min}}{(4 + 3\xi_{\min})}\left(2 + \frac{3\xi_{\min}}{2}\right) = - \frac{\xi_{\min}}{2}.
   \end{equation}
   Let $B_c = 2/\xi_{\min}$. Then for $B > B_c$,~\eqref{Paper03_intermediate_estimate_impact_slow_environmental_fluctuations_dormancy_iv} implies that for $\rho \in (\rho_c,1)$, we have
   \begin{equation} \label{Paper03_intermediate_estimate_impact_slow_environmental_fluctuations_dormancy_v}
       B\left(2 - \rho_0\left(2 + \frac{3\xi}{2}\right)\right) + 1 < 0.
   \end{equation}
   Hence, by applying~\eqref{Paper03_intermediate_estimate_impact_slow_environmental_fluctuations_dormancy_iii} and~\eqref{Paper03_intermediate_estimate_impact_slow_environmental_fluctuations_dormancy_v} to~\eqref{Paper03_intermediate_estimate_impact_slow_environmental_fluctuations_dormancy_i}, we conclude that if $B > B_c$ and $\rho_0 \in (\rho_c,1)$, then $g(\boldsymbol{b}, \rho_0, \xi) < 0$ for all $\xi \in [\xi_{\min}, \xi_{\max}]$. Since the map $\xi \in (0, \infty) \mapsto B_c = 2/\xi$ is decreasing and satisfies the limit in~\eqref{Paper03_limit_population_size_impact_beneficial_events_dormancy_fluctuations}, the proof is complete.
\end{proof}

\subsection{Diffusion in fast-changing environment} \label{Paper03_subsection_diffusion_proofs_fast_environment}

In this section, we prove 
Theorem~\ref{Paper03_thm_convergence_diffusion_fast_changing_environment}. 
Our arguments will be closely related to the ones used in 
Sections~\ref{Paper03_subsection_diffusion_WF_model_constant_environment} 
and~\ref{Paper03_subsection_diffusion_WF_model_slow_environment}. As in the 
proof of Theorem~\ref{Paper03_weak_convergence_WF_model_slow_env}, we will 
construct Bernoulli-simile random variables that will 
record whether or not each individual in a new generation is a mutant. 
In the case of a fast changing environment, the outcome probability depends on 
the genetic composition of the previous $K$ generations, the history of the 
environment in the previous $K$ generations, and on the current environment. Formally, for each $N \in \mathbb{N}$, let $\left\{ \zeta^{N, \tau}_{j} \, \vert \, \tau \in \mathbb{N}_{0}, \, j \in [N]  \right\}$ be a set of i.i.d.~random variables taking values in the space of measurable functions from $[0,1]^{K+1} \times [-1,1]^{K+1}$ to $\{0,1\}$ such that for all $(\boldsymbol{x}, \boldsymbol{\upsilon}, \bar{\upsilon}) \in [0,1]^{K+1} \times [-1,1]^{K} \times [-1,1]$,
\begin{equation} \label{Paper03_definition_zeta_N_fast_environment}
    \mathbb{E}\left[\zeta^{N, \tau}_{j}(\boldsymbol{x}, \boldsymbol{\upsilon}, \bar{\upsilon})\right] = \frac{b_0(1 + \bar{\upsilon})x_0 +  \sum_{i = 1}^{K}b_{i}(1 + \upsilon_{i-1})x_{i}}{\left((1 - x_{0}) + b_0x_0\right)(1 + \bar{\upsilon}) + \sum_{i = 1}^{K}b_{i}(1 + \upsilon_{i-1})x_{i}}.
\end{equation}
By~\eqref{Paper03_binomial_WF_fast_fluctuating_environment}, we can construct our Wright-Fisher process in a fast-changing environment in such a way that for any $t \in (0, \infty)$ and every $N \in \mathbb{N}$ such that $\lfloor Nt \rfloor \geq 1$,
\begin{equation} \label{Paper03_writing_proportion_mutants_random_zetas}
    x^N_0(t) = \frac{1}{N} \sum_{j = 1}^{N} \zeta_j^{N,(\lfloor Nt \rfloor -1)}\left(\boldsymbol{x}^N\left(t - \tfrac{1}{N}\right),\boldsymbol{\upsilon}^N\left(t - \tfrac{1}{N}\right), \upsilon_0^N\left(t\right)\right).
\end{equation}

Our next result is an elementary moment computation, which will be used in our description of the moments of increments of $x^N_0$. For every 
$N \in \mathbb{N}$, we write $\pi^N$ for the distribution of $\upsilon^N_0(0)$
which, by Assumption~\ref{Paper03_assumption_fast_changing_environment}, 
takes the form
\begin{equation} \label{Paper03_distribution_scaled_fast_environment}
    \pi^N(\{-s_N\}) =  \pi^N(\{s_N\}) = \frac{1-\pi^N(\{0\})}{2} = p \in [0,1/2],
\end{equation}
where $(s_N)_{N \in \mathbb{N}}$ satisfies~\eqref{Paper03_limit_scaling_fluctuating_fitness_parameter_assumption}.

\begin{lemma} \label{Paper03_elementary_lemma_moments_fast_environment}
    For constants $f_1,f_2,f_3,f_4 \in [0,1]$ with $f_1 \geq f_2$ and $f_3 \geq f_4 > 0$, and for $\bar\upsilon$ a random variable with distribution $\pi^N$, we have
    \begin{equation} \label{Paper03_eq:elementary_lemma_first_moment_fast_environment}
        \mathbb{E}_{\pi^N}\left[\frac{f_1 + \bar\upsilon f_2}{f_3 + \bar\upsilon f_4}\right] = \frac{f_1}{f_3} + \frac{2ps_N^2f_4(f_1f_4 -f_2f_3)}{(f_3^2 -s_N^2f_4^2)f_3},
    \end{equation}
    and
    \begin{equation} \label{Paper03_eq:elementary_lemma_second_moment_fast_environment}
        \mathbb{E}_{\pi^N}\left[\left(\frac{f_1 + \bar\upsilon f_2}{f_3 + \bar\upsilon f_4} - \mathbb{E}_{\pi^N}\left[\frac{f_1 + \bar\upsilon f_2}{f_3 + \bar\upsilon f_4}\right]\right)^2\right] = \frac{2ps_N^2(f_1f_4 - f_2f_3)^2}{f_3^4} + \mathcal{O}(s_N^3).
    \end{equation}
    Moreover, for any $q \geq 3$, we have
     \begin{equation} \label{Paper03_eq:elementary_lemma_higher_moment_fast_environment}
        \mathbb{E}_{\pi^N}\left[\left\vert\frac{f_1 + \bar\upsilon f_2}{f_3 + \bar\upsilon f_4} - \mathbb{E}_{\pi^N}\left[\frac{f_1 + \bar\upsilon f_2}{f_3 + \bar\upsilon f_4}\right]\right\vert^q\right] = \mathcal{O}(s_N^q).
    \end{equation}
    Furthermore,
    \begin{equation} \label{Paper03_eq:elementary_lemma_mixed_moments_fast_environment}
        \mathbb{E}_{\pi^N}\left[\bar\upsilon \left(\frac{f_1+\bar\upsilon f_2}{f_3+\bar\upsilon f_4} - \mathbb{E}_{\pi^N}\left[\frac{f_1+\bar\upsilon f_2}{f_3+\bar\upsilon f_4}\right]\right)\right] = -2ps_N^2\frac{(f_1f_4 - f_2f_3)}{f_3^2} + \mathcal{O}(s_N^3).
    \end{equation}
\end{lemma}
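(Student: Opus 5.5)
The whole lemma reduces to a direct computation with a three-point distribution. Under $\pi^N$, $\bar\upsilon$ takes the values $s_N$ and $-s_N$ with probability $p$ each, and $0$ with probability $1-2p$. Every expectation therefore reduces to evaluating the function
\[
R(u)\defeq \frac{f_1+uf_2}{f_3+uf_4}
\]
at $u\in\{-s_N,0,s_N\}$. The hypotheses $f_3\ge f_4>0$ and $s_N<1$ keep the denominators $f_3\pm s_Nf_4$ bounded away from $0$, uniformly in the constants. We work with the centred difference
\[
R(u)-R(0)=\frac{u(f_2f_3-f_1f_4)}{f_3(f_3+uf_4)} .
\]

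\textbf{First moment.} We have $\mathbb{E}_{\pi^N}[R]=R(0)+p\big[(R(s_N)-R(0))+(R(-s_N)-R(0))\big]$. Writing $D\defeq f_2f_3-f_1f_4$, the bracket becomes
\[
\frac{D}{f_3}\left(\frac{s_N}{f_3+s_Nf_4}-\frac{s_N}{f_3-s_Nf_4}\right)=-\frac{2s_N^2f_4D}{f_3(f_3^2-s_N^2f_4^2)}.
\]
Multiplying by $p$ gives exactly \eqref{Paper03_eq:elementary_lemma_first_moment_fast_environment}, since $-D=f_1f_4-f_2f_3$.

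\textbf{Higher centred moments.} Set $m\defeq\mathbb{E}_{\pi^N}[R]$ and $\delta\defeq m-R(0)=\mathcal{O}(s_N^2)$. Then $R(\bar\upsilon)-m=(R(\bar\upsilon)-R(0))-\delta$, and $R(\pm s_N)-R(0)=\pm s_N D/f_3^2+\mathcal{O}(s_N^2)$.
\begin{itemize}
\item Second moment: the cross term and the $\delta^2$ term are $\mathcal{O}(s_N^3)$ and $\mathcal{O}(s_N^4)$ respectively. The main term is $2p\,s_N^2D^2/f_3^4+\mathcal{O}(s_N^3)$, which is \eqref{Paper03_eq:elementary_lemma_second_moment_fast_environment}.
\item Moments of order $q\ge3$: $|R(\bar\upsilon)-m|\le C s_N$ almost surely, which gives $\mathcal{O}(s_N^q)$ directly.
\item Mixed moment: $\mathbb{E}[\bar\upsilon]=0$ by symmetry, so $\mathbb{E}[\bar\upsilon(R-m)]=\mathbb{E}[\bar\upsilon(R(\bar\upsilon)-R(0))]$. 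This equals $p\,s_N\cdot s_N D/f_3^2\cdot 2+\mathcal{O}(s_N^3)=2ps_N^2D/f_3^2+\mathcal{O}(s_N^3)$, and since $D=-(f_1f_4-f_2f_3)$ this matches \eqref{Paper03_eq:elementary_lemma_mixed_moments_fast_environment}.
\end{itemize}

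\textbf{Main obstacle.} No step is a genuine obstacle. The only care needed is to make the $\mathcal{O}(\cdot)$ constants uniform in $f_1,\dots,f_4$. If some $f_3$ is very small, the constants blow up like $f_3^{-k}$. We therefore state the error bounds with constants depending on a lower bound for $f_3$. In the application, $f_3$ is bounded below because $f_3\ge(1-x_0)+b_0x_0\ge b_0>0$.
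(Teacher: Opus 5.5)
Your proof is correct and is essentially the paper's argument: the paper uses the same exact decomposition of $\frac{f_1+\bar\upsilon f_2}{f_3+\bar\upsilon f_4}$ into $f_1/f_3$, a term linear in $\bar\upsilon$, and a remainder of order $\bar\upsilon^2$, and then averages over the three-point law $\pi^N$. The one small difference is for $q\ge3$: your almost-sure bound $|R(\bar\upsilon)-m|\le Cs_N$ is more direct than the paper's multinomial expansion and handles the absolute value immediately; separately, your remark that the $\mathcal{O}(\cdot)$ constants depend on a lower bound for $f_3$ (and so are not uniform in the constants, as your opening sentence suggests) is a useful precision that the paper leaves implicit.
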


The proof of Lemma~\ref{Paper03_elementary_lemma_moments_fast_environment} is 
elementary, and we postpone it to 
Section~\ref{Paper03_subsection_proofs_auxiliary_computations_fast_env}. 
We now show that 
Lemma~\ref{Paper03_lemma_computation_conditional_increments_fast_environment}, 
which describes the expected increments over a single generation in the 
environment and in the proportion of mutants, is a corollary of 
Lemma~\ref{Paper03_elementary_lemma_moments_fast_environment}.

\begin{proof}[Proof of Lemma~\ref{Paper03_lemma_computation_conditional_increments_fast_environment}]
    Identities~\eqref{Paper03_conditional_increment_x_i_fast_environment} and~\eqref{Paper03_conditional_increment_upsilon_i_fast_environment} follow directly from the definition of the Wright-Fisher model with seed bank in a
fast-changing environment introduced in Section~\ref{subsection:WF_fast_env_model}, and from Assumption~\ref{Paper03_assumption_fast_changing_environment}. It remains to establish~\eqref{Paper03_conditional_increment_x_0_fast_environment}. By~\eqref{Paper03_definition_zeta_N_fast_environment} and~\eqref{Paper03_writing_proportion_mutants_random_zetas}, we have for every $N \in \mathbb{N}$, $t \in [0, +\infty)$,
    \begin{equation} \label{Paper03_first_step_increment_prop_mutant_fast_environment}
    \begin{aligned}
        & \mathbb{E}\Bigg[x^N_0\left(t + \frac{1}{N}\right) - x^N_0(t) \Bigg\vert \, \mathcal{F}^N_t \times \sigma\left(\upsilon^N_0\left(t + \frac{1}{N}\right)\right)\Bigg] \\ & \quad = \frac{b_0(1 + \upsilon^N_0(t + 1/N))x_0^N(t) + \sum_{i = 1}^K b_i(1 + \upsilon^N_{i-1}(t))x^N_i(t)}{\left((1 - x^N_0(t)) + b_0x^N_0(t)\right)(1 + \upsilon^N_0(t + 1/N)) + \sum_{i = 1}^K b_i(1 + \upsilon^N_{i-1}(t))x^N_i(t)} - x_0^N(t).
    \end{aligned}
    \end{equation}
Applying the tower property 
and~\eqref{Paper03_probability_distribution_fast_environment_assumption}, 
and observing that conditional on $\mathcal{F}^N_t$, the random variable 
$\upsilon_0^N(t+1/N)$ 
in~\eqref{Paper03_first_step_increment_prop_mutant_fast_environment} has 
distribution given by~\eqref{Paper03_distribution_scaled_fast_environment}, we 
conclude that~\eqref{Paper03_conditional_increment_x_0_fast_environment} 
follows from~\eqref{Paper03_eq:elementary_lemma_first_moment_fast_environment}.
\end{proof}

It will be convenient to write the dynamics of the Wright-Fisher model in a 
fast-changing environment in such way that we keep track of both
%the two sources of noise in the dynamics of the number of mutants in the population, i.e.~to record 
the noise from the binomial sampling and that from the environment. To study the noise arising from the genetic sampling, we introduce for each $N\in \mathbb{N}$, the set of i.i.d.~random variables $\left\{\theta^{N,\tau}_0 \, \vert \, \tau \in \mathbb{N}_{0} \right\}$ taking values in the space of measurable functions from $[0,1]^{K+1} \times [-1,1]^{K+1}$ to $\mathbb{R}$ given by
\begin{equation} \label{Paper03_definition_theta_aux_rv_construction_martingale_fast_environment_genetic_drift}
    \theta^{N,\tau}_0 (\boldsymbol{x}, \boldsymbol{\upsilon}, \bar\upsilon) \defeq \frac{1}{\sqrt{N}} \sum_{j = 1}^{N} \left(\zeta^{N,\tau}_{j}(\boldsymbol{x}, \boldsymbol{\upsilon}, \bar\upsilon) - \frac{b_0(1 + \bar\upsilon)x_0 +  \sum_{i = 1}^{K}b_{i}(1 + \upsilon_{i-1})x_{i}}{\left((1 - x_0) +b_0x_0\right)(1 + \bar\upsilon) + \sum_{i = 1}^{K}b_{i}(1 + \upsilon_{i-1})x_{i}}\right).
\end{equation}
To study the noise term in the dynamics of mutant individuals that arises from the environmental fluctuations, we introduce for each $N \in \mathbb{N}$ and $(\boldsymbol{x}, \boldsymbol{\upsilon}, \bar\upsilon) \in [0,1]^{K+1} \times (-1,1)^{K+1}$, the term
\begin{equation} \label{Paper03_noise_discrete_WF_fast_environment_source_env_impact_mut}
\begin{aligned}
\theta^{N,\tau}_{\textrm{env}} (\boldsymbol{x}, \boldsymbol{\upsilon}, \bar\upsilon) & \defeq \frac{b_0(1 + \bar\upsilon)x_0 + \sum_{i = 1}^K b_i(1 + \upsilon_{i-1})x_i}{\left((1 - x_0) + b_0x_0\right)(1 + \bar{\upsilon}) + \sum_{i = 1}^K b_i(1 + \upsilon_{i-1})x_i} \\ & \quad \quad - \frac{2ps^2_N(1-x_0)((1-x_0)+b_0x_0)\Big(\sum_{i = 1}^{K} b_i(1 + \upsilon_{i-1})x_i\Big)}{\left(h(\boldsymbol{x}, \boldsymbol{\upsilon})^2 - s^2_N(1-(1-b_0)x_0)^2\right)h(\boldsymbol{x}, \boldsymbol{\upsilon})} \\ & \quad \quad - \frac{b_0x_0 + \sum_{i = 1}^K b_i(1 + \upsilon_{i-1})x_i}{(1 - x_0) + b_0x_0 + \sum_{i = 1}^K b_i(1 + \upsilon_{i-1})x_i},
\end{aligned}
\end{equation}
where the function $h: [0,1]^{K+1} \times (-1,1)^K \rightarrow \mathbb{R}$ is defined in~\eqref{Paper03_simplified_notation_fast_environment_drift}.

\begin{remark}
    Observe that, if $\bar\upsilon$ is a random variable with distribution $\pi^N$ defined in~\eqref{Paper03_distribution_scaled_fast_environment} and $(\boldsymbol{x},\boldsymbol{\upsilon}) \in [0,1]^{K+1} \times [-1,1]^K$, then, by applying identity~\eqref{Paper03_eq:elementary_lemma_first_moment_fast_environment} with
    \begin{equation} \label{Paper03_identitfication_terms_predictable_process_fast_environment}
    \begin{aligned}
        f_1 & = b_0x_0 + \sum_{i=1}^K b_i(1 + \upsilon_{i-1})x_i, \\ f_2 & = b_0x_0, \\ f_3 & = (1-x_0) + b_0x_0 + \sum_{i =1}^{K} b_i(1 + \upsilon_{i-1})x_i, \\ f_4 & = b_0x_0 + \sum_{i =1}^{K} b_i(1 + \upsilon_{i-1})x_i,
    \end{aligned}
    \end{equation}
    and by rearranging terms, we conclude that
    \begin{equation} \label{Paper03_proof_fluctuations_coming_from_fast_environment_martingale}
        \mathbb{E}_{\pi^N}\left[\theta^{N,\tau}_{\textrm{env}} (\boldsymbol{x}, \boldsymbol{\upsilon}, \bar\upsilon)\right] = 0.
    \end{equation}
\end{remark}

We now rewrite the dynamics of $(x^N_0(t))_{t \geq 0}$ in terms of semimartingales and a forcing field. Recall the definition of $F^{\textrm{fast}}_0: [0,1]^{K+1} \times (-1,1)^{K} \rightarrow \mathbb{R}$ in~\eqref{Paper03_flow_definition_WF_model_fast_changing_environment}. Using~\eqref{Paper03_writing_proportion_mutants_random_zetas},~\eqref{Paper03_definition_theta_aux_rv_construction_martingale_fast_environment_genetic_drift} and~\eqref{Paper03_noise_discrete_WF_fast_environment_source_env_impact_mut}, we have that for any $T \in [0, \infty)$ and every $N \in \mathbb{N}$,
\begin{equation} \label{Paper03_rewriting_dynamics_fast_environment_xx}
\begin{aligned}
    x^N_0(T) & = x^N_0(0) + M^N_0(T) + M^N_{\textrm{env}}(T) + \int_0^{T} F^{\textrm{fast}}_0(\boldsymbol{x}^N, \boldsymbol{\upsilon}^N) \, d\lfloor Nt \rfloor \\ & \quad \; + \int_{0}^{T} 2p(Ns^2_N) \left(\frac{(1-x^N_0)\left((1 - x^N_0) + b_0x^N_0\right)\left(\sum_{i=1}^K b_i(1+\upsilon^N_{i-1})x^N_i\right)}{(1 - x^N_0) + b_0x^N_0 + \sum_{i=1}^K b_i(1+\upsilon^N_{i-1})x^N_i}\right) \, d(\lfloor Nt \rfloor/N),
\end{aligned}
\end{equation}
where $(M^N_0(t))_{t \geq 0}$ and $(M^N_{\textrm{env}}(t))_{t \geq 0}$ are càdlàg processes given by, for all $T \geq 0$,
\begin{equation} \label{Paper03_discrete_martingales_noise}
\begin{aligned}
    M^N_0(T) & \defeq  \frac{1}{\sqrt{N}}\sum_{t = 0}^{\lfloor NT \rfloor - 1} \theta^{N,t}_0\left(\boldsymbol{x}^N\left(\frac{t}{N}\right),\boldsymbol{\upsilon}^N\left(\frac{t}{N}\right),\upsilon^N_0\left(\frac{t+1}{N}\right)\right), \\ M^N_{\textrm{env}}(T) & \defeq \sum_{t = 0}^{\lfloor NT \rfloor - 1} \theta^{N,t}_\textrm{env}\left(\boldsymbol{x}^N\left(\frac{t}{N}\right),\boldsymbol{\upsilon}^N\left(\frac{t}{N}\right),\upsilon^N_0\left(\frac{t+1}{N}\right)\right).
\end{aligned}
\end{equation}

Our next result concerns the processes $(M^N_0(t))_{t \geq 0}$ and $(M^N_{\textrm{env}}(t))_{t \geq 0}$.

\begin{lemma}
\label{Paper03_martingales_WF_model_fast_env}
     For every $N \in \mathbb{N}$, the processes $M^{N}_{0}$ and ${M}^{N}_{\textrm{env}}$ are square integrable martingales. Moreover, for all $T \in [0, \infty)$, the sequences of random variables $\left(\left[M^{N}_{0}\right](T)\right)_{N \in \mathbb{N}}$, $\left(\left[M^{N}_{\textrm{env}}\right](T)\right)_{N \in \mathbb{N}}$, $\Big((M^{N}_{0}(T))^2\Big)_{N \in \mathbb{N}}$ and $\Big((M^{N}_{env}(T))^2\Big)_{N \in \mathbb{N}}$ are uniformly integrable. Furthermore, for every~$N \in \mathbb{N}$ and any~$T \geq 0$,
     \begin{equation} \label{Paper03_orthogonality_noises_fast_environment}
         \left\langle M^N_0, M^N_\textrm{env} \right\rangle (T) = 0.
     \end{equation}
\end{lemma}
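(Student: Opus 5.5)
The plan mirrors the proofs of Lemma~\ref{Paper03_martingales_WF_model_constant_env} and Lemma~\ref{Paper03_martingales_WF_model_slow_env}, treating the two noise sources in turn. Let $\mathcal{G}^N_t \defeq \mathcal{F}^N_t \times \sigma(\upsilon^N_0(t+1/N))$ be the enlarged $\sigma$-algebra that also sees the next environmental value.

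\emph{Martingale property.} Given $\mathcal{G}^N_t$, the variables $\zeta^{N,t}_j$ are conditionally i.i.d.\ Bernoulli with mean~\eqref{Paper03_definition_zeta_N_fast_environment}. Hence $\mathbb{E}[\theta^{N,t}_0\,\vert\,\mathcal{G}^N_t]=0$, and by the tower property $M^N_0$ is an $\{\mathcal{F}^N_t\}$-martingale. For $M^N_{\textrm{env}}$, the increment $\theta^{N,t}_{\textrm{env}}$ is a function of $\mathcal{F}^N_t$-measurable quantities and of $\upsilon^N_0(t+1/N)$. Conditionally on $\mathcal{F}^N_t$, the latter has law $\pi^N$ by Assumption~\ref{Paper03_assumption_fast_changing_environment}, so~\eqref{Paper03_proof_fluctuations_coming_from_fast_environment_martingale} gives zero conditional mean. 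Square integrability is immediate, since both processes have bounded increments over finitely many steps.

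\emph{Uniform integrability.} This follows from Lemma~\ref{Paper03_auxiliary_lemma_uniform_integrability_martingales}, so I need bounds on the predictable brackets and on the jumps.

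For $M^N_0$: the conditional variance of one increment is $N^{-1}\cdot$(Bernoulli variance) $\le 1/(4N)$, so $\langle M^N_0\rangle(T)\le T/4$. The jumps are bounded by $1$, exactly as in~\eqref{Paper03_martingale_bound_jumps_constant_environment}.

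For $M^N_{\textrm{env}}$: apply~\eqref{Paper03_eq:elementary_lemma_second_moment_fast_environment} with the choice~\eqref{Paper03_identitfication_terms_predictable_process_fast_environment}. Each term $f_i$ is bounded above by a constant, $f_3\ge b_0>0$, and $\vert\upsilon_{i-1}\vert\le s_N<1$. This gives a conditional variance of $\mathcal{O}(s_N^2)=\mathcal{O}(N^{-1})$ uniformly in the state, so $\langle M^N_{\textrm{env}}\rangle(T)\lesssim T$. Jumps are $\mathcal{O}(s_N)$, since the ratio is Lipschitz in $\bar\upsilon$ and the centring term is $\mathcal{O}(s_N^2)$.

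The main technical point is checking that the $\mathcal{O}(s_N^3)$ remainder in Lemma~\ref{Paper03_elementary_lemma_moments_fast_environment} is uniform in $(\boldsymbol{x},\boldsymbol{\upsilon})$. This should follow from the denominator $f_3+\bar\upsilon f_4\ge f_3(1-s_N)$ being bounded away from zero, because $f_3 \geq (1-x_0)+b_0x_0 \geq b_0$.

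\emph{Orthogonality~\eqref{Paper03_orthogonality_noises_fast_environment}.} Both martingales jump only at times $t/N$, so
\[
\langle M^N_0,M^N_{\textrm{env}}\rangle(T)=\sum_{t=0}^{\lfloor NT\rfloor-1}\mathbb{E}\!\left[N^{-1/2}\theta^{N,t}_0\,\theta^{N,t}_{\textrm{env}}\,\Big\vert\,\mathcal{F}^N_{t/N}\right].
\]
Condition first on $\mathcal{G}^N_{t/N}$. The factor $\theta^{N,t}_{\textrm{env}}$ is $\mathcal{G}^N_{t/N}$-measurable, while $\mathbb{E}[\theta^{N,t}_0\,\vert\,\mathcal{G}^N_{t/N}]=0$. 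Each summand therefore vanishes identically, and~\eqref{Paper03_orthogonality_noises_fast_environment} holds exactly for every $N$.
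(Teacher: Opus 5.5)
Your proposal is correct and follows essentially the same route as the paper. You obtain the martingale property by conditioning on $\mathcal{F}^N_t \times \sigma(\upsilon^N_0(t+1/N))$ and using the zero-mean identity for $\theta^{N,t}_{\textrm{env}}$; uniform integrability comes from Lemma~\ref{Paper03_auxiliary_lemma_uniform_integrability_martingales} via bracket and jump bounds; and orthogonality follows from the same conditioning. One small improvement over the paper: your direct bound $\langle M^N_0\rangle(T)\le T/4$, obtained from the zero conditional mean given the enlarged $\sigma$-algebra, is cleaner than the paper's expansion in $\bar\upsilon$. That expansion only yields a $\limsup$ bound, whereas the auxiliary lemma asks for an almost sure bound for every $N$.
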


\begin{proof}
    We shall study the processes $M^{N}_{0}$ and ${M}^{N}_{\textrm{env}}$ separately. 
    
    \medskip

    \noindent \underline{Step~$(i)$: Characterisation of $M^N_0$:}

    \medskip
    
    By the tower property of conditional expectations, and then by~\eqref{Paper03_discrete_martingales_noise},~\eqref{Paper03_definition_theta_aux_rv_construction_martingale_fast_environment_genetic_drift}, and~\eqref{Paper03_definition_zeta_N_fast_environment}, we have that for all $t \in [0, \infty)$,
    \begin{equation}
    \begin{aligned}
        & \mathbb{E}\left[M^N_0(t + 1/N) - M^N(t) \, \vert \mathcal{F}^N_t\right] \\ & \quad = \mathbb{E}\left[\mathbb{E}\left[M^N_0(t + 1/N) - M^N(t) \, \Big\vert \mathcal{F}^N_t \times \sigma\left(\upsilon^N_0(t + 1/N)\right)\right] \, \Big\vert  \mathcal{F}^N_t \right] \\ & \quad = 0,
    \end{aligned}
    \end{equation}
    and therefore $(M^N_0(t))_{t \geq 0}$ is a martingale. For the uniform integrability, we first observe that~\eqref{Paper03_discrete_martingales_noise},~\eqref{Paper03_definition_theta_aux_rv_construction_martingale_fast_environment_genetic_drift}, and~\eqref{Paper03_definition_zeta_N_fast_environment} imply that for every $N \in \mathbb{N}$, almost surely
    \begin{equation} \label{Paper03_UI_jumps_bound_fast_environment_genetic_drift_i}
    \sup_{t \in [0, \infty)} \; \vert M^N_0(t + 1/N) - M^N_0(t) \vert \leq 1.
    \end{equation}
    Moreover, using the binomial sampling and rearranging terms, for all $t \geq 0$ and $\bar\upsilon \in \{-s_N,0,s_N\}$, we have
    \begin{equation} \label{Paper03_UI_predictable_bracket_bound_fast_environment_genetic_drift_i}
    \begin{aligned}
        & \mathbb{E}\left[\Big(M^N_0(t + 1/N) - M^N_0(t)\Big)^2 \, \Big\vert \, \sigma\left(\mathcal{F}^N_t, \left\{\upsilon^N_0\left(t + \frac{1}{N}\right) = \bar\upsilon\right\} \right)\right] \\ & \quad = \frac{1}{N}\frac{(1+\bar\upsilon)(1-x^N_0(t))\left(b_0(1+\bar\upsilon)x^N_0(t) + \sum_{i=1}^{K} b_i(1 + \upsilon_{i-1}^N(t))x^N_i(t)\right)}{\left[\Big((1 + x^N_0(t)) + b_0x^N_0(t)\Big)(1 + \bar\upsilon) + \sum_{i = 1}^{K} b_i(1 + \upsilon^N_{i-1}(t))x^N_i(t)\right]^2} \\ & \quad = \frac{1}{N}\frac{(1-x^N_0(t))\left(b_0x^N_0(t) + \sum_{i=1}^{K} b_i(1 + \upsilon_{i-1}^N(t))x^N_i(t)\right)}{\left[\Big((1 + x^N_0(t)) + b_0x^N_0(t)\Big) + \sum_{i = 1}^{K} b_i(1 + \upsilon^N_{i-1}(t))x^N_i(t)\right]^2} + \frac{\bar\upsilon}{N}\epsilon(\boldsymbol{x}^N, \boldsymbol{\upsilon}^N,\bar\upsilon),
    \end{aligned}
    \end{equation}
    where $\epsilon: [0,1]^{K+1} \times (-1,1)^{K+1} \rightarrow \mathbb{R}$ satisfies
    \begin{equation}
        \sup_{(\boldsymbol{x},\boldsymbol{\upsilon},\bar\upsilon) \in [0,1]^{K+1} \times (-1,1)^{K+1}} \vert \epsilon(\boldsymbol{x},\boldsymbol{\upsilon},\bar\upsilon) \vert < \infty.
    \end{equation}
    Taking the conditional expectation given $\mathcal{F}^N_t$ on both sides of~\eqref{Paper03_UI_predictable_bracket_bound_fast_environment_genetic_drift_i} and then using the tower property and~\eqref{Paper03_limit_scaling_fluctuating_fitness_parameter_assumption}, we conclude that for every $N \in \mathbb{N}$ and any $T \in [0, \infty)$,
    \begin{equation} \label{Paper03_UI_predictable_bracket_bound_fast_environment_genetic_drift_iii}
    \begin{aligned}
        \left\langle M^N_0 \right\rangle(T) & = \frac{1}{N} \sum_{t = 0}^{\lfloor NT \rfloor - 1} \frac{(1 - x^N_0(t/N))\left(b_0x^N_0(t/N) + \sum_{i = 1}^{K}b_{i}(1 + \upsilon^N_{i-1}(t/N))x^N_{i}(t/N)\right)}{\left[\left(1 - x^N_{0}(t/N)\right) + b_0x^N_0(t/N) + \sum_{i = 1}^{K} b_{i} (1 + \upsilon^N_{i-1}(t/N)) x^N_{i}(t/N)\right]^2} \\ & \quad \quad + \mathcal{O}(N^{-1/2}).
    \end{aligned}
    \end{equation}
    In particular, for all $T \geq 0$, we have that almost surely
    \begin{equation} \label{Paper03_UI_predictable_bracket_bound_fast_environment_genetic_drift_iv}
        \limsup_{N \rightarrow \infty} \;   \left\langle M^N_0 \right\rangle(T) \leq \frac{T}{4}.
    \end{equation}
    Using~\eqref{Paper03_UI_jumps_bound_fast_environment_genetic_drift_i} and~\eqref{Paper03_UI_predictable_bracket_bound_fast_environment_genetic_drift_iv}, we can use Lemma~\ref{Paper03_auxiliary_lemma_uniform_integrability_martingales} to conclude that  for all $T \in [0, \infty)$, the sequences of random variables $\left(\left[M^{N}_{0}\right](T)\right)_{N \in \mathbb{N}}$ and $\Big(\left(M^{N}_{0}(T)\right)^2\Big)_{N \in \mathbb{N}}$ are uniformly integrable, as desired.

    \medskip

    \noindent \underline{Step~$(ii)$: Characterisation of $M^N_\textrm{env}$:}

    \medskip

    The fact that $M^N_{\textrm{env}}$ is a martingale follows directly from~\eqref{Paper03_proof_fluctuations_coming_from_fast_environment_martingale}. Moreover, by~\eqref{Paper03_noise_discrete_WF_fast_environment_source_env_impact_mut}, using~\eqref{Paper03_eq:elementary_lemma_second_moment_fast_environment} and~\eqref{Paper03_identitfication_terms_predictable_process_fast_environment} and rearranging terms, we conclude that
for every $N \in \mathbb{N}$, there is a predictable finite variation process $(\tilde{\epsilon}^{N}(t))_{t \geq 0}$ such that
    \begin{equation} \label{Paper03_bound_predictable_bracket+process_fast_environment_env_martingale}
    \begin{aligned}
        \left\langle M^N_{\textrm{env}} \right\rangle (T) & = 2p(Ns_N^2) \int_{0}^{T}  \frac{\Big(1 - x^N_0\Big)^2\Big(\sum_{i = 1}^K b_i(1 + \upsilon^N_{i-1})x^N_i\Big)^2}{\Big[(1-x^N_0) + b_0x^N_0 + \sum_{i = 1}^K b_i(1 + \upsilon^N_{i-1})x^N_i\Big]^2} \, d(\lfloor Nt \rfloor/N) + s_N \tilde{\epsilon}^N(T),
    \end{aligned}
    \end{equation}
    and for every $T > 0$, there exists $C_T^{(1)}>0$ such that for every $N \in \mathbb{N}$, %the following estimate holds 
almost surely
    \begin{equation} \label{Paper03_bound_error_term_predictable_bracket_process_martingale_fast_fluctuations}
        \sup_{t \leq T} \; \vert \tilde{\epsilon}^N(t) \vert \leq C_T^{(1)}.
    \end{equation}
    Moreover, Assumption~\ref{Paper03_assumption_fast_changing_environment} and~\eqref{Paper03_noise_discrete_WF_fast_environment_source_env_impact_mut} imply that there exists $C^{(2)}_{T}>0$ such that almost surely
    \begin{equation} \label{Paper03_bound_jumps_fast_environment_env_martingale}
        \sup_{N \in \mathbb{N}} \; \sup_{t \in [0, \infty)} \; \left\vert M^N_{\textrm{env}} (t + 1/N) - M^N_{\textrm{env}} (t)\right\vert \leq C_T^{(2)}.
    \end{equation}
    By~\eqref{Paper03_bound_predictable_bracket+process_fast_environment_env_martingale},~\eqref{Paper03_bound_error_term_predictable_bracket_process_martingale_fast_fluctuations},~\eqref{Paper03_bound_jumps_fast_environment_env_martingale} and by~\eqref{Paper03_limit_scaling_fluctuating_fitness_parameter_assumption}, we can use Lemma~\ref{Paper03_auxiliary_lemma_uniform_integrability_martingales} to conclude that for all $T \in [0, \infty)$, the sequences of random variables $\left(\left[M^{N}_{\textrm{env}}\right](T)\right)_{N \in \mathbb{N}}$ and $\Big(\left(M^{N}_{env}(T)\right)^2\Big)_{N \in \mathbb{N}}$ are uniformly integrable.

     \medskip

    \noindent \underline{Step~$(iii)$: Orthogonality of~$M^N_0$ and~$M^N_{\textrm{env}}$:}

    \medskip
    
    It remains to establish~\eqref{Paper03_orthogonality_noises_fast_environment}. By~\eqref{Paper03_discrete_martingales_noise}, it will suffice to establish that for every $N \in \mathbb{N}$ and all $t \geq 0$,
    \begin{equation} \label{Paper03_proving_orthogonality_fast_env_step_i}
    \begin{aligned}
        \mathbb{E}\left[ \theta^{N,t}_0\left(\boldsymbol{x}^N\left(\frac{t}{N}\right),\boldsymbol{\upsilon}^N\left(\frac{t}{N}\right),\upsilon^N_0\left(\frac{t+1}{N}\right)\right) \theta^{N,t}_\textrm{env}\left(\boldsymbol{x}^N\left(\frac{t}{N}\right),\boldsymbol{\upsilon}^N\left(\frac{t}{N}\right),\upsilon^N_0\left(\frac{t+1}{N}\right)\right) \Bigg\vert \mathcal{F}^N_{t/N}\right] = 0.
    \end{aligned}
    \end{equation}
    By~\eqref{Paper03_definition_theta_aux_rv_construction_martingale_fast_environment_genetic_drift} and~\eqref{Paper03_noise_discrete_WF_fast_environment_source_env_impact_mut},
    \begin{equation} \label{Paper03_proving_orthogonality_fast_env_step_ii}
    \begin{aligned}
         & \mathbb{E}\Bigg[ \theta^{N,t}_0\left(\boldsymbol{x}^N\left(\frac{t}{N}\right),\boldsymbol{\upsilon}^N\left(\frac{t}{N}\right),\upsilon^N_0\left(\frac{t+1}{N}\right)\right) \theta^{N,t}_\textrm{env}\left(\boldsymbol{x}^N\left(\frac{t}{N}\right),\boldsymbol{\upsilon}^N\left(\frac{t}{N}\right),\upsilon^N_0\left(\frac{t+1}{N}\right)\right) \\ & \quad \quad \quad \quad \quad \quad \quad \quad \quad \quad \quad \quad \quad \quad \quad \quad \quad \quad \quad \quad \quad \quad \quad \quad \quad \quad \Bigg\vert \mathcal{F}^N_{t/N} \times \sigma\left(\upsilon^N_0\left(\frac{t+1}{N}\right)\right)\Bigg] \\ & \quad = \theta^{N,t}_\textrm{env}\left(\boldsymbol{x}^N\left(\frac{t}{N}\right),\boldsymbol{\upsilon}^N\left(\frac{t}{N}\right),\upsilon^N_0\left(\frac{t+1}{N}\right)\right) \\ & \quad \quad \quad \cdot \mathbb{E}\Bigg[ \theta^{N,t}_0\left(\boldsymbol{x}^N\left(\frac{t}{N}\right),\boldsymbol{\upsilon}^N\left(\frac{t}{N}\right),\upsilon^N_0\left(\frac{t+1}{N}\right)\right) \Bigg\vert \mathcal{F}^N_{t/N} \times \sigma\left(\upsilon^N_0\left(\frac{t+1}{N}\right)\right)\Bigg] \\ & \quad = 0,
    \end{aligned}
    \end{equation}
    where the last equality follows from~\eqref{Paper03_definition_zeta_N_fast_environment} and~\eqref{Paper03_definition_theta_aux_rv_construction_martingale_fast_environment_genetic_drift}. Identity~\eqref{Paper03_proving_orthogonality_fast_env_step_i} then follows from~\eqref{Paper03_proving_orthogonality_fast_env_step_ii} and the tower property, which completes the proof.
\end{proof}

We will also need to describe the dynamics of $\boldsymbol{\upsilon}^N$ in terms of a semimartingale and a large drift. Recall the definition of $\boldsymbol{F}^{\, \textrm{fast}}: [0,1]^{K+1} \times [-1,1]^{K} \rightarrow \mathbb{R}^{2K+1}$ in~\eqref{Paper03_flow_definition_WF_model_fast_changing_environment}. For $i \in \llbracket K-1 \rrbracket$, we have
\begin{equation*}
    \upsilon_{i}^N(T) = \int_0^T F^{\, \textrm{fast}}_{\textrm{env},i}(\boldsymbol{x}^N, \boldsymbol{\upsilon}^N) \, d\lfloor Nt \rfloor.
\end{equation*}
Moreover, we have
\begin{equation} \label{Paper03_dyn_fast_env_first_env_coordinate}
    \upsilon^N_0(T) = \widetilde{M}^N_{\textrm{env}}(T) + \int_0^T F^{\, \textrm{fast}}_{\textrm{env},0}(\boldsymbol{x}^N, \boldsymbol{\upsilon}^N) \, d\lfloor Nt \rfloor,
\end{equation}
where
\begin{equation} \label{Paper03_martingale_fluctuations_env_dyn_fast_environment}
    \widetilde{M}^N_{\textrm{env}}(T) = \sum_{t = 0}^{\lfloor NT \rfloor - 1} \upsilon^N_0\left(t + \frac{1}{N}\right).
\end{equation}

Next, we characterise the process $(\widetilde{M}^N_{\textrm{env}}(t))_{t \geq 0}$.

\begin{lemma} \label{Paper03_lem_noise_env_specific_fast_env}
    For every $N \in \mathbb{N}$, $  (\widetilde{M}^N_{\textrm{env}}(t))_{t \geq 0}$ is a square-integrable martingale, and for all $T \in [0, \infty)$, the sequences of random variables $\left(\left[\widetilde{M}^N_{\textrm{env}}\right](T)\right)_{N \in \mathbb{N}}$ and $\Big(\left(\widetilde{M}^N_{\textrm{env}}(T)\right)^2\Big)_{N \in \mathbb{N}}$ are uniformly integrable. Moreover, for every~$N \in \mathbb{N}$ and any~$T \geq 0$,
     \begin{equation} \label{Paper03_orthogonality_noises_fast_environment_second_version}
         \left\langle M^N_0, \widetilde{M}^N_\textrm{env} \right\rangle (T) = 0.
     \end{equation}
     Finally, for every $N \in \mathbb{N}$, there exists a finite variation process $(\epsilon^N(t))_{t \geq 0}$ such that for every $T \in [0, \infty)$, there exists $C_T > 0$ for which for every $N \in \mathbb{N}$, almost surely
     \begin{equation*}
         \sup_{t \leq T} \vert \epsilon^N(t) \vert \leq C_T,
     \end{equation*}
     and for every $N \in \mathbb{N}$ and $T \geq 0$,
     \begin{equation} \label{Paper03_noises_fast_environment_complete_dependence_first_version}
     \begin{aligned}
         & \left\langle M^N_\textrm{env}, \widetilde{M}^N_\textrm{env} \right\rangle (T) \\ & \quad = - {2p(Ns^2_N)} \int_0^{T} \frac{\left(1 - x^N_0\right)\left(\sum_{i = 1}^K b_i(1+\upsilon^N_{i-1})x^N_i\right)}{\Big[(1-x^N_0) + b_0x^N_0 + \sum_{i = 1}^K b_i(1 + \upsilon^N_{i-1})x^N_i\Big]^2} \, d(\lfloor Nt \rfloor/N) + s_N \epsilon^N(T).
    \end{aligned}
     \end{equation}
\end{lemma}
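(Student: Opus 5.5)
The plan mirrors the treatment of $M^N_{\textrm{env}}$ in Lemma~\ref{Paper03_martingales_WF_model_fast_env}. The increments of $\widetilde{M}^N_{\textrm{env}}$ are the variables $\upsilon^N_0((t+1)/N)=s_N\Upsilon^N_0(t+1)$. Each of these is independent of $\mathcal{F}^N_{t/N}$ and, by~\eqref{Paper03_distribution_scaled_fast_environment}, has mean zero. This gives the martingale property immediately. Square integrability is trivial because every increment is bounded by $s_N\le 1$.

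\textbf{Step 1 (uniform integrability).} For each increment we have $\mathbb{E}[(\upsilon^N_0((t+1)/N))^2\mid\mathcal{F}^N_{t/N}]=2ps_N^2$. Hence
\[
\langle \widetilde{M}^N_{\textrm{env}}\rangle(T)=2p\,Ns_N^2\,\lfloor NT\rfloor/N .
\]
This is bounded uniformly in $N$ by~\eqref{Paper03_limit_scaling_fluctuating_fitness_parameter_assumption}. The jumps are bounded by $s_N\le1$. We then apply Lemma~\ref{Paper03_auxiliary_lemma_uniform_integrability_martingales}, which yields uniform integrability of both $[\widetilde{M}^N_{\textrm{env}}](T)$ and $(\widetilde{M}^N_{\textrm{env}}(T))^2$.

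\textbf{Step 2 (orthogonality with $M^N_0$).} We condition first on $\mathcal{F}^N_{t/N}\times\sigma(\upsilon^N_0((t+1)/N))$. Under this conditioning the environmental increment is measurable. The factor $\theta^{N,t}_0$ has conditional mean zero by~\eqref{Paper03_definition_zeta_N_fast_environment} and~\eqref{Paper03_definition_theta_aux_rv_construction_martingale_fast_environment_genetic_drift}. The tower property then gives~\eqref{Paper03_orthogonality_noises_fast_environment_second_version}, exactly as in~\eqref{Paper03_proving_orthogonality_fast_env_step_ii}.

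\textbf{Step 3 (covariation with $M^N_{\textrm{env}}$).} Since $\mathbb{E}[\bar\upsilon]=0$, the quantity $\mathbb{E}_{\pi^N}[\bar\upsilon\,\theta^{N,t}_{\textrm{env}}]$ equals the mixed moment in~\eqref{Paper03_eq:elementary_lemma_mixed_moments_fast_environment}. We use the identifications~\eqref{Paper03_identitfication_terms_predictable_process_fast_environment}; the deterministic correction terms in $\theta^{N,t}_{\textrm{env}}$ contribute nothing after multiplication by $\bar\upsilon$. This gives the per-step value
\[
-2ps_N^2\frac{f_1f_4-f_2f_3}{f_3^2}+\mathcal{O}(s_N^3).
\]
A direct computation gives $f_1f_4-f_2f_3=(1-x_0)\sum_{i\ge1}b_i(1+\upsilon_{i-1})x_i$, which is the stated integrand. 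Summing over the $\lfloor NT\rfloor$ steps turns $s_N^2$ into $Ns_N^2\cdot d(\lfloor Nt\rfloor/N)$. The accumulated error is $N\cdot\mathcal{O}(s_N^3)=s_N\cdot\mathcal{O}(Ns_N^2)$, and we define $\epsilon^N$ to be this error divided by $s_N$.

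\textbf{Main obstacle.} The one point needing care is the uniformity of the $\mathcal{O}(s_N^3)$ remainder from Lemma~\ref{Paper03_elementary_lemma_moments_fast_environment}. It must hold uniformly in $(\boldsymbol{x},\boldsymbol{\upsilon})$, which requires $f_3$ to be bounded away from zero. We have $f_3\ge \min(1,b_0)>0$, because $b_0>0$. It remains to check that the remainder constants in that lemma depend only on lower bounds for $f_3$. This follows by expanding $(f_3+\bar\upsilon f_4)^{-1}$ as a geometric series, using $s_N<1$ and $f_4\le f_3$. This yields $\sup_{t\le T}|\epsilon^N(t)|\le C_T$ with $C_T$ of order $T\sup_N Ns_N^2$.
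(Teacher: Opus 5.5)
Your proposal is correct and follows the paper's proof step for step. It uses i.i.d.\ mean-zero increments together with Lemma~\ref{Paper03_auxiliary_lemma_uniform_integrability_martingales} for uniform integrability, conditions on the new environmental draw to get orthogonality with $M^N_0$, and sums the mixed-moment identity of Lemma~\ref{Paper03_elementary_lemma_moments_fast_environment} over $\lfloor NT\rfloor$ steps for the covariation with $M^N_{\textrm{env}}$; your check that the $\mathcal{O}(s_N^3)$ remainder is uniform in $(\boldsymbol{x},\boldsymbol{\upsilon})$ fills in a detail the paper leaves implicit.

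One small point: your identity $f_1f_4-f_2f_3=(1-x_0)\sum_{i\geq1}b_i(1+\upsilon_{i-1})x_i$ holds only with $f_4=(1-x_0)+b_0x_0$, the coefficient of $\bar\upsilon$ in the denominator. It fails with $f_4$ as printed in \eqref{Paper03_identitfication_terms_predictable_process_fast_environment}, so state your identification explicitly rather than citing that display.
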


\begin{proof}
   By~\eqref{Paper03_martingale_fluctuations_env_dyn_fast_environment},~\eqref{Paper03_scaling_fast_environment} and Assumption~\ref{Paper03_assumption_fast_changing_environment}, $  (\widetilde{M}^N_{\textrm{env}}(t))_{t \geq 0}$ is a sum of mean zero i.i.d.~random variables, and therefore $  (\widetilde{M}^N_{\textrm{env}}(t))_{t \geq 0}$ is a martingale. By Assumption~\ref{Paper03_assumption_fast_changing_environment} and~\eqref{Paper03_martingale_fluctuations_env_dyn_fast_environment}, we also have
   \begin{equation} \label{Paper03_predictable_process_fast_environment_noise_2}
       \left\langle \widetilde{M}^N_{\textrm{env}} \right\rangle(T) = 2p(Ns^2_N)\frac{\lfloor NT \rfloor}{N}.
   \end{equation}
   Also, by~\eqref{Paper03_limit_scaling_fluctuating_fitness_parameter_assumption} and~\eqref{Paper03_scaling_fast_environment}, we have that almost surely
   \begin{equation} \label{Paper03_bounded_size_jump_noise_2_fast_environment}
       \sup_{N \in \mathbb{N}} \; \sup_{t \leq T} \vert \widetilde{M}^N_{\textrm{env}}(t) - \widetilde{M}^N_{\textrm{env}}(t-) \vert \leq 1.
   \end{equation}
   Using~\eqref{Paper03_limit_scaling_fluctuating_fitness_parameter_assumption},~\eqref{Paper03_predictable_process_fast_environment_noise_2},~\eqref{Paper03_bounded_size_jump_noise_2_fast_environment}, and then applying Lemma~\ref{Paper03_auxiliary_lemma_uniform_integrability_martingales}, we conclude that the sequences of random variables $\left(\left[\widetilde{M}^N_{\textrm{env}}\right](T)\right)_{N \in \mathbb{N}}$ and $\Big(\left(\widetilde{M}^N_{\textrm{env}}(T)\right)^2\Big)_{N \in \mathbb{N}}$ are uniformly integrable.

   It remains to compute the predictable bracket covariation between $\widetilde{M}^N_\textrm{env}$ and the martingales~$M^N_0$ and~$M^{N}_{\textrm{env}}$ introduced in~\eqref{Paper03_discrete_martingales_noise}. Observe that by~\eqref{Paper03_discrete_martingales_noise} and~\eqref{Paper03_martingale_fluctuations_env_dyn_fast_environment},~\eqref{Paper03_orthogonality_noises_fast_environment_second_version} will follow after establishing that for every $N \in \mathbb{N}$ and all $t \geq 0$,
   \begin{equation} \label{Paper03_pre_step_predictable_covariation_step_i_fast_env}
         \mathbb{E}\left[ \upsilon^N_0\left(\frac{t+1}{N}\right) \cdot \theta^{N,t}_0\left(\boldsymbol{x}^N\left(\frac{t}{N}\right),\boldsymbol{\upsilon}^N\left(\frac{t}{N}\right),\upsilon^N_0\left(\frac{t+1}{N}\right)\right) \Bigg\vert \mathcal{F}^N_{t/N}\right] = 0.
   \end{equation}
   To show that~\eqref{Paper03_pre_step_predictable_covariation_step_i_fast_env} holds, we observe that by~\eqref{Paper03_discrete_martingales_noise},~\eqref{Paper03_definition_theta_aux_rv_construction_martingale_fast_environment_genetic_drift} and~\eqref{Paper03_definition_zeta_N_fast_environment}, we have
   \begin{equation*}
    \begin{aligned}
        & \mathbb{E}\left[ \upsilon^N_0\left(\frac{t+1}{N}\right) \cdot \theta^{N,t}_0\left(\boldsymbol{x}^N\left(\frac{t}{N}\right),\boldsymbol{\upsilon}^N\left(\frac{t}{N}\right),\upsilon^N_0\left(\frac{t+1}{N}\right)\right) \Bigg\vert \mathcal{F}^N_{t/N} \times \sigma\left(\upsilon^N_0\left(\frac{t+1}{N}\right)\right)\right] \\ & \quad = \upsilon^N_0\left(\frac{t+1}{N}\right) \mathbb{E}\left[\theta^{N,t}_0\left(\boldsymbol{x}^N\left(\frac{t}{N}\right),\boldsymbol{\upsilon}^N\left(\frac{t}{N}\right),\upsilon^N_0\left(\frac{t+1}{N}\right)\right) \Bigg\vert \mathcal{F}^N_{t/N} \times \sigma\left(\upsilon^N_0\left(\frac{t+1}{N}\right)\right)\right] \\ & \quad = 0,
    \end{aligned}
   \end{equation*}
   and therefore~\eqref{Paper03_pre_step_predictable_covariation_step_i_fast_env} follows from the tower property of conditional expectation.

   It remains to establish~\eqref{Paper03_noises_fast_environment_complete_dependence_first_version}. Observe that by~\eqref{Paper03_noise_discrete_WF_fast_environment_source_env_impact_mut} and by using identity~\eqref{Paper03_eq:elementary_lemma_mixed_moments_fast_environment} from Lemma~\ref{Paper03_elementary_lemma_moments_fast_environment} with~\eqref{Paper03_identitfication_terms_predictable_process_fast_environment}, we have that for every $N \in \mathbb{N}$ and $T \in (0,\infty)$,
   \begin{equation} \label{Paper03_expected_increment_covariation_noises_env_fast_env}
    \begin{aligned}
        & \mathbb{E}\left[ \upsilon^N_0\left(\frac{t+1}{N}\right) \cdot \theta^{N,t}_\textrm{env}\left(\boldsymbol{x}^N\left(\frac{t}{N}\right),\boldsymbol{\upsilon}^N\left(\frac{t}{N}\right),\upsilon^N_0\left(\frac{t+1}{N}\right)\right) \Bigg\vert \mathcal{F}^N_{t/N}\right] \\ & \quad = -2ps^2_N\frac{\left(1 - x^N_0(t/N)\right)\left(\sum_{i = 1}^K b_i(1+\upsilon^N_{i-1}(t/N))x^N_i(t/N)\right)}{\Big[(1-x^N_0(t/N)) + b_0x^N_0(t/N) + \sum_{i = 1}^K b_i(1 + \upsilon^N_{i-1}(t/N)x^N_i(t/N)\Big]^2} + \mathcal{O}(s_N^3).
    \end{aligned}
   \end{equation}
   Identity~\eqref{Paper03_noises_fast_environment_complete_dependence_first_version} then follows from~\eqref{Paper03_expected_increment_covariation_noises_env_fast_env} and~\eqref{Paper03_limit_scaling_fluctuating_fitness_parameter_assumption}, which completes the proof.
\end{proof}

We will also need to characterise the finite-variation process that appears on the right-hand side of~\eqref{Paper03_rewriting_dynamics_fast_environment_xx}. 
To simplify notation, for $N \in \mathbb{N}$ and $T \in [0,\infty)$, let
\begin{equation} \label{Paper03_eq:fv_process_fast_environment}
    A^{N}_0(T) \defeq \int_{0}^{T} 2p(Ns^2_N) \left(\frac{(1-x^N_0)\left((1 - x^N_0) + b_0x^N_0\right)\left(\sum_{i=1}^K b_i(1+\upsilon^N_{i-1})x^N_i\right)}{(1 - x^N_0) + b_0x^N_0 + \sum_{i=1}^K b_i(1+\upsilon^N_{i-1})x^N_i}\right) \, d(\lfloor Nt \rfloor/N).
\end{equation}
The next result is an immediate consequence of Assumption~\ref{Paper03_assumption_fast_changing_environment}. Recall the definition of total variation introduced before Assumption~\ref{Paper03_assumption_katzenberger_stochastic_process}.

\begin{lemma} \label{Paper03_lem_FV_proc_fast_env}
    For any $T \in [0, \infty)$, the following conditions hold:
    \begin{enumerate}[(i)]
        \item The sequence $\left(\mathfrak{T}(A^N_0,T)\right)_{N \in \mathbb{N}}$ is uniformly integrable.
        \item For any $\varepsilon > 0$, we have
        \begin{equation*}
            \lim_{\delta \rightarrow 0^+} \, \limsup_{N \rightarrow \infty} \mathbb{P}\left(\sup_{t \leq T} \left(\mathfrak{T}(A^N_0,t + \delta) - \mathfrak{T}(A^N_0,t)\right) > \varepsilon \right) = 0.
        \end{equation*}
    \end{enumerate}
\end{lemma}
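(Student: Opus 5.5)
The integrand of $A^N_0$ is bounded uniformly in $N$ and in the state, so total variation is controlled by Lebesgue measure times a constant; both claims should follow from this. The process $(\boldsymbol{x}^N,\boldsymbol{\upsilon}^N)$ takes values in $[0,1]^{K+1}\times\{-s_N,0,s_N\}^K$. Since $s_N<1$, every factor $1+\upsilon^N_{i-1}$ lies in $[0,2]$, so $\sum_{i=1}^K b_i(1+\upsilon^N_{i-1})x^N_i\in[0,2]$. The denominator $(1-x^N_0)+b_0x^N_0+\sum_{i=1}^K b_i(1+\upsilon^N_{i-1})x^N_i$ is at least $(1-x^N_0)+b_0x^N_0\ge b_0>0$, because $b_0>0$ by~\eqref{Paper03_assumption_probability_distribution_bi}. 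The numerator $(1-x^N_0)\big((1-x^N_0)+b_0x^N_0\big)\big(\sum_i b_i(1+\upsilon^N_{i-1})x^N_i\big)$ lies in $[0,2]$.

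By~\eqref{Paper03_limit_scaling_fluctuating_fitness_parameter_assumption}, the sequence $Ns_N^2$ converges to $s^2$, so $\sup_N Ns_N^2=:\sigma<\infty$. Hence the integrand in~\eqref{Paper03_eq:fv_process_fast_environment} is nonnegative and bounded by $C\defeq 4p\sigma/b_0$, uniformly in $N$, $t$ and the randomness.

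Because the integrator $\lfloor Nt\rfloor/N$ is nondecreasing and the integrand is nonnegative, $A^N_0$ is nondecreasing. Therefore, for $t\le t'$,
\[
\mathfrak{T}(A^N_0,t')-\mathfrak{T}(A^N_0,t)=A^N_0(t')-A^N_0(t)\le C\,\frac{\lfloor Nt'\rfloor-\lfloor Nt\rfloor}{N}\le C\Big(t'-t+\frac1N\Big).
\]
For~(i), taking $t=0$ and $t'=T$ gives $\mathfrak{T}(A^N_0,T)\le C(T+1)$ almost surely for all $N$. A family bounded by a deterministic constant is uniformly integrable.

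For~(ii), the same bound gives $\sup_{t\le T}\big(\mathfrak{T}(A^N_0,t+\delta)-\mathfrak{T}(A^N_0,t)\big)\le C(\delta+1/N)$ deterministically. Fix $\varepsilon>0$ and take $\delta<\varepsilon/(2C)$. Then for $N>2C/\varepsilon$ the probability in~(ii) is zero, so the $\limsup$ is zero for all small $\delta$.

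The only step needing care is the uniform lower bound on the denominator. This is exactly where $b_0>0$ enters: without it, $x^N_0\to1$ with vanishing seed-bank terms could make the denominator degenerate. Everything else is routine.
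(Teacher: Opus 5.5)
Your proof is correct and follows the same route as the paper. Nonnegativity of the integrand gives $\mathfrak{T}(A^N_0,\cdot)=A^N_0$, and a uniform bound on the integrand via $\sup_N Ns_N^2<\infty$ yields both (i) and (ii); your explicit constant and the $1/N$ slack from $\lfloor Nt\rfloor$ simply fill in details the paper leaves implicit.

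One small correction to your closing remark: $b_0>0$ is not actually what keeps the integrand bounded. Write $a=(1-x_0)+b_0x_0$ and $c=\sum_{i=1}^K b_i(1+\upsilon_{i-1})x_i$. The fraction is then $(1-x_0)\,ac/(a+c)\le\min(a,c)\le 1$, so the bound $2p\sup_N Ns_N^2$ holds without using $b_0>0$.
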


\begin{proof}
    Observe that the integrand on the right-hand side of~\eqref{Paper03_eq:fv_process_fast_environment} is non-negative and therefore for every $N \in \mathbb{N}$ and any $T \in [0, \infty)$, $ \mathfrak{T}(A^N_0,T) = A^N_0(T)$. Then, the lemma follows from the observation that by~\eqref{Paper03_limit_scaling_fluctuating_fitness_parameter_assumption}, 
    \begin{equation*}
        \sup_{N \in \mathbb{N}} \; \sup_{(\boldsymbol{x},\boldsymbol{\upsilon}) \in [0,1]^{K+1} \times [-1,1]^K} 2p(Ns^2_N) \left(\frac{(1-x_0)\left((1 - x_0) + b_0x_0\right)\left(\sum_{i=1}^K b_i(1+\upsilon_{i-1})x_i\right)}{(1 - x_0) + b_0x_0 + \sum_{i=1}^K b_i(1+\upsilon_{i-1})x_i}\right) < \infty.
    \end{equation*}
\end{proof}

In order to apply the Katzenberger's framework to the analysis of the Wright-Fisher model with a fast-changing environment, we will also need to characterise the flow $\boldsymbol{F}^{\, \textrm{fast}}: [0,1]^{K+1} \times (-1,1)^{K} \rightarrow \mathbb{R}^{2K+1}$ defined in~\eqref{Paper03_flow_definition_WF_model_fast_changing_environment} and its associated projection map $\boldsymbol{\Phi}^{\, \textrm{fast}}$ introduced after~\eqref{Paper03_attractor_manifold_fast_environment}. This will be our next result. Recall the definition of $\Gamma^{\, \textrm{fast}}$ in~\eqref{Paper03_attractor_manifold_fast_environment}, the definition of the flow $\boldsymbol{F}: [0,1]^{K+1} \rightarrow \mathbb{R}^{K+1}$ in~\eqref{Paper03_flow_definition_WF_model} and its associated projection map~$\boldsymbol{\Phi}$ in~\eqref{Paper03_definition_projection_map}, and of the set $D(1)$ introduced before Assumption~\ref{Paper03_assumption_manifold}. 
Recall that we use $\boldsymbol{0}$ to denote the $K$-dimensional null vector.

\begin{lemma} \label{Paper03_characterisation_flow_associated_fast_changing_environment}
    For any $(\boldsymbol{x},\boldsymbol{0}) \in \Gamma^{\, \textrm{fast}}$, the Jacobian of $\boldsymbol{F}^{\, \textrm{fast}}$ evaluated at~$(\boldsymbol{x},\boldsymbol{0})$, i.e.~the matrix $\mathcal{J}\boldsymbol{F}^{\, \textrm{fast}}_{(\boldsymbol{x},\boldsymbol{0})}$, has exactly one eigenvalue $0$ and $2K$ eigenvalues in~$D(1)$. Moreover, for any $\boldsymbol{x} = (x, \cdots,x) \in \Gamma$, the following relations hold
    \begin{equation} \label{Paper03_first_order_derivatives_fast_environment}
        \frac{\partial \Phi^{\, \textrm{fast}}_0}{\partial x_0} \left(\boldsymbol{x}, \boldsymbol{0}\right) = \frac{1}{B(1-x) + 1} \quad \textrm{and} \quad \frac{\partial \Phi^{\, \textrm{fast}}_0}{\partial \upsilon_0} \left(\boldsymbol{x}, \boldsymbol{0}\right) = \frac{(1-b_0)x(1-x)}{B(1-x)+1}.
    \end{equation}
    Furthermore,
    \begin{equation} \label{Paper03_second_order_derivatives_fast_environment_easy_part}
         \frac{\partial^2 \Phi^{\, \textrm{fast}}_0}{\partial x_0^2} \left(\boldsymbol{x}, \boldsymbol{0}\right) = \frac{\partial^2 \Phi_0}{\partial x_0^2} (\boldsymbol{x}).
    \end{equation}
\end{lemma}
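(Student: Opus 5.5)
The plan is to compute the Jacobian of $\boldsymbol{F}^{\,\textrm{fast}}$ at $(\boldsymbol{x},\boldsymbol{0})$ explicitly and to exploit its block-triangular structure. First, order the variables as $(\boldsymbol{x},\boldsymbol{\upsilon})$. The $\upsilon$-components $F^{\textrm{fast}}_{\textrm{env},i}$ depend only on $\boldsymbol{\upsilon}$, so the lower-left block is zero. The $\upsilon\upsilon$ block is the lower bidiagonal matrix with $-1$ on the diagonal and $1$ on the subdiagonal, so all of its $K$ eigenvalues equal $-1\in D(1)$. Moreover, at $\boldsymbol{\upsilon}=\boldsymbol{0}$ the $xx$ block is exactly $\mathcal{J}\mathbf{F}_{\boldsymbol{x}}$ from~\eqref{Paper03_formula_jacobian_manifold}, since $F^{\textrm{fast}}_0(\cdot,\boldsymbol{0})=F_0$. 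The spectrum is therefore the union of the two block spectra. Lemma~\ref{Paper03_eigenvalue_condition_constant_environment} supplies one zero eigenvalue and $K$ eigenvalues in $D(1)$ from the $xx$ block, which gives the eigenvalue claim.

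Next, the first-order derivatives follow from the Parsons--Rogers formula~\eqref{Paper03_formula_Parsons_Rogers_first_order_derivatives} applied in dimension $2K+1$. The right null vector is $\tilde{\boldsymbol{u}}=(\boldsymbol{1},\boldsymbol{0})$, tangent to $\Gamma^{\textrm{fast}}$ with $\gamma'=(\boldsymbol{1},\boldsymbol{0})$. For the left null vector, write $\tilde{\boldsymbol{v}}=(\boldsymbol{v},\boldsymbol{w})$ with $\boldsymbol{v}$ from~\eqref{Paper03_left_eigenvector_jacobian}; this is forced because the upper-left block is $\mathcal{J}\mathbf{F}_{\boldsymbol{x}}$ and $\langle\tilde{\boldsymbol{v}},\tilde{\boldsymbol{u}}\rangle=1$. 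The vector $\boldsymbol{w}$ is then determined by
\[
C^*\boldsymbol{v}+L^*\boldsymbol{w}=0,
\]
where $C$ is the upper-right block and $L$ is the $\upsilon\upsilon$ block. Only the first row of $C$ is nonzero, with entries
\[
\frac{\partial F^{\textrm{fast}}_0}{\partial\upsilon_{i-1}}=\frac{(1-x)b_ix}{1}.
\]
Here the denominator equals $1$ on $\Gamma$ because $(1-x)+b_0x+(1-b_0)x=1$. Solving the bidiagonal system $L^*\boldsymbol{w}=-v_0 C_{0,\cdot}^*$ by back-substitution gives
\[
w_{i-1}=v_0(1-x)x\sum_{j\ge i}b_j,
\]
and in particular $w_0=v_0(1-b_0)x(1-x)$. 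Then~\eqref{Paper03_formula_Parsons_Rogers_first_order_derivatives} yields $\partial\Phi_0^{\textrm{fast}}/\partial x_0=v_0=1/(B(1-x)+1)$ and $\partial\Phi_0^{\textrm{fast}}/\partial\upsilon_0=w_0$, which is~\eqref{Paper03_first_order_derivatives_fast_environment}.

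For~\eqref{Paper03_second_order_derivatives_fast_environment_easy_part}, the simplest route is invariance. The subspace $\{\boldsymbol{\upsilon}=\boldsymbol{0}\}$ is invariant under the flow, because $\dot{\boldsymbol{\upsilon}}=L\boldsymbol{\upsilon}$, and on it the $x$-dynamics coincide with those of $\mathbf{F}$. Hence $\Phi^{\textrm{fast}}_0(\boldsymbol{y},\boldsymbol{0})=\Phi_0(\boldsymbol{y})$ for all $\boldsymbol{y}$ in a neighbourhood, and differentiating twice in $x_0$ gives the identity directly, with no need for $\boldsymbol{\Theta}$.

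The main obstacle is justifying the left-eigenvector computation, in particular confirming that the zero eigenvalue stays simple for the full matrix and fixing the sign conventions in the triangular back-substitution for $\boldsymbol{w}$. Both are routine once the block structure is written down. If the invariance argument needs more care near the boundary of the domain, I would fall back on~\eqref{Paper03_formula_Parsons_Rogers_second_order_derivatives}, checking that $\tilde{\boldsymbol{v}}'$ restricted to $x$-components equals $\boldsymbol{v}'$ and that $\tilde\theta_{00}$ agrees with $\theta_{00}$. That agreement should follow from uniqueness in Theorem~\ref{Paper03_prop_uniqueness_quadratic_term_nonlinear_manifold}, since the restricted Lyapunov equation on the $x$-block reproduces~\eqref{Paper03_linear_system_original_ODE}.
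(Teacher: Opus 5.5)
Your proposal is correct. For the eigenvalue claim and the two first-order derivatives you follow the paper's route. The Jacobian is block upper-triangular, and the $\boldsymbol{\upsilon}\boldsymbol{\upsilon}$ block contributes $K$ eigenvalues equal to $-1$, which also settles your worry that the zero eigenvalue might not be simple. The derivatives then come from the Parsons--Rogers formula with left null vector $(\boldsymbol{v},\boldsymbol{w})$. The paper just writes this vector down, whereas your back-substitution $w_{i-1}=v_0x(1-x)\sum_{j\ge i}b_j$ actually derives it.

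For \eqref{Paper03_second_order_derivatives_fast_environment_easy_part} you argue differently, and your argument is valid. The paper proceeds as follows:
\begin{enumerate}[(a)]
\item it computes $\mathcal{P}_s^{\,\textrm{fast}}$ and the block Hessian of $F^{\,\textrm{fast}}_0$;
\item it observes that the $(\boldsymbol{x},\boldsymbol{x})$-block of $\boldsymbol{\Theta}^{\,\textrm{fast}}$ satisfies the constant-environment Lyapunov system together with $\boldsymbol{\Theta}\boldsymbol{u}=0$, and invokes uniqueness from Theorem~\ref{Paper03_prop_uniqueness_quadratic_term_nonlinear_manifold};
\item it substitutes into \eqref{Paper03_formula_Parsons_Rogers_second_order_derivatives}, using that the $\boldsymbol{x}$-parts of $\boldsymbol{v}^{\,\textrm{fast}}$ and $\boldsymbol{\gamma}^{\,\textrm{fast}}$ match the constant-environment ones.
\end{enumerate}

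Your invariance argument runs as follows. Since $\dot{\boldsymbol{\upsilon}}=L\boldsymbol{\upsilon}$ is autonomous and linear, the set $\{\boldsymbol{\upsilon}=\boldsymbol{0}\}$ is invariant and the flow restricted to it is that of $\mathbf{F}$. Hence $\Phi^{\,\textrm{fast}}_0(\cdot,\boldsymbol{0})\equiv\Phi_0$.

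This is more economical than the paper's route. It needs only the $x_0$-partials of $\Phi^{\,\textrm{fast}}_0$, which are taken along lines inside the invariant subspace. It bypasses $\boldsymbol{\Theta}$ and the Hessians entirely. It also gives the first identity in \eqref{Paper03_first_order_derivatives_fast_environment}, and every pure $\boldsymbol{x}$-derivative, at no extra cost.

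What the paper's heavier route buys is the block machinery: $\mathcal{P}_s^{\,\textrm{fast}}$, $\boldsymbol{H}_{\boldsymbol{x},\boldsymbol{\upsilon}}$, $\boldsymbol{H}_{\boldsymbol{\upsilon},\boldsymbol{\upsilon}}$ and $\boldsymbol{\Theta}^{\,\textrm{fast}}$. This is reused for the mixed and $\upsilon_0\upsilon_0$ derivatives in Lemma~\ref{Paper03_lem:explicit_drifdt_fast_env}. Those derivatives involve directions transverse to $\{\boldsymbol{\upsilon}=\boldsymbol{0}\}$, where invariance gives no information.
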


The proof of Lemma~\ref{Paper03_characterisation_flow_associated_fast_changing_environment} is based on formulae~\eqref{Paper03_formula_Parsons_Rogers_first_order_derivatives} and~\eqref{Paper03_formula_Parsons_Rogers_second_order_derivatives}, and on Theorem~\ref{Paper03_prop_uniqueness_quadratic_term_nonlinear_manifold}. Since we used a similar argument in Section~\ref{Paper03_derivatives_specific_ode} when we studied the properties of the flow arising from the Wright-Fisher model with seed bank in a constant environment, we will postpone the proof of Lemma~\ref{Paper03_characterisation_flow_associated_fast_changing_environment} to Section~\ref{Paper03_sec_comp_lemmas_fast_env}.

We are now ready to establish Theorem~\ref{Paper03_thm_convergence_diffusion_fast_changing_environment}.

\begin{proof}[Proof of Theorem~\ref{Paper03_thm_convergence_diffusion_fast_changing_environment}]
    Our proof consists of an application of 
Theorem~\ref{Paper03_major_tool_katzenberger}. As in the proof of 
Theorem~\ref{Paper03_weak_convergence_WF_model_slow_env}, in order to 
keep our notation consistent with that of 
Theorem~\ref{Paper03_major_tool_katzenberger}, we introduce the 
$(2K+1)$-dimensional process
    \begin{equation*}
        \left(\boldsymbol{\Lambda}^N(t)\right)_{t \geq 0} \defeq \left(M^N_0(t) + M^N_\textrm{env}(t) + A^N_0(t), 0, \cdots, 0, \widetilde{M}^N_{\textrm{env}}(t), 0, \cdots, 0\right)_{t \geq 0},
    \end{equation*}
    where~$M^N_0$,~$M^{N}_{\textrm{env}}$,~$A^N_0$ and~$\widetilde{M}^{N}_{\textrm{env}}$ are defined in~\eqref{Paper03_discrete_martingales_noise},~\eqref{Paper03_eq:fv_process_fast_environment} and~\eqref{Paper03_martingale_fluctuations_env_dyn_fast_environment}. By~\eqref{Paper03_flow_definition_WF_model_fast_changing_environment},~\eqref{Paper03_rewriting_dynamics_fast_environment_xx} and~\eqref{Paper03_dyn_fast_env_first_env_coordinate}, we can write the dynamics of $(\boldsymbol{x}^N(t),\upsilon^N(t))_{t \geq 0}$ for every $N \in \mathbb{N}$ and for any $T \in [0, \infty)$ as
    \begin{equation*}
        (\boldsymbol{x}^N(T),\boldsymbol{\upsilon}^N(T)) = (\boldsymbol{x}^N(0),\boldsymbol{\upsilon}^N(0)) + \boldsymbol{\Lambda}^N(T) + \int_{0}^T \boldsymbol{F}^{\, \textrm{fast}}  (\boldsymbol{x}^N,\boldsymbol{\upsilon}^N) \, d\lfloor Nt \rfloor.
    \end{equation*}
    To improve the readability, we split the proof into steps.

    \medskip

    \noindent \textit{Step~(i): tightness of $\left(\boldsymbol{x}^{N}, \boldsymbol{\upsilon}^{N}, \boldsymbol{\Lambda}^{N}\right)_{N \in \mathbb{N}}$}

    \medskip
    
    By Theorem~\ref{Paper03_major_tool_katzenberger}, and Lemmas~\ref{Paper03_martingales_WF_model_fast_env},~\ref{Paper03_lem_noise_env_specific_fast_env},~\ref{Paper03_lem_FV_proc_fast_env} and~\ref{Paper03_characterisation_flow_associated_fast_changing_environment}, the sequence of processes  $\left(\boldsymbol{x}^{N}, \boldsymbol{\upsilon}^{N}, \boldsymbol{\Lambda}^{N}\right)_{N \in \mathbb{N}}$ is relatively compact in~$\mathscr{D}\left([0, \infty), [0,1]^{K+1} \times [-1,1]^K \times \mathbb{R}^{2K + 1}\right)$. From now on, let $(\boldsymbol{x}(t), \boldsymbol{\upsilon}(t), \boldsymbol{\Lambda}(t))_{t \geq 0}$ be a subsequential limit. 
Theorem~\ref{Paper03_major_tool_katzenberger} implies that 
$(\boldsymbol{x}(t), \boldsymbol{\upsilon}(t), 
\boldsymbol{\Lambda}(t))_{t \geq 0}$ is a diffusion with sample paths in 
$\Gamma^{\, \textrm{fast}} = \Gamma \times \{0\}^K$, where $\Gamma$ is the 
diagonal of the $(K+1)$-dimensional hypercube. Since, in the limit, 
coordinates describing the environment vanish, and 
$x_0 \equiv x_1 \equiv \cdots \equiv x_K$, it will suffice to describe the 
dynamics of $x_0$.
    
    \medskip

    \noindent \textit{Step~(ii): characterisation of $(\boldsymbol{\Lambda}(t))_{t \geq 0}$}

    \medskip
    
    To compute the drift and the noise terms of the corresponding SDE, we 
need to characterise the limiting process $\boldsymbol{\Lambda}$ in terms 
of $x_0$. With this aim in mind, we shall characterise the limit points of each of the martingales $M^{N}_{0}$, $M^{N}_{\textrm{env}}$ and $\widetilde{M}_{\textrm{env}}^{N}$ that appear in the dynamics of the discrete process (see Equations~\eqref{Paper03_martingale_seeds_WF},~\eqref{Paper03_martingale_impact_env_on_seeds_WF} and~\eqref{Paper03_martingale_both_seeds_slow_env_WF}).
    
    Starting with $M^{N}_{0}$, by using the uniform integrability of the 
sequences $\left(\left(M^{N}_{0}(T)\right)^2\right)_{N \in \mathbb{N}}$ and~$\left(\left[M^{N}_{0}\right](T)\right)_{N \in \mathbb{N}}$ stated in Lemma~\ref{Paper03_martingales_WF_model_fast_env}, we conclude that $M^{N}_{0}$ 
converges weakly to a process $(M_0(t))_{t \geq 0}$ which is a continuous martingale with respect to the filtration generated by $(\boldsymbol{x}(t), \boldsymbol{\Lambda}(t))_{t \geq 0}$, and whose quadratic variation is the (weak) limit of $([M^N_0])_{N \in \mathbb{N}}$. Hence, taking the limit as $N \rightarrow \infty$ on both sides of~\eqref{Paper03_UI_predictable_bracket_bound_fast_environment_genetic_drift_iii} and using the continuity of $M_0$, we conclude that the quadratic variation of the martingale $M_0$ is given by, for all $T \in [0, \infty)$,
    \begin{equation*} 
        [M_0](T) = \langle M_0 \rangle(T) = \int_{0}^T x_0 (1 - x_0) \, dt.
    \end{equation*}
    Therefore, there exists a standard Brownian motion $W_0$ such that for all $T \in [0, \infty)$,
    \begin{equation} \label{Paper03_WF_noise_fast_env_lim}
    M_0(T) = \int_{0}^{T} \sqrt{x_0(1-x_0)} \, dW_0(t).
    \end{equation}

    Repeating the same argument for $M^{N}_{\textrm{env}}$, using~\eqref{Paper03_bound_predictable_bracket+process_fast_environment_env_martingale} instead of~\eqref{Paper03_UI_predictable_bracket_bound_fast_environment_genetic_drift_iii}, and then applying~\eqref{Paper03_limit_scaling_fluctuating_fitness_parameter_assumption}, we conclude that as $N \rightarrow \infty$, the sequence of processes $(M^N_{\textrm{env}})_{N \in \mathbb{N}}$ 
converges weakly to a continuous martingale $M_{\textrm{env}}$ whose quadratic variation is given by, for all $T \geq 0$,
    \begin{equation} \label{Paper03_quad_proc_env_noise_x_0_fast}
        [M_{\textrm{env}}](T) = \langle M_{\textrm{env}} \rangle(T) = 2ps^2(1-b_0)^2\int_{0}^{T}x_0^2(1-x_0)^2 \, dt.
    \end{equation}
    Therefore, there exists a standard Brownian motion $W_{\textrm{env}}$ such that for all $T \in [0, \infty)$,
    \begin{equation} \label{Paper03_first_env_noise_fast_env_lim}
        M_{\textrm{env}}(T) =(2p)^{1/2}s(1-b_0) \int_{0}^T x_0(1-x_0) \, dW_{\textrm{env}}(t).
    \end{equation}
    Moreover, by taking the limit as $N \rightarrow \infty$ in~\eqref{Paper03_orthogonality_noises_fast_environment} and using the uniform integrability conditions stated in Lemma~\ref{Paper03_martingales_WF_model_fast_env}, we conclude that the covariation of $M_0$ and $M_{\textrm{env}}$ is identically equal to $0$ and therefore, by Knight's theorem, the Brownian motions~$W_0$ and~$W_{\textrm{env}}$ are independent.

    Applying the argument that we used for $M^N_0$ to the analysis of~$\widetilde{M}^N_{\textrm{env}}$, using Lemma~\ref{Paper03_lem_noise_env_specific_fast_env} and identity~\eqref{Paper03_predictable_process_fast_environment_noise_2}, we conclude that $\left(\widetilde{M}^N_{\textrm{env}}\right)_{N \in \mathbb{N}}$ 
converges weakly to a continuous martingale $\widetilde{M}_{\textrm{env}}$ whose quadratic variation is given by
    \begin{equation} \label{Paper03_quad_proc_env_noise_upsilon_0_fast}
        [\widetilde{M}_{\textrm{env}}](T) = \langle \widetilde{M}_{\textrm{env}} \rangle(T) = 2ps^2T.
    \end{equation}
    By taking the limit as $N \rightarrow \infty$ on both sides of~\eqref{Paper03_noises_fast_environment_complete_dependence_first_version}, using the uniform integrability conditions stated in Lemmas~\ref{Paper03_martingales_WF_model_fast_env} and~\ref{Paper03_lem_noise_env_specific_fast_env}, and then recalling~\eqref{Paper03_quad_proc_env_noise_x_0_fast} and~\eqref{Paper03_quad_proc_env_noise_upsilon_0_fast}, we conclude that the covariation of~$M_{\textrm{env}}$ and~$\widetilde{M}_{\textrm{env}}$ satisfies, for all $T \in [0, \infty)$,
    \begin{equation} \label{Paper03_covariance_lim_fast_env_different_env_noises}
        [M_{\textrm{env}},\widetilde{M}_{\textrm{env}}](T) = - \left([M_{\textrm{env}}](T)[\widetilde{M}_{\textrm{env}}](T)\right)^{1/2} = - 2ps^2(1-b_0) \int_0^T x_0(1-x_0) \, dt.
    \end{equation}
    Therefore, by Knight's theorem and~\eqref{Paper03_quad_proc_env_noise_upsilon_0_fast}, we have for all $T \in [0, \infty)$
    \begin{equation} \label{Paper03_second_env_noise_fast_env_lim}
        \widetilde{M}_{\textrm{env}}(T) = -(2p)^{1/2}sW_{\textrm{env}}(T),
    \end{equation}
    where $W_{\textrm{env}}$ is the standard Brownian motion that appears on the right-hand side of~\eqref{Paper03_first_env_noise_fast_env_lim}.

    To compute the drift of the limiting SDE, we also need to characterise the limit of the sequence of finite variation processes $(A^N_0)_{N \in \mathbb{N}}$ defined in~\eqref{Paper03_eq:fv_process_fast_environment}. By Lemma~\ref{Paper03_lem_FV_proc_fast_env} and by Theorem~\ref{Paper03_major_tool_katzenberger}, the sequence $(A^N_0)_{N \in \mathbb{N}}$ 
converges weakly to a finite variation process $A_0$ as $N \rightarrow \infty$. By taking the limit as $N \rightarrow \infty$ on both sides of~\eqref{Paper03_eq:fv_process_fast_environment}, and then using dominated convergence and~\eqref{Paper03_limit_scaling_fluctuating_fitness_parameter_assumption}, we conclude that for all $T \in [0, \infty)$
    \begin{equation} \label{Paper03_FV_process_fast_env_lim}
        A_0(T) = 2ps^2(1-b_0)\int_0^T x_0(1-x_0)\Big((1-x_0) + b_0x_0\Big) \, dt.
    \end{equation}

    \medskip

    \noindent \textit{Step~(iii): the limiting SDE}

    \medskip

    We are now ready to state the limiting SDE for $x_0$. Applying Theorem~\ref{Paper03_major_tool_katzenberger} to the sequence of processes $\left(\boldsymbol{x}^{N}, \boldsymbol{\upsilon}^{N}, \boldsymbol{\Lambda}^{N}\right)_{N \in \mathbb{N}}$, we conclude that in the limit, $(x_0(t))_{t \geq 0}$ satisfies the following SDE
    \begin{equation} \label{Paper03_pre_computation_limiting_SDE_fast_env}
    \begin{aligned}
    dx_0 & = \frac{\partial \Phi^{\, \textrm{fast}}_0}{\partial x_0} dA_0(t) + \frac{\partial \Phi^{\, \textrm{fast}}_0}{\partial x_0} dM_0(t) + \frac{\partial \Phi^{\, \textrm{fast}}_0}{\partial x_0} dM_\textrm{env}(t) + \frac{\partial \Phi^{\, \textrm{fast}}_0}{\partial \upsilon_0} d\widetilde{M}_\textrm{env}(t) + \frac{1}{2} \frac{\partial^2 \Phi^{\, \textrm{fast}}_0}{\partial x_0^2} d[M_0](t) \\ & \quad \; + \frac{1}{2} \frac{\partial^2 \Phi^{\, \textrm{fast}}_0}{\partial x_0^2} d[M_\textrm{env}](t) + \frac{1}{2} \frac{\partial^2 \Phi^{\, \textrm{fast}}_0}{\partial \upsilon_0^2} d[\widetilde{M}_\textrm{env}](t) + \frac{\partial^2 \Phi^{\, \textrm{fast}}_0}{\partial x_0 \partial \upsilon_0} d[M_\textrm{env}, \widetilde{M}_{\textrm{env}}](t),
    \end{aligned}
    \end{equation}
    where to compute the the terms involving second-order derivatives we used the fact that for all $T \in [0, \infty)$,
\[
[M_0,M_\textrm{env}](T) = [M_0,\widetilde{M}_\textrm{env}](T) = 0.
\]
    Then,~\eqref{Paper03_limiting_SDE_fast_env} follows from applying identities~\eqref{Paper03_first_order_derivatives_fast_environment},~\eqref{Paper03_second_order_derivatives_fast_environment_easy_part},~\eqref{Paper03_WF_noise_fast_env_lim},~\eqref{Paper03_first_env_noise_fast_env_lim},~\eqref{Paper03_covariance_lim_fast_env_different_env_noises},~\eqref{Paper03_second_env_noise_fast_env_lim} and~\eqref{Paper03_FV_process_fast_env_lim} to~\eqref{Paper03_pre_computation_limiting_SDE_fast_env} and rearranging terms. This completes the proof.
\end{proof}

\section{Proof of technical and auxiliary results} 
\label{Paper03_section_proof_auxilliary_lemmas}

\subsection{Explicit solution for ODE with $K=1$} \label{Paper03_explicit_solution_small_K}

In this section we apply the method of characteristics to study the flow in~\eqref{Paper03_flow_definition_WF_model} for $K=1$. Recall that $\Gamma$ is the diagonal of $[0,1]^2$. We will establish the following result.

\begin{lemma} \label{Paper03_lemma_example_explicit_solution_ODE_K_1}
    Let $\boldsymbol{\Phi}: [0,1]^2 \rightarrow \Gamma$ be the map defined in~\eqref{Paper03_definition_projection_map}, associated to the flow $\boldsymbol{F}: [0,1]^{2} \rightarrow \mathbb{R}^2$ in~\eqref{Paper03_flow_definition_WF_model} with $K = 1$. Then, for any $\boldsymbol{x} = (x_0,x_1) \in [0,1) \times [0,1]$, we have
    \begin{equation} \label{Paper03_implicit_definition_derivatives_K_1}
        \frac{(1 - b_0) x_1 + b_0}{(1 - x_0)} - (1 - b_0) \log(1 - x_0) = \frac{(1 - b_0) \Phi_0(\boldsymbol{x}) + b_0}{( 1 - \Phi_0(\boldsymbol{x})) } - (1 - b_0) \log ( 1 - \Phi_0(\boldsymbol{x})).
    \end{equation}    
\end{lemma}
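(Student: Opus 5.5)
The plan is to exhibit a first integral (conserved quantity) $H(x_0,x_1)$ of the planar ODE $\dot{\boldsymbol{x}} = \mathbf{F}(\boldsymbol{x})$ with $K=1$, and then pass to the limit $T\to\infty$ along the trajectory. For $K=1$ we have $b_1 = 1-b_0$, and the flow reads
\begin{equation*}
\dot x_0 = \frac{(1-x_0)(1-b_0)(x_1-x_0)}{(1-x_0)+b_0x_0+(1-b_0)x_1}, \qquad \dot x_1 = x_0 - x_1 .
\end{equation*}
Set $D(\boldsymbol{x}) \defeq (1-x_0)+b_0x_0+(1-b_0)x_1$. Away from the diagonal, and for $x_0<1$, we eliminate time and obtain the characteristic equation
\begin{equation*}
\frac{dx_0}{dx_1} = -\frac{(1-b_0)(1-x_0)}{D(\boldsymbol{x})}.
\end{equation*}
After rewriting, this is the linear ODE in $x_1$ as a function of $x_0$:
\begin{equation*}
(1-b_0)(1-x_0)\frac{dx_1}{dx_0} + (1-b_0)x_1 = -\big((1-x_0)+b_0x_0\big).
\end{equation*}

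The key step is to integrate this with the integrating factor $1/(1-x_0)^2$. Dividing by $(1-b_0)(1-x_0)^2$ gives
\begin{equation*}
\frac{d}{dx_0}\Big(\frac{x_1}{1-x_0}\Big) = -\frac{1-(1-b_0)x_0}{(1-b_0)(1-x_0)^2}.
\end{equation*}
Splitting $1-(1-b_0)x_0 = (1-b_0)(1-x_0) + b_0$ makes the right-hand side elementary in $x_0$. Integrating and multiplying through by $(1-b_0)$, we expect the conserved quantity
\begin{equation*}
H(x_0,x_1) = \frac{(1-b_0)x_1+b_0}{1-x_0} - (1-b_0)\log(1-x_0).
\end{equation*}
Rather than trusting the separation of variables, we will verify directly that $\nabla H\cdot \mathbf{F} = 0$ on $[0,1)\times[0,1]$. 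This is a one-line computation: $\partial_{x_1}H = (1-b_0)/(1-x_0)$, and $\partial_{x_0}H = D/(1-x_0)^2$ after combining terms. The check also covers points on the diagonal, where $\mathbf{F}=0$ and $\Phi(\boldsymbol{x})=\boldsymbol{x}$ trivially.

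It remains to pass to the limit. Since $H$ is constant along trajectories, $H(\boldsymbol{x}) = H(\boldsymbol{\phi}(\boldsymbol{x},T))$ for all $T$. By Lemma~\ref{Paper03_eigenvalue_condition_constant_environment} and the definition~\eqref{Paper03_definition_projection_map}, $\boldsymbol{\phi}(\boldsymbol{x},T)\to\boldsymbol{\Phi}(\boldsymbol{x}) = (\Phi_0,\Phi_0)$. Continuity of $H$ then gives~\eqref{Paper03_implicit_definition_derivatives_K_1}, provided $\Phi_0(\boldsymbol{x})<1$.

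This last point is the main obstacle. We must check that a trajectory started with $x_0<1$ stays in $\{x_0<1\}$ and does not converge to the corner $(1,1)$, where $H$ blows up. First, $\{x_0=1\}$ is invariant because $\dot x_0$ carries the factor $(1-x_0)$, so by uniqueness $x_0(t)<1$ for all $t$. Second, $H(\boldsymbol{\phi}(\boldsymbol{x},t))$ is constant and finite, whereas $H\to+\infty$ as $x_0\uparrow 1$ uniformly in $x_1\in[0,1]$. The reason is that the term $(\cdot+b_0)/(1-x_0)\ge b_0/(1-x_0)$ dominates the logarithm, using $b_0>0$. Hence the trajectory stays in $\{x_0\le 1-\delta\}$ for some $\delta>0$, so $\Phi_0(\boldsymbol{x})<1$ and the limit argument is justified.
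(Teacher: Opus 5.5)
Your proposal is correct and follows essentially the paper's route: eliminate time to obtain the first integral $H$, then let $T\to\infty$ along the trajectory. The paper gets $H$ via the substitution $y_0=1-x_0$, $y_1=(1-b_0)x_1+b_0$ and solving $dy_1/dy_0=y_1/y_0+(1-b_0)$, while you use an integrating factor in the original variables. Your direct check that $\nabla H\cdot\mathbf{F}=0$, and your use of $H\geq b_0/(1-x_0)$ to force $\Phi_0(\boldsymbol{x})<1$, add rigour the paper omits: it divides by $x_0-x_1$ without comment and passes to the limit without checking that $H$ stays finite there.
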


\begin{proof}
For $K = 1$, the ODE associated to the flow defined in~\eqref{Paper03_flow_definition_WF_model} is reduced to
\begin{equation} \label{Paper03_simplified_ODE_K_1}
\begin{aligned}
    \frac{d}{dt}x_0(t) & = - \frac{(1-b_0)(1 -x_0)(x_0 - x_1)}{1 - (1 - b_0)(x_0-x_1)} \quad \textrm{and} \quad \frac{d}{dt}x_1(t) = x_0 - x_1,
\end{aligned}
\end{equation}
since by~\eqref{Paper03_assumption_probability_distribution_bi}, we have $b_1 = 1 - b_0$. Setting then
\begin{equation} \label{Paper03_simplified_ODE_K_1_changing_variables}
    y_0 \defeq 1 - x_0 \quad \textrm{and} \quad y_1 \defeq (1 - b_0)x_1 + b_0,
\end{equation}
we have by~\eqref{Paper03_simplified_ODE_K_1} that
\begin{equation} \label{Paper03_simplified_ODE_K_1_changing_variables_corresponding_ODE}
    \frac{d}{dt} y_0(t) = y_0 \frac{(1-b_0)(x_0 - x_1)}{1 - (1 - b_0)(x_0-x_1)} \quad \textrm{and} \quad \frac{d}{dt} y_1(t) = (1 - b_0)(x_0-x_1).
\end{equation}
Observe that $y_0$ represents the proportion of wild type individuals in the current generation, and $y_1$ represents the proportion of seeds produced one generation in the past that will contribute to the genetic pool of the current generation. Then, by the chain rule and~\eqref{Paper03_simplified_ODE_K_1_changing_variables_corresponding_ODE}, we have
\begin{equation} \label{Paper03_simplified_ODE_K_1_changing_variables_characteristic_curve}
\begin{aligned}
    \frac{d}{d y_0} y_1 & = \frac{1 - (1 - b_0)(x_0 - x_1)}{y_0} \\ & = \frac{y_1}{y_0} + (1 - b_0),
\end{aligned}
\end{equation}
where for the last equality we used the fact that by~\eqref{Paper03_simplified_ODE_K_1_changing_variables},
\begin{equation*}
    (1 - b_0)(x_0 - x_1) = 1 - y_1 - (1 - b_0)y_0.
\end{equation*}
To explicitly solve~\eqref{Paper03_simplified_ODE_K_1_changing_variables_characteristic_curve}, we observe that again by the chain rule
\begin{equation*}
    \frac{d}{d(\log y_0)} y_1 = y_1 + (1-b_0)e^{\log y_0},
\end{equation*}
which admits as its unique solution
\begin{equation*}
\begin{aligned}
    y_1 & = (1-b_0)(\log y_0)e^{\log y_0} + Ce^{\log y_0} \\ & = (1 - b_0) y_0 \log y_0 + Cy_0,
\end{aligned}
\end{equation*}
where $C$ is a parameter which is determined by the initial condition of the ODE in~\eqref{Paper03_simplified_ODE_K_1_changing_variables_characteristic_curve}. Then, by applying~\eqref{Paper03_simplified_ODE_K_1_changing_variables}, and letting the initial condition be $\boldsymbol{x} = (x_0, x_1) \in [0, 1) \times [0,1]$, we conclude that for all $t \geq 0$, we have
\begin{equation} \label{Paper03_almost_last_step_computation_ODE_K_1}
     \frac{(1 - b_0) x_1 + b_0}{(1 - x_0)} - (1 - b_0) \log(1 - x_0) = \frac{(1 - b_0) x_1(t) + b_0}{( 1 - x_0(t)) } - (1 - b_0) \log ( 1 - x_0(t)).
\end{equation}
By Lemma~\ref{Paper03_eigenvalue_condition_constant_environment}, the projection map $\boldsymbol{\Phi}: [0,1]^2 \rightarrow \Gamma$ defined in~\eqref{Paper03_definition_projection_map} is well defined, and moreover we have $\Phi_0 = \Phi_1$. Therefore, taking the limit as $t \rightarrow \infty$ on the right-hand side of~\eqref{Paper03_almost_last_step_computation_ODE_K_1} completes the proof.
\end{proof}

One can then establish~\eqref{Paper03_drift_K_1}  by differentiating both sides of~\eqref{Paper03_implicit_definition_derivatives_K_1} with respect to $x_0$ and $x_1$. We were not able to apply the method of characteristics for  the case $K \geq 2$. In particular, we did not find
the analogue of~\eqref{Paper03_simplified_ODE_K_1_changing_variables_characteristic_curve} in that case. 
%We explain in Sections~\ref{Paper03_subsection_computing_derivatives} and~\ref{Paper03_general_result_derivatives_manifold} how we can compute the first and second order derivatives without using the method of characteristics. Our approach combines Parsons and Rogers' method~\cite{parsons2017dimension} with a new formula for the quadratic approximation of the nonlinear stable manifold near an attractor manifold for an ODE.

\subsection{Proof of Theorems~\ref{Paper03_prop_uniqueness_quadratic_term_nonlinear_manifold} and~\ref{Paper03_explicit_formula_solution_lyapunov_equation}} \label{Paper03_section_proofs_dynamical_systems}

In this section, we will prove Theorems~\ref{Paper03_prop_uniqueness_quadratic_term_nonlinear_manifold} and~\ref{Paper03_explicit_formula_solution_lyapunov_equation}, i.e.~we will establish an explicit formula for the matrix $\boldsymbol{\Theta}$ which is a quadratic approximation for the stable nonlinear manifold (see~\eqref{Paper03_Parsons_Rogers_description_flow}), by studying the dynamical system near the manifold $\Gamma$. Recall that $\Gamma$, defined in~\eqref{Paper03_definition_manifold}, is the invariant attractor manifold of ODE~\eqref{Paper03_deterministic_flow}. Suppose Assumptions~\ref{Paper03_assumption_manifold} and~\ref{Paper03_geometric_properties_Gamma} are both satisfied. For any $\boldsymbol{x} \in \Gamma$, let $\boldsymbol{u} \equiv \boldsymbol{u}(\boldsymbol{x})$ be a right eigenvector of $\mathcal{J}\mathbf{F}_{\boldsymbol{x}}$ associated to the eigenvalue~$0$, and let $\boldsymbol{v} \equiv \boldsymbol{v}(\boldsymbol{x})$ be a left eigenvector of $\mathcal{J}\mathbf{F}_{\boldsymbol{x}}$ associated to the eigenvalue~$0$, such that $\langle \boldsymbol{u}, \boldsymbol{v} \rangle = 1$. For any $\boldsymbol{x} \in \Gamma$, let $E_{c} \equiv E_{c}(\boldsymbol{x})$ and $E_{s} \equiv E_{s}(\boldsymbol{x})$ denote, respectively, the linear centre and stable manifolds. Observe that $E_{c} = \spanvector(\boldsymbol{u})$, and the right eigenvectors of $\mathcal{J}\mathbf{F}_{\boldsymbol{x}}$ associated to the eigenvalues of $\mathcal{J}\mathbf{F}_{\boldsymbol{x}}$ with negative real part are a basis for $E_{s}$. Recall that $\Gamma$ is a non-linear centre manifold, and let $\mathfrak{W}_{s} \equiv \mathfrak{W}_{s}(\boldsymbol{x})$ be the non-linear stable manifold tangent to $E_{s}$ at $\boldsymbol{x}$. Also, recall the definition of the projection matrices $\mathcal{P}_c$ and $\mathcal{P}_s$ in~\eqref{Paper03_projection_linear_center_manifold} and~\eqref{Paper03_projection_linear_stable_manifold}, respectively. By Lemma~5.2 in~\cite{kuznetsov1998elements}, $\mathcal{P}_{c}$ and $\mathcal{P}_{s}$ are well defined and satisfy the relations $\mathcal{P}^{2}_{c} = \mathcal{P}_{c}$, $\mathcal{P}^{2}_{s} = \mathcal{P}_{s}$ and $\mathcal{P}_{c}\mathcal{P}_{s} = \mathcal{P}_{s}\mathcal{P}_{c} = 0$.

We begin by observing that for any $\boldsymbol{x} \in \Gamma$, the left eigenvector $\boldsymbol{v}(\boldsymbol{x})$ is orthogonal to the linear stable manifold $E_{s}(\boldsymbol{x})$. Indeed, for any $\boldsymbol{y} \in E_{s}$, there exists $\tilde{\boldsymbol{y}} \in E_{s}$ such that $\mathcal{J}\mathbf{F}_{\boldsymbol{x}} \tilde{\boldsymbol{y}} = \boldsymbol{y}$, since $E_{s}$ is the space spanned by the (generalised) eigenvectors associated to the eigenvalues of $\mathcal{J}\mathbf{F}_{\boldsymbol{x}}$ with negative real part. Therefore, for any $\boldsymbol{y} \in E_{s}$, there exists $\tilde{\boldsymbol{y}} \in E_{s}$ such that
\begin{equation*}
    \langle \boldsymbol{y}, \boldsymbol{v} \rangle = \langle \tilde{\boldsymbol{y}}, \mathcal{J}\mathbf{F}^{*}_{\boldsymbol{x}}\boldsymbol{v} \rangle = 0.
\end{equation*}
Moreover $\mathcal{P}_{s}^{*}\boldsymbol{v} = 0$, since
\begin{equation*}
    \langle \mathcal{P}_{s}^{*}\boldsymbol{v}, \mathcal{P}_{s}^{*}\boldsymbol{v} \rangle =  \langle \mathcal{P}_{s}\left(\mathcal{P}_{s}^{*}\boldsymbol{v}\right), \boldsymbol{v} \rangle = 0,
\end{equation*}
where the last equality is derived by observing that $\mathcal{P}_{s}\left(\mathcal{P}_{s}^{*}\boldsymbol{v}\right) \in E_{s}$, and $\boldsymbol{v}$ is orthogonal to $E_{s}$.
Following~\cite{kuznetsov1998elements}, we will write the dynamics of the flow given by ODE~\eqref{Paper03_deterministic_flow} in terms of its projections on $E_{c}$ and $E_{s}$.

To simplify notation, let $(\boldsymbol{y}(t))_{t\geq 0}$ be the flow curve of the ODE started from $\boldsymbol{y}(0) \in \mathscr{U}$. Let $\varepsilon > 0$ be such that whenever $\boldsymbol{y}(0) \in \mathcal{B}(\boldsymbol{x}, \varepsilon) \cap \mathfrak{W}_{s}$, we have $\boldsymbol{y}(t) \in \mathcal{B}(\boldsymbol{x}, \varepsilon) \cap \mathfrak{W}_{s}$ for all $t \geq 0$. By the smoothness of $\mathbf{F}$ and assuming $\varepsilon$ sufficiently small, we can consider the Taylor expansion of $\mathbf{F}$, and write that for any flow $(\boldsymbol{y}(t))_{t \geq 0} \subset \mathcal{B}(\boldsymbol{x}, \varepsilon) \cap \mathfrak{W}_{s}$, 
\begin{equation} \label{Paper03_taylor_expansion_F}
    \frac{d \boldsymbol{y}(t)}{dt} = \mathcal{J}\mathbf{F}_{\boldsymbol{x}}(\boldsymbol{y} - \boldsymbol{x}) + \frac{1}{2}\mathcal{H}_{\boldsymbol{x}}(\boldsymbol{y} - \boldsymbol{x}) + o(\vert \vert \boldsymbol{y} - \boldsymbol{x} \vert \vert_{2}^{2}), 
\end{equation}
where $\mathcal{H}_{\boldsymbol{x}} = \left(\mathcal{H}_{\boldsymbol{x}, i}\right)_{i = 0}^{K}: \mathbb{R}^{K+1} \rightarrow \mathbb{R}^{K + 1}$ is given for all $i \in \llbracket K \rrbracket_{0}$ by
\begin{equation} \label{Paper03_definition_multidimensional_tensor_hessian}
    \mathcal{H}_{\boldsymbol{x}, i}(\boldsymbol{y} - \boldsymbol{x}) \defeq \Big\langle \boldsymbol{y} - \boldsymbol{x}, \Hess_{\boldsymbol{x}}(F_{i})(\boldsymbol{y} - \boldsymbol{x})\Big\rangle.
\end{equation}
We shall now compute the impact of the flow dynamics on the projections of $(\boldsymbol{y}(t))_{t \geq 0}$. We start by considering the evolution of the projection onto the linear centre manifold $E_{c}$.
\begin{equation} \label{Paper03_taylor_representation_flow_center_projection}
\begin{aligned}
    \frac{d \left(\mathcal{P}_{c}(\boldsymbol{y}(t) - \boldsymbol{x})\right)}{dt} & = \left\langle \mathcal{J}\mathbf{F}_{\boldsymbol{x}}(\boldsymbol{y} - \boldsymbol{x}) + \frac{1}{2}\mathcal{H}_{\boldsymbol{x}}(\boldsymbol{y} - \boldsymbol{x}) + o(\vert \vert \boldsymbol{y} - \boldsymbol{x}\vert \vert_{2}^{2}), \boldsymbol{v}\right\rangle\boldsymbol{u} \\ & = \Big\langle \boldsymbol{y} - \boldsymbol{x}, \mathcal{J}\mathbf{F}_{\boldsymbol{x}}^{*}\boldsymbol{v}\Big\rangle\boldsymbol{u} +  \frac{1}{2} \Big\langle \mathcal{H}_{\boldsymbol{x}}(\boldsymbol{x} - \boldsymbol{x}), \boldsymbol{v} \Big\rangle \boldsymbol{u} + o(\vert \vert \boldsymbol{y} - \boldsymbol{x}\vert \vert_{2}^{2}) \\ & = \frac{1}{2}\Big\langle \mathcal{H}_{\boldsymbol{x}}(\boldsymbol{y} - \boldsymbol{x}), \boldsymbol{v} \Big\rangle \boldsymbol{u} + o(\vert \vert \boldsymbol{y} - \boldsymbol{x}\vert \vert_{2}^{2}),
\end{aligned}
\end{equation}
where the last equality comes from the fact that by construction $\boldsymbol{v}$ is a left eigenvector of $\mathcal{J}\mathbf{F}_{\boldsymbol{x}}$ associated to the eigenvalue $0$. To compute the derivative of the projection onto the linear stable manifold, we combine the definition of $\mathcal{P}_{s}$ in~\eqref{Paper03_projection_linear_stable_manifold} with identities~\eqref{Paper03_taylor_expansion_F} and~\eqref{Paper03_taylor_representation_flow_center_projection}, to obtain
\begin{equation} \label{Paper03_taylor_representation_flow_linear_stable_projection}
     \frac{d \left(\mathcal{P}_{s}(\boldsymbol{y}(t) - \boldsymbol{x})\right)}{dt} = \mathcal{J}\mathbf{F}_{\boldsymbol{x}}(\boldsymbol{y} - \boldsymbol{x}) + \frac{1}{2}\mathcal{H}_{\boldsymbol{x}}(\boldsymbol{y} - \boldsymbol{x}) - \frac{1}{2}\Big\langle \mathcal{H}_{\boldsymbol{x}}(\boldsymbol{y} - \boldsymbol{x}), \boldsymbol{v} \Big\rangle \boldsymbol{u} + o(\vert \vert \boldsymbol{y} - \boldsymbol{x}\vert \vert_{2}^{2}).
\end{equation}
To simplify notation, for any $\boldsymbol{y} \in \mathbb{R}^{K + 1}$, we shall write $\boldsymbol{y}_{s} = \mathcal{P}_{s}\left(\boldsymbol{y} - \boldsymbol{x}\right)$. By the Centre Manifold Theorem, there is a map $V: E_{s} \rightarrow \mathbb{R}$ such that $\boldsymbol{y} \in \mathfrak{W}_{s} \cap \mathcal{B}(\boldsymbol{x}, \varepsilon)$ if and only if
\begin{equation} \label{Paper03_map_defining_nonlinear_manifold}
    V(\boldsymbol{y}_{s}) \boldsymbol{u} = \mathcal{P}_{c}(\boldsymbol{y} - \boldsymbol{x}).
\end{equation}
Then, for any $\boldsymbol{y} \in \mathfrak{W}_{s} \cap \mathcal{B}(\boldsymbol{x}, \varepsilon)$, we have
\begin{equation} \label{Paper03_rewriting_nonlinear_stable_manifold}
    \boldsymbol{y} - \boldsymbol{x} = \boldsymbol{y}_{s} + V(\boldsymbol{y}_{s})\boldsymbol{u}.
\end{equation}
Note that since $\boldsymbol{x} \in \mathfrak{W}_{s}$, we must have $V(0) = 0$. Moreover, as $\mathfrak{W}_{s}$ is tangent to $E_{s}$ at $\boldsymbol{x}$, we have $\nabla V (0) = 0$. Hence, by Taylor expanding $V$ around $0$, we find that for any $\boldsymbol{y}_{s} \in E_{s}$ with $\vert \vert \boldsymbol{y}_{s}\vert \vert_{2}$ sufficiently small,
\begin{equation} \label{Paper03_Talylor_expansion_map_defining_nonlinear_manifold}
V(\boldsymbol{y}_{s}) = \frac{1}{2}\left \langle \boldsymbol{y}_{s}, \boldsymbol{\Theta} \boldsymbol{y}_{s}  \right\rangle + o(\vert \vert \boldsymbol{y}_{s} \vert \vert_{2}^{2}),
\end{equation}
where $\boldsymbol{\Theta}$ is a symmetric square matrix of order $K+1$. Comparing~\eqref{Paper03_Talylor_expansion_map_defining_nonlinear_manifold} to~\eqref{Paper03_Parsons_Rogers_description_flow}, the matrix $\boldsymbol{\Theta}$ is the one that we aim to compute. Since the domain of $V$ is the stable linear manifold $E_{s}$, we may assume without loss of generality that
\begin{equation} \label{Paper03_product_matrices_projections}
\mathcal{P}_{s}^{*}\boldsymbol{\Theta} = \boldsymbol{\Theta} \mathcal{P}_{s} = \boldsymbol{\Theta}.
\end{equation}
Furthermore, by~\eqref{Paper03_map_defining_nonlinear_manifold}, the invariance of the non-linear manifold $\mathfrak{W}_{s}$ under the flow implies that if $\boldsymbol{y}(0) \in \mathfrak{W}_{s} \cap \mathcal{B}(\boldsymbol{x}, \varepsilon)$, then for all $t \geq 0$,
\begin{equation} \label{Paper03_product_matrices_projections_auxi}
    V(\boldsymbol{y}_{s}(t)) \boldsymbol{u} = \mathcal{P}_{c}(\boldsymbol{y}(t) - \boldsymbol{x}).
\end{equation}
Differentiating both sides of~\eqref{Paper03_product_matrices_projections_auxi} with respect to time, we conclude by the chain rule that
\begin{equation} \label{Paper03_product_matrices_projections_auxii}
    \left\langle \nabla V(\boldsymbol{y}_{s}(t)), \frac{d \boldsymbol{y}_{s}(t)}{dt} \right\rangle \boldsymbol{u} = \frac{d \left(\mathcal{P}_{c}(\boldsymbol{y}(t) - \boldsymbol{x})\right)}{dt}.
\end{equation}
By applying~\eqref{Paper03_taylor_representation_flow_center_projection},~\eqref{Paper03_taylor_representation_flow_linear_stable_projection},~\eqref{Paper03_rewriting_nonlinear_stable_manifold} and~\eqref{Paper03_Talylor_expansion_map_defining_nonlinear_manifold} to~\eqref{Paper03_product_matrices_projections_auxii}, we conclude that
\begin{equation*}
\begin{aligned}
    \Big\langle \boldsymbol{\Theta} \boldsymbol{y}_{s}, \mathcal{J}\mathbf{F}_{\boldsymbol{x}}\Big(\boldsymbol{y}_{s} + V(\boldsymbol{y}_{s})\boldsymbol{u}\Big)\Big\rangle + \mathcal{O}(\vert \vert \boldsymbol{y}_{s} \vert \vert_{2}^{3}) = \frac{1}{2}\Big\langle \mathcal{H}_{\boldsymbol{x}}(\boldsymbol{y}_{s} + V(\boldsymbol{y}_{s})\boldsymbol{u}), \boldsymbol{v}\Big\rangle + o(\vert \vert \boldsymbol{y}_{s} \vert \vert_{2}^{2}).
\end{aligned}
\end{equation*}
Recalling that $\boldsymbol{u} \in \ker\left(\mathcal{J}\mathbf{F}_{\boldsymbol{x}}\right)$, identity~\eqref{Paper03_definition_multidimensional_tensor_hessian}, and equating the quadratic terms, we conclude that for any $\boldsymbol{y}_{s} \in E_{s}$, we must have
\begin{equation} \label{Paper03_fundamental_step_computation_vartheta}
    \Big \langle \boldsymbol{\Theta} \boldsymbol{y}_{s}, \mathcal{J}\mathbf{F}_{\boldsymbol{x}} \boldsymbol{y}_{s}\Big \rangle = \frac{1}{2} \Big \langle \mathcal{H}_{\boldsymbol{x}}(\boldsymbol{y}_{s}), \boldsymbol{v} \Big \rangle.
\end{equation}
We now show that one can obtain $\boldsymbol{\Theta}$ from a linear system.

\begin{lemma} \label{Paper03_computation_quadratic_term_nonlinear_manifold_first_step}
    The matrix $\boldsymbol{\Theta}$ satisfies~\eqref{Paper03_lyapunov_equation}.
\end{lemma}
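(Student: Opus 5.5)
The plan is to upgrade the scalar quadratic identity \eqref{Paper03_fundamental_step_computation_vartheta}, which holds only for vectors in $E_s$, to a matrix identity on all of $\mathbb{R}^{K+1}$. The projection $\mathcal{P}_s$ does this. Fix an arbitrary $\boldsymbol{y}\in\mathbb{R}^{K+1}$ and apply \eqref{Paper03_fundamental_step_computation_vartheta} to $\boldsymbol{y}_s=\mathcal{P}_s\boldsymbol{y}\in E_s$. The right-hand side becomes $\tfrac12\langle \mathcal{P}_s\boldsymbol{y}, (\sum_i v_i \Hess_{\boldsymbol{x}}(F_i))\mathcal{P}_s\boldsymbol{y}\rangle$ by \eqref{Paper03_definition_multidimensional_tensor_hessian}. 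This equals $\tfrac12\langle \boldsymbol{y}, \mathcal{P}_s^*(\sum_i v_i\Hess_{\boldsymbol{x}}(F_i))\mathcal{P}_s\boldsymbol{y}\rangle$.

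For the left-hand side, I will use that $\mathcal{J}\mathbf{F}_{\boldsymbol{x}}$ commutes with $\mathcal{P}_s$. Indeed, $\mathcal{J}\mathbf{F}_{\boldsymbol{x}}\mathcal{P}_c\boldsymbol{y}=\langle\boldsymbol{y},\boldsymbol{v}\rangle\mathcal{J}\mathbf{F}_{\boldsymbol{x}}\boldsymbol{u}=0$, and $\mathcal{P}_c\mathcal{J}\mathbf{F}_{\boldsymbol{x}}\boldsymbol{y}=\langle\boldsymbol{y},\mathcal{J}\mathbf{F}^*_{\boldsymbol{x}}\boldsymbol{v}\rangle\boldsymbol{u}=0$, so $\mathcal{J}\mathbf{F}_{\boldsymbol{x}}\mathcal{P}_s=\mathcal{J}\mathbf{F}_{\boldsymbol{x}}=\mathcal{P}_s\mathcal{J}\mathbf{F}_{\boldsymbol{x}}$. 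Combined with \eqref{Paper03_product_matrices_projections}, $\boldsymbol{\Theta}\mathcal{P}_s=\boldsymbol{\Theta}$, this gives
\[
\langle \boldsymbol{\Theta}\mathcal{P}_s\boldsymbol{y},\mathcal{J}\mathbf{F}_{\boldsymbol{x}}\mathcal{P}_s\boldsymbol{y}\rangle=\langle \boldsymbol{\Theta}\boldsymbol{y},\mathcal{J}\mathbf{F}_{\boldsymbol{x}}\boldsymbol{y}\rangle=\tfrac12\langle\boldsymbol{y},(\mathcal{J}\mathbf{F}^*_{\boldsymbol{x}}\boldsymbol{\Theta}+\boldsymbol{\Theta}\mathcal{J}\mathbf{F}_{\boldsymbol{x}})\boldsymbol{y}\rangle,
\]
where the last step symmetrises using that $\boldsymbol{\Theta}$ is symmetric.

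Comparing the two sides, the quadratic forms of the symmetric matrices $\mathcal{J}\mathbf{F}^*_{\boldsymbol{x}}\boldsymbol{\Theta}+\boldsymbol{\Theta}\mathcal{J}\mathbf{F}_{\boldsymbol{x}}$ and $\mathcal{P}_s^*(\sum_i v_i\Hess_{\boldsymbol{x}}(F_i))\mathcal{P}_s$ agree for every $\boldsymbol{y}$. Both matrices are symmetric, the latter because each Hessian is symmetric. By polarisation, two symmetric matrices with equal quadratic forms are equal, which is exactly \eqref{Paper03_lyapunov_equation}.

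The step I expect to need the most care is the justification of \eqref{Paper03_fundamental_step_computation_vartheta} itself, namely the bookkeeping of orders. One must check that the $V(\boldsymbol{y}_s)\boldsymbol{u}$ contributions vanish at quadratic order: they are killed by $\mathcal{J}\mathbf{F}_{\boldsymbol{x}}$ on the left, and they are cubic inside $\mathcal{H}_{\boldsymbol{x}}$. One must also check that $\nabla V(\boldsymbol{y}_s)=\boldsymbol{\Theta}\boldsymbol{y}_s+o(\|\boldsymbol{y}_s\|)$ is paired with $d\boldsymbol{y}_s/dt=\mathcal{J}\mathbf{F}_{\boldsymbol{x}}\boldsymbol{y}_s+O(\|\boldsymbol{y}_s\|^2)$. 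Since this was already carried out before the lemma, the proof of the lemma reduces to the projection and commutation argument above.
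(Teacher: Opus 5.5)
Your proof is correct and follows essentially the same route as the paper. Both arguments extend the quadratic-form identity \eqref{Paper03_fundamental_step_computation_vartheta} from $E_s$ to all of $\mathbb{R}^{K+1}$, and then conclude from the fact that a matrix with identically vanishing quadratic form is skew-symmetric (your polarisation step); the only difference is that the paper writes $\boldsymbol{y}=\boldsymbol{y}_s+r\boldsymbol{u}$ and kills the cross terms by hand, whereas you substitute $\mathcal{P}_s\boldsymbol{y}$ and use $\mathcal{J}\mathbf{F}_{\boldsymbol{x}}\mathcal{P}_s=\mathcal{J}\mathbf{F}_{\boldsymbol{x}}$ and $\boldsymbol{\Theta}\mathcal{P}_s=\boldsymbol{\Theta}$, which is slightly cleaner.
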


\begin{proof}
    We start by recalling~\eqref{Paper03_definition_multidimensional_tensor_hessian}, and rewriting the right-hand side of~\eqref{Paper03_fundamental_step_computation_vartheta} as
    \begin{equation*}
    \begin{aligned}
    \frac{1}{2} \Big \langle \mathcal{H}_{\boldsymbol{x}}(\boldsymbol{y}_{s}), \boldsymbol{v} \Big \rangle & = \frac{1}{2} \sum_{i = 0}^{K} v_{i} \Big\langle \boldsymbol{y}_{s}, \Hess_{\boldsymbol{x}}(F_{i})\boldsymbol{y}_{s}\Big\rangle \\ & = \left\langle \boldsymbol{y}_{s}, \left(\frac{1}{2} \sum_{i = 0}^{K} v_{i} \Hess_{\boldsymbol{x}}(F_{i})\right) \boldsymbol{y}_{s}\right \rangle \\ & = \left\langle \mathcal{P}_{s}\boldsymbol{y}_{s}, \left(\frac{1}{2} \sum_{i = 0}^{K} v_{i} \Hess_{\boldsymbol{x}}(F_{i})\right) \mathcal{P}_{s}\boldsymbol{y}_{s}\right \rangle \\ & = \left\langle \boldsymbol{y}_{s}, \mathcal{P}^{*}_{s}\left(\frac{1}{2} \sum_{i = 0}^{K} v_{i} \Hess_{\boldsymbol{x}}(F_{i})\right) \mathcal{P}_{s}\boldsymbol{y}_{s}\right \rangle,
    \end{aligned}
    \end{equation*}
    where we used the fact that $\boldsymbol{y}_{s} = \mathcal{P}_{s} \boldsymbol{y}_{s}$ since $\boldsymbol{y}_{s} \in E_{s}$ to derive the third and fourth equalities. Hence, by~\eqref{Paper03_fundamental_step_computation_vartheta} and since $\boldsymbol{\Theta}$ is symmetric, we conclude that for all $\boldsymbol{y}_{s} \in E_{s}$,
    \begin{equation} \label{Paper03_intermediate_step_identity_compute_quadratic_term_nonlinear_manifold}
    \left\langle \boldsymbol{y}_{s},  \left(\boldsymbol{\Theta} \mathcal{J} F_{\boldsymbol{x}} - \mathcal{P}_{s}^{*} \left(\frac{1}{2} \sum_{i = 0}^{K} v_{i} \Hess_{\boldsymbol{x}}(F_{i})\right) \mathcal{P}_{s} \right) \boldsymbol{y}_{s} \right\rangle = 0.
    \end{equation}
    We shall verify that $\boldsymbol{\Theta} \mathcal{J} F_{\boldsymbol{x}} - \mathcal{P}_{s}^{*} \left(\frac{1}{2} \sum_{i = 0}^{K} v_{i} \Hess_{\boldsymbol{x}}(F_{i})\right) \mathcal{P}_{s}$ is skew-symmetric. Recall that a square matrix $\mathbf{M}$ of order $K+1$ is skew-symmetric if and only if $\mathbf{M} = - \mathbf{M}^{*}$, or, equivalently, if and only if $\langle \boldsymbol{y}, \mathbf{M}\boldsymbol{y} \rangle = 0$, for all $\boldsymbol{y} \in \mathbb{R}^{K+1}$. By the definition of the projections $\mathcal{P}_{c}$ and $\mathcal{P}_{s}$, for any $\boldsymbol{y} \in \mathbb{R}^{K+1}$, there exists $r \in \mathbb{R}$ and $\boldsymbol{y}_{s} \in E_{s}$ such that $\boldsymbol{y} = \boldsymbol{y}_{s} + r\boldsymbol{u}$. Hence, by the bilinearity of the inner product,
    \begin{equation*}
    \begin{aligned}
        & \left\langle \boldsymbol{y},  \left(\boldsymbol{\Theta} \mathcal{J} F_{\boldsymbol{x}} - \mathcal{P}_{s}^{*} \left(\frac{1}{2} \sum_{i = 0}^{K} v_{i} \Hess_{\boldsymbol{x}}(F_{i})\right) \mathcal{P}_{s} \right) \boldsymbol{y} \right\rangle \\ & \quad = \left\langle (\boldsymbol{y}_{s} + r\boldsymbol{u}),  \left(\boldsymbol{\Theta} \mathcal{J} F_{\boldsymbol{x}} - \mathcal{P}_{s}^{*} \left(\frac{1}{2} \sum_{i = 0}^{K} v_{i} \Hess_{\boldsymbol{x}}(F_{i})\right) \mathcal{P}_{s} \right) (\boldsymbol{y}_{s} + r \boldsymbol{u})\right\rangle \\ & \quad = \left\langle \boldsymbol{y}_{s},  \left(\boldsymbol{\Theta} \mathcal{J} F_{\boldsymbol{x}} - \mathcal{P}_{s}^{*} \left(\frac{1}{2} \sum_{i = 0}^{K} v_{i} \Hess_{\boldsymbol{x}}(F_{i})\right) \mathcal{P}_{s} \right) \boldsymbol{y}_{s} \right\rangle \\ & \quad \quad + r\left\langle \boldsymbol{y}_{s},  \left(\boldsymbol{\Theta} \mathcal{J} F_{\boldsymbol{x}} - \mathcal{P}_{s}^{*} \left(\frac{1}{2} \sum_{i = 0}^{K} v_{i} \Hess_{\boldsymbol{x}}(F_{i})\right) \mathcal{P}_{s} \right) \boldsymbol{u} \right\rangle \\ & \quad \quad + r\left\langle \boldsymbol{u},  \left(\boldsymbol{\Theta} \mathcal{J} F_{\boldsymbol{x}} - \mathcal{P}_{s}^{*} \left(\frac{1}{2} \sum_{i = 0}^{K} v_{i} \Hess_{\boldsymbol{x}}(F_{i})\right) \mathcal{P}_{s} \right) \boldsymbol{y}_{s} \right\rangle \\ & \quad \quad + r^{2}\left\langle \boldsymbol{u},  \left(\boldsymbol{\Theta} \mathcal{J} F_{\boldsymbol{x}} - \mathcal{P}_{s}^{*} \left(\frac{1}{2} \sum_{i = 0}^{K} v_{i} \Hess_{\boldsymbol{x}}(F_{i})\right) \mathcal{P}_{s} \right) \boldsymbol{u} \right\rangle. 
    \end{aligned}
    \end{equation*}    Applying~\eqref{Paper03_intermediate_step_identity_compute_quadratic_term_nonlinear_manifold} and the fact that $\boldsymbol{u} \in \ker\left(\mathcal{J}\mathbf{F}_{\boldsymbol{x}}\right) \cap \ker \left(\mathcal{P}_{s}\right)$, we conclude that
    \begin{equation} \label{Paper03_skew_symmetric_property}
    \begin{aligned}
        & \left\langle \boldsymbol{y},  \left(\boldsymbol{\Theta} \mathcal{J} F_{\boldsymbol{x}} - \mathcal{P}_{s}^{*} \left(\frac{1}{2} \sum_{i = 0}^{K} v_{i} \Hess_{\boldsymbol{x}}(F_{i})\right) \mathcal{P}_{s} \right) \boldsymbol{y} \right\rangle \\ & \quad = r\left\langle \boldsymbol{u},  \left(\boldsymbol{\Theta} \mathcal{J} F_{\boldsymbol{x}} - \mathcal{P}_{s}^{*} \left(\frac{1}{2} \sum_{i = 0}^{K} v_{i} \Hess_{\boldsymbol{x}}(F_{i})\right) \mathcal{P}_{s} \right) \boldsymbol{x}_{s} \right\rangle \\ & \quad = r\left\langle \boldsymbol{u},  \mathcal{P}_{s}^{*}\left(\boldsymbol{\Theta} \mathcal{J} F_{\boldsymbol{x}} - \left(\frac{1}{2} \sum_{i = 0}^{K} v_{i} \Hess_{\boldsymbol{x}}(F_{i})\right) \mathcal{P}_{s} \right) \boldsymbol{x}_{s} \right\rangle \\ & \quad = r\left\langle \mathcal{P}_{s}\boldsymbol{u}, \left(\boldsymbol{\Theta} \mathcal{J} F_{\boldsymbol{x}} - \left(\frac{1}{2} \sum_{i = 0}^{K} v_{i} \Hess_{\boldsymbol{x}}(F_{i})\right) \mathcal{P}_{s} \right) \boldsymbol{x}_{s} \right\rangle \\ & \quad = 0,
    \end{aligned}
    \end{equation}
    where we applied the identity $\boldsymbol{\Theta} = \mathcal{P}^{*}_{s} \boldsymbol{\Theta}$ to derive the second equality. Since~\eqref{Paper03_skew_symmetric_property} holds for all $\boldsymbol{y} \in \mathbb{R}^{K+1}$, we conclude that $\boldsymbol{\Theta} \mathcal{J} F_{\boldsymbol{x}} - \mathcal{P}_{s}^{*} \left(\frac{1}{2} \sum_{i = 0}^{K} v_{i} \Hess_{\boldsymbol{x}}(F_{i})\right) \mathcal{P}_{s}$ is skew-symmetric. Therefore,
    \begin{equation} \label{Paper03_skew_symmetric_propertyi}
        \boldsymbol{\Theta} \mathcal{J} F_{\boldsymbol{x}} - \mathcal{P}_{s}^{*} \left(\frac{1}{2} \sum_{i = 0}^{K} v_{i} \Hess_{\boldsymbol{x}}(F_{i})\right) \mathcal{P}_{s} = - \left( \boldsymbol{\Theta} \mathcal{J} F_{\boldsymbol{x}} - \mathcal{P}_{s}^{*} \left(\frac{1}{2} \sum_{i = 0}^{K} v_{i} \Hess_{\boldsymbol{x}}(F_{i})\right) \mathcal{P}_{s}\right)^{*}.
    \end{equation}
     Recalling that $\boldsymbol{\Theta}$ and $\Hess_{\boldsymbol{x}}(F_{i})$ are symmetric for all $i \in \llbracket K \rrbracket_{0}$ and by rearranging the terms in~\eqref{Paper03_skew_symmetric_propertyi}, our claim holds.
\end{proof}

By Lemma~\ref{Paper03_computation_quadratic_term_nonlinear_manifold_first_step}, it will suffice to solve Equation~\eqref{Paper03_lyapunov_equation} to compute $\boldsymbol{\Theta}$. Existence and uniqueness of solutions depend in general on the eigenvalues of the Jacobian matrix $\mathcal{J}\mathbf{F}_{\boldsymbol{x}}$~\cite[Theorem~VII.2.1]{bhatia2013matrix}. If all the eigenvalues of $\mathcal{J}\mathbf{F}_{\boldsymbol{x}}$ had negative real part, then we would classify Equation~\eqref{Paper03_lyapunov_equation} as a stable Lyapunov equation, and existence and uniqueness of solutions would be immediate. Since $\mathcal{J}\mathbf{F}_{\boldsymbol{x}}$ has one eigenvalue $0$, and all the other eigenvalues with negative real part, we say this is a singular Lyapunov equation which is semistable (see Section~4 in~\cite{bhat1999lyapunov} as a reference on this theme). In this case, it is not clear \emph{a priori} that existence and uniqueness hold. This will be our next result.

\begin{lemma} \label{Paper03_computation_symmetric_matrix_easier_linear_system}
    There exists a unique square matrix $\boldsymbol{\Theta}$ of order~$K+1$ which satisfies both Equations~\eqref{Paper03_lyapunov_equation} and~\eqref{Paper03_product_matrices_projections}.
\end{lemma}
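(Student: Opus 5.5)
We will write $\mathbf{A}\defeq\mathcal{J}\mathbf{F}_{\boldsymbol{x}}$ and $\mathbf{C}\defeq\mathcal{P}_s^*\bigl(\sum_i v_i\Hess_{\boldsymbol{x}}(F_i)\bigr)\mathcal{P}_s$, and first record the commutation relations that reduce the problem to the stable subspace. Since $\mathcal{P}_c\boldsymbol{y}=\langle\boldsymbol{y},\boldsymbol{v}\rangle\boldsymbol{u}$, we have $\mathbf{A}\mathcal{P}_c=0$ because $\mathbf{A}\boldsymbol{u}=0$. Likewise $\mathcal{P}_c\mathbf{A}=0$, because $\mathbf{A}^*\boldsymbol{v}=0$. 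Hence $\mathbf{A}\mathcal{P}_s=\mathcal{P}_s\mathbf{A}=\mathbf{A}$, and consequently $e^{\mathbf{A}t}\mathcal{P}_s=\mathcal{P}_s e^{\mathbf{A}t}$. On $E_s$ all eigenvalues of $\mathbf{A}$ have negative real part (Assumption~\ref{Paper03_assumption_manifold}), so there exist $c,\kappa>0$ with $\|e^{\mathbf{A}t}\mathcal{P}_s\|\le c\,e^{-\kappa t}$.

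\emph{Existence.} Set
\[
\boldsymbol{\Theta}_\star \defeq -\int_0^\infty e^{\mathbf{A}^*t}\,\mathbf{C}\,e^{\mathbf{A}t}\,dt .
\]
This integral converges absolutely, since $\mathbf{C}=\mathcal{P}_s^*\mathbf{C}\mathcal{P}_s$ and $e^{\mathbf{A}t}\mathcal{P}_s$ decays exponentially. It is symmetric because $\mathbf{C}$ is. Differentiating the integrand gives
\[
\mathbf{A}^*\boldsymbol{\Theta}_\star+\boldsymbol{\Theta}_\star\mathbf{A} = -\int_0^\infty \tfrac{d}{dt}\bigl(e^{\mathbf{A}^*t}\mathbf{C}e^{\mathbf{A}t}\bigr)\,dt = \mathbf{C},
\]
where the boundary term at $\infty$ vanishes by the decay estimate. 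So $\boldsymbol{\Theta}_\star$ satisfies \eqref{Paper03_lyapunov_equation}. Moreover $\boldsymbol{\Theta}_\star\mathcal{P}_s=\boldsymbol{\Theta}_\star$, because $e^{\mathbf{A}t}\mathcal{P}_s=\mathcal{P}_s e^{\mathbf{A}t}$ and $\mathbf{C}\mathcal{P}_s=\mathbf{C}$. Transposing gives $\mathcal{P}_s^*\boldsymbol{\Theta}_\star=\boldsymbol{\Theta}_\star$, so \eqref{Paper03_product_matrices_projections} holds. (This also yields the formula of Theorem~\ref{Paper03_explicit_formula_solution_lyapunov_equation}.)

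\emph{Uniqueness.} Let $\mathbf{D}$ be the difference of two solutions. Then $\mathbf{A}^*\mathbf{D}+\mathbf{D}\mathbf{A}=0$ and $\mathbf{D}=\mathbf{D}\mathcal{P}_s=\mathcal{P}_s^*\mathbf{D}$. From the homogeneous equation, $\tfrac{d}{dt}\bigl(e^{\mathbf{A}^*t}\mathbf{D}e^{\mathbf{A}t}\bigr)=0$, so $e^{\mathbf{A}^*t}\mathbf{D}e^{\mathbf{A}t}=\mathbf{D}$ for all $t\ge0$. Inserting the projections,
\[
\mathbf{D}=\mathcal{P}_s^*\mathbf{D}\mathcal{P}_s=e^{\mathbf{A}^*t}\mathcal{P}_s^*\mathbf{D}\mathcal{P}_s e^{\mathbf{A}t} = (e^{\mathbf{A}t}\mathcal{P}_s)^*\,\mathbf{D}\,(e^{\mathbf{A}t}\mathcal{P}_s) \longrightarrow 0 \quad (t\to\infty).
\]
Hence $\mathbf{D}=0$.

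\emph{Main obstacle.} The delicate step is the commutation $e^{\mathbf{A}t}\mathcal{P}_s=\mathcal{P}_s e^{\mathbf{A}t}$ together with the exponential decay of $e^{\mathbf{A}t}\mathcal{P}_s$. This requires that $0$ be a simple, semisimple eigenvalue, so that $\mathbb{R}^{K+1}=E_c\oplus E_s$ is an $\mathbf{A}$-invariant splitting whose projection is exactly $\mathcal{P}_s$; this is where $\langle\boldsymbol{u},\boldsymbol{v}\rangle=1$ is used. Without the projections, $e^{\mathbf{A}t}\to\boldsymbol{u}\boldsymbol{v}^*\neq0$, and both the existence and the uniqueness arguments break down.
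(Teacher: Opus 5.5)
Your proof is correct, but it takes a different route from the paper's. The paper argues spectrally: it restricts the Sylvester operator $\varphi\mapsto\mathcal{J}\mathbf{F}^{*}_{\boldsymbol{x}}\varphi+\varphi\,\mathcal{J}\mathbf{F}_{\boldsymbol{x}}$ to the space $\mathscr{V}$ of matrices satisfying \eqref{Paper03_product_matrices_projections}, and writes it as $\Lambda+\widetilde{\Lambda}$, a sum of commuting left and right multiplications. It then shows that $\spec(\Lambda)$ and $\spec(\widetilde{\Lambda})$ lie in the open left half-plane; the eigenvalue $0$ is excluded using $\mathcal{P}_s^{*}\boldsymbol{v}=0$ and $\varphi\boldsymbol{u}=0$. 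From this it deduces $0\notin\spec(\Lambda+\widetilde{\Lambda})$, so the operator is invertible on $\mathscr{V}$. The integral formula appears only later, in the proof of Theorem~\ref{Paper03_explicit_formula_solution_lyapunov_equation}, where the integral is checked to solve the system and is identified with $\boldsymbol{\Theta}$ through this lemma's uniqueness.

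You instead get existence directly from that integral. You get uniqueness from the invariance $e^{\mathbf{A}^{*}t}\mathbf{D}e^{\mathbf{A}t}=\mathbf{D}$, combined with the exponential decay of $e^{\mathbf{A}t}\mathcal{P}_s$, where $\mathbf{A}=\mathcal{J}\mathbf{F}_{\boldsymbol{x}}$. This buys several things:
\begin{itemize}
\item the argument is self-contained and proves the lemma and the integral representation at once;
\item it needs no result on the spectrum of a sum of commuting operators;
\item it is insensitive to the Jordan structure of $\mathbf{A}$ on $E_s$.
\end{itemize}
By contrast, the paper's eigenvector argument for $\widetilde{\Lambda}$ tacitly assumes that the eigenvectors of $\mathcal{J}\mathbf{F}_{\boldsymbol{x}}$ span $E_s$. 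This is harmless, but in general it needs a short induction along Jordan chains.

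The paper's argument is the classical Sylvester criterion, and it buys generality of a different kind. It would go through whenever no eigenvalue of $\Lambda$ and no eigenvalue of $\widetilde{\Lambda}$ sum to zero, for example with hyperbolic rather than stable transverse directions, where your integral diverges. Both routes rest on the input you correctly identify: $0$ is an algebraically simple eigenvalue, so $\mathbb{R}^{K+1}=\spanvector(\boldsymbol{u})\oplus E_s$ is an invariant splitting whose projection onto $E_s$ is exactly $\mathcal{P}_s$.
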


\begin{proof}
    Note that a square matrix $\boldsymbol{\Theta}$ satisfies~\eqref{Paper03_product_matrices_projections} if, and only if, it is an element of the real vector space
    \begin{equation*}
        \mathscr{V} \defeq \left\{\varphi \in \mathscr{L}\left(\mathbb{R}^{K+1}, \mathbb{R}^{K+1}\right): \; E_{c} \subset \ker(\varphi) \quad \textrm{and} \quad \mathcal{P}^{*}_{s} \varphi = \varphi\right\},
    \end{equation*}
    where for any vector spaces $\mathcal{X}$ and $\mathcal{Y}$, $\mathscr{L}(\mathcal{X}, \mathcal{Y})$ is the set of $\mathcal{Y}$-valued linear maps defined on $\mathcal{X}$. We now define the linear operators $\Lambda, \widetilde{\Lambda}: \mathscr{V} \rightarrow \mathscr{V}$ by, for all $\varphi \in \mathscr{V}$,
    \begin{equation*}
        \Lambda(\varphi) = \mathcal{J}\mathbf{F}_{\boldsymbol{x}}^{*}\varphi \quad \textrm{and} \quad \widetilde{\Lambda}(\varphi) = \varphi \mathcal{J}\mathbf{F}_{\boldsymbol{x}}.
    \end{equation*}
    Since $E_{c} = \ker\left(\mathcal{J}\mathbf{F}_{\boldsymbol{x}}\right)$, we have $\mathcal{J}\mathbf{F}_{\boldsymbol{x}} \mathcal{P}_{s} = \mathcal{J}\mathbf{F}_{\boldsymbol{x}}$, and therefore the operators $\Lambda$ and $\widetilde{\Lambda}$ are well defined. Let $L = \Lambda + \widetilde{\Lambda}$. To prove our claim, it will be enough to establish that $L$ is invertible, i.e.~that $0 \not\in \spec(L)$.

    Observing that $\Lambda$ and $\widetilde{\Lambda}$ commute, i.e.~that for any $\varphi \in \mathscr{V}$, we have
    \[
\Lambda \widetilde{\Lambda} \varphi = \Lambda(\varphi \mathcal{J}\mathbf{F}_{\boldsymbol{x}}) = \mathcal{J}\mathbf{F}_{\boldsymbol{x}}^*\varphi\mathcal{J}\mathbf{F}_{\boldsymbol{x}} = \widetilde{\Lambda}(\mathcal{J}\mathbf{F}_{\boldsymbol{x}}^*\varphi) =  \widetilde{\Lambda} \Lambda  \varphi,
\]
    we conclude that $\spec(L) \subseteq \spec(\Lambda) + \spec(\widetilde{\Lambda})$ (see for instance~\cite[Theorem~2.4.9]{horn2012matrix} for a proof of this fact). Hence, it will suffice to prove that $0 \not\in \spec(\Lambda) + \spec(\widetilde{\Lambda})$. To verify this, observe that we can extend $\mathscr{V}$ to a complex vector space by taking
    \begin{equation*}
        \widehat{\mathscr{V}} \defeq \left\{\varphi \in \mathscr{L}\left(\mathbb{C}^{K+1}, \mathbb{C}^{K+1}\right): \; E_{c} \subset \ker(\varphi) \quad \textrm{and} \quad \mathcal{P}^{*}_{s} \varphi = \varphi\right\}.
    \end{equation*}
    Then $\Lambda, \widetilde{\Lambda} \in \mathscr{L}(\widehat{\mathscr{V}}, \widehat{\mathscr{V}})$. Let $\lambda$ be an eigenvalue of $\Lambda$, and let $\varsigma \neq 0$ be an associated eigenvector. Then for all $\boldsymbol{y} \in \mathbb{C}^{K+1}$, we have
    \begin{equation*}
        \Lambda \varsigma \boldsymbol{y} = \mathcal{J}\mathbf{F}_{\boldsymbol{x}}^{*} (\varsigma \boldsymbol{y}) = \lambda (\varsigma \boldsymbol{y}).
    \end{equation*}
    Hence, $\lambda$ is an eigenvalue of $\mathcal{J}\mathbf{F}_{\boldsymbol{x}}^{*}$, which means by Assumption~\ref{Paper03_assumption_manifold} that either $\lambda = 0$ or $\Re(\lambda) < 0$. We now claim that only the latter alternative is possible. Indeed, if $\lambda = 0$, we would have that for all $\boldsymbol{y} \in \mathbb{C}^{K+1}$ there should exist $r \in \mathbb{C}$ such that $\varsigma \boldsymbol{y} = r\boldsymbol{v}$. Since $\mathcal{P}^{*}_{s} \varsigma = \varsigma$, this would imply that for all $\boldsymbol{y} \in \mathbb{C}^{K+1}$,
    \begin{equation*}
        \varsigma \boldsymbol{y} = \mathcal{P}^{*}_{s}  \varsigma \boldsymbol{y} = r \mathcal{P}^{*}_{s} \boldsymbol{v} = 0,
    \end{equation*}
    which is a contradiction since $\varsigma \neq 0$. Now, let $\tilde{\lambda}$ be an eigenvalue of $\widetilde{\Lambda}$, and let $\tilde{\varsigma}$ be an associated eigenvector. Then for all $\boldsymbol{y} \in \mathbb{C}^{K+1}$ we have
    \begin{equation*}
        \widetilde{\Lambda} \tilde{\varsigma} \boldsymbol{y} = \tilde{\lambda} \tilde{\varsigma} \boldsymbol{y} = \tilde{\varsigma} \mathcal{J}\mathbf{F}_{\boldsymbol{x}} \boldsymbol{y}.
    \end{equation*}
    Suppose that $\tilde{\lambda} \not\in \spec\left(\mathcal{J}\mathbf{F}_{\boldsymbol{x}}\right) \setminus \{0\}$. Then, from the previous identity, we conclude that for any eigenvector $\boldsymbol{y}_{s}$ of $\mathcal{J}\mathbf{F}_{\boldsymbol{x}}$ associated with an eigenvalue $\lambda$ with negative real part, we must have
    \begin{equation*}
        (\tilde{\lambda} - \lambda)\tilde{\varsigma} \boldsymbol{y}_{s} = 0,
    \end{equation*}
    which implies that $\tilde{\varsigma} \boldsymbol{y}_{s} = 0$. Since by construction $\tilde{\varsigma} \boldsymbol{u} = 0$, we conclude that $\tilde{\varsigma} = 0$, which is a contradiction. Therefore, $\spec(\widetilde{\Lambda}) \subseteq \spec\left(\mathcal{J}\mathbf{F}_{\boldsymbol{x}}\right) \setminus \{0\}$. Finally, by Assumption~\ref{Paper03_assumption_manifold} and our previous arguments, this means that $\lambda \in \spec(\Lambda) \cup \spec(\widetilde{\Lambda}) \Rightarrow \Re(\lambda) < 0 $. In particular, $0 \not\in \spec(L) = \spec(\Lambda + \widetilde{\Lambda}) \subset \spec(\Lambda) + \spec(\widetilde{\Lambda})$, and therefore the operator $L$ is invertible, as desired.
\end{proof}

Lemma~\ref{Paper03_computation_symmetric_matrix_easier_linear_system} shows that we can obtain the matrix $\boldsymbol{\Theta}$ by solving a system of linear equations. Theorem~\ref{Paper03_prop_uniqueness_quadratic_term_nonlinear_manifold} can be then understood as a corollary of Lemma~\ref{Paper03_computation_symmetric_matrix_easier_linear_system}.

\begin{proof}[Proof of Theorem~\ref{Paper03_prop_uniqueness_quadratic_term_nonlinear_manifold}]
   Theorem~\ref{Paper03_prop_uniqueness_quadratic_term_nonlinear_manifold} follows from Lemma~\ref{Paper03_computation_symmetric_matrix_easier_linear_system} and the observation that a symmetric matrix $\boldsymbol{\Theta}$ of order $K+1$ satisfies both Equations~\eqref{Paper03_product_matrices_projections} and~\eqref{Paper03_lyapunov_equation} if, and only if,
   it satisfies both $\boldsymbol{\Theta} \boldsymbol{u} = 0$ and~\eqref{Paper03_lyapunov_equation}. In fact, if $\boldsymbol{\Theta}$ satisfies~\eqref{Paper03_product_matrices_projections}, then $\boldsymbol{\Theta} \boldsymbol{u} = \boldsymbol{\Theta} \mathcal{P}_{s}\boldsymbol{u} = 0$. On the other hand, if $\boldsymbol{\Theta} \boldsymbol{u} = 0$, then for all $\boldsymbol{y} = \boldsymbol{y}_{s} + r\boldsymbol{u} \in \mathbf{R}^{K + 1}$, we have
    \begin{equation*}
         \boldsymbol{\Theta} \mathcal{P}_{s} \boldsymbol{y} = \boldsymbol{\Theta} \mathcal{P}_{s} (\boldsymbol{y}_{s} + r\boldsymbol{u}) = \boldsymbol{\Theta} \boldsymbol{y}_{s} = \boldsymbol{\Theta} (\boldsymbol{y}_{s} + r\boldsymbol{u}) = \boldsymbol{\Theta} \boldsymbol{y}.
    \end{equation*}
    Therefore, $\boldsymbol{\Theta} \boldsymbol{u} = 0$ implies that $\boldsymbol{\Theta} = \boldsymbol{\Theta} \mathcal{P}_{s}$. Since $\boldsymbol{\Theta}$ is symmetric, this implies that $\boldsymbol{\Theta} = \mathcal{P}_{s}^{*}\boldsymbol{\Theta} $, i.e.~that identity~\eqref{Paper03_product_matrices_projections} is satisfied, as desired.
\end{proof}

We are now in a position to prove Theorem~\ref{Paper03_explicit_formula_solution_lyapunov_equation}.

\begin{proof}[Proof of Theorem~\ref{Paper03_explicit_formula_solution_lyapunov_equation}]
    To prove our desired claim, we first show that
the integral expression for $\boldsymbol{\Theta}$ is well defined, for which it will suffice to establish that there exist $C_{1}, C_{2} > 0$ such that for all $t \geq 0$,
    \begin{equation} \label{Paper03_bound_norm_exponential_lyapunov}
        \max \Big\{\Big\vert \Big\vert \exp({\mathcal{J}\mathbf{F}^{*}_{\boldsymbol{x}}t})\mathcal{P}_{s}^{*}\Big\vert \Big\vert, \,  \Big\vert \Big\vert \mathcal{P}_{s}\exp({\mathcal{J}\mathbf{F}_{\boldsymbol{x}}t}) \Big\vert \Big\vert\Big\} \leq C_{1}e^{- C_{2}t},
    \end{equation}
    where $\vert \vert \cdot \vert \vert$ denotes the operator norm. In order to establish~\eqref{Paper03_bound_norm_exponential_lyapunov}, observe that for any $\boldsymbol{y} = \boldsymbol{y}_{s} + \langle \boldsymbol{v}, \boldsymbol{y}\rangle \boldsymbol{u}$, where $\boldsymbol{y}_{s} \in E_{s}$, we have
    \begin{equation*}
    \begin{aligned}
        \Big\vert \Big\vert \mathcal{P}_{s}\exp({\mathcal{J}\mathbf{F}_{\boldsymbol{x}}t}) (\boldsymbol{y}_{s} + \langle \boldsymbol{v}, \boldsymbol{y}\rangle \boldsymbol{u}) \Big\vert \Big\vert & \leq \Big\vert \Big\vert \mathcal{P}_{s}\exp({\mathcal{J}\mathbf{F}_{\boldsymbol{x}}t})\boldsymbol{y}_{s} \Big\vert \Big\vert + \vert \langle \boldsymbol{v}, \boldsymbol{y}\rangle \vert \cdot \Big\vert \Big\vert \mathcal{P}_{s}\exp({\mathcal{J}\mathbf{F}_{\boldsymbol{x}}t})\boldsymbol{u} \Big\vert \Big\vert \\ & \leq \vert \vert \mathcal{P}_{s} \vert \vert \cdot \Big\vert \Big\vert \exp({\mathcal{J}\mathbf{F}_{\boldsymbol{x}}t})\boldsymbol{y}_{s} \Big\vert \Big\vert + \vert \langle \boldsymbol{v}, \boldsymbol{y}\rangle \vert \cdot \vert \vert \mathcal{P}_{s}\boldsymbol{u} \vert\vert \\ & \leq e^{-Ct}\vert \vert \mathcal{P}_{s} \vert \vert \cdot \vert \vert \boldsymbol{y}_{s}\vert \vert,
    \end{aligned}
    \end{equation*}
    where in the second inequality we used the fact that since $\boldsymbol{u} \in \ker\left(\mathcal{J}\mathbf{F}_{\boldsymbol{x}}\right)$, $\exp({\mathcal{J}\mathbf{F}_{\boldsymbol{x}}t})\boldsymbol{u} \equiv \boldsymbol{u}$ for all $t \geq 0$, and in the final one we used the fact that on $E_{s}$, all the eigenvalues of $\mathcal{J}\mathbf{F}_{\boldsymbol{x}}$ have negative real part (see~\cite{perko2013differential}). Observing that, since $\mathcal{P}^{*}_{s}\boldsymbol{v} = 0$, $\mathcal{P}^{*}_{s}$ is a projection matrix on the space spanned by the eigenvectors of $\mathcal{J}\mathbf{F}_{\boldsymbol{x}}^{*}$ associated to the eigenvalues with negative real part, we can also bound $\Big\vert \Big\vert \exp({\mathcal{J}\mathbf{F}^{*}_{\boldsymbol{x}}t})\mathcal{P}_{s}^{*}\Big\vert \Big\vert$. Hence,~\eqref{Paper03_bound_norm_exponential_lyapunov} is proved, and therefore the expression
for $\boldsymbol{\Theta}$ is well defined. %Let $\boldsymbol{\Theta}$ be as in the statement of the theorem. 
Symmetry of the Hessian implies that $\boldsymbol{\Theta}$ is a 
symmetric matrix. Moreover, 
$\boldsymbol{\Theta}\boldsymbol{u} = 0$, since
    \begin{equation*}
        \mathcal{P}_{s}\exp({\mathcal{J}\mathbf{F}_{\boldsymbol{x}}t})\boldsymbol{u} = \mathcal{P}_{s}\boldsymbol{u} = 0.
    \end{equation*}
By Theorem~\ref{Paper03_computation_symmetric_matrix_easier_linear_system}, it remains to check that $\boldsymbol{\Theta}$ also solves Equation~\eqref{Paper03_lyapunov_equation}. Observe that
    \begin{equation*}
    \begin{aligned}
        \mathcal{J}\mathbf{F}_{\boldsymbol{x}}^{*}\boldsymbol{\Theta} + \boldsymbol{\Theta}\mathcal{J}\mathbf{F}_{\boldsymbol{x}} & = - \int_{0}^{+\infty} \frac{d}{dt} \left(\exp({\mathcal{J}\mathbf{F}^{*}_{\boldsymbol{x}}t})\mathcal{P}_{s}^{*} \left(\sum_{i = 0}^{K} v_{i} \Hess_{\boldsymbol{x}}(F_{i})\right) \mathcal{P}_{s}\exp({\mathcal{J}\mathbf{F}_{\boldsymbol{x}}t}) \right) \, dt \\ & = \mathcal{P}_{s}^{*} \left(\sum_{i = 0}^{K} v_{i} \Hess_{\boldsymbol{x}}(F_{i})\right) \mathcal{P}_{s},
    \end{aligned}
    \end{equation*}
    since, by~\eqref{Paper03_bound_norm_exponential_lyapunov}, 
    \begin{equation*}
        \lim_{t \rightarrow +\infty} \exp({\mathcal{J}\mathbf{F}^{*}_{\boldsymbol{x}}t})\mathcal{P}_{s}^{*} \left(\sum_{i = 0}^{K} v_{i} \Hess_{\boldsymbol{x}}(F_{i})\right) \mathcal{P}_{s}\exp({\mathcal{J}\mathbf{F}_{\boldsymbol{x}}t}) = 0.
    \end{equation*}
    Therefore, the formula holds. 

Suppose now that $\mathcal{P}_{s}^{*} \left(\sum_{i = 0}^{K} v_{i} \Hess_{\boldsymbol{x}}(F_{i})\right) \mathcal{P}_{s} \succeq 0$. Fix $\boldsymbol{y} \in \mathbb{R}^{K+1}$, and consider the ODE
    \begin{equation*}
    \begin{aligned}
        \frac{d}{dt} \boldsymbol{y}(t) & = \mathcal{J}\mathbf{F}_{\boldsymbol{x}} \boldsymbol{y}(t), \\ \boldsymbol{y}(0) & = \boldsymbol{y},
    \end{aligned}
    \end{equation*}
    with solution $\boldsymbol{y}(t) = \exp({\mathcal{J}\mathbf{F}_{\boldsymbol{x}}t}) \boldsymbol{y}$, for all $t \geq 0$. By the integral representation of $\boldsymbol{\Theta}$, we have
    \begin{equation} \label{Paper03_lyapunov_exponential_formula_i}
    \begin{aligned}
        \langle \boldsymbol{y}, \boldsymbol{\Theta}\boldsymbol{y} \rangle & = - \int_{0}^{+\infty} \boldsymbol{y}^{*}\exp\Big({\mathcal{J}\mathbf{F}_{\boldsymbol{x}}^{*}t}\Big)\mathcal{P}_{s}^{*} \left(\sum_{i = 0}^{K} v_{i} \Hess_{\boldsymbol{x}}(F_{i})\right) \mathcal{P}_{s}\exp\Big({\mathcal{J}\mathbf{F}_{\boldsymbol{x}}t}\Big)\boldsymbol{y} \, dt \\ & = - \int_{0}^{+\infty} \left\langle \boldsymbol{y}(t), \, \mathcal{P}_{s}^{*} \left(\sum_{i = 0}^{K} v_{i} \Hess_{\boldsymbol{x}}(F_{i})\right) \mathcal{P}_{s}\boldsymbol{y}(t)\right\rangle \; dt \\ & \leq 0,
    \end{aligned}
    \end{equation}
    since we assumed $\mathcal{P}_{s}^{*} \left(\sum_{i = 0}^{K} v_{i} \Hess_{\boldsymbol{x}}(F_{i})\right) \mathcal{P}_{s}$ is positive semidefinite. Since~\eqref{Paper03_lyapunov_exponential_formula_i} holds for any $\boldsymbol{y} \in \mathbb{R}^{K+1}$, we conclude that $\boldsymbol{\Theta}$ is negative semidefinite, which completes the proof.
\end{proof}

\subsection{Proof of Lemma~\ref{Paper03_auxiliary_lemma_uniform_integrability_martingales}} \label{Paper03_section_proof_lemma_uniform_integrability_martingales}

We will need the following version of the Burkholder-Davis-Gundy (BDG) inequality for càdlàg martingales; the inequality in this form is stated on page~527 of~\cite{mueller1995stochastic}, and in~(53) in~\cite{durrett2016genealogies}, and it is a consequence of~\cite[Theorem~21.1]{burkholder1973distribution}. For a square-integrable càdlàg martingale $(M(t))_{t \geq 0}$ with $M(0) = 0$, for $T \geq 0$ and $p \geq 2$,
    \begin{equation} \label{Paper03_BDG_inequality_cadlag_version}
        \mathbb{E}\left[\sup_{t \leq T} \left\vert M(t) \right\vert^{p} \right] \lesssim_{p} \mathbb{E}\left[\left(\left\langle M\right\rangle(T)\right)^{p/2} + \sup_{t \leq T} \left\vert M(t) - M(t-) \right\vert^{p}\right],
    \end{equation}
where $(\langle M\rangle (t))_{t \geq 0}$ denotes the unique predictable càdlàg process for which $\left(M^2(t) - \langle M\rangle (t)\right)_{t \geq 0}$ is a martingale.
% (see~\cite{bass2004stochastic} for a review of the difference between quadratic variation and the predictable bracket process of càdlàg martingales with jumps).

We are ready to prove Lemma~\ref{Paper03_auxiliary_lemma_uniform_integrability_martingales}.

\begin{proof}[Proof of Lemma~\ref{Paper03_auxiliary_lemma_uniform_integrability_martingales}]
    The fact that the sequence of random variables $\Big((M^N(T))^2\Big)_{N \in \mathbb{N}}$ is uniformly integrable follows directly from the conditions of the lemma and an application of the BDG inequality in~\eqref{Paper03_BDG_inequality_cadlag_version} with~$p=4$. By the standard BDG inequality and~\eqref{Paper03_BDG_inequality_cadlag_version}, we conclude that
    \begin{equation*}
        \mathbb{E}\Big[\Big[M^N\Big]^{p/2}(T)\Big] \lesssim_p \mathbb{E}\left[\left(\left\langle M\right\rangle(T)\right)^{p/2} + \sup_{t \leq T} \left\vert M(t) - M(t-) \right\vert^{p}\right],
    \end{equation*}
    and therefore, by the conditions~(i) and~(ii) of this lemma, the sequence of random variables $\Big(\left[M^N\right](T)\Big)_{N \in \mathbb{N}}$ is also uniformly integrable, which completes the proof.
\end{proof}

\subsection{Proof of Lemma~\ref{Paper03_elementary_lemma_moments_fast_environment}} \label{Paper03_subsection_proofs_auxiliary_computations_fast_env}

\begin{proof}[Proof of Lemma~\ref{Paper03_elementary_lemma_moments_fast_environment}] Observe that by an algebraic manipulation of the variables, we have
\begin{equation} \label{Paper03_simplification_elementary_fraction_fast_environment_step_i}
    \frac{f_1 + \bar\upsilon f_2}{f_3 + \bar\upsilon f_4} = \frac{f_1}{f_3} - \frac{\bar\upsilon(f_1f_4 - f_2f_3)}{f_3^2} + \frac{\bar\upsilon^2 f_4(f_1f_4 - f_2f_3)}{(f_3 + \bar\upsilon f_4)f_3^2}.
\end{equation}
    By taking expectations with respect to $\bar\upsilon$ on both sides of~\eqref{Paper03_simplification_elementary_fraction_fast_environment_step_i}, and then using~\eqref{Paper03_distribution_scaled_fast_environment} and the fact that
    \begin{equation*}
        \frac{1}{(f_3 + s_Nf_4)} + \frac{1}{(f_3 - s_Nf_4)} = \frac{2f_3}{(f_3^2 -s_N^2f_4^2)},
    \end{equation*}
    we conclude that~\eqref{Paper03_eq:elementary_lemma_first_moment_fast_environment} holds.

    In order to establish~\eqref{Paper03_eq:elementary_lemma_second_moment_fast_environment} and~\eqref{Paper03_eq:elementary_lemma_higher_moment_fast_environment}, we observe that by combining~\eqref{Paper03_simplification_elementary_fraction_fast_environment_step_i} with~\eqref{Paper03_eq:elementary_lemma_first_moment_fast_environment}, we have for every $q \in \mathbb{N}$,
    \begin{equation} \label{Paper03_elementary_fraction_fast_environment_step_ii}
    \begin{aligned}
        & \left(\frac{f_1+\bar\upsilon f_2}{f_3 + \bar\upsilon f_4} -\mathbb{E}_{\pi^N}\left[\frac{f_1+\bar\upsilon f_2}{f_3 + \bar\upsilon f_4}\right]\right)^q \\ & \quad = \left(-\frac{\bar\upsilon(f_1f_4 - f_2f_3)}{f_3^2} + \frac{\bar\upsilon^2f_4(f_1f_4 - f_2f_3)}{f_3^2(f_3+\bar\upsilon f_4)} - \frac{2ps_N^2f_4(f_1f_4 -f_2f_3)}{(f_3^2 -s_N^2f_4^2)f_3}\right)^q \\ & \quad = (-1)^q\frac{\bar\upsilon^q (f_1f_4 -f_2f_3)^q}{f_3^{2q}} \\ & \quad \quad \quad + (f_1f_4-f_2f_3)^q\sum_{\substack{l_1 + l_2 + l_3 = q: \\ l_2 + l_3 > 0}} \bar\upsilon^{l_1+2l_2}s_N^{2l_3}\left(\frac{(-1)^{l_1+l_3}(2p)^{l_3}q!f_4^{l_2+l_3}}{l_1!l_2!l_3! f_3^{q + l_1+l_2}(f_3 + \bar\upsilon)^{l_2}(f_3^2 -s_N^2f_4^2)^{l_3}}\right).
    \end{aligned}
    \end{equation}
    Identities~\eqref{Paper03_eq:elementary_lemma_second_moment_fast_environment} and~\eqref{Paper03_eq:elementary_lemma_higher_moment_fast_environment} then follow by taking expectations on both sides of~\eqref{Paper03_elementary_fraction_fast_environment_step_ii} and using~\eqref{Paper03_distribution_scaled_fast_environment}. 
    
    It remains to establish~\eqref{Paper03_eq:elementary_lemma_mixed_moments_fast_environment}. By using~\eqref{Paper03_distribution_scaled_fast_environment} and then~\eqref{Paper03_simplification_elementary_fraction_fast_environment_step_i}, we have
    \begin{equation*}
    \begin{aligned}
        \mathbb{E}_{\pi^N}\left[\bar\upsilon \left(\frac{f_1+\bar\upsilon f_2}{f_3+\bar\upsilon f_4} - \mathbb{E}_{\pi^N}\left[\frac{f_1+\bar\upsilon f_2}{f_3+\bar\upsilon f_4}\right]\right)\right] & = \mathbb{E}_{\pi^N}\left[\frac{\bar\upsilon f_1}{f_3} - \frac{\bar\upsilon^2(f_1f_4 - f_2f_3)}{f_3^2} + \frac{\bar\upsilon^3 f_4(f_1f_4 - f_2f_3)}{(f_3 + \bar\upsilon f_4)f_3^2}\right] \\ & = -2ps_N^2\frac{(f_1f_4 - f_2f_3)}{f_3^2} + \mathcal{O}(s_N^3),
    \end{aligned}
    \end{equation*}
    where the last equality also holds due to~\eqref{Paper03_distribution_scaled_fast_environment}, which completes the proof.
\end{proof}

\subsection{Proof of Lemmas~\ref{Paper03_lem:explicit_drifdt_fast_env} and~\ref{Paper03_characterisation_flow_associated_fast_changing_environment}} \label{Paper03_sec_comp_lemmas_fast_env}

We will start by establishing Lemma~\ref{Paper03_characterisation_flow_associated_fast_changing_environment}.  Recall the definition of the flow $\boldsymbol{F}^{\, \textrm{fast}}: [0,1]^{K+1} \times (-1,1)^{K} \rightarrow \mathbb{R}^{2K+1}$ in~\eqref{Paper03_flow_definition_WF_model_fast_changing_environment} and its associated projection map $\boldsymbol{\Phi}^{\, \textrm{fast}}$ introduced after~\eqref{Paper03_attractor_manifold_fast_environment}. Recall the definition of $\Gamma^{\, \textrm{fast}}$ in~\eqref{Paper03_attractor_manifold_fast_environment}, the definition of $\boldsymbol{F}: [0,1]^{K+1} \rightarrow \mathbb{R}^{K+1}$ in~\eqref{Paper03_flow_definition_WF_model} and its associated projection map~$\boldsymbol{\Phi}$ in~\eqref{Paper03_definition_projection_map}. Also, recall the definition of $D(1)$ introduced before Assumption~\ref{Paper03_assumption_manifold}.

Our strategy will be to use~\eqref{Paper03_formula_Parsons_Rogers_first_order_derivatives} and~\eqref{Paper03_formula_Parsons_Rogers_second_order_derivatives} to compute the derivatives of $\Phi^{\, \textrm{fast}}_{0}$ in $\Gamma^{\, \textrm{fast}}$. In the remainder of this section, let $(\boldsymbol{x}, \boldsymbol{0}) = (x, \cdots, x,0, \cdots,0) \in \Gamma^{\, \textrm{fast}}$. Observe that the Jacobian of $\boldsymbol{F}^{\, \textrm{fast}}$ evaluated at $(\boldsymbol{x}, \boldsymbol{0}) \in \Gamma^{\, \textrm{fast}}$ is given by
\begin{equation} \label{Paper03_Jacobian_flow_fast_environment}
\begin{aligned}
    & \mathcal{J}\mathbf{F}^{\, \textrm{fast}}_{(\boldsymbol{x}, \boldsymbol{0})} \\ & \, = \left(\begin{array}{cccccccc}
           - (1 - b_{0})(1-x) & b_{1}(1-x) & \cdots & b_{K}(1-x) & b_{1}x(1-x) & b_2x(1-x)& \cdots & b_Kx(1-x) \\
           
          1 & -1 & \cdots & 0 & 0 & 0 & \cdots & 0 
          \\ \vdots & \vdots & \ddots & \vdots & \vdots & \vdots & \ddots & \vdots \\ 0 & 0 & \cdots & -1 & 0 & 0 & \cdots & 0 
          \\ 0 & 0 & \cdots & 0 & -1 & 0 & \cdots & 0 
          \\ 0 & 0 & \cdots & 0 & 1 & -1 & \cdots & 0
          \\ \vdots & \vdots & \ddots & \vdots & \vdots & \vdots & \ddots & \vdots 
          \\ 0 & 0 & \cdots & 0 & 0 & 0 & \cdots & -1
        \end{array}\right).
\end{aligned}
\end{equation}
Next, we characterise the eigenvalues of $\mathcal{J}\mathbf{F}^{\, \textrm{fast}}$ in $\Gamma^{\, \textrm{fast}}$, and show that Assumption~\ref{Paper03_assumption_manifold} is satisfied in the case of a fast-changing environment.

\begin{lemma} 
\label{Paper03_eigenvalue_condition_fast_environment}
For every $(\boldsymbol{x}, \boldsymbol{0}) \in \Gamma^{\, \textrm{fast}}$, the Jacobian matrix of the flow 
$\mathbf{F}^{\, \textrm{fast}}$ evaluated at $(\boldsymbol{x}, \boldsymbol{0})$ has exactly one eigenvalue equal to $0$ and $2K$ eigenvalues in the open disk 
$D(1) = \{\lambda \in \mathbb{C}: \; \vert \lambda + 1 \vert < 1 \}$.
\end{lemma}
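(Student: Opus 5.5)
The Jacobian \eqref{Paper03_Jacobian_flow_fast_environment} is block upper triangular with respect to the splitting of coordinates into $(x_0,\dots,x_K)$ and $(\upsilon_0,\dots,\upsilon_{K-1})$. The plan is to exploit this structure and reduce to Lemma~\ref{Paper03_eigenvalue_condition_constant_environment}. We write
\begin{equation*}
\mathcal{J}\mathbf{F}^{\, \textrm{fast}}_{(\boldsymbol{x}, \boldsymbol{0})} = \left(\begin{array}{cc} \mathcal{J}\mathbf{F}_{\boldsymbol{x}} & \mathbf{C} \\ 0 & \mathbf{L} \end{array}\right),
\end{equation*}
where:
\begin{enumerate}[(a)]
\item the upper-left $(K+1)\times(K+1)$ block is exactly the Jacobian \eqref{Paper03_formula_jacobian_manifold} of the constant-environment flow $\mathbf{F}$ at $\boldsymbol{x}$, because setting $\boldsymbol{\upsilon}=\boldsymbol{0}$ in $F^{\textrm{fast}}_0$ recovers $F_0$;
\item the block $\mathbf{C}$ has only its first row nonzero, with entries $b_{i}x(1-x)$;
\item the lower-left block vanishes, since $F^{\textrm{fast}}_{\textrm{env},i}$ does not depend on $\boldsymbol{x}$;
\item the lower-right $K\times K$ block $\mathbf{L}$ is lower bidiagonal, with $-1$ on the diagonal and $1$ on the subdiagonal, since $F^{\textrm{fast}}_{\textrm{env},i} = \mathds{1}_{\{i\ge1\}}\upsilon_{i-1}-\upsilon_i$.
\end{enumerate}

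The first step is to verify (a)--(d) by differentiating \eqref{Paper03_flow_definition_WF_model_fast_changing_environment} and evaluating at $\boldsymbol{\upsilon}=\boldsymbol{0}$ with $x_0=\dots=x_K=x$. For (b), note that the numerator factor $\sum_i b_i x_i - (1-b_0)x_0$ vanishes on $\Gamma$. Hence only the derivative of the numerator survives, giving $(1-x)b_i x / h$. Here $h=1$ on $\Gamma$, because $(1-x)+b_0x+(1-b_0)x=1$.

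Next, the characteristic polynomial of a block triangular matrix factorises as
\begin{equation*}
\det(\lambda \mathbf{I} - \mathcal{J}\mathbf{F}^{\, \textrm{fast}}_{(\boldsymbol{x},\boldsymbol{0})}) = \det(\lambda \mathbf{I}-\mathcal{J}\mathbf{F}_{\boldsymbol{x}})\,\det(\lambda\mathbf{I}-\mathbf{L}) = \det(\lambda \mathbf{I}-\mathcal{J}\mathbf{F}_{\boldsymbol{x}})\,(\lambda+1)^{K}.
\end{equation*}
By Lemma~\ref{Paper03_eigenvalue_condition_constant_environment}, the first factor contributes exactly one eigenvalue $0$ (a simple root) and $K$ eigenvalues in $D(1)$. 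The second factor contributes the eigenvalue $-1$ with multiplicity $K$, and $-1\in D(1)$ since it is the centre of the disk. Counting gives exactly one eigenvalue $0$ and $2K$ eigenvalues in $D(1)$, as claimed.

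The only point needing care is whether $0$ remains a simple root of the full characteristic polynomial. This is immediate because $0\notin\spec(\mathbf{L})$. Algebraic simplicity is what Assumption~\ref{Paper03_assumption_manifold} requires, so nothing further is needed beyond correctly reading off the block structure in (a)--(d).
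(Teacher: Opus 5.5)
Your proposal is correct and follows essentially the same route as the paper. Both arguments use the block upper-triangular structure of $\mathcal{J}\mathbf{F}^{\,\textrm{fast}}_{(\boldsymbol{x},\boldsymbol{0})}$ to factor its characteristic polynomial as that of $\mathcal{J}\mathbf{F}_{\boldsymbol{x}}$ times $(\lambda+1)^K$, and then invoke Lemma~\ref{Paper03_eigenvalue_condition_constant_environment}; your explicit checks of the Jacobian blocks and of the simplicity of the eigenvalue $0$ fill in details the paper only states.
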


\begin{proof}
    We will follow closely the proof of Lemma~\ref{Paper03_eigenvalue_condition_constant_environment}. Observe that $\lambda \in \mathbb{C}$ is an eigenvalue of 
$\mathcal{J}\mathbf{F}^{\, \textrm{fast}}_{(\boldsymbol{x}, \boldsymbol{0})}$ if and only $\lambda + 1$ is an 
eigenvalue of $\mathcal{J}\mathbf{F}^{\, \textrm{fast}}_{(\boldsymbol{x}, \boldsymbol{0})} + \mathbf{I}_{2K+1}$, 
where $\mathbf{I}_{2K+1}$ is the identity matrix of order $2K + 1$. By using~\eqref{Paper03_Jacobian_flow_fast_environment} and observing that the bottom-right $K \times K$ principal submatrix of $\mathcal{J}\mathbf{F}^{\, \textrm{fast}}_{(\boldsymbol{x}, \boldsymbol{0})}$ is lower triangular, and then by directly computing the characteristic polynomial $\tilde{p}^*$ of 
$\mathcal{J}\mathbf{F}^{\, \textrm{fast}}_{(\boldsymbol{x}, \boldsymbol{0})} + \mathbf{I}_{2K+1}$, we conclude that the eigenvalues of $\mathcal{J}\mathbf{F}^{\, \textrm{fast}}_{(\boldsymbol{x}, \boldsymbol{0})}$ are the solutions of the polynomial equation
   \begin{equation} \label{Paper03_characteristic_polynomial_Jac_fast_env}
    \tilde{p}^*(\lambda + 1) 
= (-1)^{K}(\lambda + 1)^K \tilde{p}(\lambda + 1),
   \end{equation}
where $\tilde{p}$ is the polynomial defined in~\eqref{Paper03_characteristic_polynomial_J_+_I_const_env}. We proved in 
Lemma~\ref{Paper03_eigenvalue_condition_constant_environment} that 
$\lambda = 0$ is a root of algebraic multiplicity one of $\tilde{p}(\lambda + 1) = 0$, all the other roots of $\tilde{p}(\lambda + 1) = 0$ have negative real part and are elements of the disk $D(1)$. Hence,~\eqref{Paper03_characteristic_polynomial_Jac_fast_env} implies that $\mathcal{J}\mathbf{F}^{\, \textrm{fast}}_{(\boldsymbol{x}, \boldsymbol{0})}$ has exactly one eigenvalue $0$ and $2K$ eigenvalues in $D(1)$, which completes the proof.
\end{proof}

Lemma~\ref{Paper03_eigenvalue_condition_fast_environment} implies that $\Gamma^{\, \textrm{fast}}$ is the attractor manifold for the flow in~\eqref{Paper03_flow_definition_WF_model_fast_changing_environment}, and therefore the projection map $\boldsymbol{\Phi}^{\, \textrm{fast}}:[0,1]^{K+1} \times [-1,1]^K$ introduced after~\eqref{Paper03_attractor_manifold_fast_environment} is well defined. Let ${\boldsymbol{u}}^{\, \textrm{fast}} \equiv {\boldsymbol{u}}^{\, \textrm{fast}}(\boldsymbol{x}, \boldsymbol{0}) \in \mathbb{R}^{2K+1}$ be a right eigenvector of $\mathcal{J}\mathbf{F}^{\, \textrm{fast}}_{(\boldsymbol{x}, \boldsymbol{0})}$ associated with the eigenvalue $0$. To simplify computations, we set
\begin{equation} \label{Paper03_right_eigenvector_fast_env}
    {\boldsymbol{u}}^{\, \textrm{fast}} = \left(\begin{array}{c}
    1 \\ 
    1 \\ 
    \vdots \\ 
    1 \\ 
    0 \\ 
    0 \\ 
    \vdots \\ 
    0
    \end{array}\right).
\end{equation}
Let ${\boldsymbol{v}}^{\, \textrm{fast}} \equiv {\boldsymbol{v}}^{\, \textrm{fast}}(\boldsymbol{x}, \boldsymbol{0}) \in \mathbb{R}^{2K+1}$ denote the left eigenvector of $\mathcal{J}\mathbf{F}^{\, \textrm{fast}}_{(\boldsymbol{x}, \boldsymbol{0})}$ associated with the eigenvalue $0$ for which $\langle {\boldsymbol{v}}^{\, \textrm{fast}}, {\boldsymbol{u}}^{\, \textrm{fast}} \rangle = 1$. This gives
\begin{equation} \label{Paper03_left_eigenvector_fast_env}
    {\boldsymbol{v}}^{\, \textrm{fast}} = \frac{1}{(B(1-x)+1)}\left(\begin{array}{c}
    1 \\ 
    (1-b_0)(1-x) \\ 
    \vdots \\ 
    b_K(1-x) \\ 
    (1-b_0)x(1-x) \\ 
    (\sum_{i = 2}^{K} b_i)x(1-x) \\ 
    \vdots \\ 
    b_Kx(1-x)
    \end{array}\right),
\end{equation}
where $B$ is defined in~\eqref{Paper03_mean_age_germination}. Observe that the first $K$ entries of ${\boldsymbol{u}}^{\, \textrm{fast}}$ and ${\boldsymbol{v}}^{\, \textrm{fast}}$ 
coincide with those of the right~\eqref{Paper03_right_eigenvector_Jacobian} 
and left~\eqref{Paper03_left_eigenvector_jacobian} eigenvectors, respectively, 
associated with the eigenvalue $0$ of the Jacobian of the flow $\boldsymbol{F}$ 
arising in the Wright-Fisher dynamics under a constant environment. We also parametrise the manifold $\Gamma^{\, \textrm{fast}}$ using the map ${\boldsymbol{\gamma}}^{\, \textrm{fast}}: [0,1] \rightarrow \Gamma^{\, \textrm{fast}}$ given by
\begin{equation} \label{Paper03_parametrisation_manifold_fast_env}
    {\boldsymbol{\gamma}}^{\, \textrm{fast}}(x) \defeq (x, x, \cdots, x, 0, 0, \cdots, 0)^*.
\end{equation}
Observe that the map~${\boldsymbol{\gamma}}^{\, \textrm{fast}}$ satisfies
\begin{equation} \label{Paper03_parametrisation_manifold_fast_env_derivatives}
    \frac{d}{dx} {\boldsymbol{\gamma}}^{\, \textrm{fast}}(x) = (1, 1, \cdots, 1, 0, 0, \cdots, 0)^* \quad \textrm{and} \quad   \frac{d^2}{dx^2} {\boldsymbol{\gamma}}^{\, \textrm{fast}}(x) = 0.
\end{equation}
Next, we compute the first order derivatives of the map $\Phi_0^{\, \textrm{fast}}$ in $\Gamma^{\, \textrm{fast}}$.

\begin{lemma} \label{Paper03_lem_first_order_derivatives_fast_env}
    For any $K \in \mathbb{N}$, $\boldsymbol{b} = (b_i)_{i = 0}^{K}$ satisfying Assumption~\ref{Paper03_assumption_probability_distribution_bi}, and any $(\boldsymbol{x},\boldsymbol{0}) \in \Gamma^{\, \textrm{fast}}$, Equation~\eqref{Paper03_first_order_derivatives_fast_environment} holds.
\end{lemma}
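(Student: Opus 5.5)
The plan is to apply the Parsons--Rogers formula~\eqref{Paper03_formula_Parsons_Rogers_first_order_derivatives} directly to the flow $\boldsymbol{F}^{\, \textrm{fast}}$ at the point $(\boldsymbol{x},\boldsymbol{0}) \in \Gamma^{\, \textrm{fast}}$. Lemma~\ref{Paper03_eigenvalue_condition_fast_environment} shows that Assumption~\ref{Paper03_assumption_manifold} holds. The parametrisation~\eqref{Paper03_parametrisation_manifold_fast_env} shows that Assumption~\ref{Paper03_geometric_properties_Gamma} holds, with $\Gamma^{\, \textrm{fast}}$ parametrised by its first coordinate. So the formula applies with $\boldsymbol{v}$ replaced by $\boldsymbol{v}^{\, \textrm{fast}}$ from~\eqref{Paper03_left_eigenvector_fast_env} and $\boldsymbol{\gamma}$ replaced by $\boldsymbol{\gamma}^{\, \textrm{fast}}$.

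First I would confirm that~\eqref{Paper03_left_eigenvector_fast_env} really is a left null vector of~\eqref{Paper03_Jacobian_flow_fast_environment}. The first $K+1$ columns of the Jacobian coincide with~\eqref{Paper03_formula_jacobian_manifold} padded with zeros, so $\langle \boldsymbol{v}^{\, \textrm{fast}}, \cdot\rangle$ annihilates them by the constant-environment computation behind~\eqref{Paper03_left_eigenvector_jacobian}. For the column of $\upsilon_{j-1}$, $j \in \llbracket K \rrbracket$, the condition reads
\[
v_0 b_j x(1-x) - v_{K+j} + \mathds{1}_{\{j<K\}} v_{K+j+1} = 0 .
\]
This telescopes to $v_{K+j} = v_0 x(1-x)\sum_{i\ge j} b_i$, which matches~\eqref{Paper03_left_eigenvector_fast_env}. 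The normalisation $\langle \boldsymbol{v}^{\, \textrm{fast}},\boldsymbol{u}^{\, \textrm{fast}}\rangle=1$ is inherited from the constant-environment case, because $\boldsymbol{u}^{\, \textrm{fast}}$ vanishes on the environmental coordinates.

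Next, by~\eqref{Paper03_parametrisation_manifold_fast_env_derivatives},
\[
\sum_l v^{\, \textrm{fast}}_l (\gamma^{\, \textrm{fast}}_l)' = \langle \boldsymbol{v}^{\, \textrm{fast}}, \boldsymbol{u}^{\, \textrm{fast}}\rangle = 1 .
\]
The formula then gives $\partial \Phi^{\, \textrm{fast}}_0/\partial x_0 = v^{\, \textrm{fast}}_0 = (B(1-x)+1)^{-1}$. The coordinate $\upsilon_0$ sits at index $K+1$, so $\partial \Phi^{\, \textrm{fast}}_0/\partial \upsilon_0 = v^{\, \textrm{fast}}_{K+1} = (1-b_0)x(1-x)/(B(1-x)+1)$, using $\sum_{i\ge1} b_i = 1-b_0$. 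Together these give~\eqref{Paper03_first_order_derivatives_fast_environment}.

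I expect the main obstacle to be the verification of the left eigenvector, in particular keeping the indexing of the environmental block straight (which $\upsilon_{i-1}$ multiplies which $b_i$) and the boundary term at $j=K$. Once the telescoping structure is written down, everything else is immediate.
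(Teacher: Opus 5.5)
Your proposal is correct and is the paper's argument: the paper also just substitutes the left eigenvector~\eqref{Paper03_left_eigenvector_fast_env} and the parametrisation~\eqref{Paper03_parametrisation_manifold_fast_env}--\eqref{Paper03_parametrisation_manifold_fast_env_derivatives} into the Parsons--Rogers formula~\eqref{Paper03_formula_Parsons_Rogers_first_order_derivatives}, with $\sum_l v^{\,\textrm{fast}}_l(\gamma^{\,\textrm{fast}}_l)'=1$. The paper only asserts that $\boldsymbol{v}^{\,\textrm{fast}}$ is a left null vector; your telescoping check of this is correct, including the $j=K$ boundary term and the indexing of $\upsilon_0$ at position $K+1$.
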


\begin{proof}
    The result follows from using~\eqref{Paper03_formula_Parsons_Rogers_first_order_derivatives} with~\eqref{Paper03_left_eigenvector_fast_env},~\eqref{Paper03_parametrisation_manifold_fast_env} and~\eqref{Paper03_parametrisation_manifold_fast_env_derivatives}.
\end{proof}

We are now in a position to establish Lemma~\ref{Paper03_characterisation_flow_associated_fast_changing_environment}.

\begin{proof}[Proof of Lemma~\ref{Paper03_characterisation_flow_associated_fast_changing_environment}]
    By Lemmas~\ref{Paper03_eigenvalue_condition_fast_environment} and~\ref{Paper03_lem_first_order_derivatives_fast_env}, it remains only to verify that~\eqref{Paper03_second_order_derivatives_fast_environment_easy_part} holds. To do so we apply Theorem~\ref{Paper03_prop_uniqueness_quadratic_term_nonlinear_manifold}.

    \medskip
    \noindent\textit{Step~(i): Projection onto the linear stable manifold.}

    \medskip
    Using~\eqref{Paper03_right_eigenvector_fast_env} and~\eqref{Paper03_left_eigenvector_fast_env} in the definition~\eqref{Paper03_projection_linear_stable_manifold}, we compute the projection matrix
    \begin{equation} \label{Paper03_projection_matrix_fast_env_rewritten}
        \mathcal{P}_s^{\, \textrm{fast}}
        =
        \begin{pmatrix}
            \mathcal{P}_s & \boldsymbol{R} \\
            \boldsymbol{0}_{K \times (K+1)} & \boldsymbol{I}_K
        \end{pmatrix},
        \qquad
        (\mathcal{P}_s^{\, \textrm{fast}})^*
        =
        \begin{pmatrix}
            \mathcal{P}_s^{*} & \boldsymbol{0}_{(K+1)\times K} \\
            \boldsymbol{R}^* & \boldsymbol{I}_K
        \end{pmatrix},
    \end{equation}
    where $\mathcal{P}_s$ is defined in~\eqref{Paper03_computation_projection_matrix_stable_linear_manifold}.
    The matrix $\boldsymbol{R}$ has the form
    \begin{equation*}
        \boldsymbol{R}
        =
        x(1-x)
        \begin{pmatrix}
            -(1-b_0) & -\!\sum_{i=2}^K b_i & \cdots & -b_K \\
            -(1-b_0) & -\!\sum_{i=2}^K b_i & \cdots & -b_K \\
            \vdots & \vdots & \ddots & \vdots \\
            -(1-b_0) & -\!\sum_{i=2}^K b_i & \cdots & -b_K 
        \end{pmatrix}.
    \end{equation*}

    \medskip
    \noindent\textit{Step~(ii): Hessian of the fast-environment flow.}

    \medskip
    From~\eqref{Paper03_flow_definition_WF_model_fast_changing_environment}, the Hessians of $F^{\, \textrm{fast}}_i$ vanish along $\Gamma^{\, \textrm{fast}}$ for all $i\in\llbracket K\rrbracket$, and similarly $\Hess(F^{\, \textrm{fast}}_{\textrm{env},i}) \equiv 0$ for $i\in\llbracket K-1\rrbracket$. Therefore, at $(\boldsymbol{x},\boldsymbol{0})\in\Gamma^{\,\textrm{fast}}$, the only non-zero Hessian block is
    \begin{equation} \label{Paper03_hessian_fast_env_rewritten}
        \Hess_{(\boldsymbol{x},\boldsymbol{0})}\!\left(F^{\,\textrm{fast}}_0\right)
        =
        \begin{pmatrix}
            \Hess_{\boldsymbol{x}}(F_0) & \boldsymbol{H}_{\boldsymbol{x},\boldsymbol{\upsilon}} \\
            \boldsymbol{H}_{\boldsymbol{x},\boldsymbol{\upsilon}}^{*} & \boldsymbol{H}_{\boldsymbol{\upsilon},\boldsymbol{\upsilon}}
        \end{pmatrix},
    \end{equation}
    where the entries of $\Hess_{\boldsymbol{x}}(F_0)$ are given by~\eqref{Paper03_computation_hessian_manifold}, while $\boldsymbol{H}_{\boldsymbol{x},\boldsymbol{\upsilon}}$ and $\boldsymbol{H}_{\boldsymbol{\upsilon},\boldsymbol{\upsilon}}$ are given by
    \begin{equation*}\begin{aligned} & \boldsymbol{H}_{\boldsymbol{x},\boldsymbol{\upsilon}} \\ & \; = \left(\begin{array}{cccc} 2b_1(1-b_0)x(1-x) -b_1x & 2b_2(1-b_0)x(1-x) -b_2x & \cdots & 2b_K(1-b_0)x(1-x) - b_Kx \\ b_1(1-x) -2b_1^2x(1-x) &-2b_1b_2x(1-x) & \cdots & -2b_1b_Kx(1-x) \\ -2b_1b_2x(1-x) & b_2(1-x) -2b_2^2x(1-x) & \cdots & -2b_2b_Kx(1-x) \\ \vdots & \vdots & \ddots & \vdots \\ -2b_1b_Kx(1-x) & -2b_2b_Kx(1-x) & \cdots & b_K(1-x)-2b_K^2x(1-x) \end{array}\right), \end{aligned}
    \end{equation*}
    and
    \begin{equation*} \begin{aligned} \boldsymbol{H}_{\boldsymbol{\upsilon,\boldsymbol{\upsilon}}} =-2x^2(1-x)\left(\begin{array}{cccc} b_1^2 & b_1b_2 & \cdots & b_1b_K \\ b_1b_2 & b_2^2 & \cdots & b_2b_K\\ \vdots & \vdots & \ddots & \vdots \\ b_1b_K & b_2b_K & \cdots & b_K^2 \end{array}\right). \end{aligned}
    \end{equation*}

    \medskip
    
    \noindent\textit{Step~(iii): Quadratic term of the non-linear stable manifold.}

    \medskip
    
    Let
    \[
        \boldsymbol{\Theta}^{\,\textrm{fast}}
        =
        \begin{pmatrix}
            \boldsymbol{\Theta}^{\textrm{fast}}_{\boldsymbol{x},\boldsymbol{x}}
            & \boldsymbol{\Theta}^{\textrm{fast}}_{\boldsymbol{x},\boldsymbol{\upsilon}} \\
            (\boldsymbol{\Theta}^{\textrm{fast}}_{\boldsymbol{x},\boldsymbol{\upsilon}})^*
            & \boldsymbol{\Theta}^{\textrm{fast}}_{\boldsymbol{\upsilon},\boldsymbol{\upsilon}}
        \end{pmatrix}
    \]
    denote the quadratic approximation to the stable manifold of the flow $\boldsymbol{F}^{\textrm{fast}}$.
    Substituting~\eqref{Paper03_Jacobian_flow_fast_environment} and the matrices computed above into the Lyapunov system~\eqref{Paper03_lyapunov_equation} together with the constraint~\eqref{Paper03_restriction_theta_u_in_nullspace}, we find that the block
    $\boldsymbol{\Theta}^{\,\textrm{fast}}_{\boldsymbol{x},\boldsymbol{x}}$ satisfies
    \[
        \mathcal{J}\boldsymbol{F}_{\boldsymbol{x}} \,
        \boldsymbol{\Theta}^{\,\textrm{fast}}_{\boldsymbol{x},\boldsymbol{x}}
        +
        \boldsymbol{\Theta}^{\,\textrm{fast}}_{\boldsymbol{x},\boldsymbol{x}}
        \mathcal{J}\boldsymbol{F}_{\boldsymbol{x}}
        =
        v_0\,\mathcal{P}_s^{*}
        \Hess_{\boldsymbol{x}}(F_0)
        \mathcal{P}_s,
        \qquad
        \boldsymbol{\Theta}^{\,\textrm{fast}}_{\boldsymbol{x},\boldsymbol{x}}\,\boldsymbol{u}
        = 0,
    \]
    where $\boldsymbol{F}$, $\boldsymbol{u}$, $v_0$ and $\mathcal{P}_s$
    refer to the constant-environment system
    (see~\eqref{Paper03_flow_definition_WF_model},~\eqref{Paper03_right_eigenvector_Jacobian},~\eqref{Paper03_left_eigenvector_jacobian} and~\eqref{Paper03_computation_projection_matrix_stable_linear_manifold}).
    By Theorem~\ref{Paper03_prop_uniqueness_quadratic_term_nonlinear_manifold}, $\boldsymbol{\Theta}^{\,\textrm{fast}}_{\boldsymbol{x},\boldsymbol{x}}$ is therefore exactly the curvature matrix for the constant-environment Wright–Fisher flow.

    \medskip
    \noindent\textit{Step~(iv): Identifying the second-order derivatives.}

    \medskip
    
    Finally, Lemmas~\ref{Paper03_lemma_first_order_derivatives_projection_map} and~\ref{Paper03_lem_first_order_derivatives_fast_env}
    show that, along $\Gamma$,
    \[
        \frac{\partial \Phi^{\,\textrm{fast}}}{\partial x_0}(\boldsymbol{x},\boldsymbol{0})
        =
        \frac{\partial \Phi}{\partial x_0}(\boldsymbol{x}).
    \]
    Combining this identity with~\eqref{Paper03_formula_Parsons_Rogers_second_order_derivatives} and Step~(iii) yields~\eqref{Paper03_second_order_derivatives_fast_environment_easy_part}.
    This completes the proof.
\end{proof}

We are now ready to compute the second order derivatives for the case $K=1$.

\begin{proof}[Proof of Lemma~\ref{Paper03_lem:explicit_drifdt_fast_env}]
    For $K =1$, by substituting the matrices~$\mathcal{P}_s^{\, \textrm{fast}}$ and~$\Hess_{(\boldsymbol{x},\boldsymbol{0})}\!\left(F^{\,\textrm{fast}}_0\right)$  defined in~\eqref{Paper03_projection_matrix_fast_env_rewritten} and~\eqref{Paper03_hessian_fast_env_rewritten} into the Lyapunov system~\eqref{Paper03_lyapunov_equation} together with the constraint~\eqref{Paper03_restriction_theta_u_in_nullspace}, we conclude that the curvature matrix $\boldsymbol{\Theta}^{\, \textrm{fast}}$ satisfies
    \begin{equation} \label{Paper03_mixed_curv_fast_env}
        \theta^{\, \textrm{fast}}_{0, \upsilon_0}=\frac{(1-b_0)^2(1-x)^2\Big[(1-b_0)^2x^2-(1-b_0)(4-b_0)x+(2-b_0)\Big]}{((1-b_0)(1-x)+1)^3((1-b_0)(1-x)+2)},
    \end{equation}
    and
     \begin{equation} \label{Paper03_env_env_curv_fast_env}
        \theta^{\, \textrm{fast}}_{\upsilon_0, \upsilon_0}=-\frac{(1-b_0)^2x(1-x)^2\Big[(1-b_0)^2x^2-(1-b_0)(5-b_0)x+(4-b_0)\Big]}{((1-b_0)(1-x)+1)^3((1-b_0)(1-x)+2)}.
    \end{equation}
    The lemma follows from substituting~\eqref{Paper03_first_order_derivatives_fast_environment},~\eqref{Paper03_left_eigenvector_fast_env},~\eqref{Paper03_mixed_curv_fast_env} and~\eqref{Paper03_env_env_curv_fast_env} into~\eqref{Paper03_formula_Parsons_Rogers_second_order_derivatives}. Since the proof follows standard algebraic manipulations, we omit the details.
\end{proof}

\subsubsection*{Acknowledgements}

The authors thank Diala Abu Awad and Amandine Véber for posing the question that motivated this paper and for many helpful discussions. They are also grateful to Fernando Antoneli, Sofia Backlund, Nick Barton, Matthias Birkner and Matt Roberts for valuable comments and suggestions. While this work was being carried out, JLdOM was supported by a scholarship from the EPSRC Centre for Doctoral Training in Statistical Applied Mathematics at Bath (SAMBa), under the project EP/S022945/1.

\newpage

\bibliographystyle{abbrv}
\bibliography{bibli_seed_banks}

\end{document}